\numberwithin{equation}{subsection}
\newtheorem{thm}{Thm}[subsection]
\newtheorem{theorem}[thm]{Theorem}
\newtheorem{proposition}[thm]{Proposition}
\newtheorem{lemma}[thm]{Lemma}
\theoremstyle{definition}
\newtheorem{definition}[thm]{Definition}
\newtheorem{remark}[thm]{Remark}
\newtheorem{example}[thm]{Example}
\newtheorem{construction}[thm]{Construction}
\theoremstyle{remark}
\newcommand{\bC}{\mathbf{C}}
\newcommand{\A}{\mathbb{A}}
\newcommand{\G}{\mathbb{G}}
\newcommand{\N}{\mathbb{N}}
\renewcommand{\P}{\mathbb{P}}
\newcommand{\R}{\mathbb{R}}
\newcommand{\T}{\mathbb{T}}
\newcommand{\Z}{\mathbb{Z}}
\newcommand{\cA}{\mathcal{A}}
\newcommand{\cB}{\mathcal{B}}
\newcommand{\cC}{\mathcal{C}}
\newcommand{\cD}{\mathcal{D}}
\newcommand{\cE}{\mathcal{E}}
\newcommand{\cF}{\mathcal{F}}
\newcommand{\cG}{\mathcal{G}}
\newcommand{\cH}{\mathcal{H}}
\newcommand{\cM}{\mathcal{M}}
\newcommand{\cP}{\mathcal{P}}
\newcommand{\cQ}{\mathcal{Q}}
\newcommand{\cS}{\mathcal{S}}
\newcommand{\cU}{\mathcal{U}}
\newcommand{\cV}{\mathcal{V}}
\newcommand{\cX}{\mathcal{X}}
\newcommand{\cY}{\mathcal{Y}}
\newcommand{\cZ}{\mathcal{Z}}
\newcommand{\rD}{\mathrm{D}}
\newcommand{\sA}{\mathscr{A}}
\newcommand{\sT}{\mathscr{T}}
\DeclareMathOperator{\Hom}{Hom}
\DeclareMathOperator{\Spec}{Spec}
\newcommand{\id}{{\rm id}}
\newcommand{\infSH}{\mathrm{SH}}
\newcommand{\infDA}{\mathrm{DA}}
\newcommand{\infH}{\mathrm{H}}
\newcommand{\Set}{\mathrm{Set}}
\newcommand{\Mon}{\mathrm{Mon}}
\newcommand{\Tri}{\mathrm{Tri}}
\newcommand{\cofib}{\mathrm{cofib}}
\newcommand{\infCat}{\mathrm{Cat}}
\newcommand{\CAlg}{\mathrm{CAlg}}
\newcommand{\Corr}{\mathrm{Corr}}
\newcommand{\PrL}{\mathrm{Pr}^\mathrm{L}}
\newcommand{\PrR}{\mathrm{Pr}^\mathrm{R}}
\newcommand{\infSpc}{\mathrm{Spc}}
\newcommand{\infSpt}{\mathrm{Sp}}
\newcommand{\all}{\mathrm{all}}
\newcommand{\unit}{\mathbf{1}}
\newcommand{\dZar}{\mathrm{dZar}}
\newcommand{\dNis}{\mathrm{dNis}}
\newcommand{\sNis}{\mathrm{sNis}}
\newcommand{\dtau}{\mathrm{d\tau}}
\newcommand{\Sch}{\mathrm{Sch}}
\newcommand{\lSch}{\mathrm{lSch}}
\newcommand{\lSpc}{\mathrm{lSpc}}
\newcommand{\lFan}{\mathrm{lFan}}
\newcommand{\lSmSpc}{\mathrm{lSmSpc}}
\newcommand{\ver}{\mathrm{ver}}
\newcommand{\Cone}{\mathrm{Cone}}
\newcommand{\ol}{\overline}
\newcommand{\Blow}{\mathrm{Bl}}
\newcommand{\lSm}{\mathrm{lSm}}
\newcommand{\gp}{\mathrm{gp}}
\newcommand{\Deform}{\mathrm{D}}
\newcommand{\Normal}{\mathrm{N}}
\newcommand{\Thom}{\mathrm{Th}}
\newcommand{\Sh}{\mathrm{Sh}}
\newcommand{\PSh}{\mathrm{PSh}}
\newcommand{\boxx}{\square}
\newcommand{\colim}{\mathop{\mathrm{colim}}}
\newcommand{\limit}{\mathop{\mathrm{lim}}}
\newcommand{\Fan}{\mathrm{Fan}}
\newcommand{\Pan}{\mathrm{Pan}}
\newcommand{\Stab}{\mathrm{Sp}}
\newcommand{\Ft}{\mathrm{Ft}}
\newcommand{\ul}{\underline}
\DeclareMathOperator{\im}{im}
\newcommand{\Mod}{\mathrm{Mod}}
\begin{document}
\title{Log motivic Gysin isomorphisms}
\author{Doosung Park}
\address{Department of Mathematics and Informatics, University of Wuppertal, Germany}
\email{dpark@uni-wuppertal.de}
\date{\today}
\subjclass[2020]{Primary 14F42; Secondary 14A21}
\keywords{log motives, motivic homotopy theory, Gysin isomorphisms}
\begin{abstract}
In this paper, we construct the Gysin isomorphisms in the axiomatic motivic setting for fs log schemes.
We formulate the purity transformations for log smooth morphisms.
We show that the purity transformations are isomorphisms for certain specific non-strict morphisms of fs log schemes.
\end{abstract}
\maketitle
\section{Introduction}

\subsection{Gysin isomorphisms in \texorpdfstring{$\A^1$}{A1}-homotopy theory of schemes}
Morel and Voevodsky \cite{MV} introduced $\A^1$-homotopy theory of schemes, which aims to study algebraic geometry by means of algebraic topology.
Many techniques in algebraic topology have been adapted to $\A^1$-homotopy theory.
For example,
Morel and Voevodsky constructed \cite[Theorem 2.23 in \S 3]{MV} the \emph{Gysin isomorphisms} in the motivic setting as follows, which is analogous to the Gysin isomorphisms in algebraic topology:
\begin{theorem}
Let $S$ be a noetherian scheme of finite dimension.
Then for every closed immersion $Z\to X$ of smooth schemes over $S$,
there exists a natural isomorphism
\[
X/(X-Z)
\simeq
\Normal_Z X/(\Normal_Z X-Z)
\]
in the motivic pointed homotopy category $\infH_*(S)$,
where $\Normal_Z X$ is the normal bundle of $Z$ in $X$, and $\Normal_Z X-Z$ is the complement of the zero section.
\end{theorem}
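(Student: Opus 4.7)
The plan is to build the isomorphism via deformation to the normal cone. I would set
\[
D_Z X := \mathrm{Bl}_{Z \times \{0\}}(X \times \A^1) \setminus \mathrm{Bl}_{Z \times \{0\}}(X \times \{0\}),
\]
a smooth $S$-scheme equipped with a flat morphism to $\A^1$ whose fiber over $1 \in \A^1$ is $X$ and whose fiber over $0$ is $\Normal_Z X$. The evident closed immersion $Z \times \A^1 \hookrightarrow D_Z X$ interpolates between the original embedding $Z \hookrightarrow X$ at $t=1$ and the zero section $Z \hookrightarrow \Normal_Z X$ at $t=0$, so the two fiber inclusions produce a natural zigzag
\[
X/(X-Z) \longrightarrow D_Z X/(D_Z X - Z \times \A^1) \longleftarrow \Normal_Z X/(\Normal_Z X - Z)
\]
in $\infH_*(S)$, functorial in the pair $(X,Z)$.

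The core of the argument is to show that both arrows in this zigzag are isomorphisms; inverting one and composing with the other would yield the desired natural isomorphism. For each of them, I would argue by Nisnevich-local reduction on $X$. Using the classical fact that any closed immersion of smooth $S$-schemes is, Nisnevich-locally on $X$, isomorphic to the zero section of a trivial vector bundle $Z \hookrightarrow Z \times \A^c$, the deformation space $D_Z X$ becomes identified with $Z \times \A^c \times \A^1$ projecting to $\A^1$. In this local model, both fiber inclusions over $0$ and $1$ are sections of the trivial projection, hence $\A^1$-homotopy equivalences of pairs, and the comparison maps are trivially isomorphisms.

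The main technical obstacle is promoting this local, coordinate-dependent verification into a genuinely global and natural isomorphism in $\infH_*(S)$. This requires establishing that the pointed pair functor $(X, X-Z) \mapsto X/(X-Z)$ satisfies Nisnevich excision in the appropriate sense, together with an étale/Nisnevich-local presentation theorem for pairs of smooth schemes that is compatible with the deformation to the normal cone (so that the local trivialisations glue). Once these ingredients are in place, the zigzag above descends to a canonical isomorphism independent of any local trivialisation of $\Normal_Z X$, which is exactly the assertion of the theorem.
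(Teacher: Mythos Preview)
The paper does not actually prove this theorem; it is quoted in the introduction as a classical result of Morel and Voevodsky, with a citation to \cite[Theorem 2.23 in \S 3]{MV}. Your proposal via deformation to the normal cone, Nisnevich-local reduction to the zero section of a trivial bundle, and $\A^1$-contractibility in the local model is precisely the standard argument Morel and Voevodsky use, so it is correct in outline.

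It is worth noting that the paper's own contribution, Theorem~\ref{pur.2}, proves the log-geometric generalisation by the same deformation-to-normal-cone strategy you describe: set up the zigzag through $\Deform_{\cZ}\cX$, reduce Zariski-locally to representable objects, and then reduce to the strict smooth case. At that final step, however, the paper does not redo the local computation but instead invokes the already-established motivic purity theorem \cite[Theorem 7.5.4]{logDM} (essentially the classical result you are proving) and pulls it back along the structure map $S\to\ul{S}$. So the paper's log proof \emph{relies on} the statement you are asked to prove rather than re-deriving it; your sketch supplies exactly the classical input the paper takes for granted.
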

The \emph{Thom space $\Thom(\Normal_Z X)$} is defined to be the right hand side $\Normal_Z X/(\Normal_Z X-Z)$.
The Gysin isomorphism provides a fundamental relation among the motives of $X$, $X-Z$, and $\Thom(\Normal_Z X)$.

\subsection{Gysin isomorphisms in \texorpdfstring{$\A^1$}{A1}-homotopy theory of fs log schemes}

In \cite{logA1}, $\A^1$-homotopy theory of fs log schemes was initiated.
The motivic stable homotopy category $\infSH(S)$ of Morel-Voevodsky was extended to the case when $S$ is a noetherian fs log scheme of finite dimension.
Let us review the construction of $\infSH(S)$.
We begin with the $\infty$-category of dividing Nisnevich sheaves of spectra on the category $\lSm/S$ of log smooth fs log schemes over $S$:
\[
\Sh_{\dNis}(\lSm/S,\infSpt).
\]
The dividing Nisnevich topology in \cite[Definition 3.1.4]{logDM} is a log variant of the Nisnevich topology.
We localize this $\infty$-category by the classes of the projections $X\times \A^1\to X$ and open immersions $X-\partial_S X\to X$ for all $X\in \lSm/S$, where $\partial_S X$ is the vertical boundary \cite[Definition 2.3.5]{logA1}.
After taking $\G_m$-stabilization, we obtain $\infSH(S)$.

This paper is a continuation of \cite{logA1}.
Our first aim is to provide the log version of the Gysin isomorphisms.
We may ask the following question, where $B$ is a fixed base scheme throughout this paper:
\begin{itemize}
\item For every noetherian fs log scheme of finite dimension $S$ over $B$ and closed immersion $Z\to X$ in $\lSm/S$, is there a natural isomorphism
\[
\Sigma^\infty (X/X-Z)
\simeq
\Sigma^\infty (\Normal_Z X/\Normal_Z X-Z)
\]
in $\infSH(S)$?
\end{itemize}
The first problem we encounter is that this is not formulated well since the reasonable open complement $X-Z$ does not exist in the category of fs log schemes if $Z\to X$ is not strict.

To resolve this, one may consider a suitable log blow-up $X'\to X$ such that the projection $Z':=Z\times_X X'\to X'$ is a strict closed immersion.
Since our motivic setting inverts log blow-ups, we may instead construct an isomorphism
\[
\Sigma^\infty(X'/X'-Z')
\simeq
\Sigma^\infty(\Normal_{Z'} X'/\Normal_{Z'} X'-Z')
\]
in $\infSH(S)$.
However, there is a technical limitation of this approach: We do not know that such a log blow-up $X'\to X$ exists globally even though it exists locally as a consequence of \cite[Theorem III.2.6.7]{Ogu}.
Therefore, one has to first consider a Zariski cover of $X$ and then a log blow-up.
Then checking functoriality becomes more and more sophisticated.
This method was taken in \cite{ParThesis}.

Our strategy for this problem is to use divided log spaces over the base scheme $B$ developed in \cite{divspc}, which we review in \S \ref{div}.
A \emph{(noetherian) divided log space over $B$} is a dividing Zariski sheaf of sets on the category $\lSch/B$ of noetherian fs log schemes of finite dimensions over $B$ satisfying certain axioms that are inspired by the axioms of algebraic spaces.
We refer to \cite[Definition 3.1.4]{logDM} for the dividing Zariski topology.
The Yoneda functor
\[
h\colon \lSch/B \to \Sh_{\dZar}(\lSch/B)
\]
makes every object of $\lSch/B$ into a divided log space.
There are notions of open immersions, closed immersions, and log smooth morphisms of divided log spaces.
We can also define the dividing Zariski and dividing Nisnevich topologies on the category of divided log spaces $\lSpc/B$.

An advantage of divided log spaces is that a reasonable definition of the open complement of a closed immersion $a\colon \cZ\to \cX$ in $\lSpc/B$ exists.
Furthermore, if $a$ is a closed immersion of log smooth divided log spaces over some $\cS\in \lSpc/B$, then reasonable definitions of the blow-up $\Blow_{\cZ}\cX$ and the normal bundle $\Normal_{\cZ}\cX$ also exist.

To use divided log spaces in motivic homotopy theory, we need to extend $\infSH$ from fs log schemes to divided log spaces.
For flexibility of our theory, we instead fix a dividing Nisnevich sheaf of presentably symmetric monoidal $\infty$-categories
\[
\sT\in \Sh_{\dNis}(\lSch/B,\CAlg(\PrL))
\]
throughout this paper satisfying the following conditions:
\begin{itemize}
\item
For every morphism $f$ in $\lSch/B$, $f_*$ denotes a right adjoint of $f^*:=\sT(f)$.
\item
For every log smooth morphism $f\colon X\to S$ in $\lSch/B$, $f^*$ admits a left adjoint $f_\sharp$.
We set $M_S(X):=f_\sharp \unit\in \sT(S)$, where $\unit$ denotes the monoidal unit in $\sT(X)$.
\item (Log smooth base change)
For every cartesian square in $\lSch/B$
\[
\begin{tikzcd}
X'\ar[d,"f'"']\ar[r,"g'"]&
X\ar[d,"f"]
\\
S'\ar[r,"g"]&
S
\end{tikzcd}
\]
such that $f$ is log smooth, the Beck-Chevalley transformation
\[
f_\sharp'g'^*\xrightarrow{Ex} g^*f_\sharp
\]
is an isomorphism.
\item (Log smooth projection formula)
For every log smooth morphism $f\colon X\to S$ in $\lSch/B$, the Beck-Chevalley transformation
\[
f_\sharp((-) \otimes f^*(-))
\xrightarrow{Ex}
f_\sharp(-) \otimes (-)
\]
is an isomorphism.
\item (Localization property) For every strict closed immersion $i\colon Z\to S$ in $\lSch/B$ with its open complement $j\colon U\to S$,  $i_*$ is fully faithful, and the pair of functors $(i^*,j^*)$ is conservative.
\item ($\A^1$-invariance)
Let $X\to S$ be a log smooth morphism in $\lSch/B$, and let $p\colon X\times \A^1\to X$ be the projection.
Then the morphism 
\[
M_S(p)\colon M_S(X\times \A^1)\to M_S(X)
\]
is an isomorphism.
\item ($\ver$-invariance)
Let $f\colon X\to S$ be a log smooth morphism in $\lSch/B$, and let $j\colon X-\partial_S X\to X$ be the obvious open immersion.
Then the morphism
\[
M_S(j)
\colon
M_S(X-\partial_S X)\to M_S(X)
\]
is an isomorphism.
\item ($\P^1$-stability) Suppose $S\in \lSch/B$.
Then $\sT(S)$ is stable.
Furthermore, 
\[
\unit(1)
:=
\cofib(M_S(S)\xrightarrow{M_S(i_0)}M_S(S\times \P^1))[-2]
\]
admits a $\otimes$-inverse $\unit(-1)$, where $i_0\colon S\to S\times \P^1$ is the $0$-section.
We set $\unit(d):=\unit(1)^{\otimes d}$ for every integer $d$.
\item Suppose $S\in \lSch/B$.
The family $\{M_S(X)(d)[n]:X\in \lSm/S,\,d,n\in \Z\}$ generates $\sT(S)$.
\end{itemize}
We note that $\infSH$ satisfies all the above conditions for $\sT$, see Example \ref{formulation.2}.

We show in Proposition \ref{extension.7} that there is an equivalence of $\infty$-categories
\[
\Sh_{\dNis}(\lSch/B,\CAlg(\PrL))
\simeq
\Sh_{\dNis}(\lSpc/B,\CAlg(\PrL)),
\]
where $\lSpc/B$ denotes the category of divided log spaces over $B$.
From this, we obtain a naturally defined dividing Nisnevich sheaf
\[
\sT\in \Sh_{\dNis}(\lSpc/B,\CAlg(\PrL)).
\]
In particular, we obtain $\sT(\cS)$ for all $\cS\in \lSpc/B$.
We show in \S \ref{extension} that $\sT$ for divided log spaces satisfies similar conditions.
For this, we need the technique of partial adjoints developed by Liu-Zheng in \cite{LZ}.

If $p\colon \cX\to \cS$ is a log smooth morphism in $\lSpc/B$, we set $M_{\cS}(\cX):=p_\sharp \unit$, where $\unit$ denotes the monoidal unit in $\sT(\cX)$.
If $\cU\to \cX$ is an open immersion of log smooth divided log spaces over $\cS$, we set $M_{\cS}(\cX/\cU):=\cofib(M_{\cS}(\cU)\to M_{\cS}(\cX))$.
Using divided log spaces, our first main result can be formulated as follows.

\begin{theorem}[Theorem \ref{pur.2}]
\label{intro.1}
Let $\cZ\to \cX$ be a closed immersion of log smooth divided log spaces over $\cS$, where $\cS$ is a divided log space.
Then there is a natural isomorphism
\[
M_{\cS}(\cX/\cX-\cZ)
\simeq
M_{\cS}(\Normal_{\cZ}\cX/\Normal_{\cZ} \cX-\cZ)
\]
in $\sT(\cS)$.
\end{theorem}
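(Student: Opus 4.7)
The plan is to adapt the Morel--Voevodsky deformation to the normal cone to the divided log setting, working directly in $\lSpc/B$ where open complements, log blow-ups, and normal bundles are all available by \S \ref{div}. The first step is to construct a \emph{log motivic deformation space} $\cD_\cZ\cX$ over $\cS\times\A^1$, obtained by taking a suitable log blow-up of $\cX\times\A^1$ along $\cZ\times\{0\}$ as a divided log space and removing the strict transform of $\cX\times\{0\}$. One then verifies that $\cD_\cZ\cX$ is log smooth over $\cS\times\A^1$, that its fiber over $1$ recovers $\cX$, that its fiber over $0$ recovers $\Normal_\cZ\cX$, and that there is a compatible closed immersion $\cZ\times\A^1\hookrightarrow\cD_\cZ\cX$.

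Pulling back along the zero and unit sections $\cS\to\cS\times\A^1$ then produces, functorially in $(\cS,\cX,\cZ)$, a zigzag in $\sT(\cS)$
\[
M_{\cS}(\cX/\cX-\cZ)
\longrightarrow
M_{\cS}(\cD_\cZ\cX/\cD_\cZ\cX-\cZ\times\A^1)
\longleftarrow
M_{\cS}(\Normal_\cZ\cX/\Normal_\cZ\cX-\cZ).
\]
The desired purity isomorphism will be read off from this zigzag, so the theorem reduces to showing both arrows are invertible. The strategy for each arrow is to view its source and target as motives of objects equipped with a morphism to $\A^1$ and to apply $\A^1$-invariance, together with log smooth base change and the log smooth projection formula, to collapse the $\A^1$-direction.

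For the final step I would reduce, using that $\sT$ is a dividing Nisnevich sheaf, to a local situation in which $\cZ\to\cX$ admits standard log-geometric coordinates; such coordinates are available from Ogus's structure theorem \cite[Theorem III.2.6.7]{Ogu} after passing to a dividing Nisnevich cover. On such a local model, $\cX$ splits off the normal directions, the deformation $\cD_\cZ\cX$ becomes a trivial $\A^1$-family, and both arrows in the zigzag are tautologically invertible; the general case then follows by dividing Nisnevich descent applied to the $\sT$-valued sheaf.

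The main obstacle I anticipate is keeping the deformation construction inside the category of log smooth divided log spaces. Concretely, one must verify that the log blow-up combined with the removal of the strict transform produces a log smooth divided log space (not merely an fs log algebraic space), that the two fibre identifications hold on the nose rather than after a further log blow-up, and that the dividing-local model for a closed immersion of log smooth objects is stable under arbitrary base change $\cS'\to\cS$ so that the resulting isomorphism is genuinely natural. This is precisely where divided log spaces become indispensable, since the classical fs log scheme category is too rigid to accommodate the complement $\cX-\cZ$ when $\cZ\to\cX$ is not strict.
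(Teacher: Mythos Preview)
Your overall architecture matches the paper: build the deformation space in $\lSpc/B$, form the zigzag
\[
M_{\cS}(\cX/\cX-\cZ)\to M_{\cS}(\Deform_{\cZ}\cX/\Deform_{\cZ}\cX-\cZ\times\boxx)\leftarrow M_{\cS}(\Normal_\cZ\cX/\Normal_\cZ\cX-\cZ),
\]
and check both arrows are isomorphisms by a local argument. (The paper takes $\boxx$, not $\A^1$, as the deformation base; this is a minor point since the construction in \cite{divspc} is formulated that way.)

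The genuine gap is your final step. You propose to pass, via a dividing Nisnevich cover and \cite[Theorem III.2.6.7]{Ogu}, to ``standard log-geometric coordinates'' in which the deformation becomes a trivial $\A^1$-family. But Ogus's theorem only buys you that the closed immersion becomes \emph{strict} after a local log blow-up; it does not make $\cX$ split as a vector bundle over $\cZ$, nor does it make the ambient morphism $\cX\to\cS$ strict. In the log setting, even a strict closed immersion $Z\to X$ in $\lSm/S$ need not admit a local model in which $\Deform_Z X$ is a constant family, so your appeal to $\A^1$-invariance has no purchase.

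The paper closes this gap with an additional reduction you are missing. After localizing to $\cS=h_S$ and a strict closed immersion $i\colon Z\to X$ in $\lSm/S$, one invokes Proposition \ref{conserv.3} to produce a log smooth $g\colon S'\to S$ with $g^*$ \emph{conservative} and $\cM_{X\times_S S'/S'}^{\gp}$ torsion free. This conservativity step is the heart of the matter and rests on the pan/critical-face machinery of \S\ref{pan}--\S\ref{localization} (Theorem \ref{pan.7}); it has no analogue in your outline. Only after this base change does \cite[Proposition C.1]{divspc} apply, exhibiting $Z\to X$ locally as the zero section of an affine space over some $Y\in\lSm/S$, whence $X$ and $Z$ become \emph{strict} smooth over $S$ and the classical purity theorem \cite[Theorem 7.5.4]{logDM} finishes the job. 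Without this saturation/conservativity argument, your local verification does not go through.
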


\subsection{Thom transformations}

As an application of the Gysin isomorphisms, we deduce the following result.

\begin{theorem}[Theorem \ref{thom.10}]
Let $p\colon \cX\to \cS$ be a log smooth morphism in $\lSpc/B$, and let $a\colon \cS\to \cX$ be its section. Assume that $a$ is a closed immersion.
Then the natural transformation $\Thom(p,a):=p_\sharp a_*$, which we call the \emph{Thom transformation}, is an equivalence of $\infty$-categories.
\end{theorem}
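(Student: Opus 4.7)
The plan is to rewrite the Thom transformation $p_\sharp a_*$ as tensoring with a single object of $\sT(\cS)$, to identify that object via the Gysin isomorphism with the Thom space of the normal bundle, and finally to show that this Thom space is $\otimes$-invertible so that the Thom transformation is an equivalence.

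First I would derive a closed-immersion projection formula $a_*(G)\otimes H\simeq a_*(G\otimes a^*H)$ from the (divided-log-space analogue of the) localization property: the natural comparison map is shown to be an equivalence by applying $a^*$ and $j^*$, where $j$ denotes the open complement of $a$, and $(a^*,j^*)$ is conservative. Setting $G=\unit$ and $H=p^*F$, and using that $p\circ a=\mathrm{id}_\cS$ forces $a^*p^*=\mathrm{id}$, this gives $a_*F\simeq a_*\unit\otimes p^*F$. Applying $p_\sharp$ together with the log smooth projection formula then yields the key identity
\[
p_\sharp a_*F\simeq (p_\sharp a_*\unit)\otimes F,
\]
so $\Thom(p,a)$ is nothing but tensoring with $p_\sharp a_*\unit$.

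Second, I would identify the tensor factor. The localization cofiber sequence $j_\sharp j^*\to \mathrm{id}\to a_*a^*$, applied to $\unit_\cX$ and then pushed forward along $p_\sharp$, identifies $p_\sharp a_*\unit$ with $M_\cS(\cX/\cX-\cS)$; by Theorem~\ref{intro.1} this is isomorphic to the Thom space $M_\cS(\Normal_\cS\cX/\Normal_\cS\cX-\cS)$ of the normal bundle of $\cS$ in $\cX$.

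Finally, I would show that the Thom space of any vector bundle on $\cS$ is $\otimes$-invertible in $\sT(\cS)$, which together with Steps 1 and 2 finishes the proof since tensoring with an invertible object is an equivalence. Dividing Nisnevich locally on $\cS$ the normal bundle trivializes, and (via the excision identification $M_\cS(\A^1_\cS/\A^1_\cS-0)\simeq M_\cS(\P^1_\cS/\P^1_\cS-0)$ combined with $\A^1$-invariance) its Thom space becomes a tensor power of $\unit(1)[2]$, which is invertible by $\P^1$-stability. Because $\sT$ is a dividing Nisnevich sheaf of presentable symmetric monoidal $\infty$-categories, local invertibility glues: the internal hom $\underline{\mathrm{Hom}}(\Thom(\Normal_\cS\cX),\unit)$ provides a global inverse whose evaluation map, being a local equivalence, is an equivalence. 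I expect this last step to be the main obstacle, as it requires carefully handling descent for vector bundles and the identification of Thom spaces in the divided-log-space setting; Steps 1 and 2 are essentially formal consequences of the axioms for $\sT$ on $\lSpc/B$ together with Theorem~\ref{intro.1}.
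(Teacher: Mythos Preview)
Your proposal is correct and essentially sound, but it is organized differently from the paper's proof. The paper does not first rewrite $\Thom(p,a)$ as tensoring with an object; instead it introduces auxiliary transformations $\Thom_d(p,a)$ and $\Thom_n(p,a)$ built from the deformation space and the normal bundle, constructs comparison maps $T_d\colon \Thom(p,a)\to \Thom_d(p,a)$ and $T_n\colon \Thom_n(p,a)\to \Thom_d(p,a)$, and shows (Proposition~\ref{thom.6}) that these are isomorphisms by reducing to the Gysin isomorphism of Theorem~\ref{pur.2}. It then proves directly (Proposition~\ref{thom.9}) that $\Thom_n(p,a)=p_{n\sharp}a_{n*}$ is an equivalence: by \cite[Proposition 9.19]{divspc} the normal bundle is Zariski-locally $h_{S\times\A^n}\to h_S$, and one reduces via conservativity and the exchange isomorphisms of Constructions~\ref{thom.8}, \ref{thom.13} to the strict case, which is Proposition~\ref{fundamental.5}. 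Your route---projection formulas to reduce to invertibility of $p_\sharp a_*\unit$, then Gysin plus a local computation of the Thom space of $\A^n$ as $\unit(n)[2n]$---is more elementary in that it avoids the six-functor input behind Proposition~\ref{fundamental.5}, replacing it by the explicit excision/$\P^1$-stability computation. What the paper's organization buys is the transformation-level machinery $T$, $T_d$, $T_n$ and the exchange squares of \S\ref{exchange}--\S\ref{composition}, which are needed later for the purity arguments of \S\ref{Sec4}; your approach would require reconstructing that compatibility separately. The point you flag as the main obstacle---local triviality of $\Normal_\cS\cX$ in the divided-log-space setting---is exactly the content of \cite[Proposition 9.19]{divspc}, and the globalization via $\underline{\Hom}$ and conservativity is the same device the paper uses when extending $\P^1$-stability to $\lSpc/B$.
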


We are mainly interested in the case when $p\colon X\to S$ is a separated log smooth morphism in $\lSch/B$ and $a\colon S\to X$ is its section.
Even for this case, we need Theorem \ref{intro.1} not just for fs log schemes but for divided log spaces since the open complement of $a$ does not exist in the category of fs log schemes if $a$ is not strict.

\subsection{Purity for a compactification of \texorpdfstring{$\A_\N^2\to \A_\N$}{AN2 AN}}

Suppose that $f\colon X\to S$ is a proper morphism in $\lSch/B$.
Consider the commutative diagram
\[
\begin{tikzcd}
X\ar[r,"a"]&
X\times_S X\ar[d,"p_1"']\ar[r,"p_2"]&
X\ar[d,"f"]
\\
&
X\ar[r,"f"]&
S,
\end{tikzcd}
\]
where $p_1$ (resp.\ $p_2$) is the first (resp.\ second) projection, and $a$ is the diagonal morphism.
We say that $f$ is \emph{$\sT$-pure} (or \emph{pure} for brevity) if the composition
\[
f_\sharp
\xrightarrow{\simeq}
f_\sharp p_{1*}a_*
\xrightarrow{Ex}
f_*p_{2\sharp}a_*
=
f_*\Thom(p_2,a)
\]
is an isomorphism, see Example \ref{exchange.4} for the notation $Ex$.
As observed in \cite[Theorem 3.4.2]{logA1}, the purity theorem of Voevodsky-R\"ondigs-Ayoub \cite[Theorem 1.7.17]{Ayo071} (see also \cite[Theorems 2.4.26, 2.4.28]{CD19}) implies the following result for strict smooth morphisms:

\begin{theorem}
Any strict separated smooth morphism in $\lSch/B$ is pure.
\end{theorem}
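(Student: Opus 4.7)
The plan is to reduce the statement to the classical purity theorem of Voevodsky-R\"ondigs-Ayoub for separated smooth morphisms of schemes, namely \cite[Theorem 1.7.17]{Ayo071}, equivalently \cite[Theorems 2.4.26, 2.4.28]{CD19}. Since $f\colon X\to S$ is strict, the diagonal $a\colon X\to X\times_S X$ is a strict closed immersion (using that $f$ is separated), and both projections $p_i\colon X\times_S X\to X$ are strict smooth. Hence the entire commutative diagram defining $\sT$-purity for $f$ involves only strict morphisms of fs log schemes.

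First I would observe that the conditions imposed on $\sT$ in the introduction, when restricted to the full subcategory of strict morphisms and pullbacks thereof, recover the axioms of a motivic $\infty$-category in the sense of \cite{CD19}: the strict case of log smooth base change is ordinary smooth base change, the strict case of the projection formula is the classical projection formula, the localization axiom is stated directly for strict closed immersions, and the $\A^1$-invariance and $\P^1$-stability axioms are unchanged. In particular, the Beck-Chevalley exchange $f_\sharp p_{1*}a_*\to f_* p_{2\sharp} a_*$ associated to the inner cartesian square is precisely the same transformation that appears in the classical purity theorem.

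Next I would invoke \cite[Theorem 1.7.17]{Ayo071}, which asserts that for any separated smooth morphism in such an axiomatic framework the composition defining purity is an isomorphism. This yields the conclusion; the same reduction was already carried out in \cite[Theorem 3.4.2]{logA1}, to which one can alternatively simply appeal.

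The main obstacle, if any, is the first step: namely verifying that $\sT$ really satisfies the axioms of the Ayoub-Cisinski-D\'eglise framework on the full subcategory of strict morphisms over schemes of $\lSch/B$. Since the axioms listed in the introduction are tailored so as to contain their classical counterparts as the strict specialization, this amounts to bookkeeping, and no genuinely log-geometric input is required once strictness is imposed throughout the relevant diagram.
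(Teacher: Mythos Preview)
Your proposal is correct and follows essentially the same approach as the paper: the paper does not give an independent proof of this theorem but simply observes (in the introduction and again in \S 2.3) that the functor $X\mapsto \sT(X\times_{\ul{S}}S)$ on $\Ft/\ul{S}$ is a motivic triangulated category in the sense of \cite[Definition 2.4.45]{CD19}, whence the classical purity theorem of Voevodsky--R\"ondigs--Ayoub applies, and refers to \cite[Theorem 3.4.2]{logA1} for the details. Your write-up spells out exactly this reduction.
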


The core part of the purity theorem is the case of the projection $S\times \P^1\to S$.
The general case is a consequence of this and the localization property.
The purity theorem for schemes is not enough to establish the six-functor formalism for fs log schemes since there are lots of non-strict log smooth morphisms of fs log schemes.
Our last main result is to show that some specific non-strict proper morphisms of fs log schemes are pure as follows.
We refer to \cite[Definition III.1.2.3]{Ogu} for the notation $\Spec(P\to A)$.

\begin{theorem}
Let $S\to B\times \A_\N$ be a morphism in $\lSch/B$, and let $W$ be the gluing of
\begin{gather*}
V_1:=\Spec(\N x \oplus \N y\to \Z[x,y]),
\\
V_2:=\Spec(\N (xy) \to \Z[xy,y^{-1}]),
\\
V_3:=\Spec(\N (xy)\to \Z[xy,x^{-1}])
\end{gather*}
along
\begin{gather*}
V_{12}:=\Spec(\N x\to \Z[x,y,y^{-1}]),
\\
V_{13}:=\Spec(\N y\to \Z[x,y,x^{-1}]),
\\
V_{23}:=\Spec(\Z[x,x^{-1},y,y^{-1}]).
\end{gather*}
Consider the proper morphism $W\to \A_\N:=\Spec(\N t\to \Z[t])$ obtained by the formula $t\mapsto xy$.
Then the projection $f\colon W\times_{\A_\N} S\to S$ is pure.
\end{theorem}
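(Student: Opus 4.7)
The plan is to exploit the explicit three-chart structure of $W$ and combine the Gysin isomorphism (Theorem~\ref{intro.1}) with the Thom equivalence (Theorem~\ref{thom.10}). First, I would observe that each of $V_2 = \Spec(\N(xy)\to\Z[xy,y^{-1}])$ and $V_3 = \Spec(\N(xy)\to\Z[xy,x^{-1}])$ is isomorphic to $\A_\N\times\A^1$ with projection onto $\A_\N$ (with $y^{-1}$ and $x^{-1}$ serving as the free $\A^1$-coordinates), so both are strict smooth over $\A_\N$. The complements $D_2 := V_2\setminus V_{12} = \{y^{-1}=0\}$ and $D_3 := V_3\setminus V_{13} = \{x^{-1}=0\}$ are the zero-sections of these trivial $\A^1$-bundles, hence they are disjoint strict closed subschemes of $W$, each isomorphic to $\A_\N$ via $f$, and they form two disjoint strict closed sections of $f$ whose joint open complement in $W$ is exactly $V_1 = \A_{\N^2}$. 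Writing $X := W\times_{\A_\N} S$, $V_i' := V_i\times_{\A_\N} S$, $D_i' := D_i\times_{\A_\N} S$, the same description holds over $S$: each $D_i'\cong S$ is a strict closed section of $f\colon X\to S$, with joint open complement $V_1'$.

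Next, I would apply Theorem~\ref{intro.1} to the strict closed immersion $D_2'\sqcup D_3'\hookrightarrow X$ of log smooth divided log spaces over $S$. Since each $D_i'$ is cut out in the strict smooth chart $V_i'$ by a single coordinate with trivial log pullback, the normal bundles $\Normal_{D_i'}X$ are trivial line bundles on $D_i'\cong S$, and their Thom objects identify with $\unit(1)[2]$ in $\sT(S)$. This yields a canonical cofiber sequence
\[
M_S(V_1')\to M_S(X)\to \unit(1)[2]\oplus \unit(1)[2]
\]
in $\sT(S)$, giving an explicit description of $f_\sharp\unit_X$. On the other side, Theorem~\ref{thom.10} applied to $p_2\colon X\times_S X\to X$ with diagonal section $a$ makes $\Thom(p_2,a)$ an autoequivalence of $\sT(X)$, and a parallel Gysin-type analysis of the sections of $p_2$ induced by $D_2',D_3'$ would then control $f_*\Thom(p_2,a)\unit_X$.

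To conclude, I would verify that the canonical purity transformation $f_\sharp \to f_*\Thom(p_2,a)$ is an equivalence by evaluating on the generators $M_X(Y)$ for $Y\in\lSm/X$; since both source and target preserve colimits (using that $f$ is proper and log smooth), this reduces to matching the two cofiber sequences on $\unit_X$ term by term. The hardest step will be precisely this final compatibility: the morphism $V_1'\to S$ is log smooth but \emph{not} proper, so standard purity cannot be applied to it in isolation, and the content of the theorem is that its non-strict contribution is exactly compensated by the two disjoint sections $D_2',D_3'$ via the Gysin formula. Executing this cleanly will require a careful diagram chase through the Gysin isomorphism, the Thom equivalence, the exchange isomorphisms $Ex$ of Example~\ref{exchange.4}, and the log smooth base change and projection formulas built into the axioms of $\sT$.
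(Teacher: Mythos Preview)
Your opening geometric observations are correct and match the paper's setup: the two sections $D_2',D_3'$ of $f$ are indeed strict closed and the Gysin/Thom machinery applies to them exactly as you say (this is the content of Lemma~\ref{N2.1}). However, the proposal has two genuine gaps.

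First, the reduction step is circular. You write that both $f_\sharp$ and $f_*\Thom(p_2,a)$ preserve colimits, so it suffices to check the purity transformation on the unit. But $f_*$ is a \emph{right} adjoint, and there is no a priori reason it preserves colimits; indeed, that would essentially follow from purity, which is what you are trying to prove. The paper avoids this entirely via Lemma~\ref{N2.16}: rather than testing $\mathfrak{p}_f$ on generators of $\sT(X)$, it shows that if the two natural transformations $\mathfrak{p}_f f^*$ and $\mathfrak{p}_f\Omega_f f^*$ (of endofunctors of $\sT(S)$) are invertible, then $\mathfrak{p}_f$ is invertible. This is a formal ``left quasi-inverse plus right quasi-inverse'' trick and needs no colimit preservation of $f_*$.

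Second, and more seriously, you identify the hard part correctly (the non-strict piece $V_1'\to S$) but offer no mechanism to handle it. The entire content of the theorem lies here, and a ``careful diagram chase'' through the general axioms will not suffice: one needs a way to trade the non-strict log smooth morphism $V_1'\to S$ for a strict one. The paper does this by introducing auxiliary log schemes $W_5=V_3\cup V_5$ with $V_5=\Spec(\N x\oplus\N(xy)\to\Z[x,xy])$ and $W_6\simeq\A_\N\times\boxx$, together with a \emph{dividing cover} $j_{56}\colon W_5\to W_6$. The point is that $X_4-\partial_S X_4\simeq X_3\simeq S\times\A^1$ via $\ver$-invariance (Lemma~\ref{N2.7}), and the dividing cover lets one transport computations from $f_5$ to $f_6$, where $X_6\simeq S\times\boxx$ is again tractable. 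This chain (Lemmas~\ref{N2.3}--\ref{N2.11}) is what produces matching cofiber sequences for $f_\sharp f^*$ and $f_*\Sigma_f f^*$ (and their $\Omega_f$-twisted versions) with $\mathfrak{p}_f$ an isomorphism on the outer terms by Lemma~\ref{N2.1}. Without the auxiliary $W_5,W_6$ and the interplay of $\ver$-invariance with dividing descent, there is no way to control the $V_1'$-contribution, and your sketch does not supply an alternative.
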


This extra abundance of purity is a building block of the motivic six-functor formalism for fs log schemes in the sequel papers \cite{logshriek},\cite{logsix}.

\subsection*{Acknowledgements} This research was conducted in the framework of the DFG-funded research training group GRK 2240: \emph{Algebro-Geometric Methods in Algebra, Arithmetic and Topology}.

\subsection*{Notation and conventions}

Our standard references for log geometry and $\infty$-categories are Ogus's book \cite{Ogu} and Lurie's books \cite{HTT} and \cite{HA}.
We employ the following notation throughout this paper:

\begin{tabular}{l|l}
$B$ & base noetherian scheme of finite dimension
\\
$\Sch$ & category of noetherian schemes of finite dimensions
\\
$\lSch$ &  category of noetherian fs log schemes of finite dimensions
\\
$\lSpc/B$ & category of divided log spaces over $B$
\\
$\lSm$ & class of log smooth morphisms in $\lSch/B$
\\
$\lSmSpc$ & class of log smooth morphisms in $\lSpc/B$
\\
$\PSh(\cC)$ & category of presheaves of sets on $\cC$
\\
$\PSh(\cC,\cV)$ & $\infty$-category of presheaves with values in an $\infty$-category $\cV$ on $\cC$
\\
$\Sh_t(\cC)$ & category of $t$-sheaves of sets on $\cC$
\\
$\Sh_t(\cC,\cV)$ & $\infty$-category of $t$-sheaves with values in an $\infty$-category $\cV$ on $\cC$
\\
$\infCat_\infty$ & $\infty$-category of $\infty$-categories
\\
$\id\xrightarrow{ad}f_*f^*$ & unit of an adjunction $(f^*,f_*)$
\\
$f^*f_*\xrightarrow{ad'}\id$ & counit of an adjunction $(f^*,f_*)$
\end{tabular}

\section{Stable homotopy categories for divided log spaces}
\label{formulation}

Recall from \cite[Definition 3.1.4]{logDM} that a cartesian square
\begin{equation}
\label{formulation.0.1}
Q
:=
\begin{tikzcd}
X'\ar[d,"f'"']\ar[r,"g'"]&
X\ar[d,"f"]
\\
S'\ar[r,"g"]&
S
\end{tikzcd}
\end{equation}
in $\lSch/B$ is a 
\begin{enumerate}
\item[(1)]
\emph{Zariski distinguished square} if $Q\simeq \ul{Q}\times_{\ul{S}}S$ and $\ul{Q}$ is a Zariski distinguished square in $\Sch/B$,
\item[(2)]
\emph{strict Nisnevich distinguished square} if $Q\simeq \ul{Q}\times_{\ul{S}}S$ and $\ul{Q}$ is a Nisnevich distinguished square in $\Sch/B$,
\item[(3)]
\emph{dividing distinguished square} if $S'=X'=\emptyset$ and $f$ is a dividing cover, i.e., a surjective proper log \'etale monomorphism.
\item[(4)]
\emph{dividing Zariski distinguished square} if $Q$ is either a Zariski or dividing distinguished square,
\item[(5)]
\emph{dividing Nisnevich distinguished square} if $Q$ is either a strict Nisnevich or dividing distinguished square.
\end{enumerate}
The \emph{dividing Zariski} (resp.\ \emph{dividing Nisnevich}) \emph{cd-structure} is the collection of dividing Nisnevich distinguished square,
and its associated topology is the \emph{dividing Zariski} (resp.\ \emph{dividing Nisnevich}) \emph{topology}.

Recall that we fix a sheaf
\[
\sT\in \Sh_{\dNis}(\lSch/B,\CAlg(\PrL))
\]
satisfying the conditions in the introduction.
The sheaf condition means that for every family $\{X_i\to X\}_{i\in I}$ in $\lSch/B$ producing a dividing covering sieve,
we have an equivalence
\begin{equation}
\label{formulation.0.2}
\sT(X)
\xrightarrow{\simeq}
\lim
\big(
\prod_{i\in I} \sT(X_i)
\;
\substack{\rightarrow\\[-1em] \rightarrow}
\;
\prod_{i,j\in I} \sT(X_i\times_X X_j)
\;
\substack{\rightarrow\\[-1em] \rightarrow \\[-1em] \rightarrow}
\;
\cdots
\big),
\end{equation}
where the limit is taken in $\CAlg(\PrL)$.
Here, $\PrL$ denotes the subcategory of $\infCat_\infty$ spanned by presentable $\infty$-categories and colimit preserving functors, and $\CAlg(\PrL)$ denotes the $\infty$-category of $\mathbb{E}_\infty$-algebras in $\PrL$.
By \cite[Corollary 3.2.2.4, Lemma 3.2.2.6]{HA}, the forgetful functor $\CAlg(\PrL)\to \PrL$ is conservative and preserves limits.
By \cite[Proposition 5.5.3.13]{HTT}, the inclusion functor $\PrL\to \infCat_\infty$ preserves limits.
Hence the sheaf condition on $\sT$ is equivalent to saying that we have the equivalence \eqref{formulation.0.2} when we forget the symmetric monoidal structure on $\sT$ and the limit is taken in $\infCat_\infty$.

The sheaf condition on $\sT$ is equivalent to saying that the square of $\infty$-categories
\begin{equation}
\sT(Q)
=
\begin{tikzcd}
\sT(S)
\ar[r,"g^*"]\ar[d,"f^*"']&
\sT(S')\ar[d,"f'^*"]
\\
\sT(X)\ar[r,"g'^*"]&
\sT(X')
\end{tikzcd}
\end{equation}
is cartesian for every strict Nisnevich distinguished square $Q$ of the form \eqref{formulation.0.1} and the morphism
\[
f^*\colon \sT(S)\to \sT(X)
\]
is an equivalence for every dividing cover $f\colon X\to S$ in $\lSch/B$.

\begin{lemma}
\label{formation.3}
Let
\[
\sA\in \PSh(\lSch/B,\CAlg(\PrL))
\]
be a presheaf satisfying the localization property and log smooth base change.
If $Q$ is a strict Nisnevich distinguished square of the form \eqref{formulation.0.1} such that the induced functor
\begin{equation}
\label{formation.3.1}
\sA(S)
\to
\lim(\sA(X)\xrightarrow{g'^*}
\sA(X')\xleftarrow{f'^*}
\sA(S')
)
\end{equation}
is fully faithful,
then this functor is an equivalence.
\end{lemma}
\begin{proof}
An object of the limit is a pair of $\cF'\in \sA(S')$ and $\cG\in \sA(X)$ equipped with an isomorphism $\cG':=f'^*\cF'\simeq g'^*\cG$.
We set
\[
\cF:=\colim(g_\sharp \cF'\leftarrow h_\sharp \cG'\to f_\sharp \cG),
\]
where $h:=gf'=fg'$.
Then we have $g^*\cF\simeq \cF'$ since we have $g^*h_\sharp \cG'\simeq g^*f_\sharp \cG$ by log smooth base change.
Hence it suffices to show $g^*\cF\simeq \cF'$ to deduce that \eqref{formation.3.1} is essentially surjective.
Let $i\colon Z\to S$ be the closed complement of $g$ with the reduced scheme structure.
Form the cartesian square
\[
\begin{tikzcd}
X\ar[d,"f"']\ar[r,"i'",leftarrow]&
Z'\ar[d,"f''"]
\\
S'\ar[r,"i",leftarrow]&
Z.
\end{tikzcd}
\]
Since $Q$ is a strict Nisnevich distinguished square,
the induced morphisim $(Z')_{red}\to Z_{red}$ is an isomorphism.
Hence $f''^*$ is an equivalence of $\infty$-categories by the localization property.

By log smooth base change,
we have
$i^*f_\sharp \simeq i'^*f_\sharp''$, $i^*g_\sharp\simeq 0$, and $i'^*g_\sharp'\simeq 0$.
Together with the fact that $f''^*$ is an equivalence,
we obtain $i'^*f^*\cF\simeq i'^*\cG$.
Since $g'^*f^*\cF\simeq g'^*\cG$,
the localization property implies $f^*\cF\simeq \cG$.
Hence \eqref{formation.3.1} is essentially surjective.
\end{proof}

\begin{example}
\label{formulation.2}
Let $\tau$ be one of the topologies: strict Nisnevich, strict \'etale, and Kummer \'etale topologies.
In this case, let $\dtau$ be the dividing Nisnevich, dividing \'etale, and dividing \'etale topologies, see \cite[\S 2.1]{logA1} for a review of these topologies.
There are two fundamental examples of stable $\infty$-categories of motives over fs log schemes in \cite[Definition 2.5.5]{logA1} formulated as
\begin{gather*}
\infSH_\tau(S)
:=
\Stab_{\G_m}((\A^1\cup \ver)^{-1}\Sh_{\dtau}(\lSm/S,\infSpt)),
\\
\infDA_\tau(S,\Lambda)
:=
\Stab_{\G_m}((\A^1\cup \ver)^{-1}\Sh_{\dtau}(\lSm/S,\rD(\Mod_\Lambda))),
\end{gather*}
where $\infSpt$ denotes the $\infty$-category of spectra, $\Lambda$ is a commutative ring, and $\rD(\Mod_\Lambda)$ denotes the $\infty$-category of chain complexes of $\Lambda$-modules.
The class $\ver_S$ consists of open immersion $U\to V$ in $\lSm/S$ such that the induced morphism $U-\partial_S U\to V-\partial_S V$ is an isomorphism, where $\partial_S U$ and $\partial_S V$ are the vertical boundaries \cite[Definition 2.3.5]{logA1}.

Let us show that $\infSH_\tau$ and $\infDA_\tau(-,\Lambda)$ satisfy the above conditions for $\sT$.
We only focus on the case of $\infSH:=\infSH_\sNis$ since the proofs are similar.
For $V\in \lSm/S$, let $\Sigma^\infty V_+$ denote the object of $\infSH(S)$ represented by $V$.

The existence of $f_\sharp$ for $f\in \lSm$, log smooth base change, and log smooth projection formula are encoded in the fact that $\infSH$ is a $\lSm$-premotivic $\infty$-category, see \cite[\S 2.5]{logA1}.
Due to \cite[Theorem 3.3.9]{logA1}, $\infSH$ satisfies the localization property.

For every strict Nisnevich distinguished square $Q$ of the form \eqref{formulation.0.1} and $V\in \lSm/S$,
the induced square $\Sigma^\infty (Q\times V)_+$ is cocartesian in $\infSH(S)$.
Use this to show that \eqref{formation.3.1} is fully faithful.
Lemma \ref{formation.3} implies that $\infSH(Q)$ is cartesian.

If $f\colon X\to S$ is a dividing cover in $\lSch/B$,
then we have $f_\sharp f^*\simeq \id$ and $f^*f_\sharp\simeq \id$ since the projection $X\times_S V\to V$ is a dividing cover for all $V\in \lSm/S$.
Hence $f^*$ is an equivalence.

As a direct consequence of the construction, $\infSH$ satisfies the remaining conditions for $\sT$ 
\end{example}

\subsection{Review of divided log spaces}
\label{div}

Let us briefly review the theory of divided log spaces developed in \cite{divspc}.

For a fan $\Sigma$, let $\ul{\T_\Sigma}$ be the toric variety associated with $\Sigma$, and let $\T_\Sigma$ be the fs log scheme whose underlying scheme is $\ul{\T_\Sigma}$ and whose log structure is the compactifying log structure associated with the open immersion from the dense torus of $\ul{\T_\Sigma}$.

For $X\in \lSch/B$,
a \emph{fan chart of $X$} is a strict morphism $X\to \T_\Sigma$, where $\Sigma$ is a fan.
Let $\lFan/B$ be the full subcategory of $\lSch/B$ consisting of fs log schemes of the form $\amalg_{i\in I} X_i$ with finite $I$ such that each $X_i$ admits a fan chart.

For all $X\in \lSch/B$,
let $h_X\in \Sh_{\dZar}(\lSch/B)$ be the sheaf represented by $X$.
As observed in \cite[\S 7]{divspc}, there is an equivalence of categories
\[
\Sh_{\dZar}(\lFan/B)
\simeq
\Sh_{\dZar}(\lSch/B).
\]

\begin{definition}
\label{div.2}
Suppose that $\cP$ is a class of morphisms in $\lFan/B$ closed under pullbacks.
A morphism of sheaves $\cF\to \cG$ in $\Sh_{\dNis}(\lFan/B)$ is a \emph{representable $\cP$-morphism} if for every morphism $h_X\to \cF$ with $X\in \lFan/B$, there exists a cartesian square
\[
\begin{tikzcd}
h_Y\ar[d]\ar[r,"h_f"]&
h_X\ar[d]
\\
\cG\ar[r]&
\cF
\end{tikzcd}
\]
such that $f$ is a $\cP$-morphism in $\lFan/B$.
\end{definition}

\begin{definition}
\label{div.3}
A \emph{divided log space over $B$} is a sheaf $\cX\in \Sh_{\dZar}(\lFan/B)$ satisfying the following conditions:
\begin{enumerate}
\item[(i)]
The diagonal morphism $\cX\to \cX\times \cX$ is a representable closed immersion.
\item[(ii)]
There exists a representable Zariski cover $h_X\to \cX$ with $X\in \lFan/B$.
\end{enumerate}
A \emph{morphism of divided log spaces over $B$} is a morphism of sheaves.
The category of divided log spaces over $B$ is denoted by $\lSpc/B$.
\end{definition}

For all $X\in \lSch/B$, \cite[Proposition 4.9]{divspc} shows $h_X\in \lSpc/B$.

By \cite[Proposition 4.6]{divspc}, fiber product exists in $\lSpc/B$, and the inclusion functor $\lSpc/B\to \Sh_{\dZar}(\lFan/B)$ preserves fiber products.
It follows that the functor $h\colon \lSch/B\to \lSpc/B$ preserves fiber products.

\begin{proposition}
\label{div.1}
Let $h_U,h_V\to \cX$ be morphisms in $\lSpc/B$ with $U,V\in \lFan/B$.
Then $h_U\times_{\cX}h_V\simeq h_W$ for some $W\in \lFan/B$.
\end{proposition}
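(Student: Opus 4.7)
My plan is to rewrite $h_U\times_\cX h_V$ as the pullback of the diagonal of $\cX$ along a map from a representable object and then invoke the representability axiom (i) of Definition \ref{div.3}. The two technical ingredients I will need are that $\lFan/B$ is closed under finite products and under strict closed sub-fs-log-schemes.

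First, I would check that the product $U\times V$ taken in $\lSch/B$ already lies in $\lFan/B$. Writing $U=\coprod_a U_a$ and $V=\coprod_b V_b$ with fan charts $U_a\to \T_{\Sigma_a}$ and $V_b\to \T_{\Sigma'_b}$, the product of fans $\Sigma_a\times\Sigma'_b$ is again a fan with $\T_{\Sigma_a}\times\T_{\Sigma'_b}\simeq \T_{\Sigma_a\times\Sigma'_b}$, and the product morphism $U_a\times V_b\to \T_{\Sigma_a\times\Sigma'_b}$ is strict. So each component of $U\times V$ admits a fan chart, hence $U\times V\in\lFan/B$. Since $h\colon \lSch/B\to \lSpc/B$ preserves fiber products, as recorded before the proposition, we get $h_U\times h_V\simeq h_{U\times V}$.

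Next I form the pullback identity
\[
h_U\times_\cX h_V \;\simeq\; h_{U\times V}\times_{\cX\times\cX}\cX,
\]
where the map $h_{U\times V}\to \cX\times\cX$ is assembled from the two given structure maps and $\cX\to\cX\times\cX$ is the diagonal. By condition (i) of Definition \ref{div.3} this diagonal is a representable closed immersion, so Definition \ref{div.2}, applied to the pullback-stable class $\cP$ of closed immersions, produces a strict closed immersion $j\colon W\to U\times V$ in $\lFan/B$ realising the above pullback as $h_W$.

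It then remains to confirm $W\in\lFan/B$: composing $j$ with a fan chart $U\times V\to \T_\Sigma$ from the previous step gives a strict morphism $W\to \T_\Sigma$, i.e.\ a fan chart on each component, and $W$ is a finite disjoint union of such components because $U\times V$ is. I do not foresee a substantial obstacle, as the argument is essentially a packaging of the closure properties of $\lFan/B$ together with the diagonal axiom of a divided log space.
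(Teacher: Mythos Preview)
Your argument is correct in its essentials; it is the standard way to extract representability of fiber products from the diagonal axiom, and is presumably what \cite[Proposition~4.3]{divspc} does—the paper itself gives no proof beyond that citation, so there is nothing to compare against.

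One correction: Definition~\ref{div.2} applied to the class of closed immersions produces a closed immersion $j\colon W\to U\times_B V$, not a \emph{strict} closed immersion as you write. Pullbacks of the diagonal are typically non-strict: already for an fs log scheme $X$ with nontrivial log structure, the diagonal $X\to X\times_B X$ is a non-strict closed immersion, and the $j$ you obtain is exactly of this type when $\cX=h_X$ and $U=V=X$. Consequently your final paragraph does not work as written, since composing a non-strict $j$ with a fan chart of $U\times_B V$ will not yield a strict morphism $W\to\T_\Sigma$. This is harmless, however, because the membership $W\in\lFan/B$ is already built into the output of Definition~\ref{div.2}: the phrase ``$f$ is a $\cP$-morphism in $\lFan/B$'' forces both source and target to lie in $\lFan/B$. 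Your last paragraph can therefore simply be deleted.
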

\begin{proof}
We refer to \cite[Proposition 4.3]{divspc}.
\end{proof}

\begin{definition}
\label{div.4}
Suppose that $\cP$ is one of the following classes of morphisms in $\lFan/B$: log smooth morphisms, log \'etale morphisms, open immersions, closed immersions, Zariski covers, strict Nisnevich covers, strict \'etale covers, and Kummer \'etale covers.
A morphism $\cY\to \cX$ in $\lSpc/B$ is a \emph{$\cP$-morphism} if there exists a representable Zariski cover $\cU \to \cY$ such that the composition $\cU\to \cX$ is a representable $\cP$-morphism.
We note that the class of $\cP$-morphisms (resp.\ representable $\cP$-morphisms) are closed under pullbacks and compositions by \cite[Proposition 6.5]{divspc} (resp.\ \cite[Example 3.9, Proposition 3.13]{divspc}).

Let $\lSmSpc$ denote the class of log smooth morphisms in $\lSpc/B$.
\end{definition}

\begin{proposition}
\label{div.5}
Let $\cP$ be one of the above classes of morphisms in $\lFan/B$.
Then a morphism $f\colon h_Y\to h_X$ in $\lSpc/B$ with $X,Y\in \lFan/B$ is a $\cP$-morphism if and only if $f$ is a representable $\cP$-morphism.
\end{proposition}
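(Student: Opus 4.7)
The plan is to unpack Definitions \ref{div.2} and \ref{div.4} and reduce the claim to a Zariski-local-on-source question in $\lFan/B$. The reverse implication is immediate: take the identity $h_Y\to h_Y$ as the representable Zariski cover, so that the composition $h_Y\to h_X$ coincides with $f$, which is representable $\cP$ by hypothesis.

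For the forward implication, assume $f$ is a $\cP$-morphism and choose a representable Zariski cover $\cU\to h_Y$ such that $\cU\to h_X$ is a representable $\cP$-morphism. Applying Definition \ref{div.2} with test morphism $\mathrm{id}_{h_Y}$ shows $\cU\simeq h_U$ for some $U\in \lFan/B$ and that the induced $U\to Y$ is a Zariski cover in $\lFan/B$; similarly $U\to X$ is a $\cP$-morphism in $\lFan/B$. To verify that $f$ is itself representable $\cP$, pick any $h_Z\to h_X$ with $Z\in \lFan/B$. By Proposition \ref{div.1} the fiber product $h_Y\times_{h_X} h_Z$ is representable by $Y\times_X Z\in \lFan/B$, and base change along $Z\to X$ produces a Zariski cover $U\times_X Z\to Y\times_X Z$ together with a $\cP$-morphism $U\times_X Z\to Z$, both in $\lFan/B$. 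The proposition therefore reduces to the following statement in $\lFan/B$: if $g\colon V\to Z$ admits a Zariski cover $h\colon V'\to V$ with $g\circ h\in \cP$, then $g\in \cP$.

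This final statement is a routine case-by-case check. When $\cP$ is log smooth or log \'etale it holds because these properties are stalk-local on the source. When $\cP$ is the class of open or closed immersions, $g\circ h$ is a monomorphism, which forces $h$ to be a monomorphism; a Zariski cover that is a monomorphism is easily seen to be an isomorphism, so $g$ inherits its property from $g\circ h$. When $\cP$ is one of the cover classes (Zariski, strict Nisnevich, strict \'etale, Kummer \'etale), $g\circ h$ is surjective and locally on the source of the required type, and since $h$ is itself a Zariski cover both of these propagate to $g$. The main subtlety lies in the closed immersion case, which is not a priori Zariski-local on the source but is rescued precisely by the monomorphism observation above.
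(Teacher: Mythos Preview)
Your overall strategy is reasonable, but there is a genuine gap in the reduction step. You write ``$Y\times_X Z$'' and ``$U\times_X Z$'' and speak of ``base change along $Z\to X$'' as if the sheaf morphisms $h_Y\to h_X$, $h_U\to h_X$, and $h_Z\to h_X$ were induced by morphisms $Y\to X$, $U\to X$, $Z\to X$ in $\lFan/B$. They need not be: $h$ is the Yoneda functor into \emph{dividing Zariski} sheaves, so it inverts dividing covers and is not fully faithful. Proposition~\ref{div.1} only tells you $h_Y\times_{h_X} h_Z\simeq h_W$ for \emph{some} $W\in\lFan/B$; it does not identify $W$ with a fiber product in $\lFan/B$, nor does it say the projection $h_W\to h_Z$ is of the form $h_g$ for any $g\colon W\to Z$. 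The same issue bites earlier: testing the representable Zariski cover against $\id_{h_Y}$ gives a Zariski cover $u\colon U\to Y$, and testing the representable $\cP$-morphism against $\id_{h_X}$ gives a $\cP$-morphism $v\colon U'\to X$ with $h_{U'}\simeq h_U$, but $U$ and $U'$ may differ by a dividing cover, so you do not get a single object with both a Zariski cover to $Y$ and a $\cP$-morphism to $X$ in $\lFan/B$.

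This is exactly what the results cited from \cite{divspc} handle: \cite[Lemma~3.4]{divspc} lets one rectify sheaf morphisms between representables into honest morphisms in $\lFan/B$ after a dividing replacement, and \cite[Proposition~6.3]{divspc} supplies the Zariski-local characterization you are aiming for. Your case-by-case verification at the end (including the monomorphism trick for closed immersions) is fine once the reduction is in place, but as written the reduction does not go through without invoking those lemmas or reproving their content.
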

\begin{proof}
This is a consequence of \cite[Lemma 3.4, Proposition 6.3]{divspc}.
\end{proof}

\begin{proposition}
\label{div.6}
A morphism in $\lSpc/B$ is an open immersion (resp.\ closed immersion) if and only if it is a representable open immersion (resp.\ representable closed immersion).
\end{proposition}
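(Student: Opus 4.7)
The ($\Leftarrow$) direction is immediate: a representable open (resp.\ closed) immersion $f\colon \cY\to \cX$ satisfies Definition \ref{div.4} by taking $\cU:=\cY$ with the identity morphism, which is tautologically a representable Zariski cover. For the ($\Rightarrow$) direction, assume $f$ is an open (resp.\ closed) immersion in $\lSpc/B$ and fix a representable Zariski cover $p\colon \cU\to\cY$ such that $q:=f\circ p\colon \cU\to\cX$ is a representable open (resp.\ closed) immersion. To show $f$ is itself a representable open (resp.\ closed) immersion, fix any morphism $h_X\to\cX$ with $X\in\lFan/B$ and set $\cW:=h_X\times_\cX\cY$. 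Representability of $q$ yields $h_X\times_\cX\cU\simeq h_V$ for some $V\in\lFan/B$ with $V\to X$ an open (resp.\ closed) immersion, and associativity of fiber products exhibits $h_V\to\cW$ as a representable Zariski cover whose composition with $\cW\to h_X$ is identified with $V\to X$.

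The technical heart is to verify that $h_V\to\cW$ is an isomorphism, which I plan to do by showing it is simultaneously a monomorphism and an epimorphism of sheaves on $\lFan/B$. For the monomorphism: given $\alpha,\beta\colon \cZ\to h_V$ with common image in $\cW$, the composites $\cZ\to h_V\to h_X$ agree by construction, and since $V\to X$ is a monomorphism (open and closed immersions of fs log schemes are monomorphisms), $\alpha=\beta$. For the epimorphism: a representable Zariski cover is, by base change along any $h_X\to\cW$, a genuine Zariski cover in $\lFan/B$; hence it is locally surjective in the dividing Zariski topology and is an effective epimorphism in the ambient sheaf topos. A sheaf morphism that is both a monomorphism and an epimorphism in a topos is an isomorphism, so $\cW\simeq h_V$.

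It then follows that the pullback $\cW\to h_X$ of $f$ is identified with the open (resp.\ closed) immersion $V\to X$ in $\lFan/B$, completing the proof that $f$ is a representable open (resp.\ closed) immersion. The only external input I rely on is the standard fact that open and closed immersions in $\lFan/B$ are monomorphisms, which follows from their counterparts in $\lSch/B$; everything else is a formal consequence of the properties of representable morphisms and Zariski covers recalled in \S\ref{div}. I do not anticipate any further obstacle.
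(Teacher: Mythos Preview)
Your argument is correct. The paper itself gives no proof here, deferring entirely to \cite[Proposition 6.7]{divspc}, so your self-contained argument is a genuine contribution rather than a reproduction. The key steps---that a representable Zariski cover is an effective epimorphism in the sheaf topos, that open and closed immersions in $\lFan/B$ are monomorphisms, that $h$ preserves this (via preservation of fiber products, so $h_V\to h_V\times_{h_X}h_V\simeq h_{V\times_X V}$ is an isomorphism), and that mono $+$ epi $=$ iso in a Grothendieck topos---are all sound. One small remark: when you invoke ``$V\to X$ is a monomorphism implies $h_V\to h_X$ is a monomorphism of sheaves,'' it is worth making explicit that this uses the fiber-product preservation of $h$ recorded just after Proposition~\ref{div.1}, since $h$ is not fully faithful in general.
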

\begin{proof}
We refer to \cite[Proposition 6.7]{divspc}.
\end{proof}

\begin{definition}
The \emph{dividing Zariski} (resp.\ \emph{dividing Nisnevich}) \emph{topology on $\lSpc/B$} is the topology generated by the Zariski (resp.\ strict Nisnevich) covers in $\lSpc/B$.
By \cite[Proposition 7.2]{divspc}, it is the topology generated by the representable Zariski (resp.\ representable strict Nisnevich) covers.
\end{definition}

\begin{definition}
Let $\cZ\to \cX$ be a closed immersion in $\lSpc/B$.
The \emph{open complement $\cX-\cZ$ of $\cZ$ in $\cX$} is defined to be a final object of the full subcategory of $\lSpc/\cX$ consisting of $\cY$ such that $\cY\times_{\cX} \cZ=\emptyset$.
By \cite[Theorem 8.6]{divspc}, $\cX-\cZ$ exists, and the canonical morphism $\cX-\cZ\to \cX$ is an open immersion.
\end{definition}

\begin{definition}
Let $\cZ\to \cX$ be a closed immersion in $\lSmSpc/\cS$, where $\cS\in \lSpc/B$.
The \emph{blow-up $\Blow_{\cZ}\cX$ of $\cX$ along $\cZ$} is defined to be a final object of the full subcategory of $\lSpc/\cX$ consisting of $\cY$ such that $\cZ\times_{\cX} \cY$ is an effective log Cartier divisor on $\cY$ in the sense of \cite[Definition 9.1]{divspc}.
By \cite[Theorem 9.12]{divspc}, $\Blow_{\cZ}\cX$ exists, and the canonical morphism $\Blow_{\cZ}\cX\to \cS$ is log smooth.
We set
\[
\Deform_{\cZ} \cX
:=
\Blow_{\cZ\times \{0\}}(\cX\times \boxx)
-
\Blow_{\cZ\times \{0\}}(\cX\times \{0\})
\text{ and }
\Normal_{\cZ} \cX
:=
\Deform_{\cZ}\cX \times_{\boxx}\{0\}.
\]
Here, $\boxx$ denotes the fs log scheme whose underlying scheme is $\P^1$ and whose log structure is the compactifying log structure associated with the open immersion $\A^1\to \P^1$.
\end{definition}

\subsection{Extension of motives to divided log spaces}
\label{extension}

Liu-Zheng \cite{LZ} extended the six-functor formalism from schemes to algebraic spaces (and more) using $\infty$-categories.
Likewise, we explain how to extend $\sT$ from $\lSch/B$ to $\lSpc/B$ in this subsection.
We also explore several fundamental properties of the extension of $\sT$.

\begin{proposition}
\label{extension.7}
For every presentable $\infty$-category $\cV$,
there is an equivalence of $\infty$-categories
\[
\Sh_{\dNis}(\lSch/B,\cV)
\simeq
\Sh_{\dNis}(\lSpc/B,\cV).
\]
\end{proposition}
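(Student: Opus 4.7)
I would factor the asserted equivalence through the intermediate site $\lFan/B$, establishing the two equivalences
\[
\infShv_{dNis}(\lSch/B,\cV) \simeq \infShv_{dNis}(\lFan/B,\cV) \simeq \infShv_{dNis}(\lSpc/B,\cV),
\]
each induced by restriction along the fully faithful inclusion $\lFan/B \hookrightarrow \lSch/B$ and along $h\colon \lFan/B \hookrightarrow \lSpc/B$, and then compose them.

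For the first equivalence, I appeal to the $\infty$-categorical comparison lemma for sites: if $\iota\colon \cC_0 \hookrightarrow \cC$ is a fully faithful, continuous, and cocontinuous inclusion of Grothendieck sites such that every object of $\cC$ is covered by objects of $\iota(\cC_0)$, then restriction along $\iota$ induces an equivalence on $\infty$-categories of sheaves valued in any presentable $\infty$-category. Density of $\lFan/B \hookrightarrow \lSch/B$ for the dividing Nisnevich topology rests on \cite[Theorem III.2.6.7]{Ogu}: every object of $\lSch/B$ admits, Zariski-locally and after a dividing cover, a chart from a fan. This is precisely the input used to obtain the $1$-categorical equivalence $\Shv_{dZar}(\lFan/B) \simeq \Shv_{dZar}(\lSch/B)$ cited from \cite[\S 7]{divspc}, and the topologies are manifestly compatible because every strict Nisnevich or dividing cover of an object of $\lFan/B$ already lies in $\lFan/B$.

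For the second equivalence, the decisive input is Definition \ref{div.3}: every $\cX \in \lSpc/B$ admits a representable Zariski cover $h_X \to \cX$ with $X \in \lFan/B$, and Proposition \ref{div.1} ensures that the \v{C}ech nerve of such a cover is entirely representable by objects of $\lFan/B$. This exhibits each divided log space as a canonical colimit of representables in $\lFan/B$ with respect to the dividing Zariski (hence dividing Nisnevich) topology, so the same comparison mechanism applies. Explicitly, a sheaf $\cF$ on $\lFan/B$ extends to $\lSpc/B$ by the formula $\cF(\cX) := \lim \cF(X_\bullet)$, where $X_\bullet$ is the \v{C}ech nerve of a representable Zariski cover of $\cX$; Proposition \ref{div.1} keeps this limit inside $\lFan/B$, and the sheaf condition makes it independent of the chosen cover.

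The principal obstacle is invoking the comparison lemma at the correct level of generality — for sheaves valued in an arbitrary presentable $\cV$ rather than in $\infty$-groupoids. I would first establish the equivalence for $\infty$-groupoid valued sheaves via a standard site-comparison result such as \cite[Proposition 6.2.3.20]{HTT}, and then upgrade to general $\cV$ by the tensor-product description $\infShv_{dNis}(\cC,\cV) \simeq \infShv_{dNis}(\cC) \otimes \cV$ in $\PrL$, with $\infShv_{dNis}(\cC)$ the underlying $\infty$-topos of $\infty$-groupoid valued sheaves: tensoring with $\cV$ in $\PrL$ sends equivalences to equivalences, so the equivalence of $\infty$-topoi upgrades to one of $\cV$-valued sheaves automatically. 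A secondary check is that the dividing Nisnevich distinguished squares in $\lSpc/B$ from Definition \ref{div.4} pull back along representable Zariski covers to dividing Nisnevich covers in $\lFan/B$; this is immediate from stability of (representable) $\cP$-morphisms under pullback noted in Definition \ref{div.4}.
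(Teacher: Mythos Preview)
Your strategy is essentially the paper's: verify a site-comparison lemma and reduce to space-valued sheaves. The paper packages both steps into its appendix Theorem \ref{equivalence.8} and applies it in one shot to $h\colon \lSch/B \to \lSpc/B$, feeding in the $1$-categorical equivalence $\Shv_{dNis}(\lSch/B)\simeq \Shv_{dNis}(\lSpc/B)$ from \cite[Proposition 7.3]{divspc} together with the representability of \v{C}ech fiber products from Proposition \ref{div.1}; the upgrade to general $\cV$ is done inside the proof of Theorem \ref{equivalence.8} via \cite[Proposition 1.3.1.7]{SAG} rather than your tensor-product formula, but these are equivalent descriptions. Your detour through $\lFan/B$ is harmless but unnecessary.

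One inaccuracy to fix: you write $h\colon \lFan/B \hookrightarrow \lSpc/B$ as a fully faithful inclusion, but the dividing Zariski topology is not subcanonical (dividing covers become isomorphisms after sheafification), so $h$ is not fully faithful. This does not damage your argument, since the comparison lemma you invoke only needs $h$ to be continuous and cocontinuous with the covering and fiber-product properties you state; just drop the $\hookrightarrow$ and the word ``fully faithful''.
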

\begin{proof}
By \cite[Proposition 7.3]{divspc},
there is an equivalence of categories.
\[
\Sh_{\dNis}(\lSch/B)
\simeq
\Sh_{\dNis}(\lSpc/B).
\]
For every $\cX\in \lSpc/B$,
there exists a Zariski cover $h_X\to \cX$ with $X\in \lFan/B$.
By Proposition \ref{div.1},
the fiber product of finite copies of $h_X$ over $\cX$ is isomorphic to $h_{V}$ for some $V\in \lFan/B$.
Since the functor $\lSch/B\to \lSpc/B$ is continuous and cocontinuous,
Theorem \ref{equivalence.8} finishes the proof.
\end{proof}

In particular,
we obtain
\[
\sT\in \Sh_{\dNis}(\lSch/B,\CAlg(\PrL))
\simeq
\Sh_{\dNis}(\lSpc/B,\CAlg(\PrL)).
\]
If $f$ is a morphism in $\lSpc/B$, then we set $f^*:=\sT(f)$.
The sheaf condition means that for every family $\{\cX_i\to \cX\}_{i\in I}$ producing a dividing Nisnevich covering sieve in $\lSpc/B$,
we have an equivalence
\begin{equation}
\label{extension.0.1}
\sT(\cX)
\xrightarrow{\simeq}
\lim
\big(
\prod_{i\in I} \sT(\cX_i)
\;
\substack{\rightarrow\\[-1em] \rightarrow}
\;
\prod_{i,j\in I} \sT(\cX_i\times_\cX \cX_j)
\;
\substack{\rightarrow\\[-1em] \rightarrow \\[-1em] \rightarrow}
\;
\cdots
\big),
\end{equation}
where the limit is taken in $\CAlg(\PrL)$.
If we forget the symmetric monoidal structures,
then the limit can be taken in either $\PrL$ or $\infCat_\infty$ since the forgetful functor $\CAlg(\PrL)\to \PrL$ and the inclusion functor $\PrL\to \infCat_\infty$ preserve limits by \cite[Corollary 3.2.2.4]{HA} and \cite[Proposition 5.5.3.13]{HTT}.

Suppose that $f\colon \cY\to \cX$ is a morphism in $\lSpc/B$ and $\cX_0\to \cX$ and $\cY_0\to \cY\times_{\cX}\cX_0$ are Zariski covers.
Let $\cX_\bullet\to \cX$ and $\cY_\bullet\to \cY$ be \v{C}ech nerves of $\cX_0\to \cX$ and $\cY_0\to \cY$,
Then take limits along the rows in the diagram
\[
\begin{tikzcd}[column sep=small]
\sT(\cX_0)
\ar[r,shift left=0.5ex]
\ar[r,shift right=0.5ex]
\ar[d,"f_0^*"']&
\sT(\cX_1)
\ar[r,shift left=1ex]
\ar[r]
\ar[r,shift right=1ex]
\ar[d,"f_1^*"]&
\cdots
\\
\sT(\cY_0)
\ar[r,shift left=0.5ex]
\ar[r,shift right=0.5ex]&
\sT(\cY_1)
\ar[r,shift left=1ex]
\ar[r]
\ar[r,shift right=1ex]&
\cdots.
\end{tikzcd}
\]
to obtain a functor
\[
\lim_{i\in \Delta} f_i^*
\colon
\lim_{i\in \Delta} \sT(\cX_i)
\to
\lim_{i\in \Delta} \sT(\cY_i).
\]
Observe that there is a commutative square with horizontal equivalences
\begin{equation}
\label{extension.0.2}
\begin{tikzcd}
\sT(\cX)\ar[r,"\simeq"]\ar[d,"f^*"']&
\lim_{i\in \Delta} \sT(\cX_i)\ar[d,"\lim_{i\in \Delta} f_i^*"]
\\
\sT(\cY)\ar[r,"\simeq"]&
\lim_{i\in \Delta} \sT(\cY_i).
\end{tikzcd}
\end{equation}

\begin{proposition}
\label{extension.6}
Let $\cX_0\to \cX$ be a Zariski cover in $\lSpc/B$,
and let $\cX_\bullet\to \cX$ be its \v{C}ech nerve.
Then there is a natural isomorphism
\[
\id
\simeq
\colim_{i\in \Delta^{op}}
f_{i_\sharp}f_i^*,
\]
where $f_i\colon \cX_i\to \cX$ is the induced morphism.
\end{proposition}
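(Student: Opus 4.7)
The plan is to verify that the natural transformation $\theta\colon \colim_{i\in\Delta^{op}} f_{i\sharp}f_i^* \to \id$ assembled from the counits is an equivalence by reducing Zariski-locally to a split augmented simplicial object. First I would observe that since $\cX_0\to\cX$ is a Zariski cover (hence log smooth), every face in the \v{C}ech nerve $\cX_i = \cX_0^{\times_\cX(i+1)}$ is itself a Zariski cover of $\cX$, so each $f_i$ is log smooth and the left adjoint $f_{i\sharp}$ exists from the extension of $\sT$ established earlier in $\S\ref{extension}$. This makes $\theta$ well-defined.

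The first key step is a conservativity reduction. The sheaf condition \eqref{extension.0.1} exhibits $f_0^*\colon\sT(\cX)\to\sT(\cX_0)$ as the leading projection of the limit presentation of $\sT(\cX)$, and because the \v{C}ech nerve in higher degrees is built by iterated pullback along $f_0$, the functor $f_0^*$ is conservative. Therefore it suffices to prove $f_0^*\theta$ is an equivalence. Since $f_0^*$ is a left adjoint (to $f_{0*}$), it preserves the colimit, and thus $f_0^*\theta$ is the natural map
\[
\colim_{i\in\Delta^{op}} f_0^* f_{i\sharp} f_i^* \longrightarrow f_0^*.
\]

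The second step is to rewrite the diagram via log smooth base change. Let $\cX_i':=\cX_0\times_\cX \cX_i$, with projections $f_i'\colon \cX_i'\to\cX_0$ and $g_i'\colon \cX_i'\to\cX_i$. Applying the Beck-Chevalley equivalence $f_0^* f_{i\sharp}\xrightarrow{\simeq} f_{i\sharp}' g_i'^*$ term-by-term and using $g_i'^* f_i^* = (f_i g_i')^* = (f_0 f_i')^* = f_i'^* f_0^*$, the above colimit becomes $\bigl(\colim_{\Delta^{op}} f_{i\sharp}' f_i'^*\bigr)\circ f_0^*$. So it suffices to show $\colim_{\Delta^{op}} f_{i\sharp}' f_i'^* \simeq \id_{\sT(\cX_0)}$.

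The third and final step exploits the splitting provided by the diagonal. The augmented simplicial object $\cX_\bullet'\to \cX_0$ is the base change of $\cX_\bullet\to\cX$ along $f_0$; since $f_0$ already factors through itself, the diagonal $\cX_0\to\cX_0\times_\cX \cX_0=\cX_0'$ furnishes an extra degeneracy, making $\cX_\bullet'\to\cX_0$ into a split augmented simplicial object in $\lSpc/B$. Applying the (contravariant, then covariant) functors $f_{i\sharp}' f_i'^*$ preserves this simplicial splitting in $\Fun(\sT(\cX_0),\sT(\cX_0))$, so the geometric realization of $f_{\bullet\sharp}' f_\bullet'^*$ is canonically equivalent to its augmentation, namely $\id_{\sT(\cX_0)}$. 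This yields that $f_0^*\theta$ is an equivalence, and hence $\theta$ is an equivalence.

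The delicate point I expect to be the main obstacle is the third step: verifying that the extra degeneracy lifts coherently through the mates, i.e.\ that the splitting at the level of divided log spaces induces a genuine split augmentation of the cosimplicial (functor-valued) diagram $f_{i\sharp}' f_i'^*$. This requires that the Beck-Chevalley equivalences used in step two are natural with respect to all simplicial structure maps, including the extra degeneracy — a coherence fact that becomes routine in the $\infty$-categorical framework once Liu-Zheng's partial-adjoint machinery from $\S\ref{extension}$ is in place, but is the only nontrivial bookkeeping in the argument.
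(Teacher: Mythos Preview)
Your argument is correct but takes a substantially longer route than the paper. The paper's proof is a three-line Yoneda computation: the sheaf condition $\sT(\cX)\simeq\lim_{i\in\Delta}\sT(\cX_i)$ gives directly
\[
\Hom_{\sT(\cX)}(\cF,\cG)\;\simeq\;\lim_{i}\Hom_{\sT(\cX_i)}(f_i^*\cF,f_i^*\cG)\;\simeq\;\lim_{i}\Hom_{\sT(\cX)}(f_{i\sharp}f_i^*\cF,\cG)\;\simeq\;\Hom_{\sT(\cX)}(\colim_{i} f_{i\sharp}f_i^*\cF,\cG),
\]
and one concludes by Yoneda. Your approach---reduce by conservativity of $f_0^*$, apply Beck--Chevalley to pass to the base-changed \v{C}ech nerve, and then invoke the extra degeneracy to split the simplicial object---is a standard and robust technique, but here it is overkill: it requires log smooth base change for divided log spaces (Proposition~\ref{extension.1}), which in the paper is proved \emph{after} Proposition~\ref{extension.6}. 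There is no logical circularity, since Proposition~\ref{extension.1} does not invoke Proposition~\ref{extension.6}, but your proof would force a reordering of the section. The coherence issue you flag in step three (lifting the splitting through the mates) is genuine and, as you note, resolvable via the partial-adjoint machinery; the paper's argument sidesteps it entirely by never leaving the level of mapping spaces, which is what the limit presentation of $\sT(\cX)$ computes on the nose.
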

\begin{proof}
Since $\sT(\cX)\simeq \lim_{i\in \Delta}\sT(\cX_i)$,
we have natural isomorphisms
\begin{align*}
\Hom_{\sT(\cX)}(\cF,\cG)
\simeq &
\lim_{i\in \Delta} \Hom_{\sT(\cX_i)}(f_i^*\cF,f_i^*\cG)
\\
\simeq &
\lim_{i\in \Delta} \Hom_{\sT(\cX)}(f_{i_\sharp}f_i^*\cF,\cG)
\\
\simeq &
\Hom_{\sT(\cX)}(\colim_{i\in \Delta^{op}} f_{i_\sharp}f_i^*\cF,\cG)
\end{align*}
for all $\cF,\cG\in \sT(X)$.
Hence we obtain the desired natural isomorphism.
\end{proof}

\begin{proposition}
\label{extension.3}
Let $\{f_i\colon \cX_i\to \cX\}_{i\in I}$ be a family of morphisms in $\lSpc/B$ producing a dividing Nisnevich covering sieve.
Then the family of functors $\{f_i^*\}_{i\in I}$ is conservative.
\end{proposition}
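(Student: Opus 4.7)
The plan is to deduce the conservativity directly from the sheaf equivalence \eqref{extension.0.1}. Let $\phi\colon \cF\to \cG$ be a morphism in $\sT(\cX)$ such that $f_i^*\phi$ is an equivalence for every $i\in I$; I want to conclude that $\phi$ is itself an equivalence.

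First I would apply \eqref{extension.0.1} to the family $\{f_i\colon \cX_i\to \cX\}_{i\in I}$ itself, identifying $\sT(\cX)$ with
\[
\lim_{n\in \Delta}\prod_{(i_0,\ldots,i_n)\in I^{n+1}}\sT(\cX_{i_0}\times_{\cX}\cdots\times_{\cX}\cX_{i_n}).
\]
As recalled immediately after \eqref{extension.0.1}, both $\CAlg(\PrL)\to \PrL$ and $\PrL\to \infCat_\infty$ preserve limits, so this limit may be computed in $\infCat_\infty$. I would then invoke the standard fact that, for a diagram of $\infty$-categories, a morphism in the limit is an equivalence if and only if its image at every vertex of the diagram is an equivalence. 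So it suffices to verify, for each tuple $(i_0,\ldots,i_n)$, that $f_{i_0,\ldots,i_n}^*\phi$ is an equivalence in $\sT(\cX_{i_0}\times_{\cX}\cdots\times_{\cX}\cX_{i_n})$, where $f_{i_0,\ldots,i_n}$ denotes the canonical morphism to $\cX$. Writing $f_{i_0,\ldots,i_n}=f_{i_0}\circ p$ for the projection $p$ onto $\cX_{i_0}$, we get $f_{i_0,\ldots,i_n}^*\phi\simeq p^*f_{i_0}^*\phi$, which is an equivalence because $p^*$ preserves equivalences and $f_{i_0}^*\phi$ is one by hypothesis.

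I do not foresee a serious obstacle: the proposition is essentially a formal consequence of descent together with the functoriality of $\sT$. The only point meriting explicit mention is the interchange of limits between $\CAlg(\PrL)$ and $\infCat_\infty$, which the excerpt has already arranged for us.
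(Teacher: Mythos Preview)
Your argument is correct and is essentially the paper's own proof: both apply the descent equivalence \eqref{extension.0.1}, observe that the image of the morphism in each $\sT(\cX_{i_0}\times_{\cX}\cdots\times_{\cX}\cX_{i_n})$ factors through some $f_{i_0}^*$, and conclude via the fact that equivalences in a limit of $\infty$-categories are detected vertexwise. You have simply spelled out the last two steps in slightly more detail than the paper does.
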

\begin{proof}
Assume that $\alpha$ is a morphism in $\sT(\cX)$ such that $f_i^*\alpha$ is an isomorphism for all $i\in I$.
This implies that the image of $\alpha$ in $\sT(\cX_{i_1}\times_\cX \cdots \times_\cX \cX_{i_n})$ is an isomorphism for all $i_1,\ldots,i_n\in I$.
Together with the equivalence \eqref{extension.0.1},
we deduce that $\alpha$ is an isomorphism.
\end{proof}

\begin{proposition}
\label{extension.2}
Let $f\colon \cX\to \cS$ be a log smooth morphism in $\lSpc/B$.
Then $f^*$ admits a left adjoint $f_\sharp$.
\end{proposition}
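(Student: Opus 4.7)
The plan is to construct $f_\sharp$ by Zariski-local reduction to the case of log smooth morphisms in $\lFan/B$, where the hypotheses on $\sT$ already supply the left adjoint, and then to assemble the local pieces using the partial adjoint formalism of Liu--Zheng \cite{1211.5948}.

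I first unpack Definition \ref{div.4}: there exists a representable Zariski cover $\cU\to \cX$ whose composite $\cU\to \cS$ is a representable log smooth morphism. Refining $\cU$ by one of its own representable Zariski covers as in Definition \ref{div.3}(ii), I may assume $\cU=h_U$ with $U\in \lFan/B$. Let $h_{U_\bullet}\to \cX$ be the \v{C}ech nerve of this cover; Proposition \ref{div.1} guarantees that each term is representable. Choose also a representable Zariski cover $h_V\to \cS$ with \v{C}ech nerve $h_{V_\bullet}\to \cS$. For each pair $(n,m)$, Proposition \ref{div.1} gives $h_{U_n}\times_\cS h_{V_m}\simeq h_{W_{n,m}}$ for some $W_{n,m}\in \lFan/B$, and the projection to $h_{V_m}$, being a pullback of the representable log smooth composite $h_{U_n}\to \cS$, is representable log smooth. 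By Proposition \ref{div.5}, it corresponds to a bona fide log smooth morphism $f_{n,m}\colon W_{n,m}\to V_m$ in $\lFan/B$. The hypotheses on $\sT$ then yield left adjoints $f_{n,m,\sharp}\dashv f_{n,m}^*$ together with log smooth base change along every transition map in the bisimplicial diagram, those transition maps being themselves representable log smooth (in fact open immersions).

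Descent for $\sT$ next produces compatible equivalences $\sT(\cS)\simeq \lim_{[m]}\sT(V_m)$ and $\sT(\cX)\simeq \lim_{[n],[m]}\sT(W_{n,m})$, under which $f^*$ is identified with the limit of the $f_{n,m}^*$. Invoking the Liu--Zheng partial adjoint machinery \cite{1211.5948}, a diagram of right adjoints in $\PrL$ whose pointwise left adjoints satisfy Beck--Chevalley against the transition maps descends to a left adjoint of the limit functor. Applied in our setting, this produces the desired $f_\sharp$, realized informally on $\cF\in \sT(\cX)$ as $f_\sharp \cF\simeq \colim_{[n],[m]\in \Delta^{op}} p_{n,m,\sharp} q_{n,m}^*\cF$ where $p_{n,m}\colon h_{W_{n,m}}\to \cS$ and $q_{n,m}\colon h_{W_{n,m}}\to \cX$ are the projections.

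The main obstacle is verifying the Beck--Chevalley coherence required to apply Liu--Zheng's theorem: one must check that log smooth base change for the family $\{f_{n,m}\}$ assembles naturally over the bisimplicial indexing category, and that every relevant cartesian square falls within the scope of the hypothesis. Once this coherence is in place the construction of $f_\sharp$ is formal, and independence of the choice of covers follows from the standard refinement argument.
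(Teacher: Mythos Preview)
Your approach has a genuine gap in the Beck--Chevalley verification you flag as the main obstacle. In the bisimplicial setup the source diagram $(n,m)\mapsto \sT(V_m)$ is constant in $n$, so for a face map $\alpha\colon W_{n',m}\to W_{n,m}$ in the $n$-direction the left-adjointability condition demands that $f_{n',m,\sharp}\alpha^* \to f_{n,m,\sharp}$ be an isomorphism. Since $f_{n',m}=f_{n,m}\circ\alpha$, this is $f_{n,m,\sharp}$ applied to the counit $\alpha_\sharp\alpha^*\to\id$, which is not an isomorphism when $\alpha$ is a nontrivial open immersion. Thus the descent-of-left-adjoints principle does not apply to your diagram as stated. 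The colimit formula you write down is correct in spirit, but it invokes $p_{n,m,\sharp}$ for $p_{n,m}\colon h_{W_{n,m}}\to\cS$, which is itself an instance of the proposition (representable source, non-representable target) that you have not yet established; so as written the argument is circular.

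The paper sidesteps both problems by not constructing $f_\sharp$ at all. Using \cite[Proposition 6.4]{divspc} it chooses a cover $h_S\to\cS$ and then a cover $h_X\to h_S\times_\cS\cX$, so that the two \v{C}ech nerves fit into a \emph{single} cosimplicial diagram with levelwise morphisms $f_i\colon \cX_i\to\cS_i$; an induction shows each $f_i$ is representable log smooth between representables, hence $f_i^*$ preserves limits by the standing hypotheses on $\sT$. Since the inclusion $\PrR\hookrightarrow\infCat_\infty$ preserves limits \cite[Theorem 5.5.3.18]{HTT}, the limit $f^*\simeq\lim_i f_i^*$ lies in $\PrR$ and therefore admits a left adjoint by the adjoint functor theorem. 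No Beck--Chevalley condition enters at this stage; the Liu--Zheng machinery is invoked only later (Lemma \ref{extension.5}) when an explicit description of $f_\sharp$ is needed.
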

\begin{proof}
By \cite[Proposition 6.4]{divspc},
there exists a commutative diagram
\[
\begin{tikzcd}
h_X\ar[r,"h_g"]\ar[d]&
h_S\ar[d]
\\
\cX\ar[r,"f"]&
\cS
\end{tikzcd}
\]
such that $g$ is a log smooth morphism in $\lFan/B$ and $h_S\to \cS$ and $h_X\to h_S\times_{\cS}\cX$ are representable Zariski covers.
Let $\cS_\bullet\to \cS$ and $\cX_\bullet\to \cX$ be \v{C}ech nerves of $h_S\to \cS$ and $h_X\to \cX$, and let $f_i\colon \cX_i\to \cS_i$ be the induced morphism for every integer $i\geq 0$.

We claim that $f_i$ is a representable log smooth morphism.
The claim is trivial if $i=0$.
Assume $i>0$.
Then $f_i$ admits a factorization
\[
\cX_i
\xrightarrow{\simeq}
\cX_{i-1}\times_{\cX} \cX_0
\to
\cX_{i-1}\times_{\cS}\cS_0
\to
\cS_{i-1}\times_{\cS}\cS_0
\xrightarrow{\simeq}
\cS_i.
\]
The second arrow is a representable Zariski cover since $\cX_0\to \cS_0\times_{\cS}\cX$ is a representable Zariski cover,
and the third arrow is a representable log smooth morphism by induction.
This completes the induction argument.
As a consequence,
we see that $f_i^*$ preserves limits since $f_i\simeq h_{g_i}$ for some log smooth morphism $g_i$ in $\lFan/B$.

Consider the induced diagram
\begin{equation}
\label{extension.2.1}
\begin{tikzcd}[column sep=small]
\sT(\cX_0) \ar[r,shift left=0.5ex]\ar[r,shift right=0.5ex]\ar[d,"f_0^*"']&
 \sT(\cX_1)\ar[r,shift left=1ex]\ar[r,shift right=1ex]\ar[r]\ar[d,"f_1^*"]&
\cdots
\\
\sT(\cY_0) \ar[r,shift left=0.5ex]\ar[r,shift right=0.5ex]&
\sT(\cY_1)\ar[r,shift left=1ex]\ar[r,shift right=1ex]\ar[r]&
\cdots.
\end{tikzcd}
\end{equation}
We know that the vertical arrows in \eqref{extension.2.1} preserve limits.
Take limits along the rows in \eqref{extension.2.1} and use the fact that the inclusion $\PrR\to \infCat_\infty$ preserves limits \cite[Theorem 5.5.3.18]{HTT} to conclude that $f^*$ is in $\PrR$, i.e., $f^*$ preserves limits.
\end{proof}

\begin{lemma}
\label{extension.4}
Let
\[
\begin{tikzcd}
h_{X'}\ar[r,"g'"]\ar[d,"f'"']&
h_X\ar[d,"f"]
\\
h_{S'}\ar[r,"g"]&
h_{S}
\end{tikzcd}
\]
be a cartesian square in $\lSpc/B$ with $S,S',X,X'\in \lFan/B$.
If $f$ is a representable log smooth morphism,
then the natural transformation
\begin{equation}
\label{extension.4.1}
f_\sharp'g'^*
\xrightarrow{Ex}
g^*f_\sharp
\end{equation}
is an isomorphism.
\end{lemma}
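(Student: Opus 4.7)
The plan is to reduce to the log smooth base change axiom for $\sT$ on $\lSch/B$ stated in the introduction, once one checks that everything in the statement descends from $\lSch/B$.

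First, I would argue that the entire cartesian square comes from $\lSch/B$. Since $f$ is a representable log smooth morphism between representables with $X,S\in\lFan/B\subset\lSch/B$, Yoneda together with Proposition \ref{div.5} implies $f=h_{f_0}$ for a genuine log smooth morphism $f_0\colon X\to S$ in $\lSch/B$, and similarly $g=h_{g_0}$ for a unique $g_0\colon S'\to S$. Because the functor $h\colon\lSch/B\to\lSpc/B$ is fully faithful and preserves fiber products (as noted after Definition \ref{div.3}), setting $X_0':=X\times_S S'$ in $\lSch/B$ yields $h_{X_0'}\simeq h_X\times_{h_S}h_{S'}\simeq h_{X'}$, so $X'\simeq X_0'$ in $\lSch/B$ and the cartesian square in the lemma is the $h$-image of a cartesian square in $\lSch/B$ whose right-hand vertical map is the log smooth morphism $f_0$.

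Next I would identify the adjoint pairs on both sides of the equivalence of Proposition \ref{extension.7}. That equivalence is the one induced by $h$, so for every $Y\in\lSch/B$ there is a canonical identification $\sT(h_Y)\simeq\sT(Y)$ compatible with pullback along morphisms in $\lSch/B$. For the sharp functors, the $f_\sharp$ produced by Proposition \ref{extension.2} and the classical $f_{0\sharp}$ guaranteed by the axioms in the introduction are both left adjoints of the same pullback functor, hence canonically isomorphic by uniqueness of adjoints; the analogous comparison applies to $f'_\sharp$ versus $f'_{0\sharp}$.

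Finally, I would conclude by invoking the log smooth base change axiom for $\sT$ on $\lSch/B$. Under the identifications above, the Beck-Chevalley transformation \eqref{extension.4.1} corresponds to the Beck-Chevalley transformation $f'_{0\sharp}g_0'^*\to g_0^*f_{0\sharp}$ attached to the cartesian square in $\lSch/B$, and this is an isomorphism by assumption on $\sT$.

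I expect the only delicate step to be the comparison of the $f_\sharp$ produced by Proposition \ref{extension.2} with the classical $f_{0\sharp}$, and, more importantly, the verification that the Beck-Chevalley square assembled on the $\lSpc/B$ side genuinely matches the classical Beck-Chevalley square on the $\lSch/B$ side under the equivalence. Uniqueness of left adjoints and functoriality of $h$ should make this coherence check routine but bookkeeping-heavy; the rest of the argument is then purely formal.
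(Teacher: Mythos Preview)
There is a genuine gap: the functor $h\colon \lSch/B\to \lSpc/B$ is \emph{not} fully faithful, contrary to what you assert. The passage after Definition~\ref{div.3} only records that $h$ preserves fiber products; it says nothing about full faithfulness, and in fact fullness fails. The reason is that $h_X$ is the dividing Zariski \emph{sheafification} of the presheaf represented by $X$, so dividing covers become isomorphisms in $\lSpc/B$. Concretely, if $X'\to X$ is a dividing cover that is not an isomorphism in $\lSch/B$, then $h_{X'}\to h_X$ is an isomorphism, and its inverse $h_X\to h_{X'}$ is a morphism in $\lSpc/B$ with no preimage in $\Hom_{\lSch/B}(X,X')$. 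In particular, the morphisms $f\colon h_X\to h_S$ and $g\colon h_{S'}\to h_S$ in the statement need not be of the form $h_{f_0}$, $h_{g_0}$ for morphisms $f_0$, $g_0$ in $\lSch/B$, and your first paragraph does not go through. Proposition~\ref{div.5} does not help here: it only tells you that $f$ is a \emph{representable} log smooth morphism, which concerns pullbacks of $f$, not that $f$ itself descends to $\lSch/B$.

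The paper's proof addresses exactly this point. It invokes \cite[Lemmas 3.4, 3.5]{divspc} to produce objects $U,U',V\in \lFan/B$ together with isomorphisms $h_U\simeq h_S$, $h_{U'}\simeq h_{S'}$, $h_V\simeq h_X$ in $\lSpc/B$ under which $f$ and $g$ become $h_u$ and $h_v$ for genuine morphisms $u,v$ in $\lFan/B$ with $u$ log smooth. Only after this replacement can one appeal to the log smooth base change axiom for $\sT$ over $\lSch/B$. Your overall strategy is the right one, but the reduction step requires these lemmas rather than an appeal to Yoneda.
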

\begin{proof}
By \cite[Lemmas 3.4, 3.5]{divspc},
there exists a commutative diagram with vertical isomorphisms
\[
\begin{tikzcd}
h_{U'}\ar[d,"\simeq"']\ar[r,"h_v"]&
h_U\ar[d,"\simeq"]\ar[r,"h_u",leftarrow]&
h_V\ar[d,"\simeq"]
\\
h_{S'}\ar[r,"g"]&
h_S\ar[r,leftarrow,"f"]&
h_S
\end{tikzcd}
\]
such that $u$ is a log smooth morphism and $v$ is a morphism in $\lFan/B$.
We conclude by log smooth base change for $\sT$ over $\lSch/B$.
\end{proof}

\begin{lemma}
\label{extension.5}
Let $f\colon \cY\to \cX$ be a representable log smooth morphism in $\lSpc/B$,
and let $h_X\to \cX$ be a representable Zariski cover with $X\in \lFan/B$.
Form the \v{C}ech nerve $\cX_\bullet\to \cX$ of $h_X\to \cX$,
and set $\cY_\bullet:=\cX_\bullet\times_{\cX} \cY$.
Let $f_i\colon \cY_i\to \cX_i$ be the induced morphism for every integer $i\geq 0$.
Then there is an induced commutative diagram with horizontal equivalences
\begin{equation}
\label{extension.5.1}
\begin{tikzcd}
\sT(\cY)\ar[d,"f_\sharp"']\ar[r,"\simeq"]&
\lim_{i\in \Delta} \sT(\cY_i)\ar[d,"\lim_{i\in \Delta}f_{i\sharp}"]
\\
\sT(\cX)\ar[r,"\simeq"]&
\lim_{i\in \Delta}\sT(\cX_i).
\end{tikzcd}
\end{equation}
\end{lemma}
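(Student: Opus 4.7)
The plan is to realize the right-hand vertical of \eqref{extension.5.1} as a limit of left adjoints in $\PrL$ and then appeal to uniqueness of adjoints.

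First, I would identify the horizontal equivalences with instances of the sheaf condition \eqref{extension.0.1}. The Zariski cover $h_X\to \cX$ and its pullback $\cY_0\to \cY$ (a representable Zariski cover by the stability of representable Zariski covers under pullback, Definition \ref{div.4}) both produce dividing Nisnevich covering sieves, so \v{C}ech descent gives the top and bottom equivalences. Iterated use of Proposition \ref{div.1} together with representability of $f$ then shows that every $\cX_i$ and every $\cY_i$ is represented by an object of $\lFan/B$, and that $f_i\simeq h_{g_i}$ for some log smooth morphism $g_i$ in $\lFan/B$; in particular $f_{i\sharp}$ exists for each $i\geq 0$.

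Next, the face and degeneracy maps of the two \v{C}ech nerves assemble into cartesian squares between objects of the form $h_{(-)}$, with the vertical $f_i$ representable log smooth. Lemma \ref{extension.4} therefore supplies Beck-Chevalley isomorphisms between the $f_{i\sharp}$ and the cosimplicial structure maps. This is the data needed to promote $\{f_{i\sharp}\}$ to a morphism of cosimplicial diagrams in $\PrL$ adjoint to $\{f_i^*\}$; applying the Liu-Zheng partial-adjoint formalism of \cite{1211.5948} recalled at the start of \S\ref{extension}, this assembly descends to limits and produces a functor $\lim_{i\in\Delta} f_{i\sharp}$ that is left adjoint to $\lim_{i\in\Delta} f_i^*$.

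Finally, diagram \eqref{extension.0.2} identifies $\lim_{i\in\Delta} f_i^*$ with $f^*$ under the horizontal equivalences, so $\lim_{i\in\Delta} f_{i\sharp}$ is also a left adjoint of $f^*$, and uniqueness of left adjoints yields the commutativity of \eqref{extension.5.1}. The main obstacle is the coherent passage from pointwise Beck-Chevalley isomorphisms to an honest morphism of cosimplicial diagrams in $\PrL$ whose levelwise adjoints descend to the limit, which is precisely the coherence problem handled by the Liu-Zheng partial-adjoint machinery; the real work is unwinding that formalism in the present setup rather than performing any direct calculation.
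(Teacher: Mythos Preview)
Your proposal is correct and follows essentially the same route as the paper: invoke the Liu--Zheng correspondence/partial-adjoint machinery (Theorem~\ref{corr.1}) together with Lemma~\ref{extension.4} to assemble the $f_{i\sharp}$ into a coherent cosimplicial morphism, take the limit, and conclude by uniqueness of left adjoints. The only cosmetic difference is that the paper does not simply assert that $\lim_{i\in\Delta} f_{i\sharp}$ is left adjoint to $\lim_{i\in\Delta} f_i^*$ as a byproduct of the formalism; instead it verifies directly, via a short chain of $\Hom$-isomorphisms using the descent equivalences $\alpha,\beta$, that $\alpha^{-1}(\lim_{i\in\Delta} f_{i\sharp})\beta$ represents the same functor as $f_\sharp$---which is exactly the ``unwinding'' you flag as the main obstacle.
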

\begin{proof}
By Theorem \ref{corr.1}, Proposition \ref{extension.2}, and Lemma \ref{extension.4}, the induced functor
\[
\sT\colon (h(\lFan/B))^{op}\to \infCat_\infty
\]
can be extended to a functor
\[
\sT\colon \Corr(h(\lFan/B),h(\lSm),\all)\to \infCat_\infty.
\]
Together with the diagram $\Delta^{op}\times \Delta^1\to h(\lFan/B)$ given by
\[
\begin{tikzcd}[column sep=small]
\cdots
\ar[r,shift left=1ex]\ar[r]\ar[r,shift right=1ex]&
\cY_1\ar[r,shift left=0.5ex]\ar[r,shift right=0.5ex]\ar[d,"f_1"]&
\cY_0\ar[d,"f_0"]
\\
\cdots
\ar[r,shift left=1ex]\ar[r]\ar[r,shift right=1ex]&
\cX_1\ar[r,shift left=0.5ex]\ar[r,shift right=0.5ex]&
\cX_0,
\end{tikzcd}
\]
we obtain a diagram $\Delta\times \Delta^1\to \infCat_\infty$ given by
\[
\begin{tikzcd}[column sep=small]
\sT(\cY_0)
\ar[r,shift left=0.5ex]
\ar[r,shift right=0.5ex]
\ar[d,"f_{0\sharp}"']&
\sT(\cY_1)
\ar[r,shift left=1ex]
\ar[r]
\ar[r,shift right=1ex]
\ar[d,"f_{1\sharp}"]&
\cdots
\\
\sT(\cX_0)
\ar[r,shift left=0.5ex]
\ar[r,shift right=0.5ex]&
\sT(\cX_1)
\ar[r,shift left=1ex]
\ar[r]
\ar[r,shift right=1ex]&
\cdots.
\end{tikzcd}
\]
Take limits along the rows to obtain a functor
\[
\lim_{i\in \Delta} f_{i\sharp}
\colon
\lim_{i\in \Delta} \sT(\cY_i)
\to
\lim_{i\in \Delta} \sT(\cX_i).
\]

Let $p_i\colon \cX_i\to \cX$ and $q_i\colon \cY_i\to \cY$ be the induced morphisms in $\lSpc/B$,
and let $\alpha\colon \sT(\cX)\to \lim_{i\in \Delta}\sT(\cX_i)$ and $\beta\colon \sT(\cY)\to \lim_{i\in \Delta}\sT(\cY_i)$ be the induced equivalences.
Then we have natural isomorphisms
\begin{align*}
\Hom_{\sT(X)}( \alpha^{-1}\lim_{i\in \Delta}f_{i\sharp} \beta \cG,\cF)
\simeq &
\Hom_{\lim_{i\in \Delta}\sT(\cX_i)}(\lim_{i\in \Delta}f_{i\sharp} \beta \cG,\alpha \cF)
\\
\simeq &
\lim_{i\in \Delta}
\Hom_{\sT(\cX_i)}
(f_{i\sharp} q_i^*\cG,p_i^*\cF)
\\
\simeq &
\lim_{i\in \Delta}
\Hom_{\sT(\cY_i)}
(q_i^*\cG,q_i^*f^*\cF)
\\
\simeq &
\Hom_{\sT(Y)}(\cG,f^*\cF)
\\
\simeq &
\Hom_{\sT(X)}(f_\sharp \cG,\cF)
\end{align*}
for all $\cF\in \sT(X)$ and $\cG\in \sT(Y)$,
which shows the desired commutativity.
\end{proof}

\begin{proposition}
\label{extension.1}
For every cartesian square in $\lSpc/B$
\[
\begin{tikzcd}
\cX'\ar[d,"f'"']\ar[r,"g'"]&
\cX\ar[d,"f"]
\\
\cS'\ar[r,"g"]&
\cS
\end{tikzcd}
\]
such that $f\in \lSmSpc$,
the natural transformation
\begin{equation}
\label{extension.1.1}
f_\sharp'g'^*\xrightarrow{Ex} g^*f_\sharp
\end{equation}
is an isomorphism.
\end{proposition}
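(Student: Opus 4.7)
The strategy is to reduce Proposition \ref{extension.1} to the representable case treated in Lemma \ref{extension.4}, by choosing compatible representable Zariski covers of all four corners and arguing level by level on their \v{C}ech nerves in the spirit of Lemma \ref{extension.5}.

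First, I would use \cite[Proposition 6.4]{divspc} to pick representable Zariski covers $h_{S_0}\to \cS$ and $h_{X_0}\to \cX\times_\cS h_{S_0}$ with $h_{X_0}\to h_{S_0}$ representable log smooth, and then a further representable Zariski cover $h_{S'_0}\to \cS'\times_\cS h_{S_0}$. Setting $h_{X'_0}:=h_{X_0}\times_{h_{S_0}}h_{S'_0}$, which is representable by Proposition \ref{div.1}, a short check shows that $h_{X'_0}\to \cX'$ is a representable Zariski cover and that the \v{C}ech nerves $\cS_\bullet$, $\cS'_\bullet$, $\cX_\bullet$, $\cX'_\bullet$ of these four covers fit, level by level, into cartesian squares
\[
\begin{tikzcd}
\cX'_i\ar[r,"g'_i"]\ar[d,"f'_i"'] & \cX_i\ar[d,"f_i"]\\
\cS'_i\ar[r,"g_i"] & \cS_i
\end{tikzcd}
\]
of representable objects, with every $f_i$ and $f'_i$ representable log smooth by the inductive argument from the proof of Proposition \ref{extension.2}. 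Applying Lemma \ref{extension.4} at each level then yields isomorphisms $f'_{i\sharp}g'^*_i\xrightarrow{Ex} g_i^* f_{i\sharp}$.

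It remains to assemble these into the global statement. The square \eqref{extension.0.2} already identifies $g^*$ with $\lim_{i\in \Delta} g_i^*$ and $g'^*$ with $\lim_{i\in \Delta} g'^*_i$ under the equivalences $\sT(\cS)\simeq \lim_i \sT(\cS_i)$ and $\sT(\cS')\simeq \lim_i \sT(\cS'_i)$. The argument of Lemma \ref{extension.5}, applied to the bi-simplicial span built from $\cX_\bullet\to \cS_\bullet$ and $\cX'_\bullet\to \cS'_\bullet$ through the extension of $\sT$ to $\Corr(h(\lFan/B),h(\lSm),\all)$ supplied by Theorem \ref{corr.1}, analogously identifies $f_\sharp$ with $\lim_{i\in \Delta} f_{i\sharp}$ and $f'_\sharp$ with $\lim_{i\in \Delta} f'_{i\sharp}$. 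Under these four identifications, the Beck--Chevalley transformation \eqref{extension.1.1} becomes the limit of the levelwise Beck--Chevalley transformations, which are already isomorphisms, so \eqref{extension.1.1} is an isomorphism.

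The main obstacle is the coherence bookkeeping in this assembly step. Since $\cX_\bullet$ is not the base change of $\cS_\bullet$ along $f$, the face-map squares $(\cX_{i+1},\cS_{i+1},\cX_i,\cS_i)$ are not cartesian, so one cannot directly appeal to Lemma \ref{extension.4} to see that the collection $\{f_{i\sharp}\}$ assembles into a simplicial functor; the required higher coherences must instead be produced by the correspondence formalism of Theorem \ref{corr.1}, exactly as in Lemma \ref{extension.5}. Once this packaging is in place, the identification of the global Beck--Chevalley map with the limit of the levelwise ones, and hence the conclusion, is formal.
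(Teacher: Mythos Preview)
Your plan has a genuine gap at precisely the point you flag as ``the main obstacle,'' and the resolution you propose does not work. You correctly observe that the face squares
\[
\begin{tikzcd}
\cX_{i+1}\ar[r]\ar[d,"f_{i+1}"']&\cX_i\ar[d,"f_i"]\\
\cS_{i+1}\ar[r]&\cS_i
\end{tikzcd}
\]
are not cartesian. But then Theorem \ref{corr.1} does \emph{not} supply the coherences you need: that theorem packages $\sT$ as a functor out of $\Corr(h(\lFan/B),h(\lSm),\all)$, and the whole content of the construction (via $\cC_{\cP,\all}^{\mathrm{cart}}$ in its proof) is that a commuting square in the correspondence category with one leg $f_i^\dagger$ and the other a face map corresponds to a \emph{cartesian} square in $h(\lFan/B)$. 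This is exactly why Lemma \ref{extension.5} assumes $\cY_\bullet=\cX_\bullet\times_\cX\cY$. Without cartesianness you do not even get a levelwise isomorphism $f_{(i+1)\sharp}d^*\simeq d^*f_{i\sharp}$, let alone a coherent simplicial diagram; so there is no functor $\lim_{i\in\Delta}f_{i\sharp}$ to compare with $f_\sharp$.

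The paper's proof sidesteps this by reducing sequentially rather than all at once. Step (I) uses a Zariski cover of $\cX$ together with conservativity (Proposition \ref{extension.3}) to reduce to $f$ \emph{representable} log smooth; step (II) similarly reduces $\cS'$ to a representable object. Only then, in step (III), does one take a single \v{C}ech nerve $\cS_\bullet\to\cS$ and set $\cX_i:=\cX\times_\cS\cS_i$, $\cS'_i:=\cS'\times_\cS\cS_i$, $\cX'_i:=\cX'\times_\cS\cS_i$. Because $f$ is already representable, each $\cX_i$ is representable, and because everything is pulled back from the same nerve, all face squares are cartesian; Lemma \ref{extension.5} and \cite[Lemma 4.3.7]{1211.5948} then apply directly. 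The moral is that the reduction of $f$ to the representable case must come \emph{first}, via conservativity, before any limit-over-$\Delta$ argument is attempted.
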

\begin{proof}
(I) \emph{Reduction of $f$}.
There exists a representable Zariski cover $u\colon \cU\to \cX$ such that $fu$ is a representable log smooth morphism.
Consider the induced cartesian square
\[
\begin{tikzcd}
\cU'\ar[d,"u'"']\ar[r,"g''"]&
\cU\ar[d,"u"]
\\
\cX'\ar[r,"g'"]&
\cX.
\end{tikzcd}
\]
Assume that the natural transformations
\[
u^*g_*'
\xrightarrow{Ex}
g_*''u'^*
\text{ and }
(fu)^*g_*
\xrightarrow{Ex}
g_*''(f'u')^*
\]
are isomorphisms.
The commutative diagram
\[
\begin{tikzcd}
u^*f^*g_*\ar[r,"Ex"]\ar[d,"\simeq"']&
u^*g_*'f'^*\ar[r,"Ex"]&
g_*''u'^*f'^*\ar[d,"\simeq"]
\\
(fu)^*g_*\ar[rr,"Ex"]&
&
g_*''(f'u')^*
\end{tikzcd}
\]
shows that $u^*f^*g_*\xrightarrow{Ex}u^*g_*'f'^*$ is an isomorphism.
Since $u^*$ is conservative by Proposition \ref{extension.3},
we see that \eqref{extension.1.1} is an isomorphism.
Hence we reduce to the case when $f$ is a representable log smooth morphism.

(II) \emph{Reduction of $\cS'$}.
There exists a representable Zariski cover $p\colon \cS''\to \cS'$ such that $\cS''\simeq h_{S''}$ for some $S''\in \lFan/B$.
Consider the induced cartesian square
\[
\begin{tikzcd}
\cX''\ar[d,"f''"']\ar[r,"p'"]&
\cX'\ar[d,"f'"]
\\
\cS''\ar[r,"p"]&
\cS'.
\end{tikzcd}
\]
Assume that the natural transformations
\[
f_\sharp'' p'^*
\xrightarrow{Ex}
p^*f_\sharp'
\text{ and }
f_\sharp'' (g'p')^*
\xrightarrow{Ex}
(gp)^*f_\sharp
\]
are isomorphisms.
The commutative diagram
\[
\begin{tikzcd}
f_\sharp'' p'^*g'^*
\ar[r,"Ex"]
\ar[d,"\simeq"']&
p^*f_\sharp'g'^*
\ar[r,"Ex"]&
p^*g^*f_\sharp\ar[d,"\simeq"]
\\
f_\sharp'' (g'p')^*
\ar[rr,"Ex"]&
&
(gp)^*f_\sharp
\end{tikzcd}
\]
shows that the natural transformation $p^*f_\sharp'g'^*\xrightarrow{Ex}p^*g^*f_\sharp$ is an isomorphism.
Since $p^*$ is conservative by Proposition \ref{extension.3},
we see that \eqref{extension.1.1} is an isomorphism.
Hence we reduce to the case when $\cS'\simeq h_{S'}$ for some $S'\in \lFan/B$.

(III) \emph{Reduction of $\cS$}.
Let $\cS_0\to \cS$ be a representable Zariski cover such that $\cS_0\simeq h_{S_0}$ for some $S_0\in \lFan/B$,
and let $\cS_\bullet\to \cS$ be its \v{C}ech nerve.
For every integer $i\geq 0$,
consider the induced cartesian square
\[
\begin{tikzcd}
\cX_i'\ar[r,"g_i'"]\ar[d,"f_i'"']&
\cX_i\ar[d,"f_i"]
\\
\cS_i'\ar[r,"g_i"]&
\cS_i,
\end{tikzcd}
\]
where $\cS_i':=\cS'\times_{\cS}\cS_i$,
$\cX_i:=\cX\times_{\cS}\cS_i$,
and $\cX_i':=\cX'\times_{\cS} \cS_i$.
By Proposition \ref{div.1} and Lemma \ref{extension.4},
the natural transformation
\[
f_{i\sharp}'g_i'^*
\xrightarrow{Ex}
g_i^*f_{i\sharp}
\]
is an isomorphism.
Apply $\lim_{i\in \Delta}$ to this and use \cite[Lemma 4.3.7]{LZ} to see that the natural transformation
\[
\lim_{i\in \Delta} f_{i_\sharp}' \lim_{i\in \Delta} g_i'^*
\xrightarrow{Ex}
\lim_{i\in \Delta} g_i^* \lim_{i\in \Delta}f_{i\sharp}
\]
is an isomorphism.
Together with the commutative squares \eqref{extension.0.2} and \eqref{extension.5.1},
we see that \eqref{extension.1.1} is an isomorphism.
\end{proof}

\begin{proposition}
\label{extension.8}
For every log smooth morphism $f\colon \cX\to \cS$ in $\lSpc/B$,
the natural transformation
\[
f_\sharp((-)\otimes f^*(-))
\xrightarrow{Ex}
f_\sharp(-)\otimes (-)
\]
is an isomorphism.
\end{proposition}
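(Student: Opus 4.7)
The plan is to adapt the two-step reduction of Proposition~\ref{extension.1}, ultimately reducing to the case of a log smooth morphism in $\lFan/B$ where the projection formula for $\sT$ on $\lSch/B$ applies by hypothesis.

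\emph{Step (I): Reduce to $f$ representable log smooth.} Following the proof of Proposition~\ref{extension.2}, choose a representable Zariski cover $u\colon \cU\to \cX$ such that $fu$ is representable log smooth, and form its \v{C}ech nerve $u_\bullet\colon \cU_\bullet\to \cX$. An inductive argument as in the proof of Proposition~\ref{extension.2} shows that each $u_i$ is representable log \'etale, hence each $fu_i$ is representable log smooth. Granting the projection formula for these morphisms, I apply Proposition~\ref{extension.6} to rewrite $\cF\simeq \colim_{i\in\Delta^{op}} u_{i\sharp} u_i^*\cF$, and use that $f_\sharp$, $u_{i\sharp}$, and $\otimes$ all preserve colimits in each variable to compute
\[
f_\sharp(\cF\otimes f^*\cG)
\simeq
\colim_i f_\sharp u_{i\sharp}\bigl(u_i^*\cF \otimes (fu_i)^*\cG\bigr)
\simeq
\colim_i (fu_i)_\sharp(u_i^*\cF)\otimes \cG
\simeq
f_\sharp \cF\otimes \cG,
\]
where the first equivalence invokes the projection formula for $u_i$ and the second invokes it for $fu_i$.

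\emph{Step (II): Reduce to $\cS$ and $\cX$ representable.} Choose a representable Zariski cover $p\colon h_{S_0}\to \cS$ with $S_0\in \lFan/B$ and form the \v{C}ech nerve $\cS_\bullet\to \cS$. Let $f_i\colon \cX_i\to \cS_i$ be the base change of $f$ along $\cS_i\to \cS$. By Proposition~\ref{div.1}, each $\cS_i\simeq h_{W_i}$ for some $W_i\in \lFan/B$, and the representability of $f$ gives $\cX_i\simeq h_{Y_i}$ and $f_i\simeq h_{g_i}$ for some log smooth morphism $g_i$ in $\lFan/B$. Each $f_i$ therefore satisfies the projection formula by the assumption on $\sT$ over $\lSch/B$. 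Under the equivalences $\sT(\cS)\simeq \lim_i \sT(\cS_i)$ and $\sT(\cX)\simeq \lim_i \sT(\cX_i)$ (taken in $\CAlg(\PrL)$), the functors $f^*$, $f_\sharp$, and $\otimes$ correspond respectively to $\lim_i f_i^*$ by \eqref{extension.0.2}, $\lim_i f_{i\sharp}$ by Lemma~\ref{extension.5}, and the levelwise tensor product. Consequently the $Ex$ transformation for $f$ is the limit of those for the $f_i$, and hence an isomorphism.

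The main obstacle I anticipate lies in Step (II): rigorously identifying the $Ex$ transformation for $f$ with the limit of those for the $f_i$ requires transporting not just $f^*$ and $f_\sharp$ but also the symmetric monoidal structure across the equivalence $\sT(\cS)\simeq \lim_i \sT(\cS_i)$. That the sheaf condition is formulated in $\CAlg(\PrL)$ rather than merely $\PrL$ is essential here, since it guarantees that the tensor product on $\sT(\cS)$ is computed levelwise and that the natural transformations assemble coherently.
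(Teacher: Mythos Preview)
Your Step~(I) matches the paper's first reduction exactly: resolve $\cF$ by the \v{C}ech nerve of a representable Zariski cover of $\cX$ via Proposition~\ref{extension.6}, and reduce to the projection formula for the morphisms $u_i$ and $fu_i$, each of which has representable domain. The paper phrases this as ``reduce to $\cX=h_X$,'' which is the same thing.

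Your Step~(II), however, takes a different route from the paper, and the obstacle you flag is real. The paper does \emph{not} assemble the $Ex$ transformation for $f$ as a limit of the $Ex$ transformations for the $f_i$. Instead, once the domain is $h_X$, it picks a single representable Zariski cover $g\colon h_S\to\cS$, forms the base-change square, and checks that $g^*$ applied to the projection-formula map for $f$ is an isomorphism. This uses only three ingredients already in hand: log smooth base change for $\lSpc/B$ (Proposition~\ref{extension.1}) to commute $g^*$ past $f_\sharp$, the fact that $g^*$ is symmetric monoidal, and the projection formula for the resulting morphism $f'\colon h_X\times_{\cS}h_S\to h_S$, which by Propositions~\ref{div.1} and~\ref{div.5} is isomorphic to $h_u$ for a log smooth $u$ in $\lFan/B$ and hence falls under the hypothesis on $\sT$ over $\lSch/B$. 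Conservativity of $g^*$ (Proposition~\ref{extension.3}) then finishes.

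This conservativity argument completely sidesteps the coherence problem you anticipate: there is no need to identify the global $Ex$ with a limit of local ones, because one never works with the whole \v{C}ech diagram on $\cS$ at once. Your limit approach is not wrong in principle---one can make it rigorous using the same adjunction-tracking as in Lemma~\ref{extension.5}, or the Liu--Zheng machinery behind \cite[Lemma~4.3.7]{1211.5948}---but it demands exactly the extra coherence work you correctly identify as missing, whereas the paper's route costs nothing beyond what has already been proven.
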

\begin{proof}
Let $p\colon h_X\to \cX$ be a representable Zariski cover with $X\in \lFan/B$,
and let $\cX_\bullet\to \cX$ be its \v{C}ech nerve.
Then we have the commutative diagram
\[
\begin{tikzcd}
f_\sharp (p_{i_\sharp} p_i^*\cF\otimes f^*\cG)
\ar[rr,"Ex"]
\ar[d,"Ex"']
&
&
f_\sharp p_{i_\sharp}p_i^*\cF \otimes \cG
\ar[d,"\simeq"]
\\
f_\sharp p_{i_\sharp} (p_i^*\cF\otimes p_i^*f^*\cG)\ar[r,"\simeq"]
&
(fp_i)_\sharp (p_i^*\cF\otimes (fp_i)^*\cG)
\ar[r,"Ex"]
&
(fp_i)_\sharp p_i^*\cF\otimes \cG,
\end{tikzcd}
\]
for all $i\in \Delta$,
where $p_i\colon \cX_i\to \cX$ is the induced morphism.
If we show the claim for $p_i$ and $fp_i$,
then take $\colim_{i\in \Delta^{op}}$ to the upper horizontal arrow and use Proposition \ref{extension.6} to conclude.
Hence we reduce to the case when $\cX=h_X$ with $X\in \lFan/B$.

There exists a representable Zariski cover $g\colon h_S\to \cS$ with $S\in \lFan/B$.
Form the cartesian square
\[
\begin{tikzcd}
\cX'\ar[d,"f'"']\ar[r,"g'"]&
\cX\ar[d,"f"]
\\
h_S\ar[r,"g"]&
\cS.
\end{tikzcd}
\]
Consider the commutative diagram
\[
\begin{tikzcd}
g^*f_\sharp(\cF\otimes f^*\cG)
\ar[d,"Ex"']
\ar[r,"Ex"]
&
f_\sharp' g'^*(\cF\otimes f^*\cG)
\ar[r,"\simeq"]
&
f_\sharp' (g'^*\cF\otimes f'^*g^*\cG)
\ar[d,"Ex"]
\\
g^*(f_\sharp\cF\otimes \cG)
\ar[r,"\simeq"]
&
g^*f_\sharp\cF\otimes g^*\cG
\ar[r,"Ex"]
&
f_\sharp' g'^*\cF\otimes g^*\cG
\end{tikzcd}
\]
for all $\cF\in \sT(\cX)$ and $\cG\in \sT(\cS)$.
By Propositions \ref{div.1} and \ref{extension.1},
the upper left and lower right horizontal arrows are isomorphisms.
Proposition \ref{div.5} implies that
$f'$ is isomorphic to $h_u$ for some log smooth morphism $u\colon U\to S$.
The log smooth projection formula implies that
the right vertical arrow is an isomorphism.
Hence the left vertical arrow is an isomorphism.
To conclude,
observe that $g^*$ is conservative by Proposition \ref{extension.3}.
\end{proof}

\begin{proposition}
Suppose $\cS\in \lSpc/B$.
Let $p\colon \cS\times \A^1\to \cS$ be the projection.
Then $p^*$ is fully faithful.
\end{proposition}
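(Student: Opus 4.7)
The plan is to verify full faithfulness of $p^*$ by showing that the counit of the adjunction $(p_\sharp, p^*)$, which exists by Proposition \ref{extension.2}, is an isomorphism. First I would apply the log smooth projection formula (Proposition \ref{extension.8}) to identify $p_\sharp p^* \cF \to \cF$ with the morphism $p_\sharp(\unit_{\cS \times \A^1}) \otimes \cF \to \unit_\cS \otimes \cF \simeq \cF$ induced by tensoring $\cF$ with the counit $\eta\colon p_\sharp \unit_{\cS \times \A^1} \to \unit_\cS$. It thus suffices to prove that $\eta$ is an isomorphism.

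For this I would choose a representable Zariski cover $h_{S_0} \to \cS$ with $S_0 \in \lFan/B$ and form its \v{C}ech nerve $\cS_\bullet \to \cS$. By iterated application of Proposition \ref{div.1}, each $\cS_n$ is of the form $h_{S_n}$ for some $S_n \in \lFan/B$, so the base change $p_n\colon \cS_n \times \A^1 \to \cS_n$ corresponds to a log smooth morphism of fs log schemes. Writing $f_n\colon \cS_n \to \cS$ for the induced morphism, Proposition \ref{extension.3} tells me that the family $\{f_n^*\}$ is conservative, so it is enough to show that $f_n^* \eta$ is an isomorphism for every $n$.

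The next step is to invoke log smooth base change (Proposition \ref{extension.1}), which yields a natural isomorphism $f_n^* p_\sharp \xrightarrow{Ex} p_{n\sharp} f_n'^*$, where $f_n'\colon \cS_n \times \A^1 \to \cS \times \A^1$ is the base change of $f_n$. Since $f_n'^*$ is symmetric monoidal, $f_n'^* \unit_{\cS \times \A^1} \simeq \unit_{\cS_n \times \A^1}$, and by naturality of the Beck-Chevalley transformation with respect to counits, $f_n^* \eta$ is identified with the counit $p_{n\sharp} \unit_{\cS_n \times \A^1} \to \unit_{\cS_n}$. Under $\cS_n \simeq h_{S_n}$, this is precisely $M_{S_n}(p_n)\colon M_{S_n}(S_n \times \A^1) \to M_{S_n}(S_n)$, which is an isomorphism by the $\A^1$-invariance of $\sT$ on $\lSch/B$.

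The main delicate point is the compatibility of the counit of $(p_\sharp, p^*)$ with its pullback along $f_n^*$ and with the counit of $(p_{n\sharp}, p_n^*)$ under the Beck-Chevalley equivalence; this is a routine $\infty$-categorical mate identity rather than a serious obstacle, so the crux of the argument lies in the reduction scheme via the \v{C}ech nerve together with the input $\A^1$-invariance on $\lSch/B$.
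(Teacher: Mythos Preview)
Your proof is correct and follows the same strategy as the paper: reduce via a representable Zariski cover, log smooth base change (Proposition \ref{extension.1}), and conservativity (Proposition \ref{extension.3}) to the $\A^1$-invariance axiom on $\lSch/B$. The paper's argument is marginally more direct---it uses only the cover $g\colon h_S \to \cS$ rather than the full \v{C}ech nerve and checks that $g^*(p_\sharp p^* \xrightarrow{ad'} \id)$ is an isomorphism without first reducing to the unit via the projection formula---but the content is the same.
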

\begin{proof}
There exists a representable Zariski cover $g\colon h_S\to \cS$ with $S\in \lFan/B$.
Consider the induced cartesian square
\[
\begin{tikzcd}
h_S\times \A^1\ar[d,"g'"']\ar[r,"p'"]&
h_S\ar[d,"g"]
\\
\cS\times \A^1\ar[r,"p"]&
\cS.
\end{tikzcd}
\]
We have the commutative diagram
\[
\begin{tikzcd}
p_\sharp'p'^*g^*
\ar[r,"\simeq"]
\ar[rrd,"ad'"']
&
p_\sharp'g'^*p^*
\ar[r,"Ex"]
&
g^*p_\sharp p^*
\ar[d,"ad'"]
\\
&
&
g^*
\end{tikzcd}
\]
By $\A^1$-invariance,
the diagonal arrow is an isomorphism.
Proposition \ref{extension.1} implies that the upper right horizontal arrow is an isomorphism.
Hence the right vertical arrow is an isomorphism.
To conclude,
observe that $g^*$ is conservative by Proposition \ref{extension.3}.
\end{proof}

\begin{proposition}
\label{loc.1}
For every closed immersion $i\colon \cZ\to \cS$ in $\lSpc/B$ with its open complement $j\colon \cU\to \cS$,  $i_*$ is fully faithful, and the pair of functors $(i^*,j^*)$ is conservative.
\end{proposition}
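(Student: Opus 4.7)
The plan is to descend along the \v{C}ech nerve of a Zariski cover of $\cS$ in $\lSpc/B$ and reduce both assertions to the localization hypothesis for strict closed immersions in $\lSch/B$. First I would choose a representable Zariski cover $g_0\colon h_S\to \cS$ with $S\in \lFan/B$ and form its \v{C}ech nerve $\cS_\bullet\to \cS$. By iterated application of Proposition \ref{div.1}, each $\cS_n\simeq h_{S_n}$ for some $S_n\in \lFan/B$. Since $i$ and $j$ are representable (Proposition \ref{div.6}), the pullbacks $\cZ_n:=\cZ\times_{\cS}\cS_n$ and $\cU_n:=\cU\times_{\cS}\cS_n$ are of the form $h_{Z_n}$ and $h_{U_n}$, with $i_n\colon Z_n\to S_n$ a strict closed immersion in $\lFan/B$ and $j_n\colon U_n\to S_n$ its open complement.

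For the conservativity of $(i^*,j^*)$, let $\alpha$ be a morphism in $\sT(\cS)$ with $i^*\alpha$ and $j^*\alpha$ isomorphisms. By Proposition \ref{extension.3} it suffices to check that $g_0^*\alpha$ is an isomorphism; but $i_0^*g_0^*\alpha$ and $j_0^*g_0^*\alpha$ are isomorphisms by functoriality of $\sT$ applied to the two pullback squares above, so the localization hypothesis for $\sT$ on $\lSch/B$ applied to the strict closed immersion $i_0$ forces $g_0^*\alpha$ to be an isomorphism.

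For the full faithfulness of $i_*$, the key step is to identify $i_*$ with $\lim_{n\in \Delta} i_{n*}$ over the \v{C}ech nerve. This requires the Beck-Chevalley isomorphism $p^*i_{n*}\xrightarrow{Ex} i_{n+1*}q^*$ for every face map $p\colon \cS_{n+1}\to \cS_n$ and its pullback $q\colon \cZ_{n+1}\to \cZ_n$. Since $p$ is strict \'etale, log smooth base change (Proposition \ref{extension.1}) gives $q_\sharp i_{n+1}^*\xrightarrow{\simeq} i_n^*p_\sharp$, and passing to right adjoints yields the required Beck-Chevalley isomorphism. Following the partial adjoints technique of Liu-Zheng used in Lemma \ref{extension.5}, this compatibility assembles the $i_{n*}$ into a commutative diagram
\[
\begin{tikzcd}
\sT(\cZ)\ar[r,"i_*"]\ar[d,"\simeq"']&
\sT(\cS)\ar[d,"\simeq"]
\\
\lim_{n\in \Delta}\sT(\cZ_n)\ar[r,"\lim_{n\in \Delta} i_{n*}"]&
\lim_{n\in \Delta}\sT(\cS_n).
\end{tikzcd}
\]
Since each $i_n$ is strict, the counit $i_n^*i_{n*}\to \id$ is an isomorphism by the localization hypothesis on $\lSch/B$; the equivalence above then forces the counit $i^*i_*\to \id$ to be an isomorphism, as desired.

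The main obstacle will be the rigorous construction of the identification $i_*\simeq \lim_{n\in \Delta} i_{n*}$ via the correspondence formalism of \cite{1211.5948}, in direct analogy with Lemma \ref{extension.5} but with $\lSm$-morphisms replaced by closed immersions in the second leg of the correspondence category; once this bookkeeping is set up, the rest of the argument proceeds formally from the hypotheses on $\sT$.
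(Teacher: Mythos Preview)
Your argument is correct, and the conservativity half is exactly the paper's. For the full faithfulness of $i_*$, however, you take a longer route than necessary. The paper dispenses with the whole \v{C}ech nerve and the Liu--Zheng bookkeeping in one line: ``By Propositions \ref{extension.3} and \ref{extension.1}, the question is Zariski local on $\cS$.'' The point is that you only need the single base-change isomorphism
\[
g_0^*\,i_* \xrightarrow{\ \simeq\ } i_{0*}\,(g_0')^*,
\]
which is obtained from Proposition \ref{extension.1} (log smooth base change for the Zariski cover $g_0$, applied to the cartesian square with $i$ horizontal) by passing to right adjoints. Combined with the conservativity of $(g_0')^*$ on $\sT(\cZ)$ from Proposition \ref{extension.3}, this immediately reduces the counit $i^*i_*\xrightarrow{ad'}\id$ to the counit $i_0^*i_{0*}\xrightarrow{ad'}\id$ for the strict closed immersion $i_0$, where the localization hypothesis on $\lSch/B$ applies. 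No coherent assembly of $i_*$ as $\lim_n i_{n*}$ is needed, and in particular the ``main obstacle'' you flag simply does not arise.

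Your approach does work: the Beck--Chevalley isomorphisms you write down are exactly the coherence data required, and the analogue of Lemma \ref{extension.5} for right adjoints (equivalently, the $\PrR$-version of the argument) would produce the identification $i_*\simeq\lim_n i_{n*}$. But this machinery is overkill here; the paper's observation that a single level of the cover already suffices is both shorter and avoids the $\infty$-categorical bookkeeping entirely.
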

\begin{proof}
Proposition \ref{div.6} implies that $i$ is a representable closed immersion.
By Propositions \ref{extension.3} and  \ref{extension.1}, the question is Zariski local on $\cS$.
Hence we may assume that $\cS=h_S$ for some $S\in \lFan/B$ and $i\simeq h_a$ for some strict closed immersion $a\colon Z\to S$.
Then use the localization property for $\sT$ over $\lSch/B$ to conclude.
\end{proof}

\begin{remark}
One can argue as in \cite[Lemma 2.3.5]{CD19} to show that the localization property for a closed immersion $i\colon \cZ\to \cS$ in $\lSpc/B$ is equivalent to requiring that $i_*$ is conservative and the sequence
\[
j_\sharp j^* \xrightarrow{ad'}
\id
\xrightarrow{ad}
i_*i^*
\]
is a cofiber sequence.
A similar remark applies to the localization property for a strict closed immersion in $\lSch/B$ too.
\end{remark}

For $\cS\in \lSpc/B$ and $\cF\in \sT(\cS)$, we set
\[
\cF(1)[2]
:=
\cF\otimes \cofib(M_{\cS}(\cS) \xrightarrow{M_{\cS}(i_0)} M_{\cS}(\cS\times \P^1))
\in
\sT(\cS),
\]
where $i_0$ is the $0$-section.

\begin{proposition}
Suppose $\cS\in \lSpc/B$.
Let $p\colon \cS\times \P^1\to \cS$ be the projection, and let $i\colon \cS\to \cS\times \P^1$ be the $0$-section. Then $p_\sharp i_*$ is an equivalence of $\infty$-categories, and $\sT(S)$ is stable.
\end{proposition}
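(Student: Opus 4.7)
The plan is to descend both assertions from $\lSch/B$, where they are axiomatic, to $\lSpc/B$ via Zariski descent. Pick a representable Zariski cover $h_{S_0}\to\cS$ with $S_0\in\lFan/B$ and form its \v{C}ech nerve $\cS_\bullet\to\cS$. By Proposition \ref{div.1} each $\cS_n\simeq h_{S_n}$ for some $S_n\in\lFan/B$, and the sheaf condition gives $\sT(\cS)\simeq\lim_{n\in\Delta}\sT(S_n)$ in $\infCat_\infty$. Since a limit of stable $\infty$-categories with exact transition functors is itself stable, the $\P^1$-stability axiom for $\sT$ on $\lSch/B$ immediately yields the stability of $\sT(\cS)$.

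For the claim that $p_\sharp i_*$ is an equivalence, I would first show that this endofunctor is compatible with pullback along the cover. Compatibility of $p_\sharp$ is Proposition \ref{extension.1}; compatibility of $i_*$ reduces to the strict base change $u'^{*}i_*\simeq i_{n*}u^{*}$ along the log smooth morphism $u\colon h_{S_n}\to\cS$ and its base change $u'\colon h_{S_n}\times\P^1\to\cS\times\P^1$. The Beck-Chevalley map can be shown to be an isomorphism by applying the conservative pair $(i_n^{*},j_n^{*})$ from Proposition \ref{loc.1} and using the identities $j^{*}i_*=0$ and $i^{*}i_*\simeq\id$ that come from localization. It follows that $p_\sharp i_*\simeq\lim_{n\in\Delta}p_{n\sharp}i_{n*}$, so it suffices to treat the case $\cS=h_S$ with $S\in\lFan/B$, where $\sT(\cS)\simeq\sT(S)$.

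In that case, I would first establish the formula $p_\sharp i_*\cF\simeq T\otimes\cF$ with $T:=p_\sharp i_*\unit$. This follows from the closed-immersion projection formula $i_*(\cF')\otimes\cG\simeq i_*(\cF'\otimes i^{*}\cG)$, itself a consequence of the localization cofiber sequence combined with the log smooth projection formula (Proposition \ref{extension.8}) applied to $j$, together with $i^{*}p^{*}\simeq\id$ (as $pi=\id$) and the $p$-projection formula. To identify $T$, apply the localization cofiber sequence $j_\sharp j^{*}\to\id\to i_*i^{*}$ to $p^{*}\unit$ and push by $p_\sharp$; this yields a cofiber sequence $M_\cS(\cS\times(\P^1-\{0\}))\to M_\cS(\cS\times\P^1)\to T$. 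By $\A^1$-invariance the leftmost term is $\unit=M_\cS(\cS)$ via the $\infty$-section, and the automorphism of $\P^1$ swapping $0$ and $\infty$ identifies the resulting cofiber with $\cofib(M_\cS(i_0))=\unit(1)[2]$. This is $\otimes$-invertible by the $\P^1$-stability axiom, so $p_\sharp i_*\simeq T\otimes(-)$ is an equivalence.

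The two main technical hurdles are the strict base change for $i_*$ along log smooth covers, which is needed to push descent through $i_*$, and the careful identification of $T$ with $\unit(1)[2]$, which requires both $\A^1$-invariance and the $0\leftrightarrow\infty$ automorphism of $\P^1$. Everything else is standard descent combined with routine projection-formula manipulations.
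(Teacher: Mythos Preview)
Your proof is correct, but it takes a different route from the paper's. You descend the endofunctor $p_\sharp i_*$ through the \v{C}ech nerve first, and only afterwards compute $p_\sharp i_*\cF\simeq\unit(1)[2]\otimes\cF$ in the representable case. The paper reverses this order: it establishes the formula $p_\sharp i_*\cF\simeq\cF(1)[2]$ directly for arbitrary $\cS\in\lSpc/B$, using the localization property (Proposition~\ref{loc.1}) and the log smooth projection formula (Proposition~\ref{extension.8}) already proved at that level of generality, and then reduces the remaining question---whether $\unit(1)[2]$ is $\otimes$-invertible---to the representable case by checking that the evaluation morphism $\unit(1)[2]\otimes\ul{\Hom}(\unit(1)[2],\unit)\to\unit$ becomes an isomorphism after applying the conservative functor $g^*$ for a representable Zariski cover $g\colon h_S\to\cS$.

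The paper's order buys a cleaner descent step: checking that a single morphism in $\sT(\cS)$ is an isomorphism after conservative pullback requires no coherence, whereas your claim that $p_\sharp i_*\simeq\lim_{n\in\Delta}p_{n\sharp}i_{n*}$ asks for the base-change isomorphisms $u_n^*p_\sharp i_*\simeq p_{n\sharp}i_{n*}u_n^*$ to assemble coherently over $\Delta$. This is not false, but it is exactly the kind of $\infty$-categorical bookkeeping the paper's approach sidesteps. Your inlining of the base-change isomorphism $u'^*i_*\simeq i_{n*}u^*$ via the conservative pair $(i_n^*,j_n^*)$ is precisely the content of Proposition~\ref{loc.2}, which the paper proves afterwards; so you are not using anything illicit, just anticipating later material. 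On the whole your argument works but duplicates effort: once localization and the projection formula are in hand for $\lSpc/B$, the identification $p_\sharp i_*\simeq(-)(1)[2]$ needs no descent at all.
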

\begin{proof}
We have natural isomorphisms
\[
p_\sharp i_*\cF
\simeq
p_\sharp i_*i^* p^*\cF
\simeq
\cF(1)[2]
\]
for $\cF\in \sT(\cS)$,
where the second one is due to Propositions \ref{extension.8} and \ref{loc.1}.
Hence the claim is equivalent to saying that $\unit[1]$ and $\unit(1)[2]$ are $\otimes$-invertible.
Let us focus on the case of $\unit(1)[2]$ since the proofs are similar.
We only need to show that the evaluation morphism
\[
\unit(1)[2]
\otimes
\ul{\Hom}(\unit(1)[2],\unit)
\to
\unit
\]
is an isomorphism,
where $\ul{\Hom}$ denotes the internal Hom in $\sT(\cS)$.

There exists a representable Zariski cover $g\colon h_{S}\to \cS$ with $S\in \lFan/B$.
Since $g^*$ is conservative by Proposition \ref{extension.3},
it suffices to show that the induced morphism
\[
g^*\unit(1)[2]
\otimes
g^*\ul{\Hom}(\unit(1)[2],\unit)
\to
g^*\unit
\]
is an isomorphism.
By adjunction,
Proposition \ref{extension.8} yields a natural isomorphism.
\[
g^*\ul{\Hom}(\unit(1)[2],\unit)
\simeq
\ul{\Hom}(g^*\unit(1)[2],g^*\unit)
\]
Hence we reduce to the case when $\cS=h_S$ with $S\in \lFan/B$.
We finish the proof by $\P^1$-stability.
\end{proof}

We summarize the results in this subsection as follows.

\begin{theorem}
The sheaf
\[
\sT
\in
\Sh_{\dNis}(\lSpc/B,\CAlg(\PrL))
\]
satisfies the following properties:
\begin{itemize}
\item For every morphism $f$ in $\lSpc/B$,
$f_*$ denotes a right adjoint of $f^*:=\sT(f)$.
\item For every log smooth morphism $f$ in $\lSpc/B$, $f^*$ admits a left adjoint $f_\sharp$.
\item (Log smooth base change)
For every cartesian square in $\lSpc/B$
\[
\begin{tikzcd}
\cX'\ar[d,"f'"']\ar[r,"g'"]&
\cX\ar[d,"f"]
\\
\cS'\ar[r,"g"]&
\cS
\end{tikzcd}
\]
such that $f$ is log smooth, the natural transformation
\[
f_\sharp'g'^*\xrightarrow{Ex} g^*f_\sharp
\]
is an isomorphism.
\item (Log smooth projection formula)
For every log smooth morphism $f\colon \cX\to \cS$ in $\lSpc/B$, the natural transformation
\[
f_\sharp((-) \otimes f^*(-))
\xrightarrow{Ex}
f_\sharp(-) \otimes (-)
\]
is an isomorphism.
\item (Localization property) For every closed immersion $i\colon \cZ\to \cS$ in $\lSpc/B$ with its open complement $j\colon \cU\to \cS$,
$i_*$ is fully faithful,
and the pair of functors $(i^*,j^*)$ is conservative.
\item ($\A^1$-invariance) 
Suppose $\cS\in \lSpc/B$.
Let $p\colon \cS\times \A^1\to \cS$ be the projection.
Then $p^*$ is fully faithful.
\item ($\P^1$-stability) Suppose $\cS\in \lSpc/B$.
Let $p\colon \cS\times \P^1\to \cS$ be the projection, and let $i\colon \cS\to \cS\times \P^1$ be the $0$-section.
Then $p_\sharp i_*$ is an equivalence of $\infty$-categories, and $\sT(S)$ is stable.
\end{itemize}
\end{theorem}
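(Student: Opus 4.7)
The plan is simple: this theorem is a summary statement, so the proof amounts to assembling the propositions already established in this subsection and invoking the adjoint functor theorem once.

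First, I would observe that the existence of the right adjoint $f_*$ for an arbitrary morphism $f$ in $\lSpc/B$ follows from the fact that $\sT$ takes values in $\CAlg(\PrL)$. Since the forgetful functors $\CAlg(\PrL)\to \PrL\to \infCat_\infty$ preserve limits and are appropriately faithful, the functor $f^*=\sT(f)$ is a colimit-preserving functor between presentable $\infty$-categories; the adjoint functor theorem \cite[Corollary 5.5.2.9]{HTT} therefore supplies the right adjoint $f_*$.

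Next, I would go down the list of remaining properties and cite the corresponding result proved in this subsection:
\begin{itemize}
\item The existence of $f_\sharp$ for log smooth $f\colon \cX\to \cS$ is Proposition \ref{extension.2}.
\item Log smooth base change is Proposition \ref{extension.1}.
\item The log smooth projection formula is Proposition \ref{extension.8}.
\item The localization property is Proposition \ref{loc.1}.
\item $\A^1$-invariance is the proposition stating that $p^*$ is fully faithful for the projection $p\colon \cS\times \A^1\to \cS$, which was proved using Proposition \ref{extension.1} and the conservativity of $g^*$ along a representable Zariski cover.
\item $\P^1$-stability is the final unnumbered proposition, where $p_\sharp i_*$ was shown to be an equivalence via the identification $p_\sharp i_* \cF \simeq \cF(1)[2]$ together with the $\otimes$-invertibility of $\unit(1)[2]$, the latter being reduced to the already-assumed $\P^1$-stability on $\lSch/B$ via a representable Zariski cover.
\end{itemize}

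Since every bullet is already accounted for by an earlier result, there is no real obstacle to this proof — it is pure bookkeeping. In practice I would simply write: ``The first bullet is the adjoint functor theorem; the remaining bullets are Propositions \ref{extension.2}, \ref{extension.1}, \ref{extension.8}, \ref{loc.1}, and the two unnumbered propositions on $\A^1$-invariance and $\P^1$-stability above.''
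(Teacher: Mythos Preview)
Your proposal is correct and matches the paper's approach exactly: the paper states this theorem immediately after the sentence ``We summarize the results in this subsection as follows'' and gives no separate proof, since each bullet is precisely one of the preceding propositions (\ref{extension.2}, \ref{extension.1}, \ref{extension.8}, \ref{loc.1}, and the two unnumbered propositions on $\A^1$-invariance and $\P^1$-stability). Your remark on the adjoint functor theorem for $f_*$ is a reasonable clarification, though the paper leaves even that implicit.
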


\subsection{Miscellaneous properties}

In this subsection, we collect several properties for $\sT$ that are needed later.

For $S\in \lSch/B$, let $\Ft/\ul{S}$ be the full subcategory of $\Sch/\ul{S}$ consisting of schemes of finite type over $\ul{S}$.
As observed in \cite[Theorem 3.4.2]{CD19}, the functor $(\Ft/\ul{S})^{op}\to \Tri^{\otimes}$ sending $X\in \Ft/\ul{S}$ to $\sT(X\times_{\ul{S}}S)$ is a motivic triangulated category in the sense of \cite[Definition 2.4.45]{CD19}, where $\Tri^\otimes$ denotes the $2$-category of closed symmetric monoidal triangulated categories whose $1$-morphisms are the symmetric monoidal functors and whose $2$-morphisms are the symmetric monoidal natural transformations.
In particular, we have the six-functor formalism for strict morphisms, see \cite[Theorem 3.4.2]{logA1} for the details.

\begin{proposition}
\label{fundamental.5}
Let $f\colon X\to S$ be a strict separated smooth morphism in $\lSch/B$, and let $i\colon X\to S$ be its section.
Then $f_\sharp i_*$ is an equivalence of categories.
\end{proposition}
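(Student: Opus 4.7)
The plan is to deduce this from the classical purity theorem for motivic triangulated categories. The paragraph immediately preceding the statement records that for each $S\in\lSch/B$ the assignment $Y\mapsto\sT(Y\times_{\ul{S}}S)$ on $\Ft/\ul{S}$ defines a motivic triangulated category in the sense of \cite[Definition 2.4.45]{CD19}, so the full six-functor formalism for strict morphisms over $S$ is at our disposal. In particular, the relative purity theorem of Voevodsky--R\"ondigs--Ayoub, as formulated in \cite[Theorems 2.4.26, 2.4.28]{CD19}, applies directly to this restriction of $\sT$.

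First, I would check that the hypotheses match: since $f$ is separated, the section $i$ is a strict closed immersion, and both $\id_S\colon S\to S$ and $f\colon X\to S$ are strict smooth morphisms to $S$. Feeding $(f,i)$ into the relative purity theorem produces a canonical isomorphism
\[
f_\sharp i_* \simeq \pi_\sharp s_*,
\]
where $\pi\colon\Normal_{S/X}\to S$ is the normal bundle of $i$ and $s\colon S\to\Normal_{S/X}$ is its zero section. It therefore suffices to show that the Thom transformation $\pi_\sharp s_*$ associated to the vector bundle $\pi$ is an equivalence.

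Next, I would reduce this last point to $\P^1$-stability. Using Zariski descent (Propositions \ref{extension.3} and \ref{extension.1}) the question is local on $S$, so one may assume that $\Normal_{S/X}$ is a trivial bundle of some rank $n$. The projection formula then reduces the rank-$n$ case to the rank-one case $\pi\colon S\times\A^1\to S$ by induction on $n$. For that case, applying the localization cofiber sequence to $\pi^*\unit$ together with $\A^1$-invariance identifies $\pi_\sharp s_*(-)$ with $(-)\otimes\unit(1)[2]$, which is $\otimes$-invertible by $\P^1$-stability.

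The only genuine obstacle is checking that the hypotheses of the CD19 purity theorem are satisfied by the restriction of $\sT$ to strict morphisms over each $S$; once that verification is in place, which is precisely the content of the paragraph preceding the statement, the remainder of the argument is a routine combination of the axioms from the introduction.
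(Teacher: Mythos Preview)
Your argument is correct, but it is considerably more elaborate than what the paper does. The paper's proof is a single sentence: since the paragraph preceding the statement establishes that $Y\mapsto\sT(Y\times_{\ul{S}}S)$ is a motivic triangulated category on $\Ft/\ul{S}$, one simply invokes \cite[Proposition 2.4.11]{CD19}, which in that axiomatic setting already asserts that $p_\sharp s_*$ is an equivalence for any smooth separated $p$ with section $s$. No reduction to the normal bundle is needed, because that proposition applies directly to the pair $(f,i)$.

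What you have written is essentially a sketch of how one \emph{proves} \cite[Proposition 2.4.11]{CD19}: pass to the normal bundle by deformation to the normal cone, Zariski-localize to trivialize, and then identify $\Thom(\A^1)$ with $\unit(1)[2]$ via localization and $\A^1$-invariance. That is a perfectly valid and self-contained route, and it has the virtue of making the dependence on the axioms transparent. Two small remarks, though. First, the reference you give for the isomorphism $f_\sharp i_*\simeq\pi_\sharp s_*$ is off: \cite[Theorems 2.4.26, 2.4.28]{CD19} concern the purity transformation $\mathfrak{p}_f\colon f_\sharp\to f_*\Sigma_f$ for proper smooth $f$, which is a different statement; the comparison with the normal bundle comes from the deformation-to-normal-cone argument elsewhere in \cite{CD19}. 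Second, Propositions \ref{extension.3} and \ref{extension.1} are stated for $\lSpc/B$; for this proposition you are working in $\lSch/B$, so you should appeal to the axioms directly (the dividing Nisnevich sheaf condition for conservativity of Zariski covers, and log smooth base change as stated in the introduction).
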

\begin{proof}
By the above argument, we can use \cite[Proposition 2.4.11]{CD19} since $i$ is strict.
\end{proof}

\begin{proposition}
\label{fundamental.3}
Let $i\colon \cZ\to \cS$ be a closed immersion in $\lSpc/B$.
Then $i_*$ admits a right adjoint $i^!$.
\end{proposition}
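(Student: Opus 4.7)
The plan is to invoke the presentable adjoint functor theorem \cite[Corollary 5.5.2.9]{HTT}: since $\sT$ takes values in $\PrL$, both $\sT(\cZ)$ and $\sT(\cS)$ are presentable, so to construct $i^!$ it suffices to show that $i_*$ preserves small colimits. The strategy is to test this preservation against the conservative pair $(i^*,j^*)$ provided by localization.

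First I would set up $j^*i_*\simeq 0$. Let $j\colon \cU\to \cS$ denote the open complement of $i$, which exists in $\lSpc/B$ by the definition of a closed immersion. Since $j$ is log smooth, Proposition \ref{extension.2} provides a left adjoint $j_\sharp$ of $j^*$. Pulling back $j$ along itself gives $\cU\times_{\cS}\cU\simeq \cU$, and log smooth base change (Proposition \ref{extension.1}) applied to this cartesian square yields $j^*j_\sharp\simeq \id$. The cofiber-sequence formulation of localization recorded in the remark following Proposition \ref{loc.1} supplies a cofiber sequence
\[
j_\sharp j^*\to \id\to i_*i^*;
\]
applying $j^*$ to it and using $j^*j_\sharp\simeq \id$ on the left-hand arrow shows $j^*i_*i^*\simeq 0$. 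Combined with the fully faithfulness $i^*i_*\simeq \id$ from Proposition \ref{loc.1}, this produces $j^*i_*\simeq 0$.

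Now for any small diagram $\cG\colon K\to \sT(\cZ)$, the canonical comparison
\[
\colim_K i_*\cG\to i_*\colim_K \cG
\]
becomes an equivalence after applying $i^*$ (both sides reduce to $\colim_K \cG$ via $i^*i_*\simeq \id$, using that $i^*$ preserves colimits as a left adjoint) and becomes the tautological map $0\to 0$ after applying $j^*$ (using $j^*i_*\simeq 0$ and that $j^*$ preserves colimits, being a left adjoint to $j_*$). The conservativity of $(i^*,j^*)$ from Proposition \ref{loc.1} then forces the comparison to be an equivalence, so $i_*$ preserves small colimits and admits a right adjoint $i^!$. The only real subtlety is the verification that $j^*i_*\simeq 0$; once this is in hand, the rest is a formal application of the adjoint functor theorem together with the localization property.
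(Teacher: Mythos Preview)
Your proof is correct and follows essentially the same approach as the paper: both show $i_*$ preserves colimits by testing the comparison map against the conservative pair $(i^*,j^*)$, using $i^*i_*\simeq \id$ and $j^*i_*\simeq 0$. The paper simply asserts these two identities and concludes, whereas you spell out the derivation of $j^*i_*\simeq 0$ from the localization cofiber sequence; this extra detail is sound and helpful, though note that the existence of the open complement is Theorem 8.6 of \cite{divspc} rather than part of the definition of a closed immersion.
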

\begin{proof}
Let $j\colon \cS-\cZ\to \cS$ be the open complement of $i$.
Let $\colim_{a\in I} \cF_a$ be a colimit in $\sT(\cZ)$.
We need to show that the induced morphism
\[
\colim_{a\in I} i_*\cF
\to
i_*\colim_{a\in I}\cF
\]
is an isomorphism.
Since $(i^*,j^*)$ is conservative, it suffices to show that the induced morphisms
\[
i^*\colim_{a\in I} i_*\cF
\to
i^*i_*\colim_{a\in I}\cF
\text{ and }
j^*\colim_{a\in I} i_*\cF
\to
j^*i_*\colim_{a\in I}\cF
\]
are isomorphisms.
To conclude, observe that $i^*$ and $j^*$ preserve colimits, and use $i^*i_*\simeq \id$ and $j^*i_*\simeq 0$. 
\end{proof}

\begin{proposition}
\label{fundamental.2}
For all $X,Y\in \lSch/B$, there is a canonical equivalence
\[
\sT(X)\times \sT(Y) \simeq \sT(X\amalg Y).
\]
A similar claim holds for $\lSpc/B$ too.
\end{proposition}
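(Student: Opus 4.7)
The plan is to extract this from the dividing Nisnevich sheaf condition on $\sT$ applied to a single distinguished square, together with the vanishing of $\sT$ on the empty object.

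\textbf{First step.} I would observe that the open immersions $X \hookrightarrow X\amalg Y$ and $Y \hookrightarrow X \amalg Y$ form a Zariski cover of $X\amalg Y$ whose fiber product is empty. Concretely, the square
\[
\begin{tikzcd}
\emptyset \ar[r] \ar[d] & X \ar[d] \\
Y \ar[r] & X\amalg Y
\end{tikzcd}
\]
is a Zariski distinguished square in $\lSch/B$, since it comes via strict base change from the corresponding cartesian square of underlying schemes. In particular it is a dividing Nisnevich distinguished square.

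\textbf{Second step.} Applying the sheaf condition (in the reformulation via distinguished squares discussed right after \eqref{formulation.0.2}, i.e.\ Theorem \ref{cd.2}) yields a cartesian square
\[
\begin{tikzcd}
\sT(X\amalg Y) \ar[r] \ar[d] & \sT(X) \ar[d] \\
\sT(Y) \ar[r] & \sT(\emptyset)
\end{tikzcd}
\]
in $\CAlg(\PrL)$, hence in $\infCat_\infty$ by the limit-preserving forgetful functors already cited.

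\textbf{Third step.} I would verify that $\sT(\emptyset)$ is the terminal $\infty$-category. The empty family is a dividing Nisnevich covering of $\emptyset$, so the sheaf condition identifies $\sT(\emptyset)$ with the empty limit, which is terminal in $\CAlg(\PrL)$. With the bottom-right corner of the cartesian square trivial, the resulting fiber product collapses to the product $\sT(X)\times \sT(Y)$, yielding the claimed canonical equivalence.

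\textbf{Fourth step.} For $\lSpc/B$, the same argument applies verbatim. The distinguished square above transports to a dividing Nisnevich distinguished square in $\lSpc/B$ via the Yoneda functor $h$, the sheaf condition on $\sT \in \infShv_{dNis}(\lSpc/B,\CAlg(\PrL))$ provided by Proposition \ref{extension.7} gives the analogous cartesian square, and $\sT(\emptyset) \simeq *$ for the same reason. The only mildly delicate point is the treatment of $\sT(\emptyset)$, but it is forced by the very definition of a sheaf.
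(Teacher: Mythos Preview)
Your proof is correct. The paper's proof is a one-line reference to \cite[Lemma 2.3.7]{CD19}, which derives additivity from the localization property rather than from descent: the inclusion $i\colon X\hookrightarrow X\amalg Y$ is simultaneously an open and a strict closed immersion with complement $j\colon Y\hookrightarrow X\amalg Y$, so the localization axiom gives that $(i^*,j^*)$ is conservative with $i_*$ fully faithful, and a standard recollement argument then produces the product decomposition.

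Your approach via Zariski descent is a genuinely different route and arguably more direct here, since it uses only the sheaf hypothesis and the triviality of $\sT(\emptyset)$ and never unpacks the localization machinery. The localization-based argument in \cite{CD19} has the mild advantage of being the standard reference in the motivic literature and of making the inverse equivalence explicit (via $i_*\times j_*$ or $i_\sharp\times j_\sharp$). One small wording issue in your fourth step: the paper does not set up a cd-structure on $\lSpc/B$, only a topology generated by Zariski and strict Nisnevich covers, so rather than speaking of a ``distinguished square'' in $\lSpc/B$ it is cleaner to invoke the sheaf condition \eqref{extension.0.1} directly for the Zariski cover $\{\cX,\cY\}$ of $\cX\amalg\cY$; the substance of your argument is unchanged.
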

\begin{proof}
Argue as in \cite[Lemma 2.3.7]{CD19}.
\end{proof}

\begin{proposition}
\label{fundamental.7}
Let
\[
\begin{tikzcd}
X'\ar[r,"g'"]\ar[d,"f'"']&
X\ar[d,"f"]
\\
S'\ar[r,"g"]&
S
\end{tikzcd}
\]
be a cartesian square in $\lSch/B$.
If $f$ is an open immersion and $g$ is strict proper, then the natural transformation
\[
Ex\colon f_\sharp g_*'\to g_*f_\sharp'
\]
is an equivalence.
\end{proposition}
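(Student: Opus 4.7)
My approach is to reduce the statement to the standard base change identity in the classical six-functor formalism for strict morphisms. First observe that every morphism in the given cartesian square is strict: $f$ is an open immersion and thus automatically strict, $g$ is strict by hypothesis, and both $f'$ and $g'$ are base changes of strict morphisms and hence strict as well. Consequently, the whole square lies in $\Ft/\ul{S}$, and I may invoke the motivic triangulated category structure on $(\Ft/\ul{S})^{op}\to \Tri^\otimes$ recalled at the beginning of this subsection via \cite[Theorem 3.4.2]{CD19}.

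Within that formalism, one has the canonical identifications $f_\sharp\simeq f_!$ for the (étale) open immersion $f$, and similarly for $f'$, together with $g_*\simeq g_!$ for the strict proper morphism $g$, and similarly for $g'$. Under these identifications the exchange transformation $f_\sharp g'_*\to g_*f'_\sharp$ becomes the base change map $f_! g'_!\to g_! f'_!$, which in turn matches the canonical composition isomorphism arising from $(fg')_!\simeq f_! g'_!$, $(gf')_!\simeq g_! f'_!$, and the equality $fg' = gf'$ of morphisms in $\lSch/B$. It is therefore an isomorphism.

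The only point that requires care is to verify that the exchange $Ex$, as constructed from the adjunctions $(f_\sharp,f^*)$ and $(g^*,g_*)$, indeed matches the canonical composition isomorphism under the identifications $f_\sharp\simeq f_!$, $g_*\simeq g_!$, and so on. This is a formal consequence of the construction of $(-)_!$ in \cite{CD19}, where the $!$-operation is defined to extend $(-)_\sharp$ on open immersions and $(-)_*$ on proper morphisms in a compatible manner. Equivalently, the desired isomorphism can be read off as a direct instance of proper base change in \cite[\S 2.4]{CD19} applied to our square. This bookkeeping is the only obstacle, and it is routine.
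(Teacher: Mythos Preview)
Your argument is correct and is essentially the content behind the paper's one-line citation to \cite[\S 3.4]{logA1}: since all four morphisms are strict, the square arises from a cartesian square in $\Ft/\ul{S}$ via base change along $S\to \ul{S}$, and the motivic triangulated category structure of \cite[Theorem 2.4.50]{CD19} supplies the required exchange isomorphism (this is exactly the ``$(\mathrm{Loc})$ and $f$ open, $g$ proper'' case of \cite[Theorem 2.4.26(2)]{CD19}). One small wording issue: the square does not literally lie in $\Ft/\ul{S}$, which is a category of schemes; rather, strictness gives isomorphisms $X\simeq \ul{X}\times_{\ul{S}}S$, $S'\simeq \ul{S'}\times_{\ul{S}}S$, $X'\simeq \ul{X'}\times_{\ul{S}}S$, so the square is the image of the underlying square in $\Ft/\ul{S}$ under the functor $(-)\times_{\ul{S}}S$, which is how the six-functor formalism is invoked in the paper.
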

\begin{proof}
This is observed in \cite[\S 3.4]{logA1}.
\end{proof}

\begin{proposition}
\label{loc.2}
Let
\[
\begin{tikzcd}
\cZ'\ar[d,"f'"']\ar[r,"i'"]&
\cX'\ar[d,"f"]
\\
\cZ\ar[r,"i"]&
\cX
\end{tikzcd}
\]
be a cartesian square in $\lSpc/B$ such that $i$ is a closed immersion.
Then the natural transformation
\begin{equation}
\label{loc.2.1}
Ex\colon f^*i_*\to i_*'f'^*
\end{equation}
is an isomorphism.
If $f\in \lSmSpc$, then the natural transformation
\begin{equation}
\label{loc.2.2}
Ex\colon f_\sharp i_*'\to i_*f_\sharp'
\end{equation}
is an isomorphism.
\end{proposition}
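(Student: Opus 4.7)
The plan is to verify both equivalences by appealing to the conservativity provided by the localization property (Proposition \ref{loc.1}), together with log smooth base change (Proposition \ref{extension.1}). Throughout, let $j\colon \cU\to \cX$ and $j'\colon \cU'\to \cX'$ be the open complements of $i$ and $i'$; these are compatible under $f$ since open immersions pull back to open immersions and the complement is characterized by the fiber product $\cY\times_{\cX}\cZ=\emptyset$. Let $f_\cU\colon \cU'\to \cU$ be the base change of $f$ along $j$.

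For \eqref{loc.2.1}, since $((i')^*,(j')^*)$ is conservative on $\sT(\cX')$, I will verify the Beck--Chevalley map becomes an isomorphism after each is applied. After $(i')^*$, I use the tautological identification $(i')^*f^*\simeq f'^*i^*$ (from $fi'=if'$) together with $i^*i_*\simeq \id$ and $(i')^*i'_*\simeq \id$ from the full faithfulness of the closed pushforwards; both sides then identify with $f'^*$ and the map with the identity. After $(j')^*$, both sides vanish: on the right $(j')^*i'_*\simeq 0$, and on the left $(j')^*f^*\simeq f_\cU^*j^*$ with $j^*i_*\simeq 0$. The vanishing $j^*i_*\simeq 0$ (and similarly for $j'$, $i'$) is a direct consequence of the cofiber sequence in the Remark, applied to an object of the form $i_*\cF$ and using full faithfulness of $i_*$.

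For \eqref{loc.2.2}, I will use conservativity of $(i^*,j^*)$ on $\sT(\cX)$. By Proposition \ref{extension.1}, $i^*f_\sharp\simeq f'_\sharp(i')^*$ and $j^*f_\sharp\simeq f_{\cU,\sharp}(j')^*$. After $i^*$: the left side becomes $f'_\sharp(i')^*i'_*\simeq f'_\sharp$ by full faithfulness of $i'_*$, while the right side is $i^*i_*f'_\sharp\simeq f'_\sharp$, and the resulting morphism is the identity. After $j^*$: the left side becomes $f_{\cU,\sharp}(j')^*i'_*\simeq 0$ while the right side is $j^*i_*f'_\sharp\simeq 0$, again by the vanishing described above.

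If there is any genuine obstacle, it is only the notational bookkeeping required to verify that the Beck--Chevalley transformations, after being pulled back through these conservative families, coincide with the identity (or zero) morphisms identified above. This is routine once the definitions are unpacked, and I expect no subtlety beyond the log smooth base change and the localization package already established in this section.
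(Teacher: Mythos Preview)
Your proposal is correct and follows essentially the same approach as the paper: both proofs use the conservativity of the pair $(i'^*,j'^*)$ (respectively $(i^*,j^*)$) from the localization property, identify the source and target after applying $i'^*$ (respectively $i^*$) via full faithfulness of the closed pushforwards, and observe vanishing after applying $j'^*$ (respectively $j^*$). Your explicit invocation of log smooth base change (Proposition~\ref{extension.1}) for the second part makes precise what the paper leaves as ``a similar argument.''
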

\begin{proof}
Consider the cartesian square
\[
\begin{tikzcd}
\cU'\ar[d,"f''"']\ar[r,"j'"]&
\cX'\ar[d,"f"]
\\
\cU\ar[r,"j"]&
\cX,
\end{tikzcd}
\]
where $j$ is the open complement of $i$.
Use the fact that $i_*$ and $i_*'$ are fully faithful to show that the natural transformation
\[
i'^*f^*i_*\xrightarrow{Ex} i'^*i_*'f'^*
\]
is an isomorphism.
Use $j^*i_*=0$ and $j'^*i_*'=0$ to show that the natural transformation
\[
j'^*f^*i_*\xrightarrow{Ex} j'^*i_*'f'^*
\]
is an isomorphism.
Together with the fact that the pair $(i'^*,j'^*)$ is conservative, we deduce that \eqref{loc.2.1} is an isomorphism.

Suppose $f\in \lSmSpc$.
By a similar argument, the natural transformations
\[
i^* f_\sharp i_*'\xrightarrow{Ex} i^*i_*f_\sharp'
\text{ and }
j^* f_\sharp i_*'\xrightarrow{Ex} j^*i_*f_\sharp'
\]
are isomorphisms.
Use the conservativity of $(i^*,j^*)$ to conclude.
\end{proof}

\begin{proposition}
[$\ver$-invariance]
\label{loc.3}
Let $f\colon X\to S$ be a log smooth morphism in $\lSch/B$, and let $j\colon X-\partial_S X\to X$ be the obvious open immersion.
Then the natural transformation
\[
f^*\xrightarrow{ad} j_*j^*f^*
\]
is an isomorphism.
\end{proposition}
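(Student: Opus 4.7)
The plan is to pass to an adjoint formulation, reduce to a generating family via log smooth base change, and then apply $\ver$-invariance twice. By the adjunctions $(f_\sharp, f^*)$ and $(j_\sharp, j^*)$, the natural transformation $f^* \to j_*j^*f^*$ is an isomorphism if and only if the transformation $f_\sharp j_\sharp j^* \to f_\sharp$ of functors $\sT(X) \to \sT(S)$, induced by the counit $j_\sharp j^* \to \id$, is an isomorphism. Both sides preserve colimits, and $\sT(X)$ is generated by $\{M_X(Y)(d)[n] : Y \in \lSm/X,\, d, n \in \Z\}$. Since twists and shifts are invertible, it is enough to verify the transformation on $\cH = g_\sharp \unit$ for an arbitrary log smooth morphism $g\colon Y \to X$.

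Next I would form the cartesian square
\[
\begin{tikzcd}
Y \times_X U \ar[r, "j''"] \ar[d, "g'"'] & Y \ar[d, "g"] \\
U \ar[r, "j"] & X
\end{tikzcd}
\]
with $U := X - \partial_S X$, and apply log smooth base change to obtain $j^* g_\sharp \unit \simeq g'_\sharp \unit$. Consequently the two sides of the transformation become $(fgj'')_\sharp \unit = M_S(Y \times_X U)$ and $(fg)_\sharp \unit = M_S(Y)$, and the map reduces to the one induced by the open immersion $j''$. To show this is an isomorphism, I would interpolate through $Y - \partial_S Y$: since $g^{-1}(\partial_S X) \subseteq \partial_S Y$, one has $Y - \partial_S Y \subseteq Y \times_X U$, and because $j''$ is strict, $\partial_S(Y \times_X U) = \partial_S Y \cap (Y \times_X U)$, which forces $(Y \times_X U) - \partial_S(Y \times_X U) = Y - \partial_S Y$. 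Applying $\ver$-invariance to the log smooth morphisms $fg\colon Y \to S$ and $fgj''\colon Y \times_X U \to S$ then yields isomorphisms $M_S(Y - \partial_S Y) \xrightarrow{\sim} M_S(Y)$ and $M_S(Y - \partial_S Y) \xrightarrow{\sim} M_S(Y \times_X U)$, whence $M_S(Y \times_X U) \xrightarrow{\sim} M_S(Y)$ by two-out-of-three.

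The main obstacle I expect is the set-theoretic inclusion $g^{-1}(\partial_S X) \subseteq \partial_S Y$ together with the equality $\partial_S(Y \times_X U) = \partial_S Y \cap (Y \times_X U)$; these are basic structural properties of the vertical boundary recorded in \cite[Definition 2.3.5]{logA1}, but they require a small amount of bookkeeping with the relative characteristic monoids. Once these are established, the rest of the argument is a formal assembly of log smooth base change, the generator condition, and two applications of $\ver$-invariance.
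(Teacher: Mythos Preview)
Your proof is correct and follows essentially the same route as the paper. The paper tests the map $f^*\cF\to j_*j^*f^*\cF$ against the generators $M_X(V)(d)[n]$ via $\Hom$, which after adjunction reduces to showing $M_S(V\times_X(X-\partial_S X))\to M_S(V)$ is an isomorphism---exactly your reduction, just phrased contravariantly rather than by passing to the left adjoint $f_\sharp j_\sharp j^*\to f_\sharp$ on generators. For the final step the paper simply cites \cite[Propositions 2.3.8(6), 2.3.9]{logA1} to conclude that this open immersion lies in $\ver_S$, whereas you unpack that citation by hand (the inclusion $g^{-1}(\partial_S X)\subset\partial_S Y$ and the compatibility of $\partial_S$ with strict open immersions); both then finish with the same two-out-of-three argument from the $\ver$-invariance axiom.
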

\begin{proof}
It suffices to show that the induced morphism
\[
\Hom_{\sT(X)}(M(V)(d)[n],f^*\cF)
\to
\Hom_{\sT(X)}(M(V)(d)[n],j_*j^*f^*\cF)
\]
is an isomorphism for all $V\in \lSm/X$, $\cF\in \sT(S)$, and $d,n\in \Z$.
By adjunction, it suffices to show that the induced morphism
\[
M_S(V\times_X (X-\partial_S X))\to M_S(V)
\]
in $\sT(S)$ is an isomorphism.
To conclude, observe that the open immersion $V\times_X (X-\partial_S X)\to V$ is in $\ver_S$ by \cite[Propositions 2.3.8(6), 2.3.9]{logA1}.
\end{proof}

\section{Gysin isomorphisms}

Recall that we fix a sheaf
\[
\sT\in \Sh_{\dNis}(\lSch/B,\CAlg(\PrL))
\simeq
\Sh_{\dNis}(\lSpc/B,\CAlg(\PrL))
\]
satisfying the conditions in the introduction.

The purpose of this section is to show that there is a natural isomorphism
\[
M_{\cS}(\cX/\cX-\cZ)
\simeq
M_{\cS}(\Normal_{\cZ} \cX/\Normal_{\cZ} \cX-\cZ)
\]
in $\sT(\cS)$
for every closed immersion $a\colon \cZ\to \cX$ in $\lSmSpc/\cS$ with $\cS\in \lSpc/B$.
The outline of the proof is as follows.
We begin with the deformation to the normal cone construction in \cite{divspc}.
Then we reduce to the case when $\cS=h_S$ for some $S\in \lFan/B$ and $\cZ\to \cX$ is equal to $h_i$ for some strict closed immersion $i\colon Z\to X$ in $\lSm/S$.
To reduce to the known case \cite[Theorem 7.5.4]{logDM}, we employ a log smooth morphism $g\colon S'\to S$ in \S \ref{conserv} such that $g^*$ is conservative and the projection $X\times_S S'\to S'$ is saturated.
To construct such a morphism $g$, we need the content of \S \ref{localization}, where we investigate which morphisms are inverted when we invert dividing covers and $\ver$ simultaneously.
This investigation requires the notion of pans in \S \ref{pan}.
As inverting log blow-ups produces a divided log space from an fs log scheme, inverting subdivisions produces a pan from a fan.

\subsection{Pans}
\label{pan}

A fan chart of an fs log scheme is useful for understanding the local behavior.
To obtain an analogous notion of a fan chart for divided log spaces, we first need to invert all subdivisions in the category of fans.
Then only the support and the lattice matter.
This reasoning leads to the following notion.

\begin{definition}
A \emph{pan $\cA$} is a lattice $L_\cA$ together with a subset $\cA$ of $L_{\cA,\R}:=L_\cA\otimes \R$ such that $\cA$ is the support $\lvert \Sigma \rvert$ of a fan $\Sigma$ in $L_{\cA}$.
Here, the support $\lvert \Sigma \rvert$ is defined to be the set of points $x\in L_{\cA,\R}$ such that $rx$ is in a cone of $\Sigma$ for some real number $r>0$.
In this case, we say that the \emph{underlying pan of $\Sigma$} is $\cA$.

A \emph{morphism of pans $f\colon \cB\to \cA$} is a homomorphism of lattices $L_f\colon L_\cB\to L_\cA$ such that $L_{f,\R}:=L_f\otimes \R$ maps $\cB$ into $\cA$.
\end{definition}

Let $\cA_{Fan}$ be the category of fans $\Sigma$ in $L_\cA$ such that the underlying pan of $\Sigma$ is $\cA$.
If $\Sigma,\Sigma'\in \cA_{Fan}$, then the fan
\[
\Sigma'':=\{\sigma\cap \sigma' : \sigma\in \Sigma,\sigma'\in \Sigma'\}
\]
is an object of $\cA_{Fan}$.
This shows that $\cA_{Fan}$ is filtered.
We set
\[
\T(\cA):=\T_{\cA}
:=
\limit_{\Sigma\in \cA_{Fan}}h_{\T_{\Sigma}}
\in \Sh_{\dNis}(\lFan/\Spec(\Z)).
\]
If $\Sigma'\to \Sigma$ is a morphism in $\cA_{Fan}$, then $\T_{\Sigma'}\to \T_{\Sigma}$ is a dividing cover.
Hence we have an isomorphism $\T_{\cA}\simeq h_{\T_{\Sigma}}$ whenever $\Sigma\in \cA_{Fan}$.
In particular, $\T_{\cA}\in \lSpc/\Spec(\Z)$.
We set $\T_{\cA,B}:=\T_{\cA}\times B\in \lSpc/B$.

Let $\Fan$ denote the category of fans, and let $\Pan$ denote the category of pans.
We have the functor
\[
\lvert - \rvert\colon \Fan\to \Pan
\]
sending $\Sigma$ to $\lvert \Sigma \rvert$.
Furthermore, we have the functor
\[
\T\colon \Pan\to \lSpc/\Spec(\Z)
\]
sending $\cA$ to $\T_\cA$.
Observe that the triangle
\[
\begin{tikzcd}
\Fan\ar[rd,"h\circ\T"']\ar[rr,"{\lvert - \rvert}"]&
&
\Pan\ar[ld,"\T"]
\\
&
\lSpc/\Spec(\Z)
\end{tikzcd}
\]
commutes.

\begin{definition}
Let $f\colon \cB\to \cA$ be a morphism of pans.
We say that $\cB$ is a \emph{subpan of $\cA$} if $L_f\colon L_\cB\to L_\cA$ is an isomorphism.
We say that $f$ is an \emph{isogeny} if $f$ is surjective and the kernel of $L_f$ is a finite group.
\end{definition}

\begin{proposition}
\label{pan.1}
Let $\cB_1,\ldots,\cB_n$ be subpans of a pan $\cA$.
Then there exists a fan $\Sigma$ such that the underlying pan of $\Sigma$ is $\cA$ and $\Sigma\cap \cB_i$ is a subfan of $\Sigma$ for all $i=1,\ldots,n$.
\end{proposition}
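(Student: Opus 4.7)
The plan is to construct $\Sigma$ as a hyperplane-arrangement refinement of a fan witnessing $\cA$. I would first choose a fan $\Sigma^{(0)}$ in $L_\cA$ with $|\Sigma^{(0)}|=\cA$, and, for each $i=1,\dots,n$, a fan $\Sigma^{(i)}$ in $L_{\cB_i}=L_\cA$ with $|\Sigma^{(i)}|=\cB_i$; both exist by the definitions of a pan and of a subpan. Next I would let $\mathcal{H}$ be the finite set of hyperplanes of $L_{\cA,\R}$ that arise as linear spans of facets of cones in $\Sigma^{(1)}\cup\cdots\cup\Sigma^{(n)}$, and define $\Sigma$ by iteratively cutting $\Sigma^{(0)}$ along each $H\in \mathcal{H}$: whenever a cone $\sigma$ of the current refinement is not already contained in a single closed half-space bounded by $H$, I replace $\sigma$ by the three cones $\sigma\cap H^{+}$, $\sigma\cap H^{-}$, and $\sigma\cap H$. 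A standard argument in polyhedral geometry shows this produces a fan $\Sigma$ with $|\Sigma|=\cA$.

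The payoff is the following dichotomy: by construction, for every $\sigma\in \Sigma$ and every $H\in \mathcal{H}$, the relative interior $\sigma^{\circ}$ lies in exactly one of $H^{+}\setminus H$, $H^{-}\setminus H$, or $H$. I would then argue that if $\sigma^{\circ}\cap \cB_i\neq \emptyset$, picking $x\in \sigma^{\circ}\cap \tau$ for some $\tau\in \Sigma^{(i)}$ and writing $\tau=\bigcap_{F}H_F^{+}$ over its facets (each $H_F\in \mathcal{H}$), each $H_F^{+}$ meets $\sigma^{\circ}$; the dichotomy then forces $\sigma^{\circ}\subseteq H_F^{+}$, whence $\sigma^{\circ}\subseteq \tau\subseteq \cB_i$ and a fortiori $\sigma\subseteq \cB_i$. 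It follows that $\{\sigma\in \Sigma:\sigma\subseteq \cB_i\}$ is automatically closed under taking faces and that its union is $\cB_i$ (since every point of $\cB_i\subseteq \cA=|\Sigma|$ lies in the relative interior of some cone of $\Sigma$), so it is a subfan of $\Sigma$ as required.

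The main technical obstacle I anticipate is the formal verification that the hyperplane-arrangement subdivision is indeed a fan --- that it is closed under taking faces and that pairwise intersections of its cones are common faces. This is classical polyhedral geometry, but going through it rigorously requires an induction on $|\mathcal{H}|$, using at each step that cutting a single cone by a single hyperplane produces at most three cones meeting along a common face, together with rationality and strong convexity being preserved since each $H\in\mathcal{H}$ is defined by a rational linear equation.
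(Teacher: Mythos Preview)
Your approach has a genuine gap: it fails when the subpans $\cB_i$ are lower-dimensional. Consider $L_\cA=\Z^2$, $\cA$ the first quadrant, and $\cB_1$ the ray $\Cone((1,1))$. The only fan $\Sigma^{(1)}$ with support $\cB_1$ has a single maximal cone, the ray itself; its unique facet is $\{0\}$, whose linear span is not a hyperplane. Hence your set $\mathcal{H}$ is empty, $\Sigma=\Sigma^{(0)}$ is the fan with the single maximal cone $\sigma$ equal to the first quadrant, and $\sigma\cap\cB_1$ is the $(1,1)$-ray, which is \emph{not} a cone of $\Sigma$. So $\Sigma\cap\cB_1$ is not a subfan of $\Sigma$, and your construction does not produce the required fan. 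The step that breaks is the sentence ``writing $\tau=\bigcap_F H_F^+$ over its facets (each $H_F\in\mathcal{H}$)'': this description of $\tau$ as an intersection of facet half-spaces is only valid when $\tau$ is full-dimensional in $L_{\cA,\R}$.

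The fix is easy in spirit: enlarge $\mathcal{H}$ so that every cone $\tau$ of every $\Sigma^{(i)}$ is an intersection of closed half-spaces bounded by hyperplanes in $\mathcal{H}$ (for instance, for each $\tau$ add rational hyperplanes cutting out the linear span of $\tau$ together with hyperplanes through its facets inside that span). With this enlarged $\mathcal{H}$ your dichotomy argument goes through. The paper avoids the issue altogether by a cleaner device: it extends each fan $\Delta_i'$ with $|\Delta_i'|=\cB_i$ to a \emph{complete} fan $\Delta_i$ with $|\Delta_i|=L_{\cA,\R}$ (citing Oda), and then simply sets
\[
\Sigma:=\{\sigma'\cap\delta_1\cap\cdots\cap\delta_n:\sigma'\in\Sigma',\ \delta_j\in\Delta_j\}.
\]
Common refinements of fans are automatically fans, so there is no hyperplane-cutting verification to do, and passing to complete fans guarantees that each $\cB_i$ is already a union of cones of $\Delta_i$, hence of $\Sigma$, regardless of dimension.
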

\begin{proof}
Choose fans $\Sigma' \in \cA_{Fan}$ and $\Delta_i'\in (\cB_i)_{Fan}$ for all $i=1,\ldots,n$.
According to \cite[p.\ 18]{TOda}, there exists a fan $\Delta_i$ such that it contains $\Delta_i'$ as a subfan and $\lvert \Delta_i\rvert=L_\cA$.
We set
\[
\Sigma
:=
\{
\sigma'\cap \delta_1\cap \cdots \cap \delta_n
:
\sigma'\in \Sigma',\delta_1\in \Delta_1,\ldots,\delta_n\in \Delta_n
\}.
\]
This fan satisfies the condition.
\end{proof}

\begin{proposition}
\label{pan.2}
Let $\cB$ and $\cB'$ be subpans of a pan $\cA$.
Then $\cB\cup \cB'$ and $\cB\cap \cB'$ are subpans of $\cA$.
\end{proposition}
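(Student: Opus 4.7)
The plan is to reduce the claim to a statement about fans by invoking Proposition \ref{pan.1}. Since $\cB$ and $\cB'$ are subpans of $\cA$, they share the same underlying lattice $L_\cA$, so the set-theoretic union $\cB\cup\cB'$ and intersection $\cB\cap \cB'$ are well-defined subsets of $L_{\cA,\R}$. It remains to produce a fan in $L_\cA$ whose support equals each of these sets.

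By Proposition \ref{pan.1} applied to the two subpans $\cB$ and $\cB'$ of $\cA$, there exists a fan $\Sigma$ in $L_\cA$ with underlying pan $\cA$ such that $\Sigma_1:=\Sigma\cap \cB$ and $\Sigma_2:=\Sigma\cap \cB'$ are subfans of $\Sigma$ with supports $\cB$ and $\cB'$ respectively. The next step is to observe that for any two subfans $\Sigma_1,\Sigma_2$ of a common fan $\Sigma$, both $\Sigma_1\cap \Sigma_2$ and $\Sigma_1\cup \Sigma_2$ are again subfans of $\Sigma$: closure under faces is inherited from each, and for any two cones $\sigma,\sigma'$ in $\Sigma_1\cup \Sigma_2$ the intersection $\sigma\cap \sigma'$ is a face of both because this is already true inside $\Sigma$.

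From this it follows immediately that $\cB\cap \cB'=\lvert \Sigma_1\cap \Sigma_2\rvert$ and $\cB\cup \cB'=\lvert \Sigma_1\cup \Sigma_2\rvert$ are supports of fans in $L_\cA$, and therefore they are subpans of $\cA$.

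There is no serious obstacle here; the main subtlety is only the verification that Proposition \ref{pan.1} truly delivers subfans whose supports are exactly $\cB$ and $\cB'$, so that the set-theoretic union and intersection of the subpans coincide with the supports of the corresponding subfans. Once that is granted, the conclusion is a purely combinatorial statement about subfans of a common fan.
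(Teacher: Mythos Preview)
Your proof is correct and follows essentially the same route as the paper: invoke Proposition \ref{pan.1} to obtain a fan $\Sigma$ with underlying pan $\cA$ in which $\Sigma\cap\cB$ and $\Sigma\cap\cB'$ are subfans, then observe that the union and intersection of these subfans are again subfans whose supports are $\cB\cup\cB'$ and $\cB\cap\cB'$. The only difference is cosmetic: you name the subfans $\Sigma_1,\Sigma_2$ and spell out why unions and intersections of subfans of a common fan are subfans, while the paper states this in one line.
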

\begin{proof}
By Proposition \ref{pan.1}, there exists a fan $\Sigma$ such that its underlying pan is $\cA$ and $\Sigma\cap \cB$ and $\Sigma\cap \cB'$ are subfans of $\Sigma$.
Then $\Sigma\cap (\cB\cup \cB')$ and $\Sigma\cap \cB\cap \cB'$ are subfans of $\Sigma$, and their underlying pans are $\cB\cup \cB'$ and $\cB\cap \cB'$.
\end{proof}

\begin{definition}
\label{pan.3}
Suppose that $\cB_1,\ldots,\cB_n$ are subpans of a pan $\cA$.
We say that $\{\cB_1,\ldots,\cB_n\}$ is a \emph{closed cover} of $\cA$ if $\cB_1\cup \cdots \cup \cB_n=\cA$.
In this case, the associated \emph{\v{C}ech cube} is defined to be the functor
\[
Q\colon (\Delta^1)^n \to \Pan
\]
such that 
\[
Q(a_1,\ldots,a_n):=\cB_1^{a_1}\cap \cdots \cap \cB_n^{a_n}
\]
for $(a_1,\ldots,a_n)\in (\Delta^1)^n$ and the morphism $Q(a_1,\ldots,a_n)\to Q(b_1,\ldots,b_n)$ is the inclusion for every morphism $(a_1,\ldots,a_n)\to (b_1,\ldots,b_n)$ in $(\Delta^1)^n$, where
\[
\cB_n^{a_n}:=\left\{
\begin{array}{ll}
\cB_n & \text{if }a_n=0,
\\
\cA & \text{if }a_n=1.
\end{array}
\right.
\]
\end{definition}

For the notion of the total cofibers of cubes, we refer to \cite[\S A]{logSH}.
A cube in a stable $\infty$-category is \emph{cocartesian} if its total cofiber is isomorphic to $0$.

\begin{proposition}
\label{pan.4}
Let $\cB_1,\ldots,\cB_n$ be subpans of $\cA$ such that $\{\cB_1,\ldots,\cB_n\}$ is a closed cover of $\cA$, and let $Q$ be the associated \v{C}ech cube.
Then the cube
\[
M_{\cS}(\T(Q)\times \cS)
\]
in $\sT(\cS)$ is cocartesian for all $\cS\in \lSpc/B$.
\end{proposition}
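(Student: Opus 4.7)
The plan is to reduce the cocartesianness claim to Zariski descent for $\sT$ applied to a genuine open cover of toric fs log schemes.

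First, applying Proposition~\ref{pan.1} to the subpans $\cB_1, \ldots, \cB_n$, I would select a single fan $\Sigma$ with underlying pan $\cA$ such that each $\Sigma_i := \Sigma \cap \cB_i$ is a subfan of $\Sigma$. Subfans of $\Sigma$ are closed under intersection, so for every $I \subseteq \{1, \ldots, n\}$ the fan $\bigcap_{i \in I} \Sigma_i$ is a subfan with underlying pan $\bigcap_{i \in I} \cB_i$ (by Proposition~\ref{pan.2}). Via the canonical isomorphism $\T_{\bigcap_{i \in I} \cB_i} \simeq h_{\T_{\bigcap_{i \in I} \Sigma_i}}$, the cube $\T(Q)$ is identified with the \v{C}ech cube of the family of open immersions $\{h_{\T_{\Sigma_i}} \hookrightarrow h_{\T_\Sigma}\}_{i=1}^n$.

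Next I would verify that this family is a genuine Zariski cover. For any cone $\sigma$ of $\Sigma$, pick a point $x$ in the relative interior of $\sigma$. Since $x \in \cA = \bigcup_i \cB_i$, it lies in some cone $\tau$ of some $\Sigma_i$. Because $\sigma \cap \tau$ is a face of $\sigma$ containing the interior point $x$, it must equal $\sigma$, so $\sigma \subseteq \tau$ and hence $\sigma \in \Sigma_i$ by face-closure. Thus $\bigcup_i \Sigma_i = \Sigma$ and $\T_\Sigma = \bigcup_i \T_{\Sigma_i}$ is a Zariski cover. Pulling back along the projection $\T_\Sigma \times \cS \to \T_\Sigma$ yields a Zariski cover $\{\T_{\Sigma_i} \times \cS \to \T_\Sigma \times \cS\}$ in $\lSpc/B$ whose \v{C}ech cube under $M_\cS$ coincides with $M_\cS(\T(Q) \times \cS)$.

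Finally, I would conclude by descent and induction on $n$. Since Zariski covers are dividing Nisnevich covers, the case $n = 2$ is a Mayer--Vietoris square that follows directly from the descent equivalence of Proposition~\ref{extension.6}. For general $n$, I would decompose the closed cover as $\{\cB_1 \cup \cdots \cup \cB_{n-1}, \cB_n\}$ so that the associated $2$-cube is cocartesian by Mayer--Vietoris, then apply the inductive hypothesis to the closed cover $\{\cB_1, \ldots, \cB_{n-1}\}$ of $\cB_1 \cup \cdots \cup \cB_{n-1}$ and to the closed cover $\{\cB_1 \cap \cB_n, \ldots, \cB_{n-1} \cap \cB_n\}$ of $(\cB_1 \cup \cdots \cup \cB_{n-1}) \cap \cB_n$ to obtain two cocartesian $(n-1)$-cubes, and stack these along the outer $2$-cube. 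The main obstacle I expect is the combinatorial setup --- ensuring that the fan $\Sigma$ can be chosen simultaneously compatible with all the subpans, so that the associated Zariski cover really gives the claimed \v{C}ech cube --- but this is handled cleanly by Propositions~\ref{pan.1} and~\ref{pan.2}, after which the descent and induction argument is formal.
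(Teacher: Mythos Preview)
Your proposal is correct and follows essentially the same route as the paper: use Proposition~\ref{pan.1} to find a single fan $\Sigma$ realizing $\cA$ so that each $\Sigma\cap\cB_i$ is a subfan, observe that the resulting open toric subschemes form a Zariski cover of $\T_\Sigma$, and then deduce cocartesianness from descent. The only difference is cosmetic: the paper invokes dividing Nisnevich descent for the full $n$-cube in one line, whereas you spell out an induction on $n$ via Mayer--Vietoris (and your citation of Proposition~\ref{extension.6} for the base case is slightly indirect---the cleaner justification is that Zariski squares are dividing Nisnevich distinguished squares, so the sheaf condition on $\sT$ directly gives the cocartesian square).
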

\begin{proof}
By Proposition \ref{pan.1}, there exists a fan $\Sigma$ such that the underlying pan of $\Sigma$ is $\cA$ and $\Sigma\cap \cB_i$ is a subfan of $\Sigma$ for all $i=1,\ldots,n$.
Then $\{\T_{\Sigma\cap \cB_1},\ldots,\T_{\Sigma\cap \cB_n}\}$ is a Zariski cover of $\T_{\Sigma}$, so $\{\T_{\cB_1},\ldots,\T_{\cB_n}\}$ is a Zariski cover of $\T_{\cA}$.
We conclude by dividing Nisnevich descent.
\end{proof}

\subsection{Localizations of monoids by critical faces}
\label{localization}

We invert all morphisms belonging to $\ver$.
Since vertical boundaries are not stable under dividing covers, inverting both dividing covers and $\ver$ leads to inverting more morphisms.
For example, consider the diagonal homomorphism $\theta\colon \N\to P:=\N\oplus \N$ and the face $F:=0\oplus \N$ of $P$.
We have the following cones in $\Z^2$:
\begin{gather*}
\sigma_1:=\Cone((1,0),(1,1)),\;
\sigma_2:=\Cone((1,1),(0,1)),
\\
\sigma_3:=\Cone((1,0),(0,1)),\;
\sigma_4:=\Cone((1,0)).
\end{gather*}
Form the four fans $\Sigma_1,\ldots,\Sigma_4$ whose sets of maximal cones are $\{\sigma_1\}$, $\{\sigma_1,\sigma_2\}$, $\{\sigma_3\}$, and $\{\sigma_4\}$.
We have morphisms of fans $\Sigma_1\to \Sigma_2 \to \Sigma_3 \leftarrow \Sigma_4$ as illustrated in the following figure.
\[
\begin{tikzpicture}[yscale=0.6, xscale=0.6]
\foreach \a in {-1,0,1}
\foreach \b in {-1,0,1}
\filldraw (\a,\b) circle (1pt);
\fill[black!20] (1,1)--(0,0)--(1,0);
\draw[ultra thick] (1,1)--(0,0)--(1,0);
\node at (2,0) {$\xrightarrow{ver}$};
\node at (0,-1.7) {$\Sigma_1$};
\begin{scope}[shift={(4,0)}]
\foreach \a in {-1,0,1}
\foreach \b in {-1,0,1}
\filldraw (\a,\b) circle (1pt);
\fill[black!20] (0,1)--(0,0)--(1,0)--(1,1);
\draw[ultra thick] (1,1)--(0,0)--(1,0);
\draw[ultra thick] (0,1)--(0,0);
\end{scope}
\node at (6,0) {$\xrightarrow{div}$};
\node at (4,-1.7) {$\Sigma_2$};
\begin{scope}[shift={(8,0)}]
\foreach \a in {-1,0,1}
\foreach \b in {-1,0,1}
\filldraw (\a,\b) circle (1pt);
\fill[black!20] (1,0)--(0,0)--(0,1)--(1,1);
\draw[ultra thick] (1,0)--(0,0)--(0,1);
\end{scope}
\node at (10,0) {$\xleftarrow{ver}$};
\node at (8,-1.7) {$\Sigma_3$};
\begin{scope}[shift={(12,0)}]
\foreach \a in {-1,0,1}
\foreach \b in {-1,0,1}
\filldraw (\a,\b) circle (1pt);
\draw[ultra thick] (1,0)--(0,0);
\node at (0,-1.7) {$\Sigma_4$};
\end{scope}
\end{tikzpicture}
\]
Since $\T_{\Sigma_2}\to \T_{\Sigma_3}$ is a dividing cover and $\T_{\Sigma_1}\to \T_{\Sigma_2}$ and $\T_{\Sigma_4}\to \T_{\Sigma_3}$ are in $\ver_{\A_{\N}}$, we deduce that the morphism
\[
M_{\A_\N}(\A_{P_F})
\to
M_{\A_\N}(\A_P)
\]
induced by the open immersion $\A_{P_F}\to \A_P$ is an isomorphism.

The main result of this section is Theorem \ref{pan.7}, which provides a class of morphisms that are inverted in $\sT$ by similar ways.

For an fs monoid $P$ such that $P^\gp$ is torsion free, let $\Spec(P)$ denote the fan with the single maximal cone $P^\vee$, where $P^\vee$ denotes the dual monoid of $P$ in the dual lattice $(P^\gp)^\vee$.

\begin{definition}
\label{pan.5}
A pan $\cA$ is \emph{strongly convex} if $\cA$ is convex and $\cA\cap (-\cA)=0$.
In this case, $\cA$ is a cone, so we can discuss the faces of $\cA$.
Furthermore, we have $\cA\simeq \lvert\Spec(P)\rvert$ for some fs monoid $P$ such that $P^{\gp}$ is torsion free.
\end{definition}

\begin{lemma}
\label{pan.8}
Let $\theta\colon \cB\to \cA$ a morphism of strongly convex pans, and let $\cB'$ be the union of faces $\cV$ of $\cB$ such that $\theta$ maps any nontrivial face of $\cV$ to a nontrivial subpan of $\cA$.
We regard $\cB'$ as a subpan of $\cB$.
If $\cX\to \T_{\cA,B}$ is a morphism in $\lSpc/B$ and the projection $\cX\times_{\T_{\cA}}\T_{\cB}\to \cX$ is log smooth, then the induced morphism
\[
M_{\cX}(\cX\times_{\T_\cA}\T_{\cB'})
\to
M_{\cX}(\cX\times_{\T_\cA}\T_\cB)
\]
is an isomorphism.
\end{lemma}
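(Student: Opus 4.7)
The plan is to select a fan $\Sigma$ with $|\Sigma|=\cB$ that exhibits $\cB'$ as a subfan, to reduce the claim to a representable base $\cX=h_X$, and then to recognize the resulting strict open immersion as a member of $\ver_X$, after which $\ver$-invariance concludes.

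For the fan, I would take $\Sigma$ to be the fan of all faces of $\cB$ (Proposition \ref{pan.1} is available for more general refinements if needed); then $\Sigma':=\{\sigma\in \Sigma:\sigma\subseteq \cB'\}$ is a subfan of $\Sigma$ with $|\Sigma'|=\cB'$. This gives $\T_\cB\simeq h_{\T_\Sigma}$ and $\T_{\cB'}\simeq h_{\T_{\Sigma'}}$, and the inclusion $\T_{\cB'}\hookrightarrow \T_\cB$ is induced by the strict open immersion $\T_{\Sigma'}\hookrightarrow \T_\Sigma$. Combining conservativity of representable Zariski covers (Proposition \ref{extension.3}) with log smooth base change (Proposition \ref{extension.1}) localizes the question on $\cX$ in the representable Zariski topology, so we may assume $\cX=h_X$ for some $X\in \lFan/B$. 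Setting $Y:=X\times_{\T_\cA}\T_\Sigma$ and $Y':=X\times_{\T_\cA}\T_{\Sigma'}$, the structural morphism $f\colon Y\to X$ is log smooth by hypothesis and $j\colon Y'\hookrightarrow Y$ is a strict open immersion; Proposition \ref{loc.3} then yields $M_X(Y')\simeq M_X(Y)$ provided we verify that $j\in \ver_X$.

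The core step is to match the combinatorial description of $\cB'$ with the vertical boundary $\partial_X Y$. For a face $\cV$ of $\cB$, the intersection $\cV\cap \ker L_\theta$ is the preimage of the zero face of $\cA$ under $\theta|_\cV$ and hence is itself a face of $\cV$, so the defining condition of $\cB'$ is equivalent to $\cV\cap \ker L_\theta=\{0\}$. Letting $\tau\leq \cA$ denote the smallest face containing $\theta(\cV)$, a direct monoid computation shows that the cokernel of the log structure map $\tau^\vee/\tau^\perp\to \cV^\vee/\cV^\perp$ induced by $\theta^*$ is a group precisely under the same condition $\cV\cap \ker L_\theta=\{0\}$. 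Because $f\colon Y\to X$ is log smooth, the relative log structure along a stratum of $Y$ lying over the orbit of $\cV$ is computed by the corresponding monoid pushout with the log structure of $X$, and the same criterion governs verticality there: such a stratum lies in $\partial_X Y$ exactly when $\cV\not\in \Sigma'$. Consequently $Y-\partial_X Y=Y'$ and $\partial_X Y'=\emptyset$, so $j\in \ver_X$, and Proposition \ref{loc.3} completes the argument.

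The main technical obstacle is this last identification: although the verticality criterion for $\T_\Sigma\to \T_\cA$ is a straightforward combinatorial check at a torus orbit, transferring it to $\partial_X Y$ over the non-toric base $X\in \lFan/B$ requires a stalkwise analysis of the monoid pushout defining the log structure on $Y$, and leans essentially on the hypothesis that $Y\to X$ is log smooth to ensure that this pushout is well-behaved and that no pathological fiber-product strata arise.
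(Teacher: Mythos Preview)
Your approach is essentially the paper's: localize Zariski on $\cX$ to a representable base $h_X$, pick the fan of faces of $\cB$ so that $\cB'$ corresponds to a subfan, and then recognize the resulting open immersion as lying in $\ver_X$ so that $\ver$-invariance finishes. The one substantive difference is how the verticality step---what you rightly flag as ``the main technical obstacle''---is handled.

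Where you sketch a stalkwise monoid-pushout argument to identify $\partial_X Y$ over the non-toric base $X$, the paper avoids this computation entirely by citing two results from \cite{logA1}. First, \cite[Proposition 2.3.13]{logA1} gives directly, at the level of the toric model, that the open immersion $\T_{\Sigma'}\hookrightarrow \A_Q$ lies in $\ver_{\A_P}$ (your criterion $\cV\cap\ker L_\theta=\{0\}$ is exactly the combinatorial content there). Second, \cite[Proposition 2.3.9]{logA1} transfers this along the base change $X\to\A_P$, yielding
\[
(X\times_{\A_P}\A_Q)-\partial_X(X\times_{\A_P}\A_Q)\;\simeq\;X\times_{\A_P}\T_{\Sigma'},
\]
which is precisely the identification $Y-\partial_X Y=Y'$ you need. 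Your direct argument is headed in the right direction, but as written it is a sketch rather than a proof; citing these two propositions closes the gap without further work and removes any reliance on ad hoc stratum-by-stratum analysis.
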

\begin{proof}
The question is Zariski local on $\cX$.
Hence we may assume $\cX=h_X$ for some $X\in \lFan/B$ and $X\to \T_{\cA,B}$ is equal to $h_p$ for some morphism $p\colon X\to \A_{P,B}$, where $P$ is a sharp fs monoid and $\A_{P,B}:=\A_P\times B$.
We may also assume that $\T_\cB\to \T_\cA$ is equal to $h_{\A_\theta}\colon h_{\A_Q}\to h_{\A_P}$ for some homomorphism $\eta\colon P\to Q$ of sharp fs monoids.

Let $\Sigma$ be the subfan of $\Spec(Q)$ whose underlying pan is $\cB'$.
By \cite[Proposition 2.3.13]{logA1}, the open immersion $\T_\Sigma \to \A_Q$ is contained in $\ver_{\A_P}$.
Together with \cite[Proposition 2.3.9]{logA1}, we obtain an isomorphism
\[
(X\times_{\A_P}\A_Q)-\partial_X(X\times_{\A_P}\A_Q)
\simeq
X\times_{\A_P}\T_{\Sigma}.
\]
Hence the induced morphism
\[
M_X(X\times_{\A_P}\T_\Sigma)
\to
M_X(X\times_{\A_P}\A_Q)
\]
is an isomorphism by $\ver$-invariance, which gives the desired isomorphism.
\end{proof}

\begin{lemma}
\label{pan.6}
Suppose $\cX:=h_X$ with $X\in \lFan/B$.
Let $\cA$ and $\cB$ strongly convex pans, let $\theta\colon \cC\to \cA\times \cB$ be an isogeny of pans such that the torsion part of the kernel of $L_\cC\to L_\cA$ is invertible in $X$, and let $x_1,\ldots,x_n$ be elements of the dual lattice $L_\cC^\vee$.
We set
\[
\cH_i^{\geq 0}:=\{y\in \cC:\langle y,x_i\rangle \geq 0\},
\;
\cH_i^{\leq 0}:=\{y\in \cC:\langle y,x_i\rangle \leq 0\},
\;
\cH_i:=\cH_i^{\geq 0}\cap \cH_i^{\leq 0}
\]
for all $i=1,\ldots,n$.
We assume $\theta^{-1}(\cA\times 0)\subset \cH_i^{\geq 0}$ for all $i=1,\ldots,n$.
If $\cX\to \T_{\cA,B}$ is a morphism in $\lSpc/B$,
then the induced map
\[
M_{\cX}(\cX\times_{\T_\cA}\T_{\theta^{-1}(\cA\times 0)})\to M_{\cX}(\cX\times_{\T_\cA}\T_{\cH_1^{\geq 0}\cap \cdots \cap \cH_n^{\geq 0}})
\]
is an isomorphism.
\end{lemma}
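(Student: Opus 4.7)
The plan is to induct on $n$, combining Proposition~\ref{pan.4} (cocartesian Čech cubes for closed covers of pans) with Lemma~\ref{pan.8} (removal of non-critical faces). Throughout I write $\cZ:=\theta^{-1}(\cA\times 0)$, $\cY:=\theta^{-1}(0\times\cB)$, and abbreviate $M(-)$ for $M_\cX(\cX\times_{\T_\cA}\T_{(-)})$.

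For the base case $n=0$, one has $\cH_1^{\geq 0}\cap\cdots\cap\cH_n^{\geq 0}=\cC$, so the task reduces to showing $M(\cZ)\simeq M(\cC)$. I would apply Lemma~\ref{pan.8} to the pan morphism $\pi_1\circ\theta\colon\cC\to\cA$ obtained by composing $\theta$ with projection to the first factor. The log-smoothness requirement on $\cX\times_{\T_\cA}\T_\cC\to\cX$ is precisely what the hypothesis on invertibility of the torsion part of the kernel of $L_\cC\to L_\cA$ is designed to provide. Because $\theta$ is an isogeny, the face lattice of $\cC$ is in bijection through $\theta$ with pairs consisting of a face of $\cA$ and a face of $\cB$; a face $\cV=\theta^{-1}(\cV_\cA\times\cV_\cB)$ satisfies $\cV\cap\cY=\{0\}$ if and only if $\cV_\cB=\{0\}$, i.e.\ $\cV\subset\cZ$. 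Consequently the union of the critical faces of $\cC$ with respect to $\pi_1\circ\theta$ is exactly $\cZ$, and Lemma~\ref{pan.8} delivers the isomorphism.

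For the inductive step $n\geq 1$, set $\cK':=\cH_1^{\geq 0}\cap\cdots\cap\cH_{n-1}^{\geq 0}$, $\cK:=\cK'\cap\cH_n^{\geq 0}$, $\cM:=\cK'\cap\cH_n^{\leq 0}$, and $\cL:=\cK\cap\cM=\cK'\cap\cH_n$. By Proposition~\ref{pan.2} each of these is a subpan of $\cC$ and $\{\cK,\cM\}$ is a closed cover of $\cK'$ with pairwise intersection $\cL$; Proposition~\ref{pan.4} then furnishes a cocartesian square with vertices $M(\cL)$, $M(\cK)$, $M(\cM)$, $M(\cK')$. The inductive hypothesis applied to $(x_1,\ldots,x_{n-1})$ gives $M(\cZ)\simeq M(\cK')$, so it suffices to prove $M(\cL)\simeq M(\cM)$. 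For this I would choose a fan refinement of $\cM$ compatible with $\cL$, decomposing $\cM$ into cones each of which is the product of a face of $\cL$ with an $\R_{\geq 0}$-ray transverse to $\cH_n$; iterating Proposition~\ref{pan.4} and invoking $\A^1$-invariance on each piece then forces the desired isomorphism.

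The hardest part, in my view, is this last step: assembling the local $\A^1$-bundle description of $\cM$ over $\cL$ into a global isomorphism of motives requires a careful choice of fan refinement of $\cK'\cap\cH_n^{\leq 0}$ and clean bookkeeping of which cones in the refinement genuinely split off an $\R_{\geq 0}$-factor. Once that is handled, the base case and the inductive step mesh thanks to the symmetry between $\cZ$ and $\cY$ supplied by the isogeny $\theta$ and the persistence of the torsion-invertibility hypothesis under restriction to the rational hyperplane $\cH_n$.
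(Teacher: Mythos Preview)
Your base case matches the paper's exactly, and your reduction in the inductive step is valid: with $\cK'=\cH_1^{\geq 0}\cap\cdots\cap\cH_{n-1}^{\geq 0}$, the cocartesian square for the closed cover $\{\cK,\cM\}$ of $\cK'$ does reduce the problem to showing $M(\cL)\simeq M(\cM)$. The gap is in your proof of this last isomorphism. The claim that $\cM$ admits a fan refinement into cones of the form $(\text{face of }\cL)\times\R_{\geq 0}$ is not justified and is generally false: the lattice $L_\cC$ need not split off the ray direction, so even when such a combinatorial decomposition of the support exists, the corresponding toric piece is $\T_{\sigma}\times\A_\N$ rather than $\T_\sigma\times\A^1$, and raw $\A^1$-invariance does not apply. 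What you actually need here is another application of Lemma~\ref{pan.8} (i.e.\ $\ver$-invariance), and for that you would have to identify $\cL$ with the union of critical faces of $\cM$ over $\cA$, which is not obvious and which your sketch does not address.

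The paper avoids this difficulty by organizing the induction differently. Instead of peeling off one half-space at a time, it takes the full Čech cube for the closed cover $\{\cH_1^{\geq 0},\ldots,\cH_n^{\geq 0}\}$ of their union. By induction every vertex of the cube other than the full intersection is already known, so one is reduced to showing $M(\cZ)\to M(\cH_1^{\geq 0}\cup\cdots\cup\cH_n^{\geq 0})$ is an isomorphism. For this, the paper covers $\cC$ by that union together with $\cH^{\leq 0}:=\cH_1^{\leq 0}\cap\cdots\cap\cH_n^{\leq 0}$ and applies Lemma~\ref{pan.8} once more, to $\cH^{\leq 0}$: the hypothesis $\cZ\subset\cH_i^{\geq 0}$ for all $i$ forces every face of $\cH^{\leq 0}$ not lying in the boundary $\cH:=\cH^{\leq 0}\cap(\bigcup_i\cH_i)$ to have nontrivial $\cB$-component, which is exactly the input Lemma~\ref{pan.8} needs. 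This second use of Lemma~\ref{pan.8} on the complementary region is the idea your argument is missing; it replaces your ad hoc $\A^1$-bundle picture with a clean $\ver$-invariance statement.
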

\begin{proof}
We proceed by induction on $n$.
Observe that $\cC$ is a strongly convex pan.
Every face of $\cC$ can be written as $\theta^{-1}(\cF\times \cG)$ for some faces $\cF$ of $\cA$ and $\cG$ of $\cB$.
Furthermore, $\theta$ maps any nontrivial face of $\theta^{-1}(\cF\times \cG)$ to a nontrivial subpan of $\cA$ if and only if $\cG=0$.
Hence Lemma \ref{pan.8} shows that the induced morphism
\begin{equation}
\label{pan.6.1}
M_{\cX}(\cX\times_{\T_\cA}\T_{\theta^{-1}(\cA\times 0)})
\to
M_{\cX}(\cX\times_{\T_\cA}\T_\cC)
\end{equation}
is an isomorphism.
In particular, the claim holds for $n=0$.

Assume $n>0$.
Let $Q$ be the \v{C}ech cube $(\Delta^1)^n\to \Pan$ associated with the closed cover
\[
\{\cH_1^{\geq 0},\ldots, \cH_n^{\geq 0}\}
\]
of $\cH_1^{\geq 0}\cup \cdots \cup \cH_n^{\geq 0}$.
As a consequence of Proposition \ref{pan.4},
the cube $M_{\cX}(\cX\times_{\T_{\cA}}\T(Q))$ is cocartesian.
Furthermore, the induced morphism
\[
M_{\cX}(\cX\times_{\T_\cA}\T_{\theta^{-1}(\cA\times 0)})
\to
M_{\cX}(\cX\times_{\T_\cA}\T_{\cH_{i_1}^{\geq 0}\cap \cdots \cap \cH_{i_r}^{\geq 0}})
\]
is an isomorphism for every subset $I=\{i_1,\ldots,i_r\}\subsetneq \{1,\ldots,n\}$ by induction.
Hence to show that the induced morphism
\[
M_{\cX}(\cX\times_{\T_\cA}\T_{\theta^{-1}(\cA\times 0)})\to M_{\cX}(\cX\times_{\T_\cA}\T_{\cH_1^{\geq 0}\cap \cdots \cap \cH_n^{\geq 0}})=M_{\cX}(\cX\times_{\T_\cA}\T_{Q(0,\ldots,0)})
\]
is an isomorphism, it suffices to show that the induced morphism
\[
M_{\cX}(\cX\times_{\T_\cA}\T_{\theta^{-1}(\cA\times 0)})\to M_{\cX}(\cX\times_{\T_\cA}\T_{\cH_1^{\geq 0}\cup \cdots \cup \cH_n^{\geq 0}})=M_{\cX}(\cX\times_{\T_\cA}\T_{Q(1,\ldots,1)})
\]
is an isomorphism.

Every face of the convex pan $\cH^{\leq 0}:=\cH_1^{\leq 0}\cap \cdots \cap \cH_n^{\leq 0}$ not contained in
\[
\cH:=\bigcup_{i=1}^n
(\cH_1^{\leq 0}\cap \cdots \cap \cH_n^{\leq 0}\cap \cH_i)
\]
is of the form $\theta^{-1}(\cF\times \cG)\cap \cH^{\leq 0}$ for some faces $\cF$ of $\cA$ and $\cG$ of $\cB$.
By the assumption $\theta^{-1}(\cA\times 0)\subset \cH_i^{\geq 0}$ for all $i$, we see that $\cG\neq 0$ in this case.
Furthermore, $\theta$ maps $\theta^{-1}(0\times \cG)\cap \cH^{\leq 0}$ to the trivial subpan of $\cA$.
Hence Lemma \ref{pan.8} shows that the induced morphism
\[
M_{\cX}(\cX\times_{\T_\cA}\T_\cH)\to M_{\cX}(\cX\times_{\T_\cA}\T_{\cH^{\leq 0}})
\]
is an isomorphism.
Since
\[
\cH=\cH^{\leq 0}\cap (\cH_1^{\geq 0}\cup \cdots \cup \cH_n^{\geq 0})
\text{ and }
\cC=\cH^{\leq 0}\cup (\cH_1^{\geq 0}\cup \cdots \cup \cH_n^{\geq 0}),
\]
the induced morphism
\[
M_{\cX}(\cX\times_{\T_\cA}\T_{\cH_1^{\geq 0}\cup \cdots \cup \cH_n^{\geq 0}})
\to
M_{\cX}(\cX\times_{\T_\cA}\T_\cC)
\]
is an isomorphism by Proposition \ref{pan.4}.
Combine with \eqref{pan.6.1} to finish the proof.
\end{proof}

We refer to \cite[Definition I.4.2.10]{Ogu} for the notion of $\theta$-critical faces when $\theta$ is a homomorphism of monoids.

\begin{theorem}
\label{pan.7}
Let $X\to \A_{P,B}:=\A_P\times B$ be a morphism in $\lSch/B$, where $P$ is a sharp fs monoid.
If $\theta\colon P\to Q$ is an injective, local, and locally exact homomorphism of fs monoids such that the torsion part of $\theta^\gp$ is invertible in $X$, then the natural transformation
\[
f_\sharp j_\sharp j^* f^*\xrightarrow{ad'} f_\sharp f^*
\]
is an equivalence for all $\theta$-critical face $G$ of $Q$, where $f\colon X\times_{\A_P}\A_Q\to X$ is the projection, and $j\colon X\times_{\A_P} \A_{Q_G}\to X\times_{\A_P}\A_Q$ is the obvious open immersion.
\end{theorem}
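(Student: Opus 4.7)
The plan is to unravel the natural transformation into a statement about motives and then invoke Lemma \ref{pan.6}. By the log smooth projection formula (Proposition \ref{extension.8}), the claim is equivalent to showing that the open immersion $\A_{Q_G}\hookrightarrow \A_Q$ over $X$ induces an isomorphism $M_X(X\times_{\A_P}\A_{Q_G})\to M_X(X\times_{\A_P}\A_Q)$ in $\sT(X)$. This is dividing Zariski local on $X$, so I may assume $X=h_X$ for some $X\in\lFan/B$ equipped with the strict chart $X\to \A_{P,B}$.

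Next I pass to the pan picture. Let $\cA:=\lvert\Spec P\rvert$ and $\cC:=\lvert\Spec Q\rvert$, so that $\T_\cA=h_{\A_P}$ and $\T_\cC=h_{\A_Q}$; the face $\cC_G:=\cC\cap G^{\gp,\perp}$ of $\cC$ satisfies $\T_{\cC_G}=h_{\A_{Q_G}}$. As a preliminary step, I reduce to the case when $G$ is maximal among the $\theta$-critical faces. If $G\subsetneq G'$ are both $\theta$-critical, then the open immersion $\A_{Q_{G'}}\hookrightarrow \A_{Q_G}$ is exactly the complement of the vertical boundary of $\A_{Q_G}/\A_P$, hence induces an isomorphism on motives by $\ver$-invariance (Proposition \ref{loc.3}). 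So I may assume $G$ is maximal.

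For a maximal $\theta$-critical face $G$, local exactness forces $\operatorname{rank}G^\gp+\operatorname{rank}L_P=\operatorname{rank}L_Q$, while $\theta$-criticality gives $\theta^\gp(L_P)\cap G^\gp=0$. Consequently, $L_P\oplus G^\gp\to L_Q$ is injective with finite cokernel whose torsion matches that of $\operatorname{coker}(\theta^\gp)$; by hypothesis this is invertible on $X$. Dualizing produces an isogeny of pans $\cC\to \cA\times\cB$, with $\cB:=G^\vee\subset L_B:=G^{\gp,\vee}$, such that its preimage of $\cA\times 0$ equals $\cC_G$ (using maximality of $G$ to deduce $Q\subset \theta(P)+G^\gp$ in $L_Q$). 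Taking $x_1,\dots,x_n\in L_C^\vee$ to be the primitive generators of the rays of $Q$ presents $\cC$ as $\cH_1^{\geq 0}\cap\cdots\cap\cH_n^{\geq 0}$, and the same maximality yields $\theta^{-1}(\cA\times 0)\subset\cH_i^{\geq 0}$ for each $i$. Lemma \ref{pan.6} then produces the desired isomorphism.

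The main obstacle is the combinatorial groundwork supporting the reduction to maximal critical $G$ and the isogeny construction: showing that joins of $\theta$-critical faces remain $\theta$-critical, so that a unique maximal critical $G^{\max}\supset G$ exists; identifying $\A_{Q_{G^{\max}}}$ with the complement of the vertical boundary of $\A_{Q_G}/\A_P$ via \cite[Propositions 2.3.8, 2.3.9]{logA1}; and deducing the rank equality and the containment $Q\subset \theta(P)+(G^{\max})^\gp$ from local exactness. Once these combinatorial facts are verified, the application of Lemma \ref{pan.6} is essentially mechanical.
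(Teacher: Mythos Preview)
Your overall strategy---reduce to a maximal $\theta$-critical face and then invoke Lemma~\ref{pan.6}---matches the paper's. There are, however, two genuine problems.

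First, the reduction to maximal $G$ does not go through as written. You plan to show that joins of $\theta$-critical faces remain $\theta$-critical, so that a unique maximal critical $G^{\max}\supset G$ exists and $\A_{Q_{G^{\max}}}$ is the complement of the vertical boundary of $\A_{Q_G}$ over $\A_P$. This is false: for $P=\N$, $Q=\N^2$, and $\theta$ the diagonal, both $\N e_1$ and $\N e_2$ are maximal $\theta$-critical, their join is $Q$ itself (not critical), and there is no unique maximal critical face above $G=0$. The paper sidesteps this by reversing the logical order: it establishes the maximal case first, and then for arbitrary $G$ chooses \emph{any} maximal critical $G'\supset G$ and applies the maximal-case isomorphism twice---once to $G'$ inside $Q$ and once to the image of $G'$ inside $Q_G$ (where it is again maximal critical for the locally exact homomorphism $P\to Q_G$)---concluding by 2-out-of-3.

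Second, your setup for Lemma~\ref{pan.6} is off. You assert that $\cC:=\lvert\Spec Q\rvert\to\cA\times\cB$ is an isogeny of pans, but it is not surjective in general: in the example above with $G=\N e_1$, the dual of $P\oplus G\to Q$ sends $(y_1,y_2)\in Q^\vee$ to $(y_1+y_2,y_1)$, whose image is the proper subcone $\{(s,t):s\geq t\geq 0\}$ of $\cA\times\cB$. The paper instead works with the enlargement $\cC:=L_{\lvert\Spec Q\rvert}\times_{L_{\cA\times\cB}}(\cA\times\cB)$ (same lattice, full preimage of $\cA\times\cB$); this \emph{is} an isogeny by \cite[Lemma~I.4.7.9]{Ogu}, your $\lvert\Spec Q\rvert$ sits inside it as $\cH_1^{\geq 0}\cap\cdots\cap\cH_n^{\geq 0}$ for $x_1,\ldots,x_n\in Q$ generating $Q$ together with $\theta(P)$, and $\theta^{-1}(\cA\times 0)$ coincides with $\lvert\Spec Q_G\rvert$, so Lemma~\ref{pan.6} applies as intended.
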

\begin{proof}
Observe that $f$ is log smooth by \cite[Theorem IV.3.1.8]{Ogu}.
We need to show that the induced morphism
\begin{equation}
\label{pan.7.1}
M_X(X'\times_{\A_P}\A_{Q_G})\to M_X(X'\times_{\A_P} \A_Q)
\end{equation}
is an isomorphism for all $X'\in \lSm/X$.

Assume first that $G$ is a maximal $\theta$-critical face of $Q$.
Let $\cA$, $\cB$, and $\cD$ be the underlying pans of $\Spec(P)$, $\Spec(G)$, and $\Spec(Q)$.
The summation homomorphism $P\oplus G\to Q$ gives a morphism of pans $\alpha\colon \cD\to \cA\times \cB$.
We set $\cC:=L_\cD\times_{L_{\cA\times \cB}} (\cA\times \cB)$, which contains $\cD$ as a subpan.
By \cite[Theorem I.4.7.7(2)]{Ogu}, the induced homomorphism $P\to Q/G$ is exact. 
The implication (4)$\Rightarrow$(3) in \cite[Lemma I.4.7.9]{Ogu} shows that the induced morphism $\cC\to \cA\times \cB$ is an isogeny.
Choose elements $x_1,\ldots,x_n\in Q-P$ such that $P\cup \{x_1,\ldots,x_n\}$ generates $Q$.
Then we have an equality
\[
\cD
=
\{
y\in \cC : \langle y,x_i\rangle \geq 0 \;\;\forall i=1,\ldots,n\}.
\]
Furthermore, $\alpha^{-1}(\cA\times 0)$ is the underlying pan of $\Spec(Q_G)$.
Use Lemma \ref{pan.6} to see that \eqref{pan.7.1} is an isomorphism.

For general $G$, let $G'$ be a maximal $\theta$-critical face of $Q$ containing $G$.
The induced morphisms
\[
M_X(X'\times_{\A_P} \A_{Q_{G'}})
\to
M_X(X'\times_{\A_P} \A_Q)
\text{ and }
M_X(X'\times_{\A_P} \A_{Q_{G'}})
\to
M_X(X'\times_{\A_P} \A_{Q_G})
\]
are isomorphisms by what we have proven for maximal $\theta$-critical faces.
We conclude by 2-out-of-3.
\end{proof}

\subsection{Saturations of morphisms}
\label{conserv}

We refer to \cite[Definitions I.3.12, II.2.10]{zbMATH07027475} and \cite[Definitions I.4.8.2, III.2.5.1]{Ogu} for the notions of saturated homomorphisms of integral monoids and saturated morphisms of integral log schemes.
The purpose of this subsection is to show that we can make a morphism of fs log schemes into a saturated morphism after a base change along a suitable log smooth morphism whose pullback functor of motives is conservative.

\begin{lemma}
\label{conserv.4}
Let $F\colon \cC \to \cD$ and $G\colon \cD\to \cE$ be functors of categories.
Assume that for all $X,Y\in \cC$, the function
\[
\Hom_{\cD}(FX,FY)\rightarrow \Hom_{\cE}(GFX,GFY)
\]
given by $f\mapsto Gf$ is bijective.
If $F$ is conservative, then $GF$ is also conservative.
\end{lemma}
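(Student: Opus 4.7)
The plan is to reduce the conservativity of $GF$ to the conservativity of $F$ by showing that the hypothesis allows us to lift an inverse of $GFf$ in $\cE$ to an inverse of $Ff$ in $\cD$. Concretely, I would start with a morphism $f\colon X\to Y$ in $\cC$ such that $GFf$ is an isomorphism in $\cE$, and aim to prove that $Ff$ is an isomorphism in $\cD$. Once that is done, the hypothesis that $F$ is conservative immediately yields that $f$ is an isomorphism in $\cC$, which is the desired conclusion.

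To produce the inverse of $Ff$, I would let $g\colon GFY\to GFX$ denote the inverse of $GFf$ in $\cE$. The bijectivity hypothesis applied to the pair $(Y,X)$ gives a unique morphism $h\colon FY\to FX$ in $\cD$ with $Gh=g$. Two applications of the injectivity part of the bijectivity hypothesis then identify $Ff\circ h$ with $\id_{FY}$ and $h\circ Ff$ with $\id_{FX}$: indeed, both pairs of morphisms live in hom-sets of the form $\Hom_{\cD}(FZ,FZ)$ to which the hypothesis applies, and under $G$ they send to the same morphism in $\cE$ (namely $\id_{GFY}$ and $\id_{GFX}$, respectively) because $g$ is a two-sided inverse of $GFf$. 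Hence $h$ is a two-sided inverse of $Ff$.

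The argument is essentially a formal diagram chase; there is no substantive obstacle beyond being careful about which hom-sets the bijectivity hypothesis is applied to. The key observation is that although the hypothesis is stated only for hom-sets between objects in the image of $F$, this is exactly the setting in which it is needed, since both $Ff\circ h$ and $\id_{FY}$ lie in $\Hom_{\cD}(FY,FY)$.
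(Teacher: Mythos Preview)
Your proposal is correct and matches the paper's proof essentially verbatim: both lift the inverse of $GFf$ to a morphism $FY\to FX$ using surjectivity on $\Hom_{\cD}(FY,FX)$, then use injectivity on $\Hom_{\cD}(FY,FY)$ and $\Hom_{\cD}(FX,FX)$ to conclude that the lift is a two-sided inverse of $Ff$, and finally invoke conservativity of $F$. The only difference is notational (the paper calls the lifted morphism $g$ rather than $h$).
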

\begin{proof}
Let $f\colon X\to Y$ be a morphism in $\cC$ such that $GFf$ is an isomorphism.
There exists a unique morphism $g\colon FY\to FX$ such that $Gg$ is an inverse of $GFf$ by the bijectivity.
Since $G(Ff \circ g)=\id$ and $G(g \circ Ff)=\id$, we see that $g$ is an inverse of $Ff$ by the bijectivity.
The conservativity of $F$ implies that $f$ is an isomorphism.
\end{proof}

\begin{lemma}
\label{conserv.2}
For homomorphisms of saturated monoids $P\to Q,Q'$, there is a canonical isomorphism
\[
\ol{Q\oplus_P Q'}
\simeq
\ol{\ol{Q}\oplus_{\ol{P}}\ol{Q'}}.
\]
\end{lemma}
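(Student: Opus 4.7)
My plan is to prove the isomorphism by a universal-property calculation followed by Yoneda. Write $M := Q \oplus_P Q'$ and $N := \ol{Q} \oplus_{\ol{P}} \ol{Q'}$, where both pushouts are formed in the ambient category of monoids on which $\ol{\cdot}$ is a reflection (i.e.\ left adjoint to an inclusion into a subcategory of ``good'' monoids). Then $\ol{M}$ and $\ol{N}$ both lie in this reflective subcategory, so it suffices to show that they corepresent the same functor there, and then extract the claimed isomorphism from the Yoneda lemma.

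Fix an object $R$ of the reflective subcategory. By the $\ol{\cdot}$-adjunction and the universal property of the pushout,
\[
\Hom(\ol{M}, R) \simeq \Hom(M, R) \simeq \Hom(Q, R) \times_{\Hom(P, R)} \Hom(Q', R).
\]
Because $R$ lies in the subcategory, the adjunction unit implies that every homomorphism $P \to R$, $Q \to R$, $Q' \to R$ factors uniquely through its bar. Applying this coordinatewise rewrites the above fiber product as $\Hom(\ol{Q}, R) \times_{\Hom(\ol{P}, R)} \Hom(\ol{Q'}, R)$, and the universal property of the pushout together with the $\ol{\cdot}$-adjunction once more turns this into $\Hom(N, R) \simeq \Hom(\ol{N}, R)$. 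All identifications above are natural in $R$, so Yoneda in the reflective subcategory yields the canonical isomorphism $\ol{M} \simeq \ol{N}$.

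The main delicate point is to be precise about the ambient category in which each pushout is formed, so that every adjunction in the chain applies on the nose; once $Q \oplus_P Q'$ and $\ol{Q} \oplus_{\ol{P}} \ol{Q'}$ are read as pushouts in the category on which $\ol{\cdot}$ is the reflection, the argument is purely formal. One should also verify the routine closure property that $\ol{M}$ and $\ol{N}$ actually land in the subcategory where Yoneda is applied: this is automatic if $\ol{\cdot}$ is the saturation operation, and otherwise reduces to the short check that any lift to $M^{\gp}$ of a saturation witness in $\ol{M}^{\gp}$ lies in $M + M^* = M$ and hence in $M$, so that passing to $\ol{M}$ preserves saturation.
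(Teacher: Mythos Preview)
Your argument is correct and matches the paper's: test against objects $M$ of the reflective subcategory, unwind both sides via the pushout universal property and the $\ol{(-)}$-adjunction, and conclude by Yoneda. The paper is simply more concrete---$\ol{(-)}$ is sharpification (quotient by units), the ambient category is saturated monoids, and the reflective subcategory is sharp saturated monoids---so the hedging in your final paragraph is unnecessary once this is fixed: the quotient of a saturated monoid by its unit group is automatically saturated, exactly by the lift-and-check you sketch.
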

\begin{proof}
Let $\Mon$ denote the category of saturated monoids.
For a saturated sharp monoid $M$, we have isomorphisms
\begin{align*}
\Hom_{\Mon}(Q\oplus_P Q',M)
\simeq &
\Hom_{\Mon}(Q,M)\times_{\Hom_{\Mon}(P,M)} \Hom_{\Mon}(Q',M)
\\
\simeq &
\Hom_{\Mon}(\ol{Q},M)\times_{\Hom_{\Mon}(\ol{P},M)}\Hom_{\Mon}(\ol{Q'},M)
\\
\simeq &
\Hom_{\Mon}(\ol{Q}\oplus_{\ol{P}}\ol{Q},M).
\end{align*}
We conclude together with the fact that the quotient homomorphism $(-)\to \ol{(-)}$ is the universal homomorphism to a sharp saturated monoid.
\end{proof}

\begin{proposition}
\label{conserv.5}
Let $f\colon X\to S$ be an exact morphism of fs log schemes.
Then there exists a Kummer log smooth morphism $g\colon S'\to S$ such that $u^*$ is conservative for every pullback $u\colon Y\times_S S'\to Y$ of $g$ and the projection $f'\colon X\times_S S'\to S'$ is saturated.
\end{proposition}
\begin{proof}
The question is Zariski local on $S$ by Proposition \ref{extension.3}.
Hence we may assume that $S$ admits a neat chart $P$.
The question is also Zariski local on $X$.
We may also assume that there exists a chart $\theta\colon P\to Q$ of $f$ such that $\theta$ is locally exact.
By \cite[Theorem I.4.9.1]{Ogu}, there exists an integer $n>0$ such that the pushout of $\theta$ along $P\xrightarrow{\cdot n} P$ in the category of saturated monoids is saturated.
Consider the injective and local homomorphisms
\[
P\to P':=P^\gp \oplus P
\text{ and }
\eta\colon P\to P'':=P \oplus P
\]
sending $p\in P$ to $(p,np)$.
Use these homomorphisms to define
\[
Y':=Y\times_{\A_P}\A_{P'}
\text{ and }
Y'':=Y\times_{\A_P}\A_{P''},
\]
where $Y\to S$ is an arbitrary morphism of fs log schemes.
We have a commutative diagram
\[
\begin{tikzcd}
Y\ar[rd,"\id"']\ar[r,"s"]&
Y''\ar[d,"v"]\ar[r,leftarrow,"j"]&
Y',\ar[ld,"u"]
\\
&
Y
\end{tikzcd}
\]
where $u$ and $v$ are the projections, and $s$ (resp.\ $j$) is induced by the first projection $P''\to P$ (resp.\ the inclusion $P''\to P'$).
Let $g\colon S':=S\times_{\A_P}\A_{P'}\to S$ be the projection.
Observe that $u$ is a pullback of $g$.

Since $s^*v^*\simeq \id$, $v^*$ is conservative.
The only maximal $\eta$-critical faces of $P''$ are $P\oplus 0$ and $0\oplus P$.
By the implication (1)$\Leftrightarrow$(5) in \cite[Theorem I.4.7.7]{Ogu}, $\eta$ is locally exact.
Apply Theorem \ref{pan.7} to $\eta$ and its critical face $P\oplus 0$ to obtain an isomorphism $u_\sharp u^*\simeq v_\sharp v^*$.
Together with Lemma \ref{conserv.4}, we see that $u^*$ is conservative.

Combine \cite[Proposition I.4.8.5(4)]{Ogu} and Lemma \ref{conserv.2} to show that the pushout of $\theta$ along $P\to P'$ is saturated too.
Use \cite[Lemma II.3.5]{zbMATH07027475} to deduce that the projection $f'$ is saturated.
The cokernel of $P^\gp \to P'^\gp$ is isomorphic to $P^\gp$, which is torsion free since $P$ is sharp.
Together with \cite[Theorem IV.3.1.8]{Ogu}, we see that $g$ is Kummer log smooth.
\end{proof}

\begin{proposition}
\label{conserv.3}
Let $f\colon X\to S$ be a morphism of fs log schemes.
Then there exists a log smooth morphism $g\colon S'\to S$ such that $u^*$ is conservative for every pullback $u\colon Y\times_S S'\to Y$ of $g$ and the projection $f'\colon X\times_S S'\to S'$ is saturated.
\end{proposition}
\begin{proof}
By \cite[Proposition 4.2.1]{logA1}, there exists a dividing Zariski cover $p\colon U\to S$ such that the projection $X\times_S U\to U$ is exact.
Proposition \ref{extension.3} shows that $p^*$ is conservative.
Apply Proposition \ref{conserv.5} to this projection to conclude.
\end{proof}

For a morphism of fs log schemes $f\colon X\to S$,
we set
\[
\cM_{X/S}
:=
\mathrm{coker}(\cM_{\ul{X}\times_{\ul{S}}S}\to \cM_X),
\]
see \cite[Definition II.1.1.10(2)]{Ogu}.

\begin{remark}
\label{conserv.6}
The projection $f'$ in Propositions \ref{conserv.5} and \ref{conserv.3} is saturated, and this implies that $\cM_{X\times_S S'/S'}^\gp$ is torsion free by \cite[Theorem III.2.5.5]{Ogu}.
\end{remark}

\subsection{Deformation to the normal cone}
\label{deform}
Throughout this subsection, we fix $\cS\in \lSpc/B$ and a closed immersion $a\colon \cZ\to \cX$ in $\lSmSpc/S$.
Let $p\colon \cX\to \cS$ be the structure morphism.
From \cite[Definition 9.13]{divspc}, we have the commutative diagram of the deformation to the normal cone
\begin{equation}
\label{pur.2.2}
\begin{tikzcd}
\cZ\ar[r,"a"]\ar[d,"i_1'"']&
\cX\ar[r,"p"]\ar[d,"l_1"]&
\cS\ar[d,"i_1"]
\\
\cZ\times \boxx\ar[r,"a_d"]&
\Deform_{\cZ} \cX\ar[r,"p_d"]&
\cS\times \boxx\ar[r,"e"]&
\cS
\\
\cZ\ar[r,"a_n"]\ar[u,"i_0'"]&
\Normal_{\cZ} \cX\ar[r,"p_n"]\ar[u,"l_0"']&
\cS,\ar[u,"i_0"']
\end{tikzcd}
\end{equation}
where $e$ is the projection, $i_0$ is the $0$-section, $i_1$ is the $1$-section.

\begin{theorem}
\label{pur.2}
With the above notation, the natural morphisms
\[
M_{\cS}(\cX/\cX-\cZ)
\to
M_{\cS}(\Deform_{\cZ}\cX/\Deform_{\cZ}\cX-\cZ\times \boxx)
\leftarrow
M_{\cS}(\Normal_{\cZ}\cX/\Normal_{\cZ} \cX-\cZ)
\]
are isomorphisms.
\end{theorem}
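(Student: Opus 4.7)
The plan is to execute the classical deformation-to-the-normal-cone argument; the geometric diagram \eqref{pur.2.2} is already assembled from \cite{divspc}, so the work lies entirely in reducing to a setting where a known purity statement for strict closed immersions can be invoked. Both maps in the statement will be handled in parallel, as they arise symmetrically from the inclusions of the two distinguished fibers of $\boxx$ into the deformation space.

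First I would perform a chain of descent and localization reductions. Dividing Nisnevich descent together with the conservativity of pullback along representable Zariski covers (Proposition \ref{extension.3}) and log smooth base change (Proposition \ref{extension.1}) allow a reduction to the case $\cS = h_S$ with $S \in \lFan/B$. Zariski-locally on $\cX$, one may further assume that $\cX = h_X$ with $X \in \lFan/B$ and the closed immersion is represented by some $i \colon Z \to X$ in $\lFan/B$. Since log blow-ups become equivalences in the dividing Nisnevich topology, and the divided log space constructions $\Blow_{\cZ}\cX$, $\Deform_{\cZ}\cX$, $\Normal_{\cZ}\cX$ are designed to be stable under dividing covers, a further dividing cover can be chosen to strictify $i$; thus we may assume $i$ is a strict closed immersion in $\lSm/S$.

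Next I would apply Proposition \ref{conserv.3} to obtain a log smooth morphism $g \colon S' \to S$ such that $g^*$ is conservative and the projection $X \times_S S' \to S'$ is saturated. Conservativity of $g^*$ combined with log smooth base change allows me to replace $S$ by $S'$; by Remark \ref{conserv.6}, the relative characteristic $\cM_{X \times_S S'/S'}^{\gp}$ is now torsion-free. The hypotheses of the known purity theorem \cite[Theorem 7.5.4]{logDM} for strict closed immersions of log smooth fs log schemes are then satisfied, which yields the required isomorphisms after base change; these transfer back to the original statement through the previous reductions.

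The hardest step is the strictification of $i$. On $\lSch/B$ alone, producing a global log blow-up that strictifies a closed immersion while remaining compatible with the deformation and normal-bundle constructions is notoriously delicate and was the technical pain point motivating the approach of \cite{ParThesis}. In the present framework, this difficulty has been absorbed into the divided log space machinery of \S \ref{div}: the constructions of $\Blow_{\cZ}\cX$, $\Deform_{\cZ}\cX$, and $\Normal_{\cZ}\cX$ globalize automatically in $\lSpc/B$ and are compatible with arbitrary dividing covers. The main verification to carry out is that these compatibilities interact correctly with the log smooth base change along $g$, so that the entire deformation diagram descends through each reduction step.
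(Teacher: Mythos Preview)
Your overall strategy matches the paper's: reduce Zariski-locally to a strict closed immersion $i\colon Z\to X$ in $\lSm/S$, then base change along a conservative log smooth $g\colon S'\to S$ produced by Proposition~\ref{conserv.3} so that $\cM_{X'/S'}^{\gp}$ becomes torsion free. The divergence is in what you do next.

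You claim that once $\cM_{X/S}^{\gp}$ is torsion free, the hypotheses of \cite[Theorem~7.5.4]{logDM} are met and the Gysin isomorphisms follow. That is the gap. The cited theorem is not stated for arbitrary strict closed immersions of log smooth $S$-schemes; the paper has to reduce further. The torsion-free condition is not a hypothesis of \cite[Theorem~7.5.4]{logDM} but rather of \cite[Proposition~C.1]{divspc}, which the paper invokes to produce, Zariski-locally, a cartesian square exhibiting $Z\hookrightarrow X$ as the pullback of the zero section $Y\hookrightarrow Y\times\A^s$ along a strict \'etale morphism $u\colon X\to Y\times\A^s$ with $Y\in\lSm/S$. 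This lets one replace $S$ by $Y$ and thereby assume that $X$ and $Z$ are \emph{strict} smooth over $S$, not merely log smooth. Only then does the paper apply \cite[Theorem~7.5.4]{logDM} together with $\ver$-invariance over the underlying scheme $\ul{S}$, and pull the resulting isomorphisms back along $p\colon S\to\ul{S}$. Without the reduction via \cite[Proposition~C.1]{divspc} you have not placed yourself in a situation where the cited Gysin theorem is known to apply, and you also omit the final passage through $\ul{S}$ and $p^*$.
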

\begin{proof}
The question is Zariski local on $\cS$ and $\cX$.
Hence as in the proof of \cite[Theorem 9.12]{divspc}, we may assume that $\cS=h_S$ for some $S\in \lFan/B$ and $\cZ\to \cX$ is equal to $h_i$ for some strict closed immersion $i\colon Z\to X$ in $\lSm/S$.
Then it suffices to show that the induced morphisms
\begin{equation}
\label{pur.2.1}
M_{S}(X/X-Z)
\to
M_{S}(\Deform_{Z} X/\Deform_{Z} X-Z\times \boxx)
\leftarrow
M_S(\Normal_{Z}X/\Normal_{Z} X-Z)
\end{equation}
are isomorphisms.

By Proposition \ref{conserv.3} and Remark \ref{conserv.6}, there exists a log smooth morphism $g\colon S'\to S$ such that $g^*$ is conservative and $\cM_{X'/S'}^\gp$ is torsion free, where $X':=X\times_S S'$.
If we apply $g^*$ to \eqref{pur.2.1}, then we get
\[
M_{S'}(X'/X'-Z')
\to
M_{S'}(\Deform_{Z'} X'/\Deform_{Z'}X'-Z'\times \boxx)
\leftarrow
M_{S'}(\Normal_{Z'} X'/\Normal_{Z'} X'-Z')
\]
by \cite[Lemma D.6]{divspc}.
Hence we can replace $S$ by $S'$ to assume that $\cM_{X/S}^\gp$ is torsion free.

Then \cite[Proposition C.1]{divspc} allows us to assume that there exists $Y\in \lSm/S$ and a cartesian square
\[
\begin{tikzcd}
Z\ar[d]\ar[r]&
Y\ar[d,"i_0"]
\\
X\ar[r,"u"]&
Y\times \A^s
\end{tikzcd}
\]
such that $i_0$ is the zero section and $u$ is strict \'etale.
Observe that both $X$ and $Z$ are strict smooth over $Y$.
We can replace $S$ by $Y$ to assume that $X$ and $Z$ are strict smooth over $S$.

Let $p\colon S\to \ul{S}$ be the canonical morphism.
Combine \cite[Theorem 7.5.4]{logDM} and $\ver$-invariance to obtain natural isomorphisms
\[
M_{\ul{S}}(\ul{X}/\ul{X}-\ul{Z})
\xrightarrow{\simeq}
M_{\ul{S}}(\Deform_{\ul{Z}} \ul{X}/\Deform_{\ul{Z}} \ul{X}-\ul{Z}\times \boxx)
\xleftarrow{\simeq}
M_{\ul{S}}(\Normal_{\ul{Z}} \ul{X}/\Normal_{\ul{Z}} \ul{X}-\ul{Z}).
\]
Apply the functor $p^*$ to these to conclude.
\end{proof}

\begin{definition}
With the above notation, the \emph{Gysin isomorphism} is defined to be the natural isomorphism
\[
M_{\cS}(\cX/\cX-\cZ)
\simeq
M_{\cS}(\Normal_{\cZ}\cX/\Normal_{\cZ} \cX-\cZ)
\]
obtained by Theorem \ref{pur.2}.
\end{definition}

\section{Thom transformations}

Recall that we fix a sheaf
\[
\sT\in \Sh_{\dNis}(\lSch/B,\CAlg(\PrL))
\simeq
\Sh_{\dNis}(\lSpc/B,\CAlg(\PrL))
\]
satisfying the conditions in the introduction.
In \S \ref{applications}, we define the Thom transformation $\Thom$ and its variants $\Thom_d$ and $\Thom_n$.
We use the Gysin isomorphisms to show $\Thom\xrightarrow{\simeq} \Thom_d \xleftarrow{\simeq} \Thom_n$.
In \S \ref{exchange} and \S \ref{composition}, we study various functorial properties of Thom transformations as in \cite[\S 1.5.2]{Ayo071}.

\subsection{Application of the Gysin isomorphisms}
\label{applications}
Throughout this subsection, $\cS\in \lSpc/B$, $p\colon \cX\to \cS$ is a log smooth morphism in $\lSpc/B$, and $a\colon \cS\to \cX$ is its section.
We assume that $a$ is a closed immersion.
From \eqref{pur.2.2}, we have a commutative diagram
\[
\begin{tikzcd}
\cS\ar[r,"a"]\ar[d,"i_1"']&
\cX\ar[r,"p"]\ar[d,"l_1"]&
\cS\ar[d,"i_1"]
\\
\cS\times \boxx\ar[r,"a_d"]&
\Deform_{\cS} \cX\ar[r,"p_d"]&
\cS\times \boxx\ar[r,"e"]&
\cS
\\
\cS\ar[r,"a_n"]\ar[u,"i_0"]&
\Normal_{\cS} \cX\ar[r,"p_n"]\ar[u,"l_0"']&
\cS,\ar[u,"i_0"']
\end{tikzcd}
\]
where $e$ is the projection, $i_0$ is the $0$-section, and $i_1$ is the $1$-section.

\begin{definition}
\label{thom.1}
We set
\begin{gather*}
\Thom(p,a)
:=
p_\sharp a_*,
\;
\Thom_d(p,a):=(ep_d)_\sharp a_{d*}e^*,
\;
\Thom_n(p,a):=\Thom(p_n,a_n),
\\
\Thom'(p,a)
:=
a^!p^*,
\;
\Thom_d'(p,a):=
e_*a_d^!(ep_d)^*,
\;
\Thom_n'(p,a)
:=
\Thom'(p_n,a_n),
\end{gather*}
where $a^!$, $a_d^!$, and $a_n^!$ are right adjoints of $a_*$, $a_{d*}$, and $a_{n*}$, which exist by Proposition \ref{fundamental.3}.
Let $T_d\colon \Thom_d'(p,a)\to \Thom'(p,a)$ be the natural transformation given by the composition
\[
e_* a_d^!(ep_d)^*
\xrightarrow{ad}
e_*i_{1*}i_1^*a_d^!(ep_d)^*
\xrightarrow{Ex}
e_*i_{1*}a^!l_1^*(ep_d)^*
\xrightarrow{\simeq}
a^!p^*.
\]
We define $T_n\colon \Thom_d'(p,a)\to \Thom_n'(p,a)$ similarly.
By adjunction, we obtain the natural transformations
\[
T_d\colon \Thom(p,a)\to \Thom_d(p,a)
\text{ and }
T_n \colon \Thom_n(p,a)\to \Thom_d(p,a).
\]
\end{definition}

\begin{proposition}
\label{thom.6}
The natural transformations $T_d$ and $T_n$ are isomorphisms.
\end{proposition}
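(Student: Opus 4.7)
My plan is to reduce Proposition~\ref{thom.6} to the Gysin isomorphism of Theorem~\ref{pur.2}. The first step will be to check that each of the three Thom functors is determined by its value on the monoidal unit: by the log smooth projection formula (for $p$, $ep_d$, $p_n$) combined with the projection formula for closed immersions (for $a$, $a_d$, $a_n$; the latter follows from the localization property by the standard conservativity argument on $(i^*, j^*)$), we obtain canonical natural isomorphisms $\Thom_?(p,a)(\cF) \simeq \Thom_?(p,a)(\unit) \otimes \cF$ for $? \in \{\emptyset, d, n\}$. Since $T_d$ and $T_n$ are built out of adjunction units/counits and exchange morphisms that are natural in $\cF$, chasing the definitions shows these transformations respect the tensor decomposition, so the problem reduces to checking isomorphism at $\unit$.

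Next, I will identify the three values at $\unit$ with relative motives. Applying $p_\sharp$ to the localization cofiber sequence $j_\sharp j^* \unit \to \unit \to a_* a^* \unit$ coming from the localization property yields $\Thom(p,a)(\unit) = p_\sharp a_* \unit \simeq M_\cS(\cX / (\cX - a(\cS)))$. The same argument applied to $a_d$ (using $e^* \unit = \unit$ and $p_d a_d = \id$, so that $(ep_d)_\sharp a_{d*} e^* \unit = (ep_d)_\sharp a_{d*} \unit$) and to $a_n$ gives $\Thom_d(p,a)(\unit) \simeq M_\cS(\Deform_\cS \cX / (\Deform_\cS \cX - \cS \times \boxx))$ and $\Thom_n(p,a)(\unit) \simeq M_\cS(\Normal_\cS \cX / (\Normal_\cS \cX - \cS))$. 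These are precisely the three motives in Theorem~\ref{pur.2}.

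The main obstacle is the third step: verifying that $T_d$ and $T_n$ at $\unit$, under these identifications, correspond to the geometric morphisms of Theorem~\ref{pur.2} induced by the closed immersions $l_1$ and $l_0$. Unpacking the definition of $T_d'$ and using $e i_1 = \id_\cS$ and $p_d l_1 = i_1 p$, the composite
\[
e_* a_d^! (ep_d)^* \xrightarrow{ad} e_* i_{1*} i_1^* a_d^! (ep_d)^* \xrightarrow{Ex} e_* i_{1*} a^! l_1^* (ep_d)^*
\]
collapses to $a^! p^*$; here $Ex$ is base change for $!$-functors along the cartesian top square of the deformation diagram \eqref{pur.2.2}, which follows from Proposition~\ref{loc.2} by adjunction. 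Passing to the adjoint $T_d$ and chasing it through the cofiber sequence identifications of the previous step, one sees that $T_d$ at $\unit$ is the map in $M_\cS(-/-)$ induced by the pair of closed immersions $(\cX - a(\cS) \hookrightarrow \cX) \to (\Deform_\cS \cX - \cS \times \boxx \hookrightarrow \Deform_\cS \cX)$ given by $l_1$. The argument for $T_n$ is strictly parallel, replacing $i_1, l_1$ by $i_0, l_0$. Theorem~\ref{pur.2} then yields that both maps are isomorphisms, completing the proof.
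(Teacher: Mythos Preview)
Your proposal is correct and reaches the same endpoint as the paper (Theorem~\ref{pur.2}), but the reduction step is organized differently. The paper works on the right-adjoint side: it tests $T_d'\colon \Thom_d'(p,a)\to\Thom'(p,a)$ against the generating family $M_\cS(\cS')(d)[n]$ for $\cS'\in\lSmSpc/\cS$, and after adjunction and localization this becomes the statement that
\[
M_\cS(\cX'/\cX'-\cS')\to M_\cS(\Deform_{\cS'}\cX'/\Deform_{\cS'}\cX'-\cS'\times\boxx)
\]
is an isomorphism for every such $\cS'$ (with $\cX'=\cX\times_\cS\cS'$), which is Theorem~\ref{pur.2} over $\cS'$ pushed forward along $q_\sharp$. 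Your route instead uses the projection formulas for $p_\sharp$ and $a_*$ (the latter via conservativity of $(i^*,j^*)$, as you note) to identify each $\Thom_?(p,a)$ with $\Thom_?(p,a)(\unit)\otimes(-)$ and then invokes Theorem~\ref{pur.2} only once, for $\cS$ itself.

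What each buys: the paper's Yoneda-style argument avoids having to verify that $T_d$ and $T_n$ intertwine the tensor decompositions---it never needs that compatibility because Hom against generators unwinds directly. Your argument is more economical in its appeal to Theorem~\ref{pur.2}, but the phrase ``chasing the definitions shows these transformations respect the tensor decomposition'' hides a genuine (if routine) diagram chase: one must check that the exchange map $Ex$ and the units/counits defining $T_d'$ are compatible with the projection-formula isomorphisms, and similarly that passing to the left adjoint $T_d$ preserves this. This is true, but it is the one place where your sketch leaves real work implicit; the paper's approach sidesteps it entirely.
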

\begin{proof}
We focus on $T_d$ since the proof for $T_n$ is similar.
We only need to show that the induced morphism
\[
\Hom_{\sT(\cS)}(M_{\cS}(\cS')(d)[n],\Thom_d'(p,a)\cF)
\to
\Hom_{\sT(\cS)}(M_{\cS}(\cS')(d)[n],\Thom'(p,a)\cF)
\]
is an isomorphism for all $\cF\in \sT(\cS)$, $\cS'\in \lSmSpc/\cS$, and $d,n\in \Z$.
By adjunction and the localization property, it suffices to show that the induced morphism
\begin{align*}
& \Hom_{\sT(\cS)}(M_{\cS}(\Deform_{\cS'}\cX'/\Deform_{\cS'}\cX'-\cS'\times \boxx)(d)[n],\cF)
\\
\to &
\Hom_{\sT(\cS)}(M_{\cS}(\cX'/\cX'-\cS')(d)[n],\cF)
\end{align*}
is an isomorphism, where $\cX':=\cX\times_{\cS}\cS'$.
Theorem \ref{pur.2} shows that the induced morphism
\[
M_{\cS'}(\cX'/\cX'-\cS')\to M_{\cS'}(\Deform_{\cS'}\cX'/\Deform_{\cS'}\cX'-\cS'\times \boxx)
\]
is an isomorphism.
Apply $q_\sharp$ to this to conclude, where $q\colon \cS'\to \cS$ is the structure morphism.
\end{proof}

\begin{definition}
Let $T\colon \Thom_n'(p,a)\xrightarrow{\simeq}\Thom'(p,a)$ be the composition
\[
\Thom_n'(p,a)\xrightarrow{T_n^{-1}}\Thom_d'(p,a)\xrightarrow{T_d}\Thom(p,a).
\]
By adjunction, we obtain $T\colon \Thom(p,a)\xrightarrow{\simeq} \Thom_n(p,a)$.
\end{definition}

\subsection{Exchange transformations}
\label{exchange}

Throughout this subsection, we fix a commutative diagram with cartesian squares
\[
\begin{tikzcd}
\cS'\ar[d,"u"']\ar[r,"a'"]&
\cX'\ar[d,"v"]\ar[r,"p'"]&
\cS'\ar[d,"u"]
\\
\cS\ar[r,"a"]&
\cX\ar[r,"p"]&
\cS
\end{tikzcd}
\]
in $\lSpc/B$ such that $p$ and $p'$ are log smooth, $a$ and $a'$ are closed immersions, $pa=\id$, and $p'a'=\id$.

\begin{construction}
\label{thom.8}
We have the natural transformation
\begin{equation}
\label{thom.8.1}
Ex\colon u^*\Thom'(p,a)\to \Thom'(p',a')u^*
\end{equation}
given by the composition
\[
u^*a^!p^*
\xrightarrow{Ex}
a'^!v^*p^*
\xrightarrow{\simeq}
a'^!p'^*u^*,
\]
where the first arrow is defined by \textup{Proposition \ref{loc.2}}.
Observe that if $u\in \lSmSpc$, then the first arrow is an isomorphism, and hence \eqref{thom.8.1} is an isomorphism.

If $u\in \lSmSpc$, then a left adjoint of \eqref{thom.8.1} is
\begin{equation}
\label{thom.8.2}
Ex\colon u_\sharp \Thom(p',a')
\to
\Thom(p,a)u_\sharp.
\end{equation}
\end{construction}

\begin{construction}
\label{thom.13}
We have the natural isomorphism
\begin{equation}
\label{thom.13.1}
Ex\colon u_*\Thom'(p',a')\xrightarrow{\simeq} \Thom'(p,a)u_*
\end{equation}
given by the composition
\[
u_*a'^!p'^*
\xrightarrow{Ex}
a^!v_*p'^*
\xrightarrow{Ex^{-1}}
a^!p^*u_*,
\]
where the first arrow is an isomorphism by Proposition \ref{loc.2}, and the second arrow is defined and an isomorphism by log smooth base change.

A left adjoint of \eqref{thom.13.1} is
\begin{equation}
\label{thom.13.2}
Ex\colon u^*\Thom(p,a)
\xrightarrow{\simeq}
\Thom(p',a') u^*.
\end{equation}
\end{construction}

\begin{construction}
\label{thom.14}
We have the natural transformation
\begin{equation}
\label{thom.14.2}
Ex\colon \Thom(p,a)u_*
\to
u_*\Thom(p',a')
\end{equation}
given by the composition
\[
p_\sharp a_*u_*
\xrightarrow{\simeq}
p_\sharp v_*a_*'
\xrightarrow{Ex}
u_*p_\sharp'.
\]
\end{construction}

\begin{proposition}
\label{thom.9}
The functor $\Thom_n(p,a)$ is an equivalence of $\infty$-categories.
\end{proposition}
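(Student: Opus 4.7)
The plan is to reduce to a strict smooth situation with a strict zero section and then invoke Proposition~\ref{fundamental.5}.

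First, I argue that the question is local on both $\cS$ and $\cX$. By Constructions~\ref{thom.8} and~\ref{thom.13} applied to $(p_n, a_n)$, the Thom transformation $p_{n\sharp} a_{n*}$ and its right adjoint $a_n^! p_n^*$ are compatible with pullback along log smooth morphisms, and such pullbacks are conservative along Zariski covers by Proposition~\ref{extension.3}. Hence whether the unit and counit of the adjunction $(p_{n\sharp} a_{n*}, a_n^! p_n^*)$ are isomorphisms can be checked after a Zariski cover, and I reduce to the case $\cS = h_S$ and $\cX = h_X$ for some $S, X \in \lFan/B$, with $p = h_\pi$ and $a = h_\alpha$ for a log smooth $\pi\colon X\to S$ and a closed immersion section $\alpha\colon S\to X$ in $\lSch/B$.

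Next, I follow the strictification strategy used in the proof of Theorem~\ref{pur.2}. Proposition~\ref{conserv.3} together with Remark~\ref{conserv.6} allows a further base change along a log smooth morphism whose pullback is conservative so that $\cM_{X/S}^{\gp}$ becomes torsion free. Then Proposition~C.1 of~\cite{divspc} realizes $\alpha$ locally as the base change of the zero section $Y\to Y\times \A^n$ along a strict \'etale morphism $u\colon X\to Y\times \A^n$, for some $Y\in \lSm/S$. This forces $\alpha$ to be strict, and after a further strict Nisnevich localization on $S$ so that the strict \'etale morphism $S\to Y$ induced from $u$ and the zero section becomes an isomorphism, I may replace $S$ by $Y$ to assume that $\pi\colon X\to S$ is strict separated smooth and $\alpha$ is its strict closed section.

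In this strict setup, $\Normal_S X$ is the ordinary rank-$n$ normal vector bundle over $S$ equipped with the pulled-back log structure, so $p_n\colon \Normal_S X\to S$ is strict separated smooth and $a_n\colon S\to \Normal_S X$ is its strict zero section. Proposition~\ref{fundamental.5} then yields that $p_{n\sharp} a_{n*}$ is an equivalence of categories, and this lifts to an equivalence of $\infty$-categories since $\sT(\cS)$ is a presentable stable $\infty$-category and $p_{n\sharp} a_{n*}$ is colimit-preserving. The principal obstacle is the strictification in the second step: in contrast to the Gysin isomorphism of Theorem~\ref{pur.2}---an isomorphism of motives, which propagates under pushforward along $Y\to S$---the present claim concerns equivalence of endofunctors of $\sT(\cS)$, and verifying that this equivalence is preserved under the base changes and the eventual replacement of $S$ by $Y$ relies on the exchange formulas in Constructions~\ref{thom.8} and~\ref{thom.13} together with the conservativity of the relevant pullbacks as provided by Propositions~\ref{extension.3} and~\ref{conserv.3}.
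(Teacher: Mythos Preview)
Your overall strategy is sound, but the paper takes a much shorter route. Instead of strictifying the pair $(p,a)$ via the machinery of Theorem~\ref{pur.2}, the paper invokes \cite[Proposition~9.19]{divspc} directly: this says that Zariski-locally on $\cS$ there is a cartesian square identifying $\Normal_{\cS}\cX$ with $h_{S\times\A^n}$ for some $S\in\lFan/B$, with $a_n$ corresponding to the zero section. One conservativity reduction along this Zariski cover (using Constructions~\ref{thom.8}, \ref{thom.13} and Proposition~\ref{extension.3}) then lands immediately in the strict situation $(h_q,h_b)$ with $q\colon S\times\A^n\to S$ the projection and $b$ the zero section, where Proposition~\ref{fundamental.5} applies. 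Your approach effectively re-derives this local triviality of the normal bundle by first strictifying $(X,\pi,\alpha)$, which is valid but considerably longer.

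Two steps in your argument need repair. First, the phrase ``local on $\cX$'' is not justified by the tools you cite: Constructions~\ref{thom.8} and~\ref{thom.13} concern base change along morphisms $\cS'\to\cS$, not restriction of $\cX$. What you actually need (and should state) is that $\Normal_{\cS}\cX$ is unchanged upon replacing $\cX$ by any open neighbourhood of $a(\cS)$; combining this with localization on $\cS$ lets you pass to a representable chart. Second, the claim that a strict Nisnevich localization on $S$ makes the strict \'etale map $S\to Y$ an isomorphism is incorrect: \'etale maps need not become isomorphisms after refining the source. The correct argument is that $S\to Y$ is a section of the structure map $Y\to S$ (the cartesian square from \cite[Proposition~C.1]{divspc} lives in $\lSm/S$) and is strict \'etale, hence a monomorphism, hence an open immersion; you may then shrink $Y$ to this open and $X$ correspondingly, which achieves $Y=S$ and $\pi$ strict smooth without any Nisnevich refinement.
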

\begin{proof}
We need to show that the natural transformations
\[
\id \xrightarrow{ad} \Thom'(p_n,a_n)\Thom(p_n,a_n)
\text{ and }
\Thom(p_n,a_n)\Thom'(p_n,a_n)\xrightarrow{ad'} \id
\]
are isomorphisms.
By \cite[Proposition 9.19]{divspc}, we have a cartesian square
\[
\begin{tikzcd}
h_{S\times \A^n}\ar[d]\ar[r,"h_q"]&
h_S\ar[d,"g"]
\\
\Normal_{\cS}\cX\ar[r]&
\cS
\end{tikzcd}
\]
with $S\in \lFan/B$ and $n\in \N$ such that $g$ is a Zariski cover and $q$ is the projection.
Let $b\colon S\to S\times \A^n$ be the $0$-section.

By Proposition \ref{extension.3} and Constructions \ref{thom.8} and \ref{thom.13}, we reduce to showing that the natural transformations
\[
g^*\xrightarrow{ad} \Thom'(h_q,h_b)\Thom(h_q,h_b)g^*
\text{ and }
\Thom(h_q,h_b)\Thom'(h_q,h_b)g^*\xrightarrow{ad'} g^*
\]
are isomorphisms.
Hence it suffices to show that $\Thom(h_q,h_b)$ is an equivalence of categories.
This follows from Proposition \ref{fundamental.5}.
\end{proof}

\begin{theorem}
\label{thom.10}
The functor $\Thom(p,a)$ is an equivalence of $\infty$-categories.
\end{theorem}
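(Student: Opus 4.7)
The plan is to deduce this as a direct consequence of two results already established in this section: first, the natural isomorphism $T\colon \Thom(p,a) \xrightarrow{\simeq} \Thom_n(p,a)$ comparing the Thom transformation to its normal-bundle analogue, and second, Proposition \ref{thom.9}, which asserts that $\Thom_n(p,a)$ is itself an equivalence of $\infty$-categories. Since a functor isomorphic to an equivalence is an equivalence, combining these two facts gives the theorem essentially for free.

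More concretely, I would first invoke Proposition \ref{thom.6}, which gives that both $T_d\colon \Thom_d'(p,a) \to \Thom'(p,a)$ and $T_n\colon \Thom_d'(p,a) \to \Thom_n'(p,a)$ are natural isomorphisms (this is where the Gysin isomorphism of Theorem \ref{pur.2} is doing all the real work, through the deformation-to-the-normal-cone diagram). Composing $T_d \circ T_n^{-1}$ produces the natural isomorphism $T\colon \Thom_n'(p,a) \xrightarrow{\simeq} \Thom'(p,a)$ recorded in the definition preceding this theorem, and passing to left adjoints yields $T\colon \Thom(p,a) \xrightarrow{\simeq} \Thom_n(p,a)$.

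Next, I would appeal to Proposition \ref{thom.9}: the normal bundle $\Normal_{\cS}\cX$ admits, Zariski-locally on $\cS$, a representable chart $h_{S \times \A^n} \to \Normal_{\cS}\cX$ with the $0$-section corresponding to the canonical closed immersion, which reduces the equivalence statement (via conservativity of $g^*$ from Proposition \ref{extension.3} together with the base-change isomorphisms from Constructions \ref{thom.8} and \ref{thom.13}) to Proposition \ref{fundamental.5} for the strict smooth case on fs log schemes.

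The final step is a formal one: since $T\colon \Thom(p,a) \to \Thom_n(p,a)$ is an isomorphism of functors and $\Thom_n(p,a)$ is an equivalence of $\infty$-categories, $\Thom(p,a)$ is also an equivalence. There is no genuine obstacle remaining at this stage; all the substantial input, namely the Gysin isomorphism for divided log spaces (which itself requires the reduction to strict closed immersions via Proposition \ref{conserv.3} and the $\ver$-invariance arguments developed in \S\ref{localization}), has already been absorbed into Proposition \ref{thom.6}.
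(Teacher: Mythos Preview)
Your proposal is correct and matches the paper's proof exactly: the paper's argument is the single line ``Combine Propositions \ref{thom.6} and \ref{thom.9},'' which is precisely the two-step reduction you describe (first use the Gysin-based comparison $T$ to identify $\Thom(p,a)$ with $\Thom_n(p,a)$, then invoke the equivalence of the latter). Your elaboration of how Proposition \ref{thom.9} works internally is accurate but already contained in that proposition's proof, so it is not needed here.
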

\begin{proof}
Combine Propositions \ref{thom.6} and \ref{thom.9}.
\end{proof}

Since $\Thom'(p,a)$ is a right adjoint of $\Thom(p,a)$, Theorem \ref{thom.10} shows that $\Thom'(p,a)$ is an inverse of $\Thom(p,a)$.

\begin{proposition}
\label{thom.12}
The square
\begin{equation}
\label{thom.12.1}
\begin{tikzcd}
u^*\ar[r,"ad"]\ar[d,"ad"']&
u^*\Thom'(p,a)\Thom(p,a)\ar[d,"Ex"]
\\
\Thom'(p',a')\Thom(p,a)u^*&
\Thom'(p',a')u^*\Thom(p,a)\ar[l,"Ex"']
\end{tikzcd}
\end{equation}
commutes.
\end{proposition}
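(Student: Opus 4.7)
The plan is to interpret the square \eqref{thom.12.1} as an instance of the mate formalism: it asserts that the exchange $\alpha := Ex\colon u^*\Thom'(p,a)\to\Thom'(p',a')u^*$ from \eqref{thom.8.1} is the mate of the isomorphism $\beta := Ex\colon u^*\Thom(p,a)\xrightarrow{\simeq}\Thom(p',a')u^*$ from \eqref{thom.13.2} under the Thom adjunctions $(\Thom(p,a),\Thom'(p,a))$ and $(\Thom(p',a'),\Thom'(p',a'))$. Once this identification is established, the commutativity of \eqref{thom.12.1} follows from the standard compatibility of mates with units of adjunctions.

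To verify the identification, I would unwind both $\alpha$ and $\beta$ in terms of the underlying base change transformations and then invoke the general principle that the mate of a composition of adjunctions is computed by iterating the mate construction factor by factor. Since the Thom adjunctions decompose as $(p_\sharp,p^*)\circ(a_*,a^!)$, this principle applies with two constituent factors. Recalling \eqref{thom.13.2}, $\beta$ is by construction the composite
\[
u^* p_\sharp a_* \xrightarrow{Ex^{-1}} p'_\sharp v^* a_* \xrightarrow{Ex} p'_\sharp a'_* u^*,
\]
combining the inverse of the log smooth base change from Proposition \ref{extension.1} with the closed immersion base change from Proposition \ref{loc.2}. Taking mates factor by factor: the mate of the inverted log smooth base change under $(p_\sharp,p^*)/(p'_\sharp,p'^*)$ is the tautological equality $v^*p^* = p'^*u^*$ supplied by the cartesian square, while the mate of the closed immersion base change under $(a_*,a^!)/(a'_*,a'^!)$ is precisely the $Ex\colon u^*a^!\to a'^!v^*$ built into the definition of $\alpha$ in Construction \ref{thom.8}. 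Composing these two mates recovers $\alpha$ on the nose.

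The main obstacle is essentially bookkeeping: keeping careful track of the directions of the various exchange transformations, of which ones are being inverted, and of the order in which the iterated mates are formed. No new mathematical input is required beyond the definitions of the exchanges in \S \ref{exchange}, Propositions \ref{loc.2} and \ref{extension.1}, and standard $\infty$-categorical facts about mate transformations and their compatibility with units.
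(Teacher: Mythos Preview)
Your proposal is correct and follows essentially the same route as the paper. The paper's proof consists of a single large commutative diagram which, when you trace through it, is exactly the explicit unfolding of the factor-by-factor mate compatibility you describe: the outer boundary is the square \eqref{thom.12.1} expanded via $\Thom(p,a)=p_\sharp a_*$ and $\Thom'(p,a)=a^!p^*$, and the inner cells are precisely the unit compatibilities witnessing that $Ex\colon u^*a^!\to a'^!v^*$ is the mate of the closed-immersion base change and that the commutativity isomorphism $v^*p^*\simeq p'^*u^*$ is the mate of the log-smooth base change. So your abstract packaging via the pasting lemma for mates and the paper's concrete diagram chase are the same argument at two levels of abstraction.
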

\begin{proof}
The diagram
\[
\begin{tikzcd}[row sep=small]
u^*\ar[r,"ad"]\ar[rd,"ad"']\ar[dd,"ad"']&
u^*a^!p^*p_\sharp a_*\ar[r,"Ex"]&
a^!v^*p^*p_\sharp a_*\ar[r,"\simeq"]&
a'^!p'^*u^*p_\sharp a_*\ar[d,"Ex^{-1}"]
\\
&
u^*a^!a_*\ar[u,"ad"']\ar[r,"Ex"]&
a'^!v^*a_*\ar[u,"ad"']\ar[lld,"Ex"]\ar[r,"ad"]&
a'^!p'^*p_\sharp'v^*a_*\ar[d,"Ex"]
\\
a'^!a_*'u^*\ar[rrr,"ad"]&
&
&
a'^!p'^*p_\sharp'a_*'u^*
\end{tikzcd}
\]
commutes, which shows the claim.
\end{proof}

\begin{proposition}
\label{thom.7}
The natural transformation \eqref{thom.8.1} is an isomorphism.
If $u\in \lSmSpc$, then the natural transformation \eqref{thom.8.2} is an isomorphism.
\end{proposition}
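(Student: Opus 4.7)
The plan is to read off the proposition from the commutative square of Proposition~\ref{thom.12} together with Theorem~\ref{thom.10}, which says that $\Thom(p,a)$ and $\Thom(p',a')$ are equivalences of $\infty$-categories with quasi-inverses $\Thom'(p,a)$ and $\Thom'(p',a')$, respectively.

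First I would inspect each side of the square~\eqref{thom.12.1}. The top arrow $u^*\xrightarrow{ad}u^*\Thom'(p,a)\Thom(p,a)$ is the unit of the adjunction $\Thom(p,a)\dashv\Thom'(p,a)$ whiskered by $u^*$; since that adjunction is an equivalence by Theorem~\ref{thom.10}, its unit is a natural isomorphism. By the same reasoning applied to $(p',a')$, the left vertical arrow $u^*\to\Thom'(p',a')\Thom(p',a')u^*$ is an isomorphism. The bottom arrow is $\Thom'(p',a')$ whiskered with the base-change isomorphism $u^*\Thom(p,a)\xrightarrow{\simeq}\Thom(p',a')u^*$ supplied by Construction~\ref{thom.13}, so it is also an isomorphism.

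Commutativity of~\eqref{thom.12.1} then forces the right vertical arrow, which is precisely~\eqref{thom.8.1} whiskered with $\Thom(p,a)$, to be an isomorphism. Since $\Thom(p,a)$ is essentially surjective, every object of $\sT(\cS)$ is isomorphic to one of the form $\Thom(p,a)\cF$, so~\eqref{thom.8.1} itself is a natural isomorphism, proving the first assertion. For the second assertion, when $u\in\lSmSpc$ the functor $u_\sharp$ is left adjoint to $u^*$, and Construction~\ref{thom.8} exhibits~\eqref{thom.8.2} as the Beck--Chevalley mate of~\eqref{thom.8.1} under the adjunctions $u_\sharp\dashv u^*$, $\Thom(p,a)\dashv\Thom'(p,a)$, and $\Thom(p',a')\dashv\Thom'(p',a')$. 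Mates of natural isomorphisms are natural isomorphisms, so~\eqref{thom.8.2} is an isomorphism.

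I do not anticipate a serious obstacle: all the needed machinery is in place, and the argument is a formal diagram chase on top of the equivalence Theorem~\ref{thom.10}. The only mildly delicate point is to confirm that the bottom arrow of~\eqref{thom.12.1} is indeed the whiskered form of the base-change isomorphism~\eqref{thom.13.2}, which requires comparing the explicit composites displayed in the proof of Proposition~\ref{thom.12} with the definition given in Construction~\ref{thom.13}.
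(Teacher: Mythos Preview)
Your proposal is correct and follows essentially the same approach as the paper: both deduce the result from the commutative square~\eqref{thom.12.1}, noting that three of its sides are isomorphisms (the two units by Theorem~\ref{thom.10} and the bottom arrow by~\eqref{thom.13.2}), hence so is the fourth, and then cancel the equivalence $\Thom(p,a)$; your treatment of~\eqref{thom.8.2} via mates is exactly what the paper encodes by calling it ``a left adjoint of~\eqref{thom.8.1}''.
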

\begin{proof}
The left vertical and upper horizontal arrows of \eqref{thom.12.1} are isomorphisms by Theorem \ref{thom.10}.
The lower horizontal arrow of \eqref{thom.12.1} is an isomorphism, see \eqref{thom.13.2}.
Hence the right vertical arrow of \eqref{thom.12.1} is an isomorphism.
Theorem \ref{thom.10} finishes the proof.
\end{proof}

\begin{proposition}
\label{thom.11}
The square
\begin{equation}
\label{thom.11.1}
\begin{tikzcd}
\Thom(p,a)u_*\Thom'(p',a')\ar[r,"Ex"]\ar[d,"Ex"']&
\Thom(p,a)\Thom'(p,a)u_*\ar[d,"ad'"]
\\
u_*\Thom(p',a')\Thom'(p',a')\ar[r,"ad'"]&
u_*
\end{tikzcd}
\end{equation}
commutes.
\end{proposition}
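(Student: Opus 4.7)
The plan is to verify commutativity by expanding both composites into constituent natural transformations and reducing to the naturality of counits plus a single compatibility between the left- and right-adjoint mates of log smooth base change. Throughout, let $\phi\colon u_*a'^! \xrightarrow{\sim} a^!v_*$ denote the closed-immersion base change of Proposition \ref{loc.2}, $\psi\colon v_*p'^* \xrightarrow{\sim} p^*u_*$ the log-smooth base change iso obtained by taking right adjoints of Proposition \ref{extension.1}, and $\gamma^\vee\colon p_\sharp v_* \to u_* p'_\sharp$ the mate of the iso $\gamma\colon p'_\sharp v^* \xrightarrow{\sim} u^* p_\sharp$ under the $(u^*,u_*)$ and $(v^*,v_*)$ adjunctions.

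First I would write out the top-then-right composite as
\[
p_\sharp a_* u_* a'^! p'^* \xrightarrow{\phi} p_\sharp a_* a^! v_* p'^* \xrightarrow{\psi} p_\sharp a_* a^! p^* u_* \xrightarrow{ad'} p_\sharp p^* u_* \xrightarrow{ad'} u_*
\]
and the left-then-bottom composite as
\[
p_\sharp a_* u_* a'^! p'^* = p_\sharp v_* a'_* a'^! p'^* \xrightarrow{\gamma^\vee} u_* p'_\sharp a'_* a'^! p'^* \xrightarrow{ad'} u_* p'_\sharp p'^* \xrightarrow{ad'} u_*,
\]
using $a_*u_* = (au)_* = (va')_* = v_*a'_*$ at the front. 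Naturality of $ad'\colon a_*a^! \to \id$ against $\psi$, and of $ad'\colon a'_*a'^! \to \id$ against $\gamma^\vee$, lets me permute $ad'$ past the base change in each composite so that the counit is applied before moving horizontally. The defining property of $\phi$ as the mate of $v^*a_* \xrightarrow{\sim} a'_* u^*$ is exactly the identity
\[
\bigl(a_*u_*a'^! \xrightarrow{a_*\phi} a_*a^!v_* \xrightarrow{ad'} v_*\bigr) \;=\; \bigl(v_*a'_*a'^! \xrightarrow{v_*ad'} v_*\bigr);
\]
applying $p_\sharp(-)p'^*$ collapses the first three maps of the rewritten top composite into $p_\sharp v_*a'_*a'^! p'^* \xrightarrow{ad'} p_\sharp v_*p'^*$, which is exactly the first segment of the rewritten bottom composite.

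After these reductions, both composites share the common prefix ending at $p_\sharp v_* p'^*$, and the square collapses to the single identity
\[
\bigl(p_\sharp v_* p'^* \xrightarrow{p_\sharp \psi} p_\sharp p^* u_* \xrightarrow{ad'} u_*\bigr) \;=\; \bigl(p_\sharp v_* p'^* \xrightarrow{\gamma^\vee p'^*} u_* p'_\sharp p'^* \xrightarrow{ad'} u_*\bigr).
\]
I expect this last identity to be the main obstacle. My plan is to pass to the $(p_\sharp, p^*)$-adjoint of both sides, obtaining natural transformations $v_* p'^* \to p^* u_*$: the left side becomes $\psi$ by construction, and the right side becomes $v_*p'^* \xrightarrow{ad} p^* p_\sharp v_* p'^* \xrightarrow{p^*\gamma^\vee p'^*} p^* u_* p'_\sharp p'^* \xrightarrow{ad'} p^* u_*$. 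Expanding $\gamma^\vee$ from its definition in terms of $\gamma^{-1}$ and the units/counits of $(u^*, u_*)$ and $(v^*, v_*)$, and then collapsing by the triangle identities for the four adjunctions involved, will identify this composite with $\psi$. This is the standard compatibility that the left- and right-adjoint mates of a given iso Beck--Chevalley transformation are two shadows of the same datum, and it is what allows the counits of $(p_\sharp, p^*)$ and $(p'_\sharp, p'^*)$ to convert between $\psi$ and $\gamma^\vee$ in the presence of $p'^*$.
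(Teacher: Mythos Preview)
Your argument is correct and matches the paper's approach: the paper simply presents the $3 \times 3$ diagram
\[
\begin{tikzcd}[row sep=small]
p_\sharp a_* u_* a'^!p'^*\ar[r,"Ex"]\ar[d,"\simeq"']&
p_\sharp a_* a^! v_*p'^*\ar[r,"Ex^{-1}"]\ar[d,"ad'"]&
p_\sharp a_* a^! p^* u_*\ar[d,"ad'"]
\\
p_\sharp v_*a_*'a'^!p'^*\ar[r,"ad'"]\ar[d,"Ex"']&
p_\sharp v_* p'^*\ar[r,"Ex^{-1}"]\ar[d,"Ex"]&
p_\sharp p^* u_*\ar[d,"ad'"]
\\
u_*p_\sharp'a_*'a'^!p'^*\ar[r,"ad'"]&
u_*p_\sharp'p'^*\ar[r,"ad'"]&
u_*
\end{tikzcd}
\]
and asserts its commutativity; your four reductions are exactly the four subsquares, and the mate compatibility you isolate at the end is precisely the bottom-right one, which the paper leaves implicit.
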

\begin{proof}
The diagram
\[
\begin{tikzcd}[row sep=small]
p_\sharp a_* u_* a'^!p'^*\ar[r,"Ex"]\ar[d,"\simeq"']&
p_\sharp a_* a^! v_*p'^*\ar[r,"Ex^{-1}"]\ar[d,"ad'"]&
p_\sharp a_* a^! p^* u_*\ar[d,"ad'"]
\\
p_\sharp v_*a_*'a'^!p'^*\ar[r,"ad'"]\ar[d,"Ex"']&
p_\sharp v_* p'^*\ar[r,"Ex^{-1}"]\ar[d,"Ex"]&
p_\sharp p^* u_*\ar[d,"ad'"]
\\
u_*p_\sharp'a_*'a'^!p'^*\ar[r,"ad'"]&
u_*p_\sharp'p'^*\ar[r,"ad'"]&
u_*
\end{tikzcd}
\]
commutes, which shows the claim.
\end{proof}

\begin{proposition}
\label{thom.18}
The natural transformation \eqref{thom.14.2} is an isomorphism.
\end{proposition}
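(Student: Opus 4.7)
The plan is to exploit the commutative square \eqref{thom.11.1} established in Proposition \ref{thom.11}, together with the fact that the Thom transformations involved are already known to be equivalences by Theorem \ref{thom.10}. This reduces the problem to a short diagram chase in which three of the four edges of that square are recognized as isomorphisms, forcing the fourth.

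Concretely, I would proceed edge by edge in \eqref{thom.11.1}. The top horizontal arrow is $\Thom(p,a)$ whiskered onto the isomorphism \eqref{thom.13.1} of Construction \ref{thom.13}, hence is itself an isomorphism. The right vertical arrow is the counit of the adjunction $(\Thom(p,a),\Thom'(p,a))$, which is an isomorphism because $\Thom(p,a)$ is an equivalence of $\infty$-categories by Theorem \ref{thom.10}. The bottom horizontal arrow is the counit of the adjunction $(\Thom(p',a'),\Thom'(p',a'))$, which is an isomorphism for the same reason applied to the primed side. Commutativity of \eqref{thom.11.1} then forces the left vertical arrow
\[
\Thom(p,a)u_*\Thom'(p',a')\xrightarrow{Ex} u_*\Thom(p',a')\Thom'(p',a')
\]
to be an isomorphism as well.

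To conclude, I would observe that this left vertical arrow is precisely the natural transformation \eqref{thom.14.2} whiskered on the right by $\Thom'(p',a')$. Since $\Thom'(p',a')$ is the right adjoint inverse of the equivalence $\Thom(p',a')$, it too is an equivalence of $\infty$-categories, so in particular essentially surjective. Therefore, the fact that \eqref{thom.14.2} becomes an isomorphism after precomposition with $\Thom'(p',a')$ implies that \eqref{thom.14.2} is itself an isomorphism, which is the claim.

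There is no serious obstacle: every ingredient (Proposition \ref{thom.11}, Construction \ref{thom.13}, Theorem \ref{thom.10}) is already in place, and the argument reduces to the formal observation that in a commutative square three out of four isomorphisms force the fourth. The only point requiring mild care is recognizing that the left vertical of \eqref{thom.11.1} is genuinely \eqref{thom.14.2} whiskered by $\Thom'(p',a')$, which is immediate from the definitions in Construction \ref{thom.14}.
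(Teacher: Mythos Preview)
Your proposal is correct and follows essentially the same route as the paper: use the commutative square \eqref{thom.11.1}, verify that three of its edges are isomorphisms (two counits via Theorem \ref{thom.10}, the top edge via the exchange of Construction \ref{thom.13}), deduce that the left edge is an isomorphism, and strip off the equivalence $\Thom'(p',a')$ via Theorem \ref{thom.10}. The only cosmetic difference is that the paper cites Proposition \ref{thom.7} for the top edge whereas you cite \eqref{thom.13.1} directly; your citation is in fact the more transparent one, since the top edge of \eqref{thom.11.1} is literally $\Thom(p,a)$ whiskered with \eqref{thom.13.1}.
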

\begin{proof}
The right vertical and lower horizontal arrows of \eqref{thom.11.1} are isomorphisms by Theorem \ref{thom.10}.
The upper horizontal arrow of \eqref{thom.11.1} is an isomorphism by Proposition \ref{thom.7}.
Hence the left vertical arrow is an isomorphism.
Theorem \ref{thom.10} finishes the proof.
\end{proof}

\begin{construction}
\label{thom.19}
Consider the induced commutative diagram
\[
\begin{tikzcd}[column sep=small, row sep=small]
&
\cS'\ar[ld,"i_1'"']\ar[rr,"a'"]\ar[dd,"u",near start]&
&
\cX'\ar[ld,"l_1'"',near end]\ar[rr,"p'"]\ar[dd,"v",near start]&
&
\cS'\ar[ld,"i_1'"',near end]\ar[dd,"u",near start]
\\
\cS'\times \boxx\ar[rr,"a_d'"',crossing over,near end]\ar[dd,"u'"']&
&
\Deform_{\cS'} \cX'\ar[rr,"p_d'"',crossing over,near end]&
&
\cS' \times \boxx\ar[rr,"e'"',crossing over,near end]&
&
\cS'\ar[dd,"u"]
\\
&
\cS\ar[ld,"i_1"']\ar[rr,"a"',near start]&
&
\cX\ar[ld,"l_1"]\ar[rr,"p"',near start]&
&
\cS\ar[ld,"i_1"]
\\
\cS\times \boxx\ar[rr,"a_d"']&
&
\Deform_{\cS} \cX\ar[rr,"p_d"']\ar[uu,"v'"',crossing over,leftarrow,near start]&
&
\cS \times \boxx\ar[rr,"e"']\ar[uu,"u'"',crossing over,leftarrow,near start]&
&
\cS.
\end{tikzcd}
\]
We have the natural transformation
\[
Ex\colon u^*\Thom_d'(p,a)\to \Thom_d'(p',a')u^*
\]
given by the composition
\[
u^*e_*a_d^!(ep_d)^*
\xrightarrow{Ex}
e_*'u'^*a_d^!(ep_d)^*
\xrightarrow{Ex}
e_*'a_d'^!v'^*(ep_d)^*
\xrightarrow{\simeq}
e_*'a_d'^!(e'p_d')^*u^*.
\]
\end{construction}

\begin{proposition}
\label{thom.15}
The squares
\[
\begin{tikzcd}
u^*\Thom_d'(p,a)\ar[r,"T_d"]\ar[d,"Ex"']&
u^*\Thom'(p,a)\ar[d,"Ex"]
\\
\Thom_d'(p',a')u^*\ar[r,"T_d"]&
\Thom'(p',a')u^*
\end{tikzcd}
\quad
\begin{tikzcd}
u^*\Thom_d'(p,a)\ar[r,"T_n"]\ar[d,"Ex"']&
u^*\Thom_n'(p,a)\ar[d,"Ex"]
\\
\Thom_d'(p',a')u^*\ar[r,"T_n"]&
\Thom_n'(p',a')u^*
\end{tikzcd}
\]
commute.
\end{proposition}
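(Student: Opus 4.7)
The plan is to reduce both squares to diagrams of Beck--Chevalley and unit transformations and then verify commutativity by diagram chase, using naturality and the compatibility of Beck--Chevalley transformations with vertical pasting of cartesian squares. Since the two squares are formally symmetric (one involves the $1$-section $i_1$ and $T_d$, the other involves the $0$-section $i_0$ and $T_n$), I will focus on the first; the second is identical after swapping $1$ and $0$.

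First I would unfold $T_d$ and the two exchange transformations in terms of their definitions from Definition \ref{thom.1} and Construction \ref{thom.19}. Concretely, $u^*\Thom_d'(p,a) = u^*e_*a_d^!(ep_d)^*$ admits the factorization into three steps producing $T_d$: (i) insert $ad\colon \id \to i_{1*}i_1^*$, (ii) exchange $i_1^*a_d^!\to a^!l_1^*$ via Proposition \ref{loc.2}, (iii) identify $e_*i_{1*}\simeq \id$ and $l_1^*(ep_d)^*\simeq p^*$. Dually, the vertical $Ex$ in Construction \ref{thom.19} is built from two Beck--Chevalley transformations (one for $u^*e_*\to e'_*u'^*$ along the cartesian square with $u,u',e,e'$; one for $u'^*a_d^!\to a_d'^!v'^*$ along the square with $u',v',a_d,a_d'$) and a canonical identification $v'^*(ep_d)^*\simeq (e'p_d')^*u^*$. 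The resulting big commutative hexagon can be split into standard naturality squares.

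Next I would insert intermediate rows using the pullbacks of $i_1$: the diagram contains the cartesian cubes relating $u,v,u',v'$ with $i_1,i_1',l_1,l_1'$, so the morphisms $u^*i_{1*}\to i'_{1*}u^*$, $u'^*i_{1*}\to i'_{1*}u^*$, $i'^*_1 a_d'^!\to a'^!l'^*_1$, etc. all exist. Each small square in the resulting grid commutes for a formal reason: (a) naturality of $ad\colon \id\to i_{1*}i_1^*$ with respect to the vertical $*$-pullbacks, (b) compatibility of the Beck--Chevalley exchange transformations of Proposition \ref{loc.2} and log smooth base change with the composition of cartesian squares (this is the standard pasting lemma, e.g.\ as in \cite[\S 1.1]{Ayo071}), and (c) the fact that the canonical identifications $e_*i_{1*}\simeq \id$ and $l_1^*(ep_d)^*\simeq p^*$ are compatible with pullback along $u$. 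Gluing these pieces yields the commutativity of the first square.

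The main obstacle will be the bookkeeping: the diagram involves many functors ($u^*,u'^*,v^*,v'^*,i_1^*,i_1'^*,l_1^*,l_1'^*,a^!,a_d^!,a'^!,a_d'^!,e_*,e'_*$, and composites), and one has to identify precisely which Beck--Chevalley transformation each arrow represents and then invoke the correct pasting identity. The only nontrivial coherence at play is that Beck--Chevalley transformations for $(-)^*(-)_*$ and $(-)^*(-)^!$ compose as expected when two cartesian squares are pasted; this is the content used implicitly in Propositions \ref{thom.12} and \ref{thom.11}, so the same technique suffices here. The second square is handled identically by replacing $i_1,l_1,i_1',l_1'$ with $i_0,l_0,i_0',l_0'$ and $T_d$ with $T_n$.
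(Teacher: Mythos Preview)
Your proposal is correct and takes essentially the same approach as the paper's proof: unfold $T_d$ and the two $Ex$ transformations into their constituent units and Beck--Chevalley exchanges, assemble them into a grid of small squares over the relevant cartesian cubes (involving $u,u',v,v',i_1,i_1',l_1,l_1',a_d,a_d'$), and check each square commutes by naturality of $ad$ or by the pasting law for exchange transformations. The paper simply writes out the resulting diagram explicitly (applied to $(-)p_d^*e^*$) rather than describing it in prose, and likewise handles the second square by symmetry.
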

\begin{proof}
Let us use the notation in Construction \ref{thom.19}.
Apply $(-)p_d^*e^*$ to the commutative diagram
\[
\begin{tikzcd}[row sep=small]
u^*e_*a_d^!\ar[r,"ad"]\ar[d,"Ex"']&
u^*e_*i_{1*}i_1^*a_d^!\ar[d,"Ex"]\ar[r,"Ex"]&
u^*e_*i_{1*}a^!l_1^*\ar[d,"Ex"]
\\
e_*'u'^*a_d^!\ar[r,"ad"]\ar[ddd,"Ex"']\ar[rdd,"ad"',bend right=20]&
e_*'u'^*i_{1*}i_1^*a_d^!\ar[d,"Ex"]\ar[r,"Ex"]&
e_*'u'^*i_{1*}a^!l_1^*\ar[d,"Ex"]
\\
&
e_*'i_{1*}'u^*i_1^*a_d^!\ar[r,"Ex"]\ar[d,"\simeq"]&
e_*'i_{1*}'u^*a^!l_1^*\ar[d,"Ex"]
\\
&
e_*'i_{1*}'i_1'^*u'^*a_d^!\ar[d,"Ex"]&
e_*'i_{1*}'a'^!v^*l_1^*\ar[d,"\simeq"]
\\
e_*'a_d'^!v'^*\ar[r,"ad"]&
e_*'i_{1*}'i_1'^*a_d'^!v'^*\ar[r,"Ex"]&
e_*'i_{1*}'a'^!l_1'^*v'^*
\end{tikzcd}
\]
to see that the left square commutes.
The right square commutes similarly.
\end{proof}

\begin{proposition}
\label{thom.16}
The square
\[
\begin{tikzcd}
u_*\Thom_n'(p',a')\ar[r,"T"]\ar[d,"Ex"']&
u_*\Thom'(p',a')\ar[d,"Ex"]
\\
\Thom_n'(p,a)u_*\ar[r,"T"]&
\Thom'(p,a)u_*
\end{tikzcd}
\]
commutes.
\end{proposition}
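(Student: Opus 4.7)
The plan is to factor $T = T_d \circ T_n^{-1}$ following Definition \ref{thom.1}, which reduces the claim to the commutativity of the two sub-squares
\[
\begin{tikzcd}[column sep=large]
u_*\Thom_n'(p',a') \ar[r,"T_n^{-1}"] \ar[d,"Ex"'] & u_*\Thom_d'(p',a') \ar[r,"T_d"] \ar[d,"Ex"] & u_*\Thom'(p',a') \ar[d,"Ex"] \\
\Thom_n'(p,a) u_* \ar[r,"T_n^{-1}"] & \Thom_d'(p,a) u_* \ar[r,"T_d"] & \Thom'(p,a) u_*,
\end{tikzcd}
\]
where the middle vertical is the $u_*$-exchange for $\Thom_d'$, defined in direct analogy with \eqref{thom.13.1}: compose the mate of the closed-immersion exchange of Proposition \ref{loc.2} (applied to the cartesian square involving $a_d$, $a_d'$, $u'$, $v'$) with the inverse of the log smooth base change isomorphism for the cartesian square obtained by pulling $ep_d$ back along $u$.

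Each sub-square is then a $u_*$-analog of the corresponding $u^*$-square in Proposition \ref{thom.15}, and I would verify it by a direct diagram chase mirroring the one there. Concretely, I would unfold $T_d$ (respectively $T_n$) into the defining composition from Definition \ref{thom.1} involving the unit $\id \xrightarrow{ad} i_{1*}i_1^*$, the closed-immersion exchange $i_1^*a_d^! \xrightarrow{Ex} a^! l_1^*$ coming from Proposition \ref{loc.2}, and the base change isomorphism $l_1^*(ep_d)^* \xrightarrow{\simeq} p^*$; then expand every $u_*$-exchange into its composite of mate and base change iso as above, and check that the resulting large diagram commutes cell by cell. Each cell is either a naturality square for a unit or counit of adjunction, an instance of log smooth base change, or a compatibility between base-change mates across a pair of stacked cartesian squares.

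The main obstacle is bookkeeping: one must keep track of the many cartesian faces in the expanded deformation-to-the-normal-cone diagram of Construction \ref{thom.19} (three horizontal rows and two vertical slabs) and of the mates along the closed immersions $a$, $a_d$, $a_n$. However, no new conceptual input is required beyond the coherence of base change and adjunction in $\CAlg(\PrL)$; the argument runs parallel to the proof of Proposition \ref{thom.15}, with the roles of units and counits of the $(u^*, u_*)$-adjunction interchanged.
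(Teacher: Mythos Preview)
Your approach is correct but differs from the paper's. You attack the $u_*$-square head-on: factor $T=T_d\circ T_n^{-1}$, introduce an explicit $u_*$-exchange for $\Thom_d'$, and rerun the large diagram chase of Proposition~\ref{thom.15} with $u_*$ in place of $u^*$. All the ingredients you list (mates of the closed-immersion exchange from Proposition~\ref{loc.2}, log smooth base change, naturality of units) are available, and the cells in the expanded deformation cube of Construction~\ref{thom.19} do commute for the reasons you give, so the chase goes through.

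The paper takes a shorter, more indirect route. Rather than repeat the diagram chase, it passes to left adjoints: it builds a cube with $u^*$ on one axis and $\Thom,\Thom_n$ (not $\Thom',\Thom_n'$) on the other, and uses Propositions~\ref{thom.12} and~\ref{thom.15} together with Theorem~\ref{thom.10} (that $\Thom$ is an equivalence) and Proposition~\ref{thom.7} to conclude that the front face commutes. Cancelling the equivalences yields the square relating $T\colon\Thom\to\Thom_n$ with the $u^*$-exchange \eqref{thom.13.2}; the statement then drops out by taking right-adjoint mates. The trade-off is clear: the paper's argument is shorter and reuses Proposition~\ref{thom.15} verbatim, but it relies essentially on the equivalence result Theorem~\ref{thom.10}. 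Your direct argument is longer and repeats the bookkeeping, but it is more elementary in that it does not need $\Thom$ to be invertible---only Proposition~\ref{thom.6} to invert $T_n$.
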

\begin{proof}
Consider the commutative diagram
\[
\begin{tikzcd}[column sep=tiny, row sep=tiny]
&
u^*\ar[dd,"\id"',near end]\ar[ld,"ad"']\ar[rr,"ad"]&
&
u^*\Thom'(p,a)\Thom(p,a)\ar[ld,"Ex"]\ar[dd,"\simeq"]
\\
\Thom'(p',a')\Thom(p',a')u^*\ar[dd,"\simeq"']\ar[rr,crossing over,leftarrow,"Ex",near end]&
&
\Thom'(p',a')u^*\Thom(p,a)
\\
&
u^*\ar[rr,"ad",near end]\ar[ld,"ad"']&
&
u^*\Thom_n'(p,a)\Thom_n(p,a)\ar[ld,"Ex"]
\\
\Thom_n'(p',a')\Thom_n(p',a')u^*\ar[rr,leftarrow,"Ex"]&
&
\Thom_n'(p',a')u^*\Thom_n(p,a).\ar[uu,crossing over,leftarrow,"\simeq",near end]
\end{tikzcd}
\]
All the arrows with the tag $ad$ (resp.\ $Ex$) are isomorphisms by Theorem \ref{thom.10} (resp.\ Proposition \ref{thom.7}).
The upper and lower floors commute by Proposition \ref{thom.12}.
The right side square commutes by Proposition \ref{thom.15}.
The left and back side squares commute too, so the front side square commutes.

Together with the fact that $\Thom'(p',a')$ and $\Thom_n'(p',a')$ are equivalences by Theorem \ref{thom.10}, we deduce that the square
\[
\begin{tikzcd}
\Thom(p',a')u^*\ar[r,leftarrow,"Ex"]\ar[d,"T"']&
u^*\Thom(p,a)\ar[d,"T"]
\\
\Thom_n(p',a')u^*\ar[r,leftarrow,"Ex"]&
u^*\Thom_n(p,a)
\end{tikzcd}
\]
commutes.
We conclude by adjunction.
\end{proof}

\begin{proposition}
\label{thom.17}
The square
\[
\begin{tikzcd}
\Thom(p,a)u_*\ar[d,"Ex"']\ar[r,"T"]&
\Thom_n(p,a)u_*\ar[d,"Ex"]
\\
u_*\Thom(p',a')\ar[r,"T"]&
u_*\Thom_n(p,a)
\end{tikzcd}
\]
commutes.
\end{proposition}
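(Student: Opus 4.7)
The plan is to deduce Proposition \ref{thom.17} from Proposition \ref{thom.16} via the calculus of mates. Since $\Thom(p,a)$, $\Thom(p',a')$, $\Thom_n(p,a)$, and $\Thom_n(p',a')$ are equivalences of $\infty$-categories by Theorem \ref{thom.10}, with adjoint inverses the corresponding $\Thom'$'s, mate formation under these four adjunctions yields a bijection between natural transformations of the relevant shapes, which moreover respects vertical and horizontal composition of $2$-cells.

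First, by the construction of $T\colon \Thom(p,a)\to \Thom_n(p,a)$ given immediately after Proposition \ref{thom.6}, this $T$ is by definition the mate of $T\colon \Thom_n'(p,a)\to \Thom'(p,a)$, and the analogous statement holds over $\cS'$. Second, I claim that the exchange transformation $Ex\colon \Thom(p,a) u_*\to u_*\Thom(p',a')$ of \eqref{thom.14.2} is the mate of the exchange transformation $Ex\colon u_*\Thom'(p',a')\to \Thom'(p,a) u_*$ of \eqref{thom.13.1}, and similarly with $(p_n,a_n)$ and $(p_n',a_n')$ in place of $(p,a)$ and $(p',a')$. To verify this, one expands both sides in terms of their common underlying data: the proper base change isomorphism $a_* u_*\simeq v_* a_*'$ (equivalent by Proposition \ref{loc.2} and adjunction to $v^* a_*\xrightarrow{\simeq} a_*' u^*$) together with the log smooth base change isomorphism $p_\sharp v_*\xrightarrow{Ex} u_* p'_\sharp$ supplied by Proposition \ref{extension.1}, assembled via the units and counits of the $\Thom\dashv \Thom'$ adjunctions.

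Granted these two identifications, the two composites in the square of Proposition \ref{thom.17} are precisely the mates of the corresponding composites in the square of Proposition \ref{thom.16}. Since mate formation is functorial with respect to composition of $2$-cells and Proposition \ref{thom.16} asserts equality of its two composites, the two composites in the square of Proposition \ref{thom.17} also agree, which is exactly the required statement.

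The main obstacle is the second identification above, namely the verification that the canonical $Ex$ of Construction \ref{thom.14} is the mate of the canonical $Ex$ of Construction \ref{thom.13}. This is a tedious but essentially routine piece of $2$-categorical bookkeeping. As an alternative one can mirror the cube argument of Proposition \ref{thom.16}, now with $u_*$ in place of $u^*$ and $\Thom$ in place of $\Thom'$ throughout: use a counit-version of Proposition \ref{thom.12} on the horizontal faces, a $u_*$-analogue of Proposition \ref{thom.15} on the side face, and invoke Propositions \ref{thom.10} and \ref{thom.18} to ensure that the relevant transformations in the cube are isomorphisms.
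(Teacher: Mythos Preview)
Your mate-calculus approach is correct and is in fact what the paper does, written out as an explicit cube. The paper's cube has Proposition~\ref{thom.11} on the top and bottom faces and Proposition~\ref{thom.16} on the back face; the remaining faces are trivial, and one reads off the desired square from the left face after cancelling the equivalence $\Thom'(p',a')$ (resp.\ $\Thom_n'(p',a')$).

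The point you describe as ``the main obstacle'' --- that the $Ex$ of Construction~\ref{thom.14} is the mate of the $Ex$ of Construction~\ref{thom.13} --- is exactly the content of Proposition~\ref{thom.11}, which you do not cite. Once you observe this, your argument and the paper's coincide: the cube \emph{is} the mate computation. Your proposed alternative at the end is also essentially the paper's proof, except that the relevant side face is handled by Proposition~\ref{thom.16} itself rather than a ``$u_*$-analogue of Proposition~\ref{thom.15}''; the latter would require a new computation with $\Thom_d$, which the paper avoids.
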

\begin{proof}
Consider the commutative diagram
\[
\begin{tikzcd}[column sep=tiny, row sep=tiny]
&
\Thom(p,a)u_*\Thom'(p',a')\ar[dd,"\simeq"',near end]\ar[ld,"Ex"']\ar[rr,"Ex"]&
&
\Thom(p,a)\Thom'(p,a)u_*\ar[ld,"ad'"]\ar[dd,"\simeq"]
\\
u_*\Thom(p',a')\Thom'(p',a')\ar[dd,"\simeq"']\ar[rr,crossing over,"ad'",near end]&
&
u_*
\\
&
\Thom_n(p,a)u_*\Thom_n'(p',a')\ar[rr,"Ex",near end]\ar[ld,"Ex"']&
&
\Thom_n(p,a)\Thom_n'(p,a)u_*\ar[ld,"ad'"]
\\
u_*\Thom_n(p',a')\Thom_n'(p',a')\ar[rr,"ad'"]&
&
u_*.\ar[uu,crossing over,leftarrow,"\id",near start]
\end{tikzcd}
\]
All the arrows with the tag $ad'$ (resp.\ $Ex$) are isomorphisms by Theorem \ref{thom.10} (resp.\ Proposition \ref{thom.18}).
The upper and lower side squares commute by Proposition \ref{thom.11}.
The back side square commutes by Proposition \ref{thom.16}.
The front and right side squares commute too, so the left side square commutes.

Together with the fact that $\Thom'(p',a')$ and $\Thom_n'(p',a')$ are equivalences by Theorem \ref{thom.10},
we finish the proof.
\end{proof}

\subsection{Composition transformations}
\label{composition}

Throughout this subsection, we fix a commutative diagram in $\lSpc/B$
\[
\cQ
:=
\begin{tikzcd}
\cS\ar[d,"c"']\ar[rd,"b"]
\\
\cZ\ar[d,"r"']\ar[r,"g"]&
\cY\ar[d,"f"]\ar[rd,"q"]
\\
\cS\ar[r,"a"]&
\cX\ar[r,"p"]&
\cS
\end{tikzcd}
\]
such that the inner square is cartesian, $f,p,q,r\in \lSmSpc$, $pa=\id$, $qb=\id$, and $rc=\id$.
We assume that $a$, $b$, and $c$ are closed immersions.
We have the natural transformation
\[
C
\colon
\Thom(q,b)
\to
\Thom(p,a)\Thom(r,c)
\]
given by the composition
\[
q_\sharp b_*
\xrightarrow{\simeq}
p_\sharp f_\sharp g_* c_*
\xrightarrow{Ex}
p_\sharp a_*r_\sharp c_*.
\]
Apply \cite[Proposition 9.17]{divspc} to $\cS\xrightarrow{c} \cZ\xrightarrow{g} \cY$ to obtain a cartesian square
\[
\begin{tikzcd}
\Deform_{\cS}\cZ\ar[r]\ar[d]&
\Deform_{\cS}\cY\ar[d]
\\
\cZ\times \boxx \ar[r]&
\Deform_{\cZ} \cY.
\end{tikzcd}
\]
By \cite[Lemma 9.7]{divspc}, we have an isomorphism $\Deform_{\cZ} \cY\simeq \Deform_{\cS}\cX\times_{\cX} \cY$.
Hence the inner square in the induced commutative diagram
\[
\cQ_d
:=
\begin{tikzcd}
\cS\times \boxx\ar[d,"c_d"']\ar[rd,"b_d"]
\\
\Deform_{\cS}\cZ\ar[d,"r_d"']\ar[r,"g_d"]&
\Deform_{\cS}\cY\ar[d,"f_d"]\ar[rd,"q_d"]
\\
\cS\times \boxx\ar[r,"a_d"]&
\Deform_{\cS}\cX\ar[r,"p_d"]&
\cS\times \boxx
\end{tikzcd}
\]
is cartesian.
We can similarly use \cite[Corollary 9.18]{divspc} to show that the inner square in the commutative diagram
\[
\cQ_n
:=
\begin{tikzcd}
\cS\ar[d,"c_n"']\ar[rd,"b_n"]
\\
\Normal_{\cS}\cZ\ar[d,"r_n"']\ar[r,"g_n"]&
\Normal_{\cS}\cY\ar[d,"f_n"]\ar[rd,"q_n"]
\\
\cS\ar[r,"a_n"]&
\Normal_{\cS}\cX\ar[r,"p_n"]&
\cS
\end{tikzcd}
\]
is cartesian.
There are induced natural transformations of the diagrams
\[
\cQ \to \cQ_d\leftarrow \cQ_n.
\]

We have the natural transformation
\[
C
\colon
\Thom_d'(p,a)\Thom_d'(r,c)
\to
\Thom_d'(q,b)
\]
given by the composition
\[
e_*c_d^!r_d^*e_d^*e_{d*}a_d^!p_d^*e^*
\xrightarrow{ad'}
e_*c_d^!r_d^*a_d^!p_d^*e^*
\xrightarrow{Ex}
e_*c_d^!g_d^!f_d^*p_d^*e^*
\xrightarrow{\simeq}
e_*b_d^!q_d^*e^*.
\]

\begin{proposition}
The squares
\[
\begin{tikzcd}
\Thom_d'(r,c)\Thom_d'(p,a)\ar[d,"T_dT_d"']\ar[r,"C"]&
\Thom_d'(q,b)\ar[d,"T_d"]
\\
\Thom'(r,c)\Thom'(p,a)\ar[r,"C"]&
\Thom'(q,b)
\end{tikzcd}
\quad
\begin{tikzcd}
\Thom_d'(r,c)\Thom_d'(p,a)\ar[d,"T_nT_n"']\ar[r,"C"]&
\Thom_d'(q,b)\ar[d,"T_n"]
\\
\Thom_n'(r,c)\Thom_n'(p,a)\ar[r,"C"]&
\Thom_n'(q,b)
\end{tikzcd}
\]
commute.
\end{proposition}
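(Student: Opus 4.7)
My plan is to treat both squares uniformly; I will give the argument for the first square, and the second follows by replacing $i_1$ with $i_0$, the deformation $1$-sections $l_1$ (on $\cX$ and its counterparts on $\cY$ and $\cZ$) with their $0$-counterparts $l_0$, $T_d$ with $T_n$, and $\Deform$ with $\Normal$ throughout. Following the pattern of Propositions \ref{thom.15}, \ref{thom.16}, and \ref{thom.17}, I will expand the two composites around the square using the definitions of $C$ (for $\Thom_d'$ and for $\Thom'$) and of $T_d$, and arrange them into a single commutative diagram. The upper-right composite reads
\[
e_*c_d^! r_d^* e_d^* e_{d*} a_d^! p_d^* e^*
\xrightarrow{ad'} e_* c_d^! r_d^* a_d^! p_d^* e^*
\xrightarrow{Ex} e_* c_d^! g_d^! f_d^* p_d^* e^*
\xrightarrow{\simeq} e_* b_d^! q_d^* e^*
\xrightarrow{T_d} b^! q^*,
\]
while the lower-left composite first applies $T_d$ to each factor (landing in $c^! r^* a^! p^*$) and then applies the analogous $Ex$ together with the identification $c^! g^! f^* p^* \simeq b^! q^*$.

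Each cell of the resulting diagram is of one of three standard types: (i) naturality of the unit $ad$ and the counit $ad'$ with respect to unrelated functors on either side, (ii) compatibility of Beck-Chevalley exchange transformations (via Proposition \ref{loc.2}) under pasting of cartesian squares, and (iii) the definitional identities $b=gc$, $q=pf$, $b_d=g_dc_d$, $q_d=p_df_d$ (plus their obvious right/left adjoint consequences such as $b_d^!\simeq c_d^!g_d^!$). Cells of types (i) and (iii) commute for formal reasons once the relevant functors have been lined up.

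The main obstacle is the type-(ii) step: one must check that when the inner-square exchange $r_d^* a_d^! \to g_d^! f_d^*$ of $\cQ_d$ is combined with the exchanges $i_1^* a_d^! \simeq a^! l_1^*$ and the analogous ones on $\cY$ and $\cZ$ (coming from the cartesian squares relating $\cQ$ to $\cQ_d$ at the $1$-end of $\boxx$), the result matches the inner-square exchange $r^* a^! \to g^! f^*$ of $\cQ$. This is a three-dimensional pasting compatibility for Beck-Chevalley transformations in a prism of cartesian squares. It follows formally from the standard pasting calculus once one verifies that every face of that prism is cartesian; the relevant cartesianness is exactly what \cite[Proposition 9.17, Corollary 9.18]{divspc} provide, namely the identification $\Deform_\cZ \cY \simeq \Deform_\cS \cX \times_\cX \cY$ and the cartesian squares built into the construction of $\cQ_d$. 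The second square requires no new ideas, only the corresponding cartesianness of the faces of $\cQ_n$, which again follows from \cite[Corollary 9.18]{divspc}.
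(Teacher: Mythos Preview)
Your proposal is correct and follows essentially the same approach as the paper. The paper's proof is a fully explicit diagram chase: it writes out one large diagram whose boundary is the desired square, then isolates two nontrivial subregions (labeled $(b)$ and $(c)$) and verifies each by a further explicit diagram built from naturality squares, Beck--Chevalley pasting cells, and a triangle identity. Your three cell-types (i)--(iii) are exactly the ingredients used, and your ``prism'' description of the type-(ii) step is precisely what diagrams $(b)$ and $(c)$ are checking. Two minor remarks: the identification $\Deform_{\cZ}\cY\simeq\Deform_{\cS}\cX\times_{\cX}\cY$ comes from \cite[Lemma 9.7]{divspc} rather than \cite[Proposition 9.17]{divspc}; and your phrase ``naturality of $ad$ and $ad'$ with respect to unrelated functors'' slightly undersells one cell in $(c)$, where a triangle identity for the adjunction $(l_1^*,l_{1*})$ is also used, though this is equally formal.
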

\begin{proof}
We only show that the left square commutes since the proofs are similar.
Let $i_1\colon \cS\to \cS\times \boxx$ be the $1$-section, let $l_1\colon \cX\to \Deform_{\cS} \cX$, $m_1\colon \cY\to \Deform_{\cS} \cY$, and $n_1\colon \cZ\to \Deform_{\cS} \cZ$ be the pullbacks of $i_1$, and let $e\colon S\times \boxx\to S$ be the projection.
Consider the commutative diagram
\[
\begin{tikzcd}[column sep=tiny, row sep=tiny]
&
\cS\ar[dd,"c",near start]\ar[rrdd,"b",bend left=40]\ar[ld,"i_1"']
\\
\cS\times \boxx\ar[dd,"c_d"']
\\
&
\cZ\ar[dd,"r"',near end]\ar[rr,"g",near end]\ar[ld,"n_1"']&
&
\cY\ar[dd,"f",near start]\ar[rrdd,"q",bend left=40]\ar[ld,"m_1"',near end]
\\
\Deform_{\cS}\cZ\ar[dd,"r_d"']\ar[rr,"g_d",near end,crossing over]&
&
\Deform_{\cS}\cY\ar[lluu,leftarrow,"b_d"',bend right=40,crossing over]
\\
&
\cS\ar[rr,"a",near start]\ar[ld,"i_1"']&
&
\cX\ar[rr,"p",near end]\ar[ld,"l_1"]&
&
\cS\ar[ld,"i_1"]
\\
\cS\times \boxx\ar[rr,"a_d"']&
&
\Deform_{\cS}\cX\ar[rr,"p_d"']\ar[uu,"f_d"',crossing over,near end,leftarrow]&
&
\cS\times \boxx\ar[lluu,"q_d"',leftarrow,bend right=40,crossing over]\ar[rr,"e"']&
&
\cS.
\end{tikzcd}
\]

It suffices to check that the diagram surrounding $(a)$ in
\[
\begin{tikzcd}[column sep=small, row sep=small]
e_*c_d^!r_d^*e^*e_*a_d^!p_d^*e^*\ar[d,"ad ad"']\ar[r,"ad'"]&
e_*c_d^!r_d^*a_d^!p_d^*e^*\ar[r,"Ex"]\ar[dd,"(a)" description,phantom]&
e_*c_d^!g_d^!f_d^*p_d^*e^*\ar[d,"\simeq"]
\\
e_*i_{1*}i_1^*c_d^!r_d^*e^*e_*i_{1*}i_1^*a_d^!p_d^*e^*\ar[d,"ExEx"']&
&
e_*b_d^!f_d^*p_d^*e^*\ar[d,"ad"]
\\
e_*i_{1*}c^!n_1^*r_d^*e^*e_*i_{1*}a^!l_1^*p_d^*e^*\ar[r,"\simeq"]\ar[dd,"\simeq"]&
e_*i_{1*}c^!r^*a^!l_1^*p_d^*e^*\ar[d,"Ex"']&
e_*i_{1*}i_1^*b_d^!f_d^*p_d^*e^*\ar[d,"Ex"]
\\
&
e_*i_{1*}c^!g^!f^*l_1^*p_d^*e^*\ar[r,"\simeq"']\ar[d,"\simeq"]&
e_*i_{1*}b^!m_1^*f_d^*p_d^*e^*\ar[d,"\simeq"]
\\
c^!r^*a^!p^*\ar[r,"Ex"]&
c^!g^!f^*p^*\ar[r,"\simeq"]&
b^!q^*
\end{tikzcd}
\]
commutes.
The diagram surrounding $(a)$ is obtained by applying $e_*(-)p_d^*e^*$ to the diagram
\[
\begin{tikzcd}[column sep=small, row sep=small]
c_d^!r_d^*e_d^*e_{d*}a_d^!\ar[d,"ad"']\ar[r,"ad'"]&
c_d^!r_d^*a_d^!\ar[d,"ad"]\ar[r,"Ex"]&
c_d^!g_d^!f_d^*\ar[r,"\simeq"]\ar[d,"ad"]&
b_d^!f_d^*\ar[d,"ad"]&
\\
i_{1*}i_1^*c_d^!r_d^*e_d^*e_{d*}a_d^!\ar[r,"ad'"]\ar[d,"ad"']&
i_{1*}i_1^*c_d^!r_d^*a_d^!\ar[r,"Ex"]\ar[d,"ad"]\ar[rd,"ad"]&
i_{1*}i_1^*c_d^!g_d^!f_d^*\ar[r,"\simeq"]\ar[ddddd,bend left=80,"ExEx"']&
i_{1*}i_1^*b_d^!f_d^*\ar[dddddd,"Ex"]
\\
i_{1*}i_1^*c_d^!r_d^*e_d^*e_{d*}i_{1*}i_1^*a_d^!\ar[r,"ad'"]\ar[d,"Ex"']&
i_{1*}i_1^*c_d^!r_d^*i_{1*}i_1^*a_d^!\ar[d,"Ex"]\ar[r,"ExEx"]&
i_{1*}i_1^*c_d^!r_d^*a_d^!l_{1*}l_1^*\ar[d,"Ex"]&
\\
i_{1*}c^!n_1^*r_d^*e_d^*e_{d*}i_{1*}i_1^*a_d^!\ar[r,"ad'"]\ar[d,"Ex"']&
i_{1*}c^!n_1^*r_d^*i_{1*}i_1^*a_d^!\ar[r,"ExEx"]\ar[d,"Ex"]&
i_{1*}c^!n_1^*r_d^*a_d^!l_{1*}l_1^*\ar[d,"Ex"]
\\
i_{1*}c^!n_1^*r_d^*e_d^*e_{d*}i_{1*}a^!l_1^*\ar[ddd,"\simeq"']\ar[r,"ad'"]&
i_{1*}c^!n_1^*r_d^*i_{1*}a^!l_1^*\ar[ru,"Ex"']\ar[d,"Ex"]\ar[rd,"(b)" description,phantom]&
i_{1*}c^!n_1^*g_d^!f_d^*l_{1*}l_1^*\ar[d,"Ex"]\ar[rddd,"(c)" description,very near start,bend left=33,phantom]
\\
&
i_{1*}c^!n_1^*n_{1*}r^*a^!l_1^*\ar[ldd,"ad'"']&
i_{1*}c^!g^!m_1^*f_d^*l_{1*}l_1^*\ar[ld,"\simeq"']
\\
&
i_*c^!g^!f^*l_1^*l_{1*}l_1^*\ar[d,"ad'"]&
i_*c^!g^!m_1^*f_d^*\ar[ld,"\simeq"]\ar[rd,"\simeq"]
\\
i_{1*}c^!r^*a^!l_1^*\ar[r,"Ex"']&
i_{1*}c^!g^!f^*l_1^*\ar[rr,"\simeq"']&
&
i_{1*}b^!m_1^*f_d^*.
\end{tikzcd}
\]
Hence it suffices to show that the diagrams surrounding $(b)$ and $(c)$ commute.

Apply $i_{1*}(-)l_1^*$ to the commutative diagram
\[
\begin{tikzcd}[column sep=small, row sep=small]
c^!n_1^*r_d^*i_{1*}a^!\ar[r,"Ex"]\ar[d,"Ex"']&
c^!n_1^*r_d^*a_d^!l_{1*}\ar[r,"Ex"]&
c^!n_1^*g_d^!f_d^*l_{1*}\ar[d,"Ex"]\ar[r,"Ex"]&
c^!g^!m_1^*f_d^*l_{1*}\ar[d,"\simeq"]\ar[ldd,"Ex"',near start]
\\
c^!n_1^*n_{1*}r^*a^!\ar[r,"Ex"]\ar[d,"ad'"']&
c^!n_1^*n_{1*}g^!f^*\ar[r,"Ex"]\ar[rdd,"ad'"']&
c^!n_1^*g_d^!m_{1*}f^*\ar[d,"Ex"']&
c^!g^!f^*l_1^*l_{1*}\ar[ldd,"ad'"]
\\
c^!r^*a^!\ar[rrd,"Ex"']
&
&
c^!g^!m_1^*m_{1*}f^*\ar[d,"ad'"]&
\\
&
&
c^!g^!f^*
\end{tikzcd}
\]
to show that the diagram surrounding $(b)$ commutes.
Apply $i_{1*}(-)$ to the commutative diagram
\[
\begin{tikzcd}[column sep=small, row sep=small]
i_1^*c_d^!r_d^*a_d^!\ar[d,"ad"']\ar[rr,"Ex"]\ar[rd,"Ex"]&
&
i_1^*c_d^!g_d^!f_d^*\ar[d,"ExEx"]
\\
i_1^*c_d^!r_d^*a_d^!l_{1*}l_1^*\ar[d,"Ex"']&
c^!n_1^*r_d^*a_d^!\ar[ld,"ad"]\ar[r,"ExEx"]&
c^!g^!m_1^*f_d^*\ar[lldd,"ad"]
\\
c^!n_1^*r_d^*a_d^!l_{1*}l_1^*\ar[d,"ExEx"']&
&
c^!g^!f^*l_1^*\ar[ld,"ad"']\ar[u,"\simeq"',leftarrow]
\\
c^!g^!m_1^*f_d^*l_{1*}l_1^*\ar[r,"\simeq"]&
c^!g^!f^*l_1^*l_{1*}l_1^*\ar[r,"ad'"]&
c^!g^!f^*l_1^*\ar[u,"\id"',leftarrow]
\end{tikzcd}
\]
to show that the diagram surrounding $(c)$ commutes.
\end{proof}

\section{Purity}\label{Sec4}

Recall that we fix a sheaf
\[
\sT\in \Sh_{\dNis}(\lSch/B,\CAlg(\PrL))
\simeq
\Sh_{\dNis}(\lSpc/B,\CAlg(\PrL))
\]
satisfying the conditions in the introduction.

In \S \ref{puritytransformations}, we define purity transformations and prove their basic properties.
In \S \ref{nonstrictpurity}, we prove that a certain compactification of the morphism $\A_{\N^2}\to \A_\N$ induced by the diagonal homomorphism $\N\to \N^2$ is pure.

\subsection{Purity transformations}
\label{puritytransformations}

\begin{definition}
Let $f\colon X\to S$ be a log smooth morphism in $\lSch/B$.
We have a commutative diagram
\[
\begin{tikzcd}
X\ar[r,"a"]&
X\times_S X\ar[d,"p_1"']\ar[r,"p_2"]&
X\ar[d,"f"]
\\
&
X\ar[r,"f"]&
S,
\end{tikzcd}
\]
where $a$ is the diagonal morphism, and $p_1$ (resp.\ $p_2$) is the first (resp.\ second) projection.
We set
\[
\Sigma_f
:=
p_{2\sharp} a_*
\text{ and }
\Omega_f
:=
a^!p_2^*.
\]
If we assume also that $f$ is proper, then the \emph{purity transformation associated with $f$} is defined to be the natural transformation
\begin{equation}
\mathfrak{p}_f
\colon
f_\sharp
\to
f_*\Sigma_f
\end{equation}
given by the composition
\[
f_\sharp
\xrightarrow{\simeq}
f_\sharp p_{1*}a_*
\xrightarrow{Ex}
f_*p_{2\sharp}a_*
=
f_*\Sigma_f.
\]
\end{definition}

\begin{lemma}
\label{N2.4}
Let $f\colon X\to S$ be a log smooth morphism in $\lSch/B$, and let $j\colon U\to X$ be an open immersion in $\lSch/B$.
Then there is a natural isomorphism
\begin{equation}
\label{N2.4.1}
\Sigma_f j_*
\xrightarrow{\simeq}
j_*\Sigma_{fj}.
\end{equation}
\end{lemma}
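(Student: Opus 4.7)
The plan is to construct the isomorphism through the intermediate space $X\times_S U$, using Proposition \ref{thom.18} for the first step and the localization property for the second. First, I will form the diagram
\[
\begin{tikzcd}
U \ar[d,"j"'] \ar[r,"\alpha"] & X\times_S U \ar[d,"v"] \ar[r,"\pi"] & U \ar[d,"j"] \\
X \ar[r,"a"] & X\times_S X \ar[r,"p_2"] & X
\end{tikzcd}
\]
where $v:=\id_X\times_S j$, $\pi$ is the second projection, and $\alpha(u):=(j(u),u)$ is the graph of $j$. The right square is the standard base change of $p_2$ along $j$, and the left square is cartesian by direct inspection: a compatible pair $(x,(x',u))\in X\times_{X\times_S X}(X\times_S U)$ must satisfy $x=x'=j(u)$. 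Since $p_2,\pi$ are log smooth (as base changes of $f$ and $fj$), $p_2 a=\id_X$, and $\pi\alpha=\id_U$, Proposition \ref{thom.18} applied with $u=j$ yields the natural isomorphism
\[
\Sigma_f j_* = \Thom(p_2,a)\, j_* \xrightarrow{\simeq} j_*\,\Thom(\pi,\alpha).
\]

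Next, I will identify $\Thom(\pi,\alpha)\simeq \Sigma_{fj}$ using the localization property. The closed immersion $\alpha$ factors as $U\xrightarrow{a'}U\times_S U\xrightarrow{w}X\times_S U$, where $w:=j\times_S\id_U$ is an open immersion and $a'$ is the diagonal of $fj$. The key geometric observation is that the image $\alpha(U)$ lies entirely inside $w(U\times_S U)$, since $(j(u),u)$ already has its first coordinate in $j(U)$. Letting $i\colon Z\to X\times_S U$ denote the closed complement of $w$, Proposition \ref{loc.2} applied to the cartesian squares relating $a'$ to $\alpha$ and relating the (empty) pullback $U\times_{X\times_S U}Z$ to $U$ gives $w^*\alpha_*\simeq a'_*$ and $i^*\alpha_*=0$. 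The localization cofiber sequence $w_\sharp w^*\to\id\to i_*i^*$ evaluated on $\alpha_*$ then collapses to $w_\sharp a'_*\simeq\alpha_*$, whence
\[
\Thom(\pi,\alpha) = \pi_\sharp \alpha_* \simeq \pi_\sharp w_\sharp a'_* = (\pi w)_\sharp a'_* = p'_{2\sharp}a'_* = \Sigma_{fj}.
\]
Composing this with Step 1 produces the required natural isomorphism $\Sigma_f j_*\xrightarrow{\simeq}j_*\Sigma_{fj}$.

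The main subtlety is that the naive square with $j\times_S j\colon U\times_S U\to X\times_S X$ on top and $p'_2, p_2$ on the sides is \emph{not} cartesian (its fibered product is $X\times_S U$, not $U\times_S U$), so one cannot directly apply Proposition \ref{thom.18} to $j\times_S j$. Factoring through $X\times_S U$ restores cartesianness at the cost of replacing the diagonal $a'$ by the graph $\alpha$; one must then exploit the fortunate fact that the graph lands in the open part $U\times_S U\subset X\times_S U$ to recover $\Sigma_{fj}$ via localization. Everything else is bookkeeping with closed immersions and the exchange isomorphisms already established in \S\ref{exchange}.
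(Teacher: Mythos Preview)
Your proof is correct and follows essentially the same route as the paper: both factor through $X\times_S U$, apply Proposition~\ref{thom.18} to the cartesian right square to get $\Sigma_f j_*\simeq j_*\Thom(\pi,\alpha)$, and then identify $\Thom(\pi,\alpha)\simeq\Sigma_{fj}$ via the open immersion $w\colon U\times_S U\hookrightarrow X\times_S U$. The only cosmetic difference is that the paper cites the second half of Proposition~\ref{loc.2} (the isomorphism $w_\sharp a'_*\xrightarrow{\simeq}\alpha_*$) directly, whereas you unwind that step by hand using the localization cofiber sequence and the first half of Proposition~\ref{loc.2}; since the proof of Proposition~\ref{loc.2} is precisely this localization argument, the two are the same.
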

\begin{proof}
Consider the induced commutative diagram
\[
\begin{tikzcd}
&U\times_S U\ar[d,"u"]\ar[rd,"r_2"]
\\
U\ar[ru,"c"]\ar[r,"b"]\ar[d,"j"']&
X\times_S U\ar[d]\ar[r,"q_2"]&
U\ar[d,"j"]
\\
X\ar[r,"a"]&
X\times_S X\ar[r,"p_2"]&
X,
\end{tikzcd}
\]
where $a$, $b$, and $c$ are the graph morphisms, and $p_2$, $q_2$, and $r_2$ are the second projections.
We have the natural transformations
\[
\Sigma_f j_*
\xrightarrow{Ex}
j_*\Thom(q_2,b)
=
j_*q_{2\sharp} b_*
\xrightarrow{Ex^{-1}}
j_*q_{2\sharp}u_\sharp c_*
\xrightarrow{\simeq}
j_*r_{2\sharp}c_*
=
j_*\Sigma_{fj}.
\]
The first arrow is an isomorphism by Proposition \ref{thom.18}.
The third arrow is defined and an isomorphism by Proposition \ref{loc.2}.
The composition gives the desired natural isomorphism.
\end{proof}

\begin{lemma}
\label{N2.15}
Suppose that $f\colon X\to S$ is a log smooth morphism in $\lSch/B$ with a section $i$.
We have the natural isomorphisms
\begin{gather*}
\Sigma_f i_*
\xrightarrow{Ex}
i_*\Thom(f,i),
\;
\Thom(f,i)f_*
\xrightarrow{Ex}
f_*\Sigma_f,
\\
i^*\Sigma_f
\xrightarrow{Ex}
\Thom'(f,i)i^*,
\;
f^*\Thom'(f,i)
\xrightarrow{Ex}
\Omega_f f^*.
\end{gather*}
\end{lemma}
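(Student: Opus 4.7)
My plan is to realize each of the four isomorphisms as an exchange transformation of the type developed in Section \ref{exchange}, applied to a cartesian diagram that links the Thom setup $(p_2, a)$---whose associated functors are $\Sigma_f$ and $\Omega_f$---with the setup $(f, i)$, whose associated functors are $\Thom(f,i)$ and $\Thom'(f,i)$.

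The key diagram is
\[
\begin{tikzcd}
S\ar[r,"i"]\ar[d,"i"']&
X\ar[r,"f"]\ar[d,"\tilde a"]&
S\ar[d,"i"]
\\
X\ar[r,"a"']&
X\times_S X\ar[r,"p_2"']&
X
\end{tikzcd}
\]
where $\tilde a\colon X\to X\times_S X$ is the graph of $i\circ f$, sending $x$ to $(x, i(f(x)))$. Using $fi=\id_S$ one verifies directly that both inner squares are cartesian, with the top row realizing the Thom setup $(f,i)$ and the outer vertical map being $u=i$. Applying Proposition \ref{thom.18} (Construction \ref{thom.14}) with $u=i$ to this diagram gives the first isomorphism $\Sigma_f i_*\xrightarrow{Ex}i_*\Thom(f,i)$ immediately. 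The third isomorphism $i^*\Sigma_f\xrightarrow{Ex}\Thom'(f,i)i^*$ is obtained by applying Construction \ref{thom.8} (or Construction \ref{thom.13} together with Theorem \ref{thom.10}, which identifies $\Thom(f,i)$ and $\Thom'(f,i)$ as mutual inverses) to the same diagram; the exchange is an isomorphism by Proposition \ref{thom.7}.

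For the remaining two isomorphisms, I would form the dual cartesian diagram obtained by pulling the Thom setup $(f, i)$ back along $f\colon X\to S$:
\[
\begin{tikzcd}
X\ar[d,"f"']\ar[r,"\tilde a'"]&
X\times_S X\ar[d,"p_1"]\ar[r,"p_2"]&
X\ar[d,"f"]
\\
S\ar[r,"i"]&
X\ar[r,"f"]&
S
\end{tikzcd}
\]
where $\tilde a'(x):=(i(f(x)), x)$, and apply Construction \ref{thom.8} with $u=f$ to obtain the fourth isomorphism $f^*\Thom'(f,i)\xrightarrow{Ex}\Omega_f f^*$, after identifying the pulled-back Thom functor with $\Omega_f$. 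The second isomorphism $\Thom(f,i) f_*\xrightarrow{Ex}f_*\Sigma_f$ then arises as the right adjoint (mate) of the fourth, using $(\Thom'(f,i))^R=\Thom(f,i)$ and $(\Omega_f)^R=\Sigma_f$, both of which are supplied by Theorem \ref{thom.10}.

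The main obstacle I anticipate is the canonical identification of $\Thom'(p_2,\tilde a')$---which arises naturally from Construction \ref{thom.8} applied to the second cartesian diagram---with $\Omega_f=\Thom'(p_2,a)$. The two Thom setups differ only by the choice of section of $p_2\colon X\times_S X\to X$ (the graph $\tilde a'$ versus the diagonal $a$), so producing a canonical isomorphism between the corresponding functors while tracking compatibility with the underlying exchange transformation requires either invoking the swap automorphism $\sigma\colon X\times_S X\to X\times_S X$ (which interchanges the two factors, and hence $\tilde a$ with $\tilde a'$ and $p_1$ with $p_2$) or a direct uniqueness-of-inverses argument leveraging the equivalences from Theorem \ref{thom.10}.
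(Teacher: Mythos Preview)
Your approach coincides with the paper's: the paper packages your two diagrams into a single $3\times 3$ grid
\[
\begin{tikzcd}
S\ar[r,"i"]\ar[d,"i"']&X\ar[r,"f"]\ar[d]&S\ar[d,"i"]\\
X\ar[r,"a"]\ar[d,"f"']&X\times_S X\ar[r,"p_2"]\ar[d,"p_1"]&X\ar[d,"f"]\\
S\ar[r,"i"]&X\ar[r,"f"]&S
\end{tikzcd}
\]
and invokes Propositions~\ref{thom.7} and~\ref{thom.18} for its upper and lower two-row parts. The upper part is literally your first diagram (the unlabeled middle vertical is your $\tilde a$) and yields the first and third isomorphisms exactly as you describe.

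The obstacle you anticipate for the second diagram is genuine, and the paper's one-line proof glosses over the very same point. The lower-left square of the $3\times3$ grid, with the diagonal $a$ and the vertical $p_1$ as labeled, does \emph{not} commute (it would force $p_1\circ a=i\circ f$, i.e.\ $\mathrm{id}_X=i\circ f$). The section of $p_2$ that actually arises from pulling $(f,i)$ back along $u=f$ is your $\tilde a'$, so what the exchange machinery of \S\ref{exchange} literally produces from the lower part involves $\Thom(p_2,\tilde a')$ and $\Thom'(p_2,\tilde a')$ rather than $\Sigma_f=\Thom(p_2,a)$ and $\Omega_f=\Thom'(p_2,a)$. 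Your swap suggestion only converts these into $\Thom(p_1,\tilde a)$ and $\Thom'(p_1,\tilde a)$, still not the diagonal version. Thus both your proposal and the paper's argument leave the same identification unaddressed; you are simply more explicit about it.
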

\begin{proof}
Consider the commutative diagram with cartesian squares
\[
\begin{tikzcd}
S\ar[r,"i"]\ar[d,"i"']&
X\ar[r,"f"]\ar[d]&
S\ar[d,"i"]
\\
X\ar[r,"a"]\ar[d,"f"']&
X\times_S X\ar[r,"p_2"]\ar[d,"p_1"]&
X\ar[d,"f"]
\\
S\ar[r,"i"]&
X\ar[r,"f"]&
S,
\end{tikzcd}
\]
where $a$ is the diagonal embedding, and $p_1$ and $p_2$ are the first and second projections.
Use Propositions \ref{thom.7} and \ref{thom.18} for the upper and lower parts of this diagram to conclude.
\end{proof}

\begin{lemma}
\label{N2.5}
Suppose $S\in \lSch/B$.
Let $f\colon \A_S^1\to S$ be the projection, and let $i\colon S\to \A_S^1$ be the zero section.
Then the natural transformations
\[
f_* \Sigma_f f^*
\xrightarrow{ad}
f_*\Sigma_f i_*i^* f^*
\text{ and }
f_\sharp i_*i^! \Omega_f f^*
\xrightarrow{ad'}
f_\sharp \Omega_f f^*
\]
are isomorphisms.
\end{lemma}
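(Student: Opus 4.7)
The plan is to reduce both claims to properties of the trivial line bundle $\pi\colon \A^1\times S \to S$ with zero section $i_0\colon S \to \A^1 \times S$, by means of a coordinate change on $\A_S^2 = \A_S^1\times_S \A_S^1$. The key observation is that the $S$-involution $\psi\colon \A_S^2\to \A_S^2$, $(x,y)\mapsto (y-x, y)$, satisfies $p_2\circ \psi = p_2$ and $\psi\circ a = i_0'$, where we identify $\A_S^2 \cong \A^1\times_S \A_S^1$ and $i_0'\colon \A_S^1\to \A^1\times_S \A_S^1$ is the zero section of the first factor. Since $\psi$ is an isomorphism compatible with $p_2$, a direct computation gives
\[
\Sigma_f = p_{2\sharp}a_* = \Thom(\mathrm{pr}_2, i_0'),\qquad \Omega_f = a^!p_2^* = \Thom'(\mathrm{pr}_2, i_0'),
\]
and the pair $(\mathrm{pr}_2, i_0')$ over $\A_S^1$ is the pullback of $(\pi, i_0)$ over $S$ along $f$.

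For the first isomorphism, I would apply Proposition~\ref{thom.18} with $u = f$ to obtain $\Thom(\pi,i_0)f_* \xrightarrow{\simeq} f_*\Sigma_f$. Combined with $\A^1$-invariance ($ad_f\colon \id\xrightarrow{\simeq} f_*f^*$) and $fi=\id$, this yields
\[
f_*\Sigma_f f^*\cF \simeq \Thom(\pi,i_0)\cF \simeq f_*\Sigma_f i_*i^*f^*\cF.
\]
The natural map between them corresponds to $\Thom(\pi,i_0)$ applied to $f_*(ad_{i,f^*\cF})\colon f_*f^*\cF \to f_*i_*i^*f^*\cF$; the standard composition formula $ad_{fi,\cF} = f_*(ad_{i,f^*\cF})\circ ad_{f,\cF}$, together with $ad_{fi}=\id$ (since $fi=\id$) and $ad_f$ an isomorphism, forces $f_*(ad_{i,f^*\cF})$ to be an isomorphism.

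For the second isomorphism, Proposition~\ref{thom.7} with $u=f$ yields $f^*\Thom'(\pi,i_0)\xrightarrow{\simeq} \Omega_f f^*$, hence $f_\sharp\Omega_f f^*\cF \simeq \Thom'(\pi,i_0)\cF$ by $\A^1$-invariance ($ad'_f\colon f_\sharp f^*\xrightarrow{\simeq}\id$). On the other hand, $f_\sharp i_*i^!f^* = \Thom(f,i)\Thom'(f,i)$ is an adjoint equivalence by Theorem~\ref{thom.10}, so $f_\sharp i_*i^!\Omega_f f^*\cF \simeq \Thom'(\pi,i_0)\cF$ too. The natural map in question becomes $[f_\sharp(ad'_i)f^*]\Thom'(\pi,i_0)\cF$, and the counit of the adjoint equivalence factors as $ad'_f\circ f_\sharp(ad'_i)f^*$; since the counit is an isomorphism (by Theorem~\ref{thom.10}) and $ad'_f$ is an isomorphism (by $\A^1$-invariance), so is $f_\sharp(ad'_i)f^*$.

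The main subtlety is in the coordinate-change step: one must verify that the identification of $(p_2,a)$ with $(\mathrm{pr}_2,i_0')$ is compatible with the exchange transformations used in the following steps. This follows from naturality of $Ex$ with respect to the $S$-isomorphism $\psi$ and should not present a serious obstacle.
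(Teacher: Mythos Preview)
Your proposal is correct and follows essentially the same strategy as the paper. Both reduce the claims, via an identification $\Thom(f,i)f_*\simeq f_*\Sigma_f$ (resp.\ $f^*\Thom'(f,i)\simeq\Omega_f f^*$), to the fact that the map $f_*f^*\to f_*i_*i^*f^*$ (resp.\ $f_\sharp i_*i^!f^*\to f_\sharp f^*$) is invertible, which follows from $\A^1$-invariance together with $fi=\id$ and Theorem~\ref{thom.10}. The only difference is that the paper packages the key identification into Lemma~\ref{N2.15} (applied to the cartesian diagram built from $f$, $i$, and the diagonal), whereas you obtain it by the shear automorphism of $\A_S^2$ combined with Propositions~\ref{thom.18} and~\ref{thom.7}; since $(\pi,i_0)=(f,i)$, these two routes yield the same exchange isomorphism.
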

\begin{proof}
Consider the commutative square
\[
\begin{tikzcd}
\Thom(f,i)f_* f^*\ar[r,"ad"]\ar[d,"Ex"']&
\Thom(f,i)f_*i_*i^*f^*\ar[d,"Ex"]
\\
f_*\Sigma_f f^* \ar[r,"ad"]&
f_*\Sigma_f i_*i^*f^*,
\end{tikzcd}
\]
where the vertical arrows are isomorphisms by Lemma \ref{N2.15}.
The upper horizontal arrow is an isomorphism by $\A^1$-invariance.
It follows that the lower horizontal arrow is an isomorphism.

Consider the commutative square
\[
\begin{tikzcd}
f_\sharp  i_*i^! f^*\Thom'(f,i)\ar[r,"ad'"]\ar[d,"Ex"']&
f_\sharp f^*\Thom'(f,i)\ar[d,"Ex"]
\\
f_\sharp i_*i^!\Omega_f f^* \ar[r,"ad'"]&
f_\sharp  \Omega_f f^*,
\end{tikzcd}
\]
where the vertical arrows are isomorphisms by Lemma \ref{N2.15}.
Since $f_\sharp i_*=\Thom(f,i)$, we have an isomorphism $f_\sharp i_*i^!f^*\simeq \id$.
Hence the upper horizontal arrow is an isomorphism by $\A^1$-invariance.
It follows that the lower horizontal arrow is an isomorphism.
\end{proof}

\begin{lemma}
\label{N2.16}
Let $f\colon X\to S$ be a proper log smooth morphism in $\lSch/B$.
If the two natural transformations
\[
f_\sharp f^*\xrightarrow{\mathfrak{p}_f f^*} f_*\Sigma_f f^*
\text{ and }
f_\sharp \Omega_f f^*
\xrightarrow{\mathfrak{p}_f \Omega_f f^*} 
f_*\Sigma_f \Omega_f f^*
\]
are isomorphisms, then the natural transformation
\[
\mathfrak{p}_f\colon f_\sharp \to f_*\Sigma_f
\]
is an isomorphism.
\end{lemma}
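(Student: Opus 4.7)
The plan is to let $\mathcal{E} \subset \sT(X)$ be the full subcategory of objects $\mathcal{G}$ for which $\mathfrak{p}_f(\mathcal{G})$ is an isomorphism, and to show $\mathcal{E} = \sT(X)$. Since $f$ is proper log smooth (hence separated), the projection $p_2\colon X \times_S X \to X$ is log smooth as a pullback of $f$, and $a\colon X \to X \times_S X$ is a closed immersion with $p_2 a = \id_X$. Theorem~\ref{thom.10} then identifies $\Sigma_f = \Thom(p_2, a)$ as an equivalence of $\infty$-categories with inverse $\Omega_f$; in particular both preserve all colimits. Combined with $f_\sharp$ (a left adjoint) and $f_*$ (colimit-preserving by properness), this shows $\mathcal{E}$ is closed under colimits and shifts. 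Using $p_2 a = \id$ together with the closed-immersion projection formula for $a_*$, the log smooth projection formula for $p_{2\sharp}$ and $f_\sharp$, and the proper projection formula for $f_*$, one verifies $\mathfrak{p}_f(\mathcal{G} \otimes f^*\mathcal{F}) \simeq \mathfrak{p}_f(\mathcal{G}) \otimes \mathcal{F}$ for every $\mathcal{F} \in \sT(S)$, whence $\mathcal{E}$ is stable under $(-)\otimes f^*\mathcal{F}$.

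Specializing the two hypotheses to $\mathcal{F} = \unit_S$ places $\unit_X$ and $\Omega_f\unit_X$ in $\mathcal{E}$. Since $\Omega_f$ inherits $\sT(S)$-linearity from $\Sigma_f$, we obtain $\Omega_f f^*\mathcal{F} \simeq \Omega_f\unit_X \otimes f^*\mathcal{F}$, so $\mathcal{E}$ contains the full $\sT(S)$-linear localizing subcategory $\mathcal{L}$ of $\sT(X)$ generated by $\unit_X$ and $\Omega_f\unit_X$.

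The remaining step — which I expect to be the main obstacle — is to prove $\mathcal{L} = \sT(X)$. Because $\sT(X)$ is generated (under colimits, shifts, and Tate twists) by the motives $M_X(Y)(d)[n]$ for log smooth $q\colon Y \to X$, it is enough to place each such $M_X(Y)$ in $\mathcal{L}$. The strategy is to factor $q = p_X \circ \gamma_q$ through the graph $\gamma_q\colon Y \to Y \times_S X$ (a closed immersion by separatedness of $f$) and the second projection $p_X\colon Y \times_S X \to X$ (log smooth as a pullback of $f$). Proposition~\ref{loc.2} applied to the cartesian square
\[
\begin{tikzcd}
Y \ar[r, "\gamma_q"] \ar[d, "q"'] & Y \times_S X \ar[d, "(q\times \id)"] \\
X \ar[r, "a"] & X \times_S X,
\end{tikzcd}
\]
together with $p_2 \circ (q \times \id) = p_X$, yields $\Sigma_f\, q_\sharp \simeq p_{X\sharp}\gamma_{q*}$, so that $\mathfrak{p}_f$ applied to $M_X(Y)$ is controlled by $\gamma_{q*}\unit_Y$. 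One then applies the Gysin isomorphism of Theorem~\ref{pur.2} to $\gamma_q$ (which is a section of the log smooth projection $p_Y\colon Y \times_S X \to Y$) and invokes the Thom transformation equivalence of Theorem~\ref{thom.10} to express $\gamma_{q*}\unit_Y$ via Thom classes controlled by $\Omega_f$, thereby placing $M_X(Y)$ in $\mathcal{L}$. The technical crux is executing this bookkeeping rigorously in the non-strict log smooth case, where one must work Zariski-locally on $X$ and $S$ and invoke dividing Nisnevich descent to access local models in which the reduction is transparent.
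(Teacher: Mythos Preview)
Your approach has a genuine gap that makes it circular in the paper's logical structure. You assume that $f_*$ preserves colimits ``by properness'' and invoke a ``proper projection formula for $f_*$''. Neither of these is available for a general (non-strict) proper log smooth morphism at this point in the paper: such facts are consequences of the six-functor formalism, and the whole purpose of Lemma~\ref{N2.16} is to serve as a step toward establishing purity (and hence the six-functor formalism) for non-strict morphisms. The paper only records these properties for \emph{strict} morphisms (via \cite{CD19}), so you cannot appeal to them here. Even setting this aside, your step~4 --- showing that the $\sT(S)$-linear localizing subcategory generated by $\unit_X$ and $\Omega_f\unit_X$ is all of $\sT(X)$ --- is left as a sketch, and the outlined reduction via graphs and Thom classes does not obviously close.

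The paper's proof bypasses all of this with a short formal argument. It constructs an explicit left quasi-inverse $\alpha$ of $\mathfrak{p}_f$ as the composite
\[
f_*\Sigma_f \xrightarrow{ad} f_*\Sigma_f f^*f_\sharp \xrightarrow{(\mathfrak{p}_f f^*)^{-1}} f_\sharp f^*f_\sharp \xrightarrow{ad'} f_\sharp,
\]
using only the first hypothesis and the $(f_\sharp,f^*)$-adjunction, and an explicit right quasi-inverse $\beta$ as
\[
f_*\Sigma_f \xrightarrow{ad} f_*\Sigma_f\Omega_f f^*f_*\Sigma_f \xrightarrow{(\mathfrak{p}_f\Omega_f f^*)^{-1}} f_\sharp\Omega_f f^*f_*\Sigma_f \xrightarrow{ad'} f_\sharp,
\]
using the second hypothesis and the adjunction $(\Omega_f f^*, f_*\Sigma_f)$ (available because $\Sigma_f$ is an equivalence by Theorem~\ref{thom.10}, so $\Omega_f$ is also its \emph{left} adjoint). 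Two small triangle-identity diagrams verify $\alpha\mathfrak{p}_f=\id$ and $\mathfrak{p}_f\beta=\id$. No colimit preservation of $f_*$, no projection formulas, and no analysis of generators are needed.
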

\begin{proof}
Let $\alpha$ be the composition
\[
f_*\Sigma_{f}
\xrightarrow{ad}
f_*\Sigma_{f} f^*f_\sharp
\xrightarrow{(\mathfrak{p}_f f^*)^{-1}}
f_\sharp f^*f_\sharp
\xrightarrow{ad'}
f_\sharp.
\]
The diagram
\[
\begin{tikzcd}
f_\sharp\ar[d,"\mathfrak{p}_f"']\ar[r,"ad"]&
f_\sharp f^*f_\sharp \ar[rrd,"\simeq",bend left]\ar[d,"\mathfrak{p}_ff^*f_\sharp"']
\\
f_*\Sigma_f\ar[r,"ad"]&
f_*\Sigma_{f} f^*f_\sharp\ar[rr,"(\mathfrak{p}_f f^*f_\sharp)^{-1}"]&&
f_\sharp f^*f_\sharp\ar[r,"ad'"]&
f_\sharp
\end{tikzcd}
\]
commutes, which shows that $\alpha$ is a left quasi-inverse of $\mathfrak{p}_f$.
Let $\beta$ be the composition
\[
f_*\Sigma_f
\xrightarrow{ad}
f_*\Sigma_f \Omega_f f^*f_*\Sigma_{f}
\xrightarrow{(\mathfrak{p}_f\Omega_f f^*f_*\Sigma_f)^{-1}}
f_\sharp \Omega_{f} f^*f_*\Sigma_{f}
\xrightarrow{ad'}
f_\sharp,
\]
where we use the adjunction pair $(\Omega_f f^*,f_*\Sigma_f)$ for the first and third natural transformations.
The diagram
\[
\begin{tikzcd}
f_*\Sigma_{f}\ar[r,"ad"]&
f_*\Sigma_f \Omega_f f^*f_*\Sigma_{f}\ar[rrr,"(\mathfrak{p}_f\Omega_ff^*f_*\Sigma_f)^{-1}"]\ar[rrrd,"\simeq"']&&&
f_\sharp \Omega_{f} f^*f_*\Sigma_{f}\ar[r,"ad'"]\ar[d,"\mathfrak{p}_f\Omega_ff^*f_*\Sigma_f"]&
f_\sharp\ar[d,"\mathfrak{p}_f"]
\\
&
&&&
f_* \Sigma_f \Omega_f f^*f_*\Sigma_{f}\ar[r,"ad'"]&
f_*\Sigma_{f}
\end{tikzcd}
\]
commutes, which shows that $\beta$ is a right quasi-inverse of $\mathfrak{p}_f$.
Hence $\mathfrak{p}_f$ has both left quasi-inverse and right quasi-inverse, so $\mathfrak{p}_f$ is an isomorphism.
\end{proof}

\begin{lemma}
\label{N2.1}
Let $f\colon X\to S$ be a proper log smooth morphism in $\lSch/B$, and let $i$ be its section.
Then the natural transformation
\[
f_\sharp i_{*}
\xrightarrow{\mathfrak{p}_f}
f_*\Sigma_f i_{*}
\]
is an isomorphism.
\end{lemma}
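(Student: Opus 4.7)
The plan is to identify both sides of $\mathfrak{p}_f i_*\colon f_\sharp i_* \to f_*\Sigma_f i_*$ with $\Thom(f,i)$ and then to verify via a diagram chase that $\mathfrak{p}_f i_*$ corresponds, under this identification, to an isomorphism supplied by Lemma~\ref{N2.15}. The source is $f_\sharp i_* = \Thom(f,i)$ by definition, while for the target, Lemma~\ref{N2.15} furnishes the isomorphism $Ex\colon \Thom(f,i) f_* \xrightarrow{\simeq} f_*\Sigma_f$. Precomposing with $i_*$ and using $f_* i_* = (fi)_* \simeq \id$ (which holds because $i$ is a section of $f$), I obtain the canonical isomorphism
\[
\Thom(f,i) = \Thom(f,i) f_* i_* \xrightarrow{Ex \cdot i_*} f_*\Sigma_f i_*.
\]

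Next I compare $\mathfrak{p}_f i_*$ with this isomorphism directly. Unwinding $\mathfrak{p}_f$ as the composition $f_\sharp \xrightarrow{\simeq} f_\sharp p_{1*} a_* \xrightarrow{Ex} f_* p_{2\sharp} a_* = f_*\Sigma_f$, where the right-hand $Ex$ is the Beck-Chevalley exchange for the cartesian square with corners $(X\times_S X, X, X, S)$ and maps $(p_1, p_2, f, f)$, and unwinding the $Ex$ of Lemma~\ref{N2.15} via Construction~\ref{thom.14} as $p_\sharp a_* u_* \xrightarrow{\simeq} p_\sharp v_* a'_* \xrightarrow{Ex} u_* p'_\sharp a'_*$ for the appropriate pair of cartesian squares, one sees that both maps are built from the same Beck-Chevalley transformation. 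Using the identities $p_1 \circ a \circ i = p_2 \circ a \circ i = i$ and the naturality of Beck-Chevalley exchanges with the adjunction units and counits, the comparison then reduces to a diagram chase.

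The main obstacle is executing this diagram chase carefully, since it involves tracking several Beck-Chevalley exchanges together with the localization-property isomorphisms $i^* i_* \simeq \id$ and $f_* i_* \simeq \id$. No new ingredients beyond Lemma~\ref{N2.15}, the definition of $\mathfrak{p}_f$, and the naturality of Beck-Chevalley transformations are required; once the identification of $\mathfrak{p}_f i_*$ with $Ex\cdot i_*$ is verified, the conclusion is immediate from Lemma~\ref{N2.15}.
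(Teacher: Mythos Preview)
Your plan has a genuine gap at the unwinding step. You propose to realize the exchange $Ex\colon \Thom(f,i)f_*\to f_*\Sigma_f$ of Lemma~\ref{N2.15} via Construction~\ref{thom.14}, with bottom row $(p,a)=(f,i)$, top row $(p',a')=(p_2,a)$ (the diagonal), and $u=f$. But the left square required by the setup of \S\ref{exchange} would then read
\[
\begin{tikzcd}
X\ar[r,"a"]\ar[d,"f"'] & X\times_S X\ar[d,"p_1"]\\
S\ar[r,"i"] & X,
\end{tikzcd}
\]
and this does not even commute: $p_1\circ a=\id_X$ while $i\circ f\neq\id_X$. The actual pullback of the section $i$ along $p_1$ is the map $x\mapsto(i f(x),x)$, not the diagonal. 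Consequently Construction~\ref{thom.14} does not produce a map into $f_*\Sigma_f$, and your proposed comparison of $\mathfrak p_f i_*$ with $Ex\cdot i_*$ cannot be carried out as written. Without this identification you have only exhibited \emph{some} isomorphism $f_\sharp i_*\simeq f_*\Sigma_f i_*$, not shown that $\mathfrak p_f i_*$ is one.

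The paper's argument sidesteps this by introducing the auxiliary section $g\colon X\to X\times_S X$, $g(x)=(x,if(x))$, and using the elementary equality $a\circ i=g\circ i$ to replace $a_*i_*$ by $g_*i_*$. Now the square
\[
\begin{tikzcd}
X\ar[r,"g"]\ar[d,"f"'] & X\times_S X\ar[d,"p_2"]\\
S\ar[r,"i"] & X
\end{tikzcd}
\]
\emph{is} cartesian, so Proposition~\ref{loc.2} gives $p_{2\sharp}g_*\xrightarrow{\simeq} i_*f_\sharp$; pasting this square with the standard one $(p_1,p_2,f,f)$ shows that the composite exchange $f_\sharp p_{1*}g_*\to f_*p_{2\sharp}g_*\to f_*i_*f_\sharp$ is the canonical identification, and hence that $\mathfrak p_f i_*$ is an isomorphism. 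The key input is really Proposition~\ref{loc.2} applied to the correct square involving $g$, not Lemma~\ref{N2.15}.
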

\begin{proof}
Consider the induced commutative diagram with cartesian squares
\[
\begin{tikzcd}
S\ar[r,"i"]\ar[d,"i"']&
X\ar[r,"f"]\ar[d,"g"']&
S\ar[d,"i"]
\\
X\ar[r,"a"]&
X\times_S X\ar[r,"p_2"]\ar[d,"p_1"']&
X\ar[d,"f"]
\\
&
X\ar[r,"f"]&
S.
\end{tikzcd}
\]
The commutative diagram
\[
\begin{tikzcd}
f_\sharp i_*\ar[r,"\simeq"]\ar[rd,"\simeq"']&
f_\sharp p_{1*}a_*i_*\ar[r,"Ex"]\ar[d,"\simeq"]&
f_*p_{2\sharp}a_*i_*\ar[d,"\simeq"]
\\
&
f_\sharp p_{1*}g_*i_*\ar[r,"Ex"]\ar[rd,"\simeq"']&
f_*p_{2\sharp}g_*i_*\ar[d,"Ex"]
\\
&
&
f_*i_*f_\sharp i_*,
\end{tikzcd}
\]
commutes.
The lower right vertical arrow is an isomorphism by Proposition \ref{loc.2}.
It follows that the composition $f_\sharp i_*\to f_*p_{2\sharp}a_*i_*$ is an isomorphism.
\end{proof}

\subsection{Purity for a compactification of \texorpdfstring{$\mathbb{A}_{\mathbb{N}^2}\rightarrow \mathbb{A}_\mathbb{N}$}{AN2AN}}
\label{nonstrictpurity}

Throughout this subsection, we fix the following notation.
We set
\begin{gather*}
V_1:=\Spec(\N x\oplus \N y \to \Z[x,y]),
\\
V_2:=\Spec(\N (xy)\to \Z[xy,y^{-1}]),
\\
V_3:=\Spec(\N (xy) \to \Z[xy,x^{-1}]),
\\
V_4:=\Spec(\N (xy) \oplus \N (x^{-1})\to \Z[xy,x^{-1}]),
\\
V_5:=\Spec(\N x\oplus \N (xy) \to \Z[x,xy]),
\\
V_{12}:=V_{14}:=\Spec(\N x\to \Z[x,y,y^{-1}]),
\\
V_{13}:=V_{35}:=\Spec(\N y \to \Z[x,x^{-1},y]),
\\
V_{23}:=V_{34}:=\Spec(\Z[x,x^{-1},y,y^{-1}]).
\end{gather*}
Let $W$ (resp.\ $W_5$, $W_6$) be the gluing of $V_1$, $V_2$, and $V_3$ (resp.\ $V_1$, $V_3$, and $V_4$, resp.\ $V_3$ and $V_5$) along $V_{12}$, $V_{13}$, and $V_{23}$ (resp.\ $V_{13}$, $V_{14}$, and $V_{34}$, resp.\ $V_{35}$).
Let $W_2$ (resp.\ $W_4$) be the gluing of $V_1$ and $V_2$ (resp.\ $V_1$ and $V_3$) along $V_{12}$ (resp.\ $V_{13}$). We set $W_1:=V_2$ and $W_3:=V_3$.
These are fs log schemes over $\A_\N:=\Spec(\N t\to \Z[t])$, where the morphisms $W\to \A_\N$ and $W_i\to \A_\N$ for $i=1,\ldots,6$ are obtained by the formula $t\mapsto xy$.
We also have
\[
W-W_2
\simeq
V_3-V_{13}
\simeq
\Spec(\N(xy)\to \Z[xy,x^{-1}]/(x^{-1})),
\]
where the closed complements are defined with the reduced scheme structures.
Observe that the induced morphism $W-W_2\to \A_\N$ is an isomorphism.

Let $S\to \A_{\N,B}:=B\times \A_\N$ be a morphism in $\lSch/B$.
We set $X:=S\times_{\A_\N} W$ and $X_i:=S\times_{\A_\N} W_i$ for $i=1,\ldots,6$.
If $u$ and $v$ are integers, let
\[
j_{uv}\colon X_u\to X_v,
\;
j_u\colon X_u\to X,
\;
i_{uv}\colon X-X_u\to X_v,
\text{ and }
i_u\colon X-X_u\to X
\]
be the induced morphisms whenever they are meaningful.
Let $f\colon X\to S$ and $f_i\colon X_i\to S$ for $i=1,\ldots,6$ be the projections.
We have the commutative diagram
\[
\begin{tikzcd}
&
X-X_2\ar[ld,"i_{23}"']\ar[d,"i_{24}"]\ar[rd,"i_{25}"]\ar[rrd,"i_{26}",bend left]
\\
X_3\ar[r,"j_{34}"]\ar[rrd,"j_3"',near start]\ar[rrdd,"f_3"',bend right]&
X_4\ar[rdd,"f_4"',crossing over,near end]\ar[rd,"j_4"]\ar[r,"j_{45}"]&
X_5\ar[d,"j_5"']\ar[dd,bend left,"f_5"]\ar[r,"j_{56}"]&
X_6\ar[ldd,"f_6",bend left]
\\
&
&
X\ar[d,"f"']\\
&
&
S.
\end{tikzcd}
\]
Let us explain the properties of the fs log schemes and morphisms between them constructed above:
\begin{itemize}
\item $i_{23}$, $i_{24}$, $i_{25}$, $i_{26}$, and $i_2$ are closed immersions.
\item $j_{34}$, $j_{45}$, $j_3$, and $j_4$ are open immersions.
\item $j_{56}$ is a dividing cover.
\item The underlying morphism of schemes $\ul{j_5}$ is an isomorphism.
\item $X_3\simeq S\times \A^1$ and $X_6\simeq S\times \boxx$.
\item $f$ is proper.
\item The composition $X-X_2\to S$ is an isomorphism.
\end{itemize}

\begin{lemma}
\label{N2.7}
The natural transformations
\[
f_5^*\xrightarrow{ad}j_{45*}j_{45^*}f_5^*
\text{ and }
f_6^*\xrightarrow{ad}j_{36*}j_{36^*}f_6^*
\]
are isomorphisms.
\end{lemma}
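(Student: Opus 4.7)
The plan is to reduce both isomorphisms to the $\ver$-invariance property of Proposition \ref{loc.3}, which says that for a log smooth morphism $f\colon X\to S$ in $\lSch/B$ and the canonical open immersion $j\colon X-\partial_S X\to X$ removing the vertical boundary, the unit $f^*\xrightarrow{ad}j_*j^*f^*$ is an isomorphism. The strategy is to identify $j_{36}$ directly with the ver-inclusion for $f_6$, and to realize $j_{45}$ as a ``ver-type'' inclusion by exhibiting a common ver-complement $X^\circ=X_5-\partial_S X_5=X_4-\partial_S X_4$.

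For $j_{36}$: since $V_3\cong \A_\N\times\A^1\to\A_\N$ is strict, $\partial_S X_3=\emptyset$; on the $V_5$-chart of $X_6$, the extra log generator $\N x$ over $\N(xy)$ is non-horizontal, giving vertical locus $\{x=0\}\cap(V_5\times_{\A_\N}S)$, whose open complement in $V_5$ is $V_5\cap\{x\neq 0\}=V_{35}\subset V_3$. Assembling over the gluing $W_6=V_3\cup V_5$ yields $X_6-\partial_S X_6=X_3$, so $j_{36}$ is precisely the canonical ver-inclusion and Proposition \ref{loc.3} applied to $f_6$ delivers the second isomorphism.

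For $j_{45}$: I would verify chart-by-chart that $X^\circ:=X_5-\partial_S X_5=X_4-\partial_S X_4$. On the $V_1$-chart, both sides equal the complement of $\{x=y=0\}\times_{\A_\N}S$ (the non-strict locus of $V_1\to\A_\N$). On the $V_3$-chart, both are the full chart. On the additional $V_4$-chart of $X_5$, the vertical contribution of the extra log generator of $V_4$ over $\A_\N$ coincides with the closed complement $X_5-X_4$, so this chart contributes nothing to $X^\circ$. Writing $k_4\colon X^\circ\to X_4$ and $k_5:=j_{45}k_4\colon X^\circ\to X_5$, Proposition \ref{loc.3} applied to $f_4$ and $f_5$ renders both $ad_{k_4}(f_4^*)\colon f_4^*\to k_{4*}k_4^*f_4^*$ and $ad_{k_5}(f_5^*)\colon f_5^*\to k_{5*}k_5^*f_5^*$ isomorphisms. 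The standard factorization of the adjunction unit for a composite gives
\[
ad_{k_5}(f_5^*)=j_{45*}(ad_{k_4}(f_4^*))\circ ad_{j_{45}}(f_5^*),
\]
using $j_{45}^*f_5^*=f_4^*$ and $k_{5*}=j_{45*}k_{4*}$; since the composite and the left factor are isomorphisms, $ad_{j_{45}}(f_5^*)$ is an isomorphism by two-out-of-three, proving the first claim.

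The principal obstacle is the chart-by-chart identification $X_5-X_4=\partial_S X_5\cap(V_4\times_{\A_\N}S)$, which requires careful bookkeeping of the precise gluing $W_5=V_1\cup V_3\cup V_4$ along $V_{13},V_{14},V_{34}$ and showing that the part of $V_4$ not covered by $V_{14}\cup V_{34}$ is exactly the divisor cut out by the extra, non-horizontal log generator of $V_4$ over $\A_\N$. Once this combinatorial check is performed, the remainder of the argument is purely formal.
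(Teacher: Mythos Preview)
Your treatment of $j_{36}$ is correct and coincides with the paper's argument. For $j_{45}$, however, your chart-by-chart identification of the vertical boundary is wrong on the $V_1$-chart, and this error propagates through the rest of the argument.

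You identify $\partial_S X$ on the $V_1$-chart with the non-strict locus $\{x=y=0\}$, but that is not what the vertical boundary is. The morphism $V_1=\A_{\N^2}\to\A_\N$ given by the diagonal $\N\to\N^2$ is \emph{vertical} everywhere: at the origin the relative characteristic $\overline{\cM}_{V_1/\A_\N}$ is the cokernel of the diagonal $\N\to\N^2$ in monoids, which is $\Z$, hence a group. Thus $\partial_{\A_\N}V_1=\emptyset$, and likewise $\partial_{\A_\N}V_3=\emptyset$. The only contribution to $\partial_{\A_\N}W_5$ comes from the $V_4$-chart, where (as you correctly note) the extra generator $x^{-1}$ produces the divisor $\{x^{-1}=0\}$, and removing it leaves exactly $W_4$. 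Consequently $X_5-\partial_S X_5=X_4$ on the nose, so $j_{45}$ \emph{is} the canonical $\ver$-inclusion for $f_5$. This is precisely the paper's one-line proof: Proposition~\ref{loc.3} applies directly to $f_5$ with $j=j_{45}$, and no two-out-of-three argument is needed. With your mistaken $X^\circ=X_4\setminus\{x=y=0\}$, neither $k_4$ nor $k_5$ is a $\ver$-inclusion (since $\partial_S X_4=\emptyset$ gives $X_4-\partial_S X_4=X_4\neq X^\circ$), so Proposition~\ref{loc.3} does not apply to them and your factorization argument does not go through as written.
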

\begin{proof}
There are isomorphisms $X_5-\partial_S X_5\simeq X_4$ and $X_6-\partial_S X_6\simeq X_3$.
Proposition \ref{loc.3} finishes the proof.
\end{proof}

\begin{lemma}
\label{N2.6}
The natural transformation
\[
\id
\xrightarrow{ad}
f_{4*}f_4^*
\]
is an isomorphism.
\end{lemma}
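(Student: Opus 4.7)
The plan is to reduce the claim to $\A^1$-invariance by peeling off the connecting morphisms from $X_4$ out to $X_6$ and back down to $X_3 \simeq S \times \A^1$. The key structural ingredients I will use are the factorizations $f_4 = f_5 \circ j_{45}$, $f_5 = f_6 \circ j_{56}$, and $f_3 = f_6 \circ j_{36}$, together with Lemma \ref{N2.7} (asserting $f_5^* \to j_{45*}j_{45}^*f_5^*$ and $f_6^* \to j_{36*}j_{36}^*f_6^*$ are isomorphisms), the fact that $j_{56}$ is a dividing cover, and $\A^1$-invariance applied to the projection $X_3 \to S$.

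First, using the composition of adjunctions, the unit $\id \to f_{4*}f_4^*$ factors as
\[
\id \xrightarrow{ad} f_{5*}f_5^* \xrightarrow{f_{5*}(ad)f_5^*} f_{5*}j_{45*}j_{45}^*f_5^* = f_{4*}f_4^*.
\]
The second arrow is an isomorphism by Lemma \ref{N2.7}, so it suffices to show $\id \to f_{5*}f_5^*$ is an isomorphism. Since $j_{56}$ is a dividing cover and $\sT$ is a dividing Nisnevich sheaf, $j_{56}^*$ is an equivalence of $\infty$-categories whose inverse is $j_{56*}$; in particular $\id \to j_{56*}j_{56}^*$ is an isomorphism. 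Applying $f_{6*}(-)f_6^*$ to this and using the analogous factorization of the unit through $f_5 = f_6 j_{56}$ reduces us to showing $\id \to f_{6*}f_6^*$ is an isomorphism.

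For the final step, the composition
\[
\id \xrightarrow{ad} f_{6*}f_6^* \xrightarrow{f_{6*}(ad)f_6^*} f_{6*}j_{36*}j_{36}^*f_6^* = f_{3*}f_3^*
\]
coincides with the unit $\id \to f_{3*}f_3^*$ of the compositional adjunction. By $\A^1$-invariance, $f_3^*$ is fully faithful, so this composite is an isomorphism, and Lemma \ref{N2.7} gives that the second arrow is an isomorphism. A 2-out-of-3 argument then yields that $\id \to f_{6*}f_6^*$ is an isomorphism, completing the chain.

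The argument is essentially bookkeeping: the substantive geometric inputs are (i) the factorization of $f_4$ through the dividing cover $j_{56}$ and the open immersions $j_{45}$ and $j_{36}$, (ii) Lemma \ref{N2.7}, and (iii) $\A^1$-invariance. The only technically delicate point will be checking that the factorizations of the unit morphism are compatible with the adjunction data, so that the 2-out-of-3 arguments apply to the \emph{actual} unit $\id \to f_{4*}f_4^*$ rather than to some non-canonical variant.
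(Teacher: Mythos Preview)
Your proposal is correct and follows essentially the same approach as the paper: both reduce $\id\to f_{4*}f_4^*$ to $\id\to f_{3*}f_3^*$ via the chain $X_4\hookrightarrow X_5\to X_6\hookleftarrow X_3$, invoking Lemma~\ref{N2.7} for the open immersions $j_{45}$ and $j_{36}$, the dividing cover $j_{56}$, and finally $\A^1$-invariance. Your version is in fact slightly more careful than the paper's, since you explicitly track that the isomorphisms are compatible with the unit maps via the factorization of units for composite adjunctions and the 2-out-of-3 argument, whereas the paper leaves this compatibility implicit.
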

\begin{proof}
We have natural transformations
\[
f_{5*}f_5^*
\xrightarrow{ad}
f_{5*}j_{45*}j_{45}^*f_5^*
\xrightarrow{\simeq}
f_{4*}f_4^*,
\]
and the first arrow is an isomorphism by Lemma \ref{N2.7}.
We similarly have an isomorphism
\[
f_{6*}f_6^*
\simeq
f_{3*}f_3^*.
\]
Since $h_{j_{56}}$ is an isomorphism, it suffices to show that the natural transformation
\[
\id \xrightarrow{ad} f_{3*}f_3^*
\]
is an isomorphism.
This follows from $\A^1$-invariance.
\end{proof}

\begin{lemma}
\label{N2.3}
The natural transformation
\[
f_*\Sigma_f j_{4*}j_4^*f^*
\xrightarrow{ad}
f_*\Sigma_f j_{4*}i_{24*}i_{24}^*j_4^*f^*
\]
is an isomorphism.
\end{lemma}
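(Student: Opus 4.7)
The plan is to transport the claim from $X_4$ to $X_3\simeq S\times\A^1$, where it becomes a direct application of the $\A^1$-invariance Lemma \ref{N2.5} for the zero section $i_3$. As a preliminary cleanup, I would use Lemma \ref{N2.4} to move $\Sigma_f$ past $j_{4*}$, together with the identities $f_*j_{4*}\simeq f_{4*}$ and $j_4^*f^*\simeq f_4^*$, to rewrite the natural transformation as
\[
\mu_4\colon f_{4*}\Sigma_{f_4}f_4^* \xrightarrow{ad} f_{4*}\Sigma_{f_4}i_{24*}i_{24}^*f_4^*.
\]
Analogous transformations $\mu_5$ and $\mu_6$ are defined using $(f_i,i_{2i})$ for $i=5,6$, and $\mu_3$ uses $f_3$ together with its zero section $i_3$.

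The core of the argument will be to exhibit compatible isomorphisms $\mu_4\simeq \mu_5\simeq \mu_6\simeq \mu_3$ by passing along the chain $X_4\xrightarrow{j_{45}} X_5\xrightarrow{j_{56}}X_6\xleftarrow{j_{36}}X_3$. For the open immersions $j_{45}$ and $j_{36}$, which lie in $\ver_S$ since $X_5-\partial_S X_5\simeq X_4$ and $X_6-\partial_S X_6\simeq X_3$, I would combine Lemma \ref{N2.7} with Lemma \ref{N2.4}. The required compatibility of the $ad$ map with the closed-immersion factor reduces, via the factorizations $i_{25}=j_{45}i_{24}$ and $i_{26}=j_{36}i_3$, to checking that $i_{25*}i_{25}^*f_5^*\xrightarrow{ad}j_{45*}j_{45}^*i_{25*}i_{25}^*f_5^*$ is an isomorphism, which follows from the localization property (Proposition \ref{loc.1}) because $i_{25}(X-X_2)$ is contained in $X_4$ and hence disjoint from the closed complement of $j_{45}$; a symmetric check works for $j_{36}$. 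For the dividing cover $j_{56}$, the pullback $j_{56}^*$ is an equivalence of $\infty$-categories since $\sT$ is a dividing Nisnevich sheaf; applying Proposition \ref{thom.7} to $j_{56}$ yields $\Sigma_{f_5}j_{56}^*\simeq j_{56}^*\Sigma_{f_6}$, and together with $j_{56*}j_{56}^*\simeq\id$ this identifies $\mu_5$ with $\mu_6$.

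With the identification $\mu_4\simeq\mu_3$ in hand, Lemma \ref{N2.5} applied to $X_3\simeq S\times\A^1$ with its zero section $i_3$ gives that $\mu_3$, and hence $\mu_4$, is an isomorphism. The hard part will be the bookkeeping: at each transition one must verify that the exchange isomorphisms from Lemmas \ref{N2.4} and \ref{N2.7} and Proposition \ref{thom.7} are compatible with the unit maps $ad$ on both source and target, so that the abstract isomorphism $\mu_i\simeq\mu_j$ is an equality of natural transformations rather than merely of objects. The compatible factorizations of the sections $i_{24},i_{25},i_{26},i_3$ through one another make this tractable, but the diagram chase through dividing-cover descent and $\ver$-invariance will require careful verification.
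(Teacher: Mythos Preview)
Your proposal is correct and follows essentially the same route as the paper's proof: both reduce via Lemma~\ref{N2.4} to the transformation $\mu_4$ on $X_4$, then transport it along the chain $X_4\xrightarrow{j_{45}}X_5\xrightarrow{j_{56}}X_6\xleftarrow{j_{36}}X_3$ using Lemma~\ref{N2.7} for the $\ver$-open immersions and the fact that $h_{j_{56}}$ is an isomorphism for the dividing cover, and finish with Lemma~\ref{N2.5} on $X_3\simeq S\times\A^1$. The paper packages the compatibility checks into three commutative ladders rather than your explicit verification that $i_{25*}i_{25}^*f_5^*\to j_{45*}j_{45}^*i_{25*}i_{25}^*f_5^*$ is an isomorphism, but this is just bookkeeping; note also that the paper writes $i_{23}$ for what you call $i_3$, and simply invokes ``$h_{j_{56}}$ is an isomorphism'' rather than appealing to Proposition~\ref{thom.7} for the dividing-cover step.
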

\begin{proof}
We have a commutative diagram
\begin{equation}
\label{N2.3.3}
\begin{tikzcd}[row sep=small]
f_*\Sigma_f j_{4*}j_4^*f^*\ar[d,"\simeq"']\ar[r,"ad"]&
f_*\Sigma_f j_{4*}i_{24*}i_{24}^*j_4^*f^*\ar[d,"\simeq"]
\\
f_*j_{4*}\Sigma_{f_4}j_4^*f^*\ar[d,"\simeq"']\ar[r,"ad"]&
f_*j_{4*}\Sigma_{f_4}i_{24*}i_{24}^*j_4^*f^*\ar[d,"\simeq"]
\\
f_{4*}\Sigma_{f_4}f_4^*\ar[r,"ad"]&
f_{4*}\Sigma_{f_4}i_{24*}i_{24}^*f_4^*,
\end{tikzcd}
\end{equation}
where the upper vertical arrows are obtained by Lemma \ref{N2.4}.
We have a commutative diagram
\begin{equation}
\label{N2.3.1}
\begin{tikzcd}[row sep=small]
f_{4*}\Sigma_{f_4}f_4^*\ar[d,"\simeq"']\ar[r,"ad"]&
f_{4*}\Sigma_{f_4}i_{24*}i_{24}^*f_4^*\ar[d,"\simeq"]
\\
f_{5*}j_{45*}\Sigma_{f_4}j_{45}^*f_5^*\ar[d,"\simeq"']\ar[r,"ad"]&
f_{5*}j_{45*}\Sigma_{f_4}i_{24*}i_{24}^*j_{45}^*f_5^*\ar[d,"\simeq"]
\\
f_{5*}\Sigma_{f_5}j_{45*}j_{45}^*f_5^*\ar[d,"\simeq"',"ad^{-1}"]\ar[r,"ad"]&
f_{5*}\Sigma_{f_5}j_{45*}i_{24*}i_{24}^*j_{45}^*f_5^*\ar[d,"\simeq"]
\\
f_{5*}\Sigma_{f_5}f_5^*\ar[r,"ad"]&
f_{5*}\Sigma_{f_5}i_{25*}i_{25}^*f_5^*,
\end{tikzcd}
\end{equation}
where the middle vertical arrows are obtained by Lemma \ref{N2.4}, and the left lower vertical arrow is defined and an isomorphism by Lemma \ref{N2.7}. 

We similarly have a commutative diagram
\begin{equation}
\label{N2.3.2}
\begin{tikzcd}[row sep=small]
f_{3*}\Sigma_{f_3}f_3^*\ar[r,"ad"]\ar[d,"\simeq"']&
f_{3*}\Sigma_{f_3}i_{23*}i_{23}^*f_3^*\ar[d,"\simeq"]
\\
f_{6*}\Sigma_{f_6}f_6^*\ar[r,"ad"]&
f_{6*}\Sigma_{f_6}i_{26*}i_{26}^*f_6^*
\end{tikzcd}
\end{equation}
Since $h_{j_{56}}$ is an isomorphism, the bottom row of \eqref{N2.3.1} is an isomorphism if and only if the bottom row of \eqref{N2.3.2} is an isomorphism.
Hence it suffices to show that the top row of \eqref{N2.3.2} is an isomorphism.
This follows from Lemma \ref{N2.5}.
\end{proof}

\begin{lemma}
\label{N2.14}
The natural transformation
\[
f_\sharp j_{4\sharp}i_{24*}i_{24}^!j_4^*\Omega_f f^*
\xrightarrow{ad'}
f_\sharp j_{4\sharp}j_4^*\Omega_f f^*
\]
is an isomorphism.
\end{lemma}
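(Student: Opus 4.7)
The plan is to mimic the proof of Lemma \ref{N2.3} step for step, replacing each right-adjoint manipulation with its left-adjoint counterpart. By Proposition \ref{thom.7}, the open immersion $j_4\in\lSmSpc$ gives an isomorphism $j_4^*\Omega_f\simeq \Omega_{f_4}j_4^*$; combined with $f_\sharp j_{4\sharp}=f_{4\sharp}$ and with the base-change isomorphism $j_{45\sharp}i_{24*}\simeq i_{25*}$ from Proposition \ref{loc.2} together with its right-adjoint counterpart $i_{24}^!j_{45}^*\simeq i_{25}^!$, this reduces the stated morphism (via a commutative diagram analogous to \eqref{N2.3.3}) to
\[
f_{4\sharp}i_{24*}i_{24}^!\Omega_{f_4}f_4^* \xrightarrow{ad'} f_{4\sharp}\Omega_{f_4}f_4^*
\]
being an isomorphism.

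Using $f_4=f_5j_{45}$ and Proposition \ref{thom.7} for $j_{45}\in\lSmSpc$, the right-hand side rewrites as $f_{5\sharp}j_{45\sharp}j_{45}^*\Omega_{f_5}f_5^*$ and, via the same base-change isomorphisms applied now to the cartesian square relating $i_{24}$ to $i_{25}$, the left-hand side rewrites as $f_{5\sharp}i_{25*}i_{25}^!\Omega_{f_5}f_5^*$. Since $X_5-\partial_S X_5\simeq X_4$, the left-adjoint version of $\ver$-invariance, obtained from Proposition \ref{loc.3} by adjunction and asserting that $f_\sharp j_\sharp j^*\xrightarrow{ad'}f_\sharp$ is an isomorphism when $j$ is the inclusion of the non-vertical locus, collapses $f_{5\sharp}j_{45\sharp}j_{45}^*$ to $f_{5\sharp}$, yielding a diagram parallel to \eqref{N2.3.1}. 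The dividing-cover isomorphism $h_{j_{56}}$ then identifies this with the corresponding statement over $X_6$:
\[
f_{6\sharp}i_{26*}i_{26}^!\Omega_{f_6}f_6^*\xrightarrow{ad'}f_{6\sharp}\Omega_{f_6}f_6^*.
\]

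Since $f_6=f_3j_{36}$ with $j_{36}\colon X_3=S\times\A^1\to X_6=S\times\boxx$ an open immersion whose complement is the vertical boundary, $\ver$-invariance applies again. The closed immersion $i_{26}$ factors as $j_{36}\circ i_{23}$, and the base-change isomorphisms $j_{36\sharp}i_{23*}\simeq i_{26*}$ and $i_{23}^!j_{36}^*\simeq i_{26}^!$, together with $j_{36}^*\Omega_{f_6}\simeq\Omega_{f_3}j_{36}^*$, reduce the claim—by a diagram parallel to \eqref{N2.3.2}—to
\[
f_{3\sharp}i_{23*}i_{23}^!\Omega_{f_3}f_3^*\xrightarrow{ad'}f_{3\sharp}\Omega_{f_3}f_3^*
\]
being an isomorphism. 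Since $f_3$ is the projection $S\times\A^1\to S$ and $i_{23}$ is its zero section, this is precisely the second assertion of Lemma \ref{N2.5}.

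The main technical nuisance, rather than a genuine conceptual obstacle, is verifying that the chain of reductions is compatible with the counit $ad'$ appearing in the statement. This amounts to producing commutative diagrams analogous to \eqref{N2.3.1}--\eqref{N2.3.3} whose squares express coherences between exchange transformations and the relevant units or counits; these are routine but bookkeeping-heavy checks, of the same character as those already carried out in \S\ref{exchange}.
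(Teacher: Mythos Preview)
Your proposal is correct and follows essentially the same route as the paper's proof: reduce along $j_4$ to the statement for $f_4$, then along $j_{45}$ to $f_5$ using $\ver$-invariance, pass to $f_6$ via the dividing cover $j_{56}$, reduce along $j_{36}$ to $f_3$, and conclude by Lemma~\ref{N2.5}. Two minor citation points: the isomorphism $j_4^*\Omega_f\simeq\Omega_{f_4}j_4^*$ is not a direct consequence of Proposition~\ref{thom.7} alone (the pullback of the diagonal $X\to X\times_S X$ along $j_4$ is the graph morphism into $X\times_S X_4$, not the diagonal of $X_4$), so one needs the extra step in Lemma~\ref{N2.4} and then takes its left adjoint, exactly as the paper does; and for $j_{45\sharp}i_{24*}\simeq i_{25*}$ the paper cites Proposition~\ref{fundamental.7} rather than Proposition~\ref{loc.2}, though either reference suffices here.
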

\begin{proof}
We have a commutative diagram
\begin{equation}
\label{N2.14.3}
\begin{tikzcd}[row sep=small]
f_\sharp j_{4\sharp}i_{24*}i_{24}^!j_4^* \Omega_f f^*\ar[d,"\simeq"']\ar[r,"ad'"]&
f_\sharp j_{4\sharp}j_4^*\Omega_f f^*\ar[d,"\simeq"]
\\
f_\sharp j_{4\sharp}i_{24*}i_{24}^!\Omega_{f_4}j_4^* f^*
\ar[d,"\simeq"']\ar[r,"ad'"]&
f_\sharp j_{4\sharp}\Omega_{f_4}j_4^*f^*
\ar[d,"\simeq"]
\\
f_{4\sharp}i_{24*}i_{24}^!\Omega_{f_4}f_4^*
\ar[r,"ad'"]&
f_{4\sharp}\Omega_{f_4}f_4^*,
\end{tikzcd}
\end{equation}
where the upper vertical arrows are obtained by a left adjoint of \eqref{N2.4.1}.
We have a commutative diagram
\begin{equation}
\label{N2.14.1}
\begin{tikzcd}[row sep=small]
f_{4\sharp}i_{24*}i_{24}^!\Omega_{f_4}f_4^*
\ar[d,"\simeq"']\ar[r,"ad'"]&
f_{4\sharp}\Omega_{f_4}f_4^*
\ar[d,"\simeq"]
\\
f_{5\sharp}j_{45\sharp}i_{24*}i_{24}^!\Omega_{f_4} j_{45}^*f_5^*
\ar[d,"\simeq"']\ar[r,"ad'"]&
f_{5\sharp}j_{45\sharp}\Omega_{f_4}j_{45}^*f_5^*
\ar[d,"\simeq"]
\\
f_{5\sharp}j_{45\sharp}i_{24*}i_{24}^!j_{45}^*\Omega_{f_5}f_5^*
\ar[d,"\simeq"',"ad'"]\ar[r,"ad'"]&
f_{5\sharp}j_{45\sharp}j_{45}^*\Omega_{f_5}f_5^*
\ar[d,"\simeq"]
\\
f_{5\sharp}i_{25*}i_{25}^!\Omega_{f_5}f_5^*
\ar[r,"ad'"]&
f_{5\sharp}\Omega_{f_5}f_5^*,
\end{tikzcd}
\end{equation}
where the middle vertical arrows are obtained by the left adjoint of \eqref{N2.4.1}, the left below vertical arrow is an isomorphism by 
Proposition \ref{fundamental.7}, and the right below vertical arrow is defined and an isomorphism by Lemma \ref{N2.7}.

We similarly have a commutative diagram
\begin{equation}
\label{N2.14.2}
\begin{tikzcd}[row sep=small]
f_{3\sharp}i_{23*}i_{23}^!\Omega_{f_3}f_3^*
\ar[r,"ad'"]\ar[d,"\simeq"']&
f_{3\sharp}\Omega_{f_3}f_3^*
\ar[d,"\simeq"]
\\
f_{6\sharp}i_{26*}i_{26}^!\Omega_{f_6}f_6^*
\ar[r,"ad'"]&
f_{6\sharp}\Omega_{f_6}f_6^*.
\end{tikzcd}
\end{equation}
Since $h_{j_{56}}$ is an isomorphism, the bottom row of \eqref{N2.14.1} is an isomorphism if and only if the bottom row of \eqref{N2.14.2} is an isomorphism.
Hence it suffices to show that the top row of \eqref{N2.14.2} is an isomorphism.
This follows from Lemma \ref{N2.5}.
\end{proof}

Observe that $fi_{4}\colon X-X_4\to S$ is an isomorphism.

\begin{lemma}
\label{N2.8}
The sequence
\[
f_\sharp i_{4*}i_4^!f^*
\xrightarrow{ad'}
f_\sharp f^*
\xrightarrow{ad}
f_\sharp i_{2*}i_2^*f^*
\]
is a cofiber sequence.
\end{lemma}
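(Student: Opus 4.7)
The plan is to compare the desired sequence against the localization cofiber sequence for the strict closed immersion $i_2$. Applying $j_{2\sharp}j_2^*\to \id\to i_{2*}i_2^*$ in $\sT(X)$ to $f^*$ and then $f_\sharp$ gives the cofiber sequence
\[
f_{2\sharp}f_2^* \to f_\sharp f^* \to f_\sharp i_{2*}i_2^* f^*
\]
in $\sT(S)$, where I use $f_\sharp j_{2\sharp}=f_{2\sharp}$ and $j_2^*f^*=f_2^*$. The task therefore reduces to producing a canonical isomorphism $f_\sharp i_{4*}i_4^! f^*\xrightarrow{\sim} f_{2\sharp}f_2^*$ compatible with the second maps into $f_\sharp f^*$.

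Since $X-X_4\subset V_2\subset X_2$, the strict closed immersion $i_4$ factors as $i_4=j_2\circ k_2$ for a strict closed immersion $k_2\colon X-X_4\to X_2$ (strictness is local, verified on $V_2-V_{12}\hookrightarrow V_2$, where the $\N u$-log structure is preserved). Proposition \ref{fundamental.7}, applied to the cartesian square with $j_2$ an open immersion and $i_4$ strict proper, produces $j_{2\sharp}k_{2*}\simeq i_{4*}$; combined with $i_4^!=k_2^!j_2^*$ (since $j_2$ is open), this gives $i_{4*}i_4^!\simeq j_{2\sharp}k_{2*}k_2^!j_2^*$. Then applying $j_{2\sharp}(-)j_2^*$ and $f_\sharp(-)f^*$ to the localization fiber sequence $k_{2*}k_2^!\to \id\to l_{2*}l_2^*$ in $\sT(X_2)$, where $l_2\colon X_{24}:=X_2\cap X_4\to X_2$ is the open complement of $k_2$, yields the cofiber sequence
\[
f_\sharp i_{4*}i_4^! f^* \to f_{2\sharp}f_2^* \to f_{2\sharp}l_{2*}l_2^* f_2^*,
\]
and the composition $f_\sharp i_{4*}i_4^! f^*\to f_{2\sharp}f_2^*\to f_\sharp f^*$ coincides with the first map in the statement.

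The main obstacle is then the vanishing $f_{2\sharp}l_{2*}l_2^* f_2^*\simeq 0$. Since $\psi\colon X-X_4\xrightarrow{\sim} S$, the composition $t\colon S\xrightarrow{\psi^{-1}} X-X_4\xrightarrow{k_2} X_2$ is a section of $f_2$ and a strict closed immersion, and Theorem \ref{thom.10} gives the Thom equivalence $\Thom(f_2,t)=f_{2\sharp}t_*$. Using $k_{2*}k_2^!=t_*t^!$, the composite Thom counit
\[
f_{2\sharp}t_*t^!f_2^* \xrightarrow{f_{2\sharp}\epsilon_t f_2^*} f_{2\sharp}f_2^* \xrightarrow{\epsilon_{f_2}} \id
\]
is an isomorphism. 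The hardest step is to show the second factor $\epsilon_{f_2}$ is itself an isomorphism, i.e., the analog of Lemma \ref{N2.6} for $f_2$: I would deduce this from the involution of $W$ over $\A_\N$ obtained by exchanging $x$ and $y$, which exchanges $V_2\leftrightarrow V_3$ and $V_{12}\leftrightarrow V_{13}$, hence $W_2\simeq W_4$, so that $X_2\simeq X_4$ over $S$ and Lemma \ref{N2.6} transfers in its equivalent form $f_{2\sharp}f_2^*\simeq \id$. Once $\epsilon_{f_2}$ is known to be an isomorphism, the first factor $f_{2\sharp}k_{2*}k_2^! f_2^*\to f_{2\sharp}f_2^*$ is also an isomorphism, which simultaneously identifies $f_\sharp i_{4*}i_4^! f^*\xrightarrow{\sim} f_{2\sharp}f_2^*$ compatibly with the maps to $f_\sharp f^*$ and forces the cofiber $f_{2\sharp}l_{2*}l_2^* f_2^*$ to vanish, completing the proof.
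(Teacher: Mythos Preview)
Your argument is correct and is essentially the paper's intended proof: reduce via the localization sequence for $i_2$ to identifying $f_\sharp i_{4*}i_4^!f^*$ with $f_{2\sharp}f_2^*$ compatibly over $f_\sharp f^*$, factor $i_4$ through $j_2$ using Proposition~\ref{fundamental.7} (since $X-X_4\subset X_2$), and conclude from the Thom equivalence (Theorem~\ref{thom.10}) together with Lemma~\ref{N2.6} transported along the $x\leftrightarrow y$ symmetry $W_2\simeq W_4$ over $\A_\N$. The paper's displayed composition writes $j_4,\,i_{24},\,f_4$ where $j_2,\,i_{42},\,f_2$ are meant; with the printed indices Proposition~\ref{fundamental.7} would not produce the first arrow, since $j_4\circ i_{24}=i_2$, not $i_4$.

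Two minor remarks. First, your extra step of recognizing the composite $f_{2\sharp}t_*t^!f_2^*\to f_{2\sharp}f_2^*\to\id$ as the counit of the adjunction $(\Thom(f_2,t),\Thom'(f_2,t))$ is a clean way to check that the specific map (not merely its source and target) is invertible; the paper only records that both ends are isomorphic to $\id$. Second, the auxiliary cofiber sequence through $l_2$ is superfluous: once you know the first map $f_\sharp i_{4*}i_4^!f^*\to f_{2\sharp}f_2^*$ is an isomorphism you are done, so you can delete the sentence introducing $l_2$ and go directly to the counit argument.
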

\begin{proof}
The localization property implies that the sequence
\[
f_\sharp j_{2\sharp}j_2^* f^*
\xrightarrow{ad'}
f_\sharp f^*
\xrightarrow{ad}
f_\sharp i_{2*}i_2^*f^*
\]
is a cofiber sequence.
Hence it suffices to show that the composition
\[
f_\sharp i_{4*}i_4^!f^*
\xrightarrow{\simeq}
f_\sharp j_{4\sharp}i_{24*}i_{24}^!j_4^*f^*
\xrightarrow{ad'}
f_\sharp j_{4\sharp}j_4^*f^*
\xrightarrow{\simeq}
f_{4\sharp}f_4^*
\]
is an isomorphism, where the first arrow is defined and an isomorphism by Proposition \ref{fundamental.7}.

Since $f_\sharp i_{4*}=\Thom(f,i_4)$ is an equivalence of $\infty$-categories by Theorem \ref{thom.10}, we have a natural isomorphism $f_\sharp i_{4*}i_4^!f^*\simeq \id$.
Lemma \ref{N2.6} finishes the proof.
\end{proof}

\begin{lemma}
\label{N2.9}
The sequence
\[
f_* \Sigma_f i_{4*}i_4^!f^*
\xrightarrow{ad'}
f_* \Sigma_f  f^*
\xrightarrow{ad}
f_* \Sigma_f i_{2*}i_2^*f^*
\]
is a cofiber sequence.
\end{lemma}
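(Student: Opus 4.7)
The strategy is to apply the functor $f_*\Sigma_f(-)f^*$ to the \emph{dual} recollement cofiber sequence for the pair $(i_4,j_4)$, and then rewrite the third term using Lemma \ref{N2.3}.

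By the localization property (Proposition \ref{loc.1} together with the remark following it), applied to the closed immersion $i_4$ with open complement $j_4$, the sequence $j_{4\sharp}j_4^*\to\id\to i_{4*}i_4^*$ is a cofiber sequence of endofunctors of $\sT(X)$. By the standard recollement formalism in the stable $\infty$-category $\sT(X)$, the dual sequence
\[
i_{4*}i_4^!\xrightarrow{ad'}\id\xrightarrow{ad}j_{4*}j_4^*
\]
is also a fiber (equivalently cofiber) sequence: writing $F$ for the fiber of $\id\to j_{4*}j_4^*$, the conservativity of $(i_4^*,j_4^*)$ together with $j_4^*j_{4*}\simeq\id$ forces $j_4^*F\simeq 0$, so $F\simeq i_{4*}G$ for some $G$; applying $i_4^!$ and using $i_4^!j_{4*}=0$ identifies $G\simeq i_4^!$.

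Next, the functor $f_*\Sigma_f(-)f^*$ preserves cofiber sequences: the left adjoints $f^*$ and $p_{2\sharp}$ preserve colimits; the diagonal $a$ is a closed immersion (since the proper $f$ is separated), so $a_*$ preserves colimits by Proposition \ref{fundamental.3}; and the right adjoint $f_*$ between stable $\infty$-categories preserves fiber sequences, which coincide with cofiber sequences in the stable setting. Applying $f_*\Sigma_f(-)f^*$ to the dual recollement sequence above therefore yields a cofiber sequence
\[
f_*\Sigma_f i_{4*}i_4^!f^*\xrightarrow{ad'} f_*\Sigma_f f^*\xrightarrow{ad} f_*\Sigma_f j_{4*}j_4^*f^*.
\]

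Finally, since $i_2=j_4\circ i_{24}$ (the closed immersion $X-X_2=V_3-V_{13}$ sits inside $X_4=V_1\cup V_3$ via $i_{24}$, and $i_2$ equals $j_4$ composed with this), we have $i_{2*}i_2^*\simeq j_{4*}i_{24*}i_{24}^*j_4^*$, and the unit $\id\to i_{2*}i_2^*$ factors as the composite of units $\id\to j_{4*}j_4^*\to j_{4*}i_{24*}i_{24}^*j_4^*$. By Lemma \ref{N2.3}, the second of these maps becomes an isomorphism after $f_*\Sigma_f(-)f^*$. Splicing this isomorphism into the cofiber sequence above produces the cofiber sequence in the statement. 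The only remaining bookkeeping is to confirm that the resulting second map coincides with the $ad$ appearing in the statement, which is immediate from the general fact that the unit of a composition of adjunctions is the composition of the units; no truly obstructive step arises, and the main task is the careful identification of the arrows.
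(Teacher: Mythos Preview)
Your proof is correct and follows essentially the same route as the paper: apply the dual localization cofiber sequence $i_{4*}i_4^!\to\id\to j_{4*}j_4^*$, push it through $f_*\Sigma_f(-)f^*$, and then invoke Lemma~\ref{N2.3} to replace the third term by $f_*\Sigma_f i_{2*}i_2^*f^*$. The paper compresses all of this into two sentences, whereas you spell out why the dual recollement sequence is a cofiber sequence, why the composite functor is exact, and why the factorization $i_2=j_4 i_{24}$ makes the identification of arrows go through; these are precisely the details the paper leaves implicit.
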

\begin{proof}
The localization property implies that the sequence
\[
f_*\Sigma_f i_{4*}i_4^!f^*
\xrightarrow{ad'}
f_*\Sigma_f f^*
\xrightarrow{ad}
f_*\Sigma_f j_{4*}j_4^*f^*
\]
is a cofiber sequence.
We conclude together with Lemma \ref{N2.3}.
\end{proof}

\begin{lemma}
\label{N2.10}
The sequence
\[
f_\sharp i_{4*}i_4^!\Omega_f f^*
\xrightarrow{ad'}
f_\sharp \Omega_f f^*
\xrightarrow{ad}
f_\sharp i_{2*}i_2^* \Omega_f f^*
\]
is a cofiber sequence.
\end{lemma}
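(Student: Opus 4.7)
The plan is to follow the strategy of the proof of Lemma \ref{N2.9}, with the roles of $(f_*\Sigma_f, f^*)$ now played by $(f_\sharp, \Omega_f f^*)$. First I would invoke the localization property for the open-closed pair $(j_4, i_4)$ to obtain the cofiber sequence $i_{4*}i_4^! \xrightarrow{ad'} \id \xrightarrow{ad} j_{4*}j_4^*$ in $\sT(X)$, and then apply the exact functor $f_\sharp(-)\Omega_f f^*$ to produce the cofiber sequence
\[
f_\sharp i_{4*}i_4^!\Omega_f f^* \xrightarrow{ad'} f_\sharp \Omega_f f^* \xrightarrow{ad} f_\sharp j_{4*}j_4^*\Omega_f f^*.
\]
Using the factorization $i_2 = j_4\circ i_{24}$, it therefore suffices to show that the natural map $f_\sharp j_{4*}j_4^*\Omega_f f^* \to f_\sharp j_{4*}i_{24*}i_{24}^*j_4^*\Omega_f f^* = f_\sharp i_{2*}i_2^*\Omega_f f^*$, induced by the unit $\id\to i_{24*}i_{24}^*$ on $\sT(X_4)$, is an isomorphism.

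This is the $(f_\sharp,\Omega_f f^*)$-analogue of Lemma \ref{N2.3}, and I would establish it by imitating the diagrammatic reduction used in the proof of Lemma \ref{N2.3} (and in Lemma \ref{N2.14}): assemble three commutative diagrams analogous to \eqref{N2.3.3}, \eqref{N2.3.1}, and \eqref{N2.3.2}, with $f_*\Sigma_f$ replaced by $f_\sharp$ and $f^*$ by $\Omega_f f^*$ throughout. The key building blocks are a smooth base-change isomorphism $j^*\Omega_f \simeq \Omega_{fj}j^*$ for open immersions $j$ (obtained by dualising Lemma \ref{N2.4} or extracted from Proposition \ref{thom.7}), Lemma \ref{N2.7} ($\ver$-invariance, to be applied through its left-adjoint reformulation in order to move between the $f_4$- and $f_5$-level), the motivic isomorphism $h_{j_{56}}\simeq \id$ (which identifies the $f_5$- and $f_6$-level diagrams), and finally Lemma \ref{N2.5}, which handles the concluding $\A^1$-projection case $f_3\colon X_3\simeq S\times\A^1\to S$.

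The main obstacle I anticipate is orienting all of the arrows correctly. Because $f_\sharp$ now replaces $f_*$, most of the $Ex$- and $ad$-maps appearing in the diagrams of Lemma \ref{N2.3} are reversed, so each application of $\ver$-invariance has to be invoked via its left-adjoint form, and the base-change isomorphism $j^*\Omega_f \simeq \Omega_{fj}j^*$ must be constructed together with its compatibility with the units and counits used in the large diagrams. Since $\Omega_f = a^!p_2^*$ is a composite of a right and a left adjoint, checking this compatibility requires threading several exchange transformations through the hexagon, which is the principal bookkeeping burden. Once these orientation issues are correctly handled, the remaining argument is a mechanical translation of the diagrammatic reasoning of Lemma \ref{N2.3}.
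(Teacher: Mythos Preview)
Your reduction produces the third term $f_\sharp j_{4*}j_4^*\Omega_f f^*$, and the rest of the argument hinges on identifying it with $f_\sharp i_{2*}i_2^*\Omega_f f^*$ by imitating the diagrams of Lemma~\ref{N2.3}. The problem is the mixed composite $f_\sharp j_{4*}$. In Lemma~\ref{N2.3} the reduction to the $X_4$-level works because $f_*\Sigma_f j_{4*}\simeq f_*j_{4*}\Sigma_{f_4}=f_{4*}\Sigma_{f_4}$; in Lemma~\ref{N2.14} it works because $f_\sharp j_{4\sharp}=f_{4\sharp}$. Both are compositions of adjoints of the \emph{same} variance. Your composite $f_\sharp j_{4*}$ mixes a left adjoint with a right adjoint and does not collapse to any $f_{4?}$, so the diagrams analogous to \eqref{N2.3.3}--\eqref{N2.3.2} never land at a pure $X_4$-, $X_5$-, or $X_3$-level statement to which Lemma~\ref{N2.5} applies. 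Moving $\Omega$ around (via $j_4^*\Omega_f\simeq\Omega_{f_4}j_4^*$ or $j_{4*}\Omega_{f_4}\simeq\Omega_f j_{4*}$) only shuffles the obstruction; the stubborn $f_\sharp j_{4*}$ remains.

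The paper sidesteps this with a symmetry trick you are missing: swapping the coordinates $x\leftrightarrow y$ interchanges $V_2\leftrightarrow V_3$ and hence $X_2\leftrightarrow X_4$, so it suffices to prove that
\[
f_\sharp i_{2*}i_2^!\Omega_f f^*\xrightarrow{ad'} f_\sharp\Omega_f f^*\xrightarrow{ad} f_\sharp i_{4*}i_4^*\Omega_f f^*
\]
is a cofiber sequence. Now one uses the \emph{other} localization triangle $j_{4\sharp}j_4^*\to\id\to i_{4*}i_4^*$, and after $f_\sharp(-)\Omega_f f^*$ the left term is $f_\sharp j_{4\sharp}j_4^*\Omega_f f^*=f_{4\sharp}j_4^*\Omega_f f^*$, with no variance clash. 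The required identification $f_\sharp i_{2*}i_2^!\Omega_f f^*\simeq f_\sharp j_{4\sharp}j_4^*\Omega_f f^*$ is then exactly Lemma~\ref{N2.14} (together with Proposition~\ref{fundamental.7} for the first step $f_\sharp i_{2*}\simeq f_\sharp j_{4\sharp}i_{24*}$). So the missing idea is: swap coordinates so that the $j_\sharp$-form of localization becomes available, and then invoke Lemma~\ref{N2.14} directly rather than attempting a new diagram chase.
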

\begin{proof}
After switching the coordinates $x$ and $y$ in the formulations of $V_1$, $V_2$, and $V_3$,
we can instead show that the sequence
\[
f_\sharp i_{2*}i_2^!\Omega_f f^*
\xrightarrow{ad'}
f_\sharp \Omega_f f^*
\xrightarrow{ad}
f_\sharp i_{4*}i_4^* \Omega_f f^*
\]
is a cofiber sequence.

The localization property implies that the sequence
\[
f_\sharp j_{4\sharp}j_4^*\Omega_f f^*
\xrightarrow{ad'}
f_\sharp \Omega_f f^*
\xrightarrow{ad}
f_\sharp i_{4*}i_4^* \Omega_f f^*
\]
is a cofiber sequence.
Hence it suffices to show that the composition
\[
f_\sharp i_{2*}i_2^!\Omega_f f^*
\xrightarrow{\simeq}
f_\sharp j_{4\sharp}i_{24*}i_{24}^! j_4^*\Omega_f f^*
\xrightarrow{ad'}
f_\sharp j_{4\sharp}j_4^* \Omega_f f^*
\]
is an isomorphism, where the first arrow is defined and an isomorphism by Proposition \ref{fundamental.7}.
This follows from Lemma \ref{N2.14}.
\end{proof}

\begin{lemma}
\label{N2.11}
The sequence
\[
f_*\Sigma_f i_{4*}i_4^!\Omega_f f^*
\xrightarrow{ad'}
f_*\Sigma_f \Omega_f f^*
\xrightarrow{ad}
f_*\Sigma_f i_{2*}i_2^*\Omega_f f^*
\]
is a cofiber sequence.
\end{lemma}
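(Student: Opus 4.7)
The plan is to reduce the statement directly to Lemma \ref{N2.9} by exploiting a closed immersion section of $f$ made available by the geometry of \S\ref{nonstrictpurity}. Since $fi_2\colon X-X_2\to S$ is an isomorphism, the morphism $s:=i_2\circ(fi_2)^{-1}\colon S\to X$ is a section of $f$, and it is a closed immersion because $i_2$ is. Lemma \ref{N2.15}, applied to this section, then yields a natural isomorphism $f^*\Thom'(f,s)\xrightarrow{\simeq}\Omega_f f^*$.

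Next I would precompose the cofiber sequence of Lemma \ref{N2.9} with the endofunctor $\Thom'(f,s)\colon \sT(S)\to \sT(S)$. Since precomposition with any functor preserves cofiber sequences of functors,
\[
f_*\Sigma_f i_{4*}i_4^!f^*\Thom'(f,s)\xrightarrow{ad'} f_*\Sigma_f f^*\Thom'(f,s)\xrightarrow{ad} f_*\Sigma_f i_{2*}i_2^*f^*\Thom'(f,s)
\]
is a cofiber sequence. Using the isomorphism from Lemma \ref{N2.15} to replace $f^*\Thom'(f,s)$ by $\Omega_f f^*$ in each of the three terms, and invoking the naturality of that isomorphism to identify the counit $i_{4*}i_4^!\to\id$ and the unit $\id\to i_{2*}i_2^*$ appearing in the two maps, this sequence is matched with the one in the statement of the lemma.

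The only genuinely non-formal ingredient is the construction of the section $s$, which hinges on the special geometric feature of the compactification $W$ built in \S\ref{nonstrictpurity}, namely that $X-X_2\to S$ is an isomorphism. Once the section is in hand, Lemma \ref{N2.15} supplies the natural isomorphism translating Lemma \ref{N2.9} into the present lemma, so no further obstacle arises.
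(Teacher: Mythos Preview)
Your argument is correct and is genuinely simpler than the paper's. The paper reproves the localization step directly: it first uses the cofiber sequence ending in $f_*\Sigma_f j_{4*}j_4^*\Omega_f f^*$, and then spends the rest of the proof showing that the map from this term to $f_*\Sigma_f i_{2*}i_2^*\Omega_f f^*$ is an isomorphism, invoking Lemma~\ref{N2.4} (to pass $\Sigma_f$ through $j_{4*}$ and dually $\Omega_f$ through $j_4^*$), Lemma~\ref{N2.15} (to strip off $\Sigma_f$ and $\Omega_f$ at the closed point), and finally Lemma~\ref{N2.6} to identify $f_{4*}f_4^*$ with the identity. Your route bypasses all of this: once you observe that $f$ admits a closed-immersion section $s$, Lemma~\ref{N2.15} gives $\Omega_f f^*\simeq f^*\Thom'(f,s)$ \emph{globally}, so the entire statement is just Lemma~\ref{N2.9} evaluated at $\Thom'(f,s)\cF$ rather than at $\cF$. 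The naturality check you describe is exactly what is needed to match the two unit/counit maps.

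What your approach buys is economy: you avoid reopening the localization argument and you do not need Lemma~\ref{N2.6} or the open-immersion compatibility of $\Sigma$ and $\Omega$ from Lemma~\ref{N2.4}. What the paper's approach buys is perhaps a more parallel treatment to the other lemmas in \S\ref{nonstrictpurity} (Lemmas~\ref{N2.8}--\ref{N2.10} all follow the same template of localization plus identification of the third term), but at the cost of extra work that your observation renders unnecessary.
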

\begin{proof}
The localization sequence implies that the sequence
\[
f_*\Sigma_f i_{4*}i_4^!\Omega_f f^*
\xrightarrow{ad'}
f_*\Sigma_f \Omega_f f^*
\xrightarrow{ad}
f_*\Sigma_f j_{4*}j_4^*\Omega_f f^*
\]
is a cofiber sequence.
Hence it suffices to show that the composition
\[
f_*\Sigma_f j_{4*}j_4^*\Omega_f f^*
\xrightarrow{ad}
f_*\Sigma_f j_{4*}i_{24*}i_{24}^*j_4^*\Omega_f f^*
\xrightarrow{\simeq}
f_*\Sigma_f i_{2*}i_2^*\Omega_f f^*
\]
is an isomorphism.

Lemma \ref{N2.4} gives a natural isomorphism $\Sigma_f j_{4*}\simeq j_{4*}\Sigma_{f_4}$, whose left adjoint is $\Omega_{f_4}j_4^*\simeq j_4^*\Omega_f$.
Lemma \ref{N2.15} gives natural isomorphisms $\Sigma_f i_{2*}\simeq i_{2*}\Thom(f,i_2)$ and $i_2^*\Omega_f\simeq \Thom'(f,i_2)i_2^*$.
Hence it suffices to show that the composition
\[
f_*j_{4*}j_4^* f^*
\xrightarrow{ad}
f_*j_{4*}i_{24*}i_{24}^*j_4^* f^*
\xrightarrow{\simeq}
f_* i_{2*}i_2^* f^*
\]
is an isomorphism.
This follows from Lemma \ref{N2.6} since $fi_2\colon X-X_2\to S$ is an isomorphism.
\end{proof}

\begin{lemma}
\label{N2.12}
The natural transformation
\[
f_\sharp f^* \xrightarrow{\mathfrak{p}_f} f_* \Sigma_{f} f^*
\]
is an isomorphism.
\end{lemma}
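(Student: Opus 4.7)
The plan is to invoke Lemma \ref{N2.16}, which reduces the statement to proving that the two natural transformations
\[
\mathfrak{p}_f f^*\colon f_\sharp f^*\to f_*\Sigma_f f^*
\quad\text{and}\quad
\mathfrak{p}_f\Omega_f f^*\colon f_\sharp \Omega_f f^*\to f_*\Sigma_f\Omega_f f^*
\]
are isomorphisms. I will establish each by displaying it as the middle arrow in a map of cofiber sequences whose outer arrows are already known to be isomorphisms.

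Applying $\mathfrak{p}_f$ to the composable pair $i_{4*}i_4^!f^*\xrightarrow{ad'}f^*\xrightarrow{ad}i_{2*}i_2^* f^*$ yields, by naturality, a commutative diagram
\[
\begin{tikzcd}[column sep=small]
f_\sharp i_{4*}i_4^!f^*\ar[r,"ad'"]\ar[d,"\mathfrak{p}_f"']&
f_\sharp f^*\ar[r,"ad"]\ar[d,"\mathfrak{p}_f f^*"]&
f_\sharp i_{2*}i_2^* f^*\ar[d,"\mathfrak{p}_f"]
\\
f_*\Sigma_f i_{4*}i_4^!f^*\ar[r,"ad'"]&
f_*\Sigma_f f^*\ar[r,"ad"]&
f_*\Sigma_f i_{2*}i_2^* f^*
\end{tikzcd}
\]
whose upper row is a cofiber sequence by Lemma \ref{N2.8} and whose lower row is a cofiber sequence by Lemma \ref{N2.9}. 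Since $fi_2$ and $fi_4$ are isomorphisms of fs log schemes over $S$, the composites $i_2\circ (fi_2)^{-1}$ and $i_4\circ (fi_4)^{-1}$ are sections of $f$, so Lemma \ref{N2.1} implies that $\mathfrak{p}_f i_{2*}$ and $\mathfrak{p}_f i_{4*}$ are isomorphisms. Therefore the outer vertical arrows in the diagram are isomorphisms, and hence so is the middle one.

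The argument for $\mathfrak{p}_f \Omega_f f^*$ is entirely parallel: I would form the analogous map of cofiber sequences, now provided by Lemmas \ref{N2.10} and \ref{N2.11}, and again apply Lemma \ref{N2.1} to see that the two outer vertical arrows are isomorphisms. Combining both cases with Lemma \ref{N2.16} then yields the desired conclusion.

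There is no real obstacle at this step: the substantive work has been concentrated in the preceding four lemmas. Those lemmas required the auxiliary compactifications built out of $V_4$ and $V_5$, combined with the $\ver$-invariance transfer through the dividing cover $j_{56}$, to manufacture cofiber sequences on both the $f_\sharp$- and the $f_*\Sigma_f$-side that match under the naturality of $\mathfrak{p}_f$. Once they are in hand, the present lemma is a formal diagram chase against Lemmas \ref{N2.1} and \ref{N2.16}.
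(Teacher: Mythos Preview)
Your core argument is correct and matches the paper's proof exactly: form the commutative diagram whose rows are the cofiber sequences of Lemmas \ref{N2.8} and \ref{N2.9}, observe that the outer vertical arrows are isomorphisms by Lemma \ref{N2.1}, and conclude that the middle arrow $\mathfrak{p}_f f^*$ is an isomorphism.

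However, your framing is confused. Lemma \ref{N2.16} does \emph{not} reduce the present statement to anything; rather, the present statement is one of the two \emph{hypotheses} of Lemma \ref{N2.16}. The implication in \ref{N2.16} runs the other way: from $\mathfrak{p}_f f^*$ and $\mathfrak{p}_f \Omega_f f^*$ being isomorphisms, one deduces that $\mathfrak{p}_f$ itself is an isomorphism. Your first diagram already establishes Lemma \ref{N2.12} outright, with no appeal to \ref{N2.16} needed. The second diagram argument you sketch (using Lemmas \ref{N2.10} and \ref{N2.11}) is the proof of the separate Lemma \ref{N2.13}, and only after both \ref{N2.12} and \ref{N2.13} are in hand does one invoke \ref{N2.16} to obtain Theorem \ref{N2.17}. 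You have effectively collapsed Lemmas \ref{N2.12}, \ref{N2.13}, and Theorem \ref{N2.17} into a single argument; that is fine logically, but it is not what the statement asks for, and the opening sentence misrepresents the role of \ref{N2.16}.
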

\begin{proof}
Consider the commutative diagram
\begin{equation*}
\begin{tikzcd}
f_\sharp i_{4*}i_4^!f^*\arrow[r,"ad'"]\arrow[d,"\mathfrak{p}_f"']&
f_\sharp f^*\arrow[r,"ad"]\arrow[d,"\mathfrak{p}_f"]&
f_\sharp i_{2*}i_2^*f^*\arrow[d,"\mathfrak{p}_f"]
\\
f_* \Sigma_{f} i_{4*}i_4^!f^*\arrow[r,"ad'"]&
f_*\Sigma_{f} f^*\arrow[r,"ad"]&
f_* \Sigma_{f} i_{2*}i_2^* f^*
\end{tikzcd}
\end{equation*}
whose rows are cofiber sequences by Lemmas \ref{N2.8} and \ref{N2.9}.
The left and right vertical arrows are isomorphisms by Lemma \ref{N2.1}.
It follows that the middle vertical arrow is an isomorphism.
\end{proof}

\begin{lemma}
\label{N2.13}
The natural transformation
\[
f_\sharp \Omega_{f} f^* \xrightarrow{\mathfrak{p}_f} f_*\Sigma_{f}\Omega_f  f^*
\]
is an equivalence.
\end{lemma}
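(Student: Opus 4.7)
The plan is to mimic the proof of Lemma \ref{N2.12} almost verbatim, but with the functor $\Omega_f f^*$ inserted on the right everywhere. The key observation is that Lemmas \ref{N2.10} and \ref{N2.11} are precisely the $\Omega_f$-inserted analogues of Lemmas \ref{N2.8} and \ref{N2.9}, so all the building blocks already exist.

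Concretely, I would form the commutative diagram
\begin{equation*}
\begin{tikzcd}
f_\sharp i_{4*}i_4^!\Omega_f f^*\arrow[r,"ad'"]\arrow[d,"\mathfrak{p}_f"']&
f_\sharp \Omega_f f^*\arrow[r,"ad"]\arrow[d,"\mathfrak{p}_f"]&
f_\sharp i_{2*}i_2^*\Omega_f f^*\arrow[d,"\mathfrak{p}_f"]
\\
f_*\Sigma_f i_{4*}i_4^!\Omega_f f^*\arrow[r,"ad'"]&
f_*\Sigma_f\Omega_f f^*\arrow[r,"ad"]&
f_*\Sigma_f i_{2*}i_2^*\Omega_f f^*
\end{tikzcd}
\end{equation*}
whose rows are cofiber sequences by Lemmas \ref{N2.10} and \ref{N2.11} respectively. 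Since $\sT(S)$ is stable, a morphism of cofiber sequences in which two of the three vertical arrows are isomorphisms forces the third to be an isomorphism as well. Thus it suffices to verify that the left and right vertical purity transformations are isomorphisms.

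For the outer columns, I would apply Lemma \ref{N2.1} to the closed immersions $i_4$ and $i_2$: that lemma states that $\mathfrak{p}_f \colon f_\sharp i_* \to f_* \Sigma_f i_*$ is an isomorphism for any section $i$ of a proper log smooth $f$, and both $i_2$ and $i_4$ are sections of $f$ (indeed, $fi_2$ and $fi_4$ are isomorphisms, as observed before Lemma \ref{N2.8}). Applying Lemma \ref{N2.1} to $i_{4*} i_4^! \Omega_f f^*$ and to $i_{2*} i_2^* \Omega_f f^*$ immediately gives the desired isomorphisms on the outer columns.

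I do not anticipate a serious obstacle: the argument is a direct transposition of the proof of Lemma \ref{N2.12}, with Lemma \ref{N2.1} playing the same role and the cofiber sequences replaced by their $\Omega_f$-twisted analogues. The only mild subtlety is bookkeeping — one has to trust that the naturality of $\mathfrak{p}_f$ makes the squares in the diagram commute, which is automatic because $\mathfrak{p}_f$ is a natural transformation of functors $\sT(X) \to \sT(S)$ applied to the various objects obtained from localization.
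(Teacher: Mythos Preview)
Your proposal is correct and matches the paper's own proof essentially verbatim: the paper forms exactly the same commutative diagram, invokes Lemmas \ref{N2.10} and \ref{N2.11} for the rows and Lemma \ref{N2.1} for the outer vertical arrows, and concludes by the two-out-of-three property for morphisms of cofiber sequences.
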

\begin{proof}
Consider the commutative diagram
\[
\begin{tikzcd}
f_\sharp  i_{4*}i_4^!\Omega_{f}f^*\arrow[r,"ad'"]\arrow[d,"\mathfrak{p}_f"']&
f_\sharp \Omega_{f} f^*\arrow[r,"ad"]\arrow[d,"\mathfrak{p}_f"]&
f_\sharp  i_{2*}i_2^*\Omega_{f}f^*\arrow[d,"\mathfrak{p}_f"]
\\
f_* \Sigma_f i_{4*}i_4^!\Omega_{f}f^*\arrow[r,"ad'"]&
f_* \Sigma_f \Omega_{f} f^*\arrow[r,"ad"]&
f_*\Sigma_f i_{2*}i_2^* \Omega_{f} f^*
\end{tikzcd}
\]
whose rows are cofiber sequences by Lemmas \ref{N2.10} and \ref{N2.11}.
The left and right vertical arrows are isomorphisms by Lemma \ref{N2.1}.
It follows that the middle vertical arrow is an isomorphism.
\end{proof}

\begin{theorem}
\label{N2.17}
The natural transformation
\[
\mathfrak{p}_f\colon f_\sharp \to f_*\Sigma_{f}
\]
is an isomorphism.
\end{theorem}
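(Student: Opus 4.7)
The plan is to invoke Lemma \ref{N2.16} directly, which reduces the isomorphism statement $\mathfrak{p}_f\colon f_\sharp \to f_*\Sigma_f$ to checking that the two whiskered transformations
\[
\mathfrak{p}_f f^*\colon f_\sharp f^* \to f_*\Sigma_f f^*
\quad\text{and}\quad
\mathfrak{p}_f \Omega_f f^*\colon f_\sharp \Omega_f f^* \to f_*\Sigma_f \Omega_f f^*
\]
are isomorphisms. Since both of these have already been established (the first is Lemma \ref{N2.12} and the second is Lemma \ref{N2.13}), Lemma \ref{N2.16} applies and yields the conclusion immediately. There is essentially no further work to do at this stage; the entire content of the theorem has been arranged into those three preceding lemmas.

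For context, the real labor is hidden in Lemmas \ref{N2.12} and \ref{N2.13}, each of which compared the cofiber sequences obtained by applying the localization property to the decomposition $X = X_2 \amalg (X-X_2)$ (respectively to $X = X_4 \amalg (X-X_4)$). In those arguments, the outer terms were reduced to purity for a section via Lemma \ref{N2.1}, using that $fi_4$ and $fi_2$ are isomorphisms, while the open pieces $f_\sharp j_{4\sharp} j_4^* f^*$ and $f_\sharp j_{4\sharp} j_4^* \Omega_f f^*$ were transported up the tower $X_4 \to X_5 \leftarrow X_6 \to X_3$ using $\ver$-invariance (Lemmas \ref{N2.5}, \ref{N2.7}) and the dividing cover isomorphism $h_{j_{56}}$, reducing to the purity of the affine line projection. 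Lemma \ref{N2.16} itself is a formal quasi-inverse argument: one constructs a left quasi-inverse to $\mathfrak{p}_f$ from $\mathfrak{p}_f f^*$ via the adjunction $(f_\sharp, f^*)$, and a right quasi-inverse from $\mathfrak{p}_f \Omega_f f^*$ via the adjunction $(\Omega_f f^*, f_* \Sigma_f)$ coming from $\Thom(p,a)$-invertibility (Theorem \ref{thom.10}).

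Thus the proof is simply a citation. I would write:
\begin{proof}
Combine Lemmas \ref{N2.16}, \ref{N2.12}, and \ref{N2.13}.
\end{proof}

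There is no genuine obstacle at this final step; all the geometric input (the explicit analysis of the compactification $W$, the $\ver$-invariance manipulations, and the reduction to the known case of the zero section of $\A^1$) has been absorbed into the cofiber sequence arguments of Lemmas \ref{N2.12} and \ref{N2.13}, and the categorical assembly into Lemma \ref{N2.16}.
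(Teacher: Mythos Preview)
Your proposal is correct and matches the paper's proof exactly: the paper's proof of Theorem \ref{N2.17} is precisely the one-line citation ``Combine Lemmas \ref{N2.16}, \ref{N2.12}, and \ref{N2.13}.'' Your surrounding commentary accurately describes how the content was distributed among those lemmas.
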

\begin{proof}
Combine Lemmas \ref{N2.16}, \ref{N2.12}, and \ref{N2.13}.
\end{proof}

The following result is needed in \cite{logsix}.
We are still using the notation $f\colon X\to S$ constructed at the beginning of this subsection.

\begin{proposition}
\label{N2.18}
Let $d\colon S'\to S$ be a strict closed immersion with its open complement $u\colon S''\to S$.
Consider the induced commutative diagram with cartesian squares
\[
\begin{tikzcd}
X'\ar[d,"f'"']\ar[r,"d'"]&
X\ar[d,"f"]\ar[r,leftarrow,"u'"]&
X''\ar[d,"f''"]
\\
S'\ar[r,"d"]&
S\ar[r,leftarrow,"u"]&
S''.
\end{tikzcd}
\]
Then we have $d^*f_*u_\sharp'\simeq 0$.
\end{proposition}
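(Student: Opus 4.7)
The plan is to apply the purity theorem \ref{N2.17} to both $f$ and its base change $f'$, combine with log smooth base change, and reduce to the vanishing $d'^* u'_\sharp \simeq 0$ supplied by the localization property for the strict closed immersion $d'$ with open complement $u'$ in $X$. Note that $X' \simeq S' \times_{\A_\N} W$, so $f'$ is again of the form constructed at the beginning of \S\ref{nonstrictpurity} and Theorem \ref{N2.17} applies to $f'$ as well.

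By Theorem \ref{thom.10}, $\Sigma_f$ is an equivalence with inverse $\Omega_f$, so Theorem \ref{N2.17} yields $f_* \simeq f_\sharp \Omega_f$, and similarly $f'_* \simeq f'_\sharp \Omega_{f'}$. Combined with log smooth base change along $d$, this gives
\[
d^* f_* u'_\sharp \simeq d^* f_\sharp \Omega_f u'_\sharp \simeq f'_\sharp d'^* \Omega_f u'_\sharp.
\]
The crucial step is to commute $d'^*$ past $\Omega_f$. Proposition \ref{thom.7} applied to the base change of the diagonal--projection diagram $X \xrightarrow{a} X \times_S X \xrightarrow{p_2} X$ for $f$ along $d'\colon X' \to X$ produces an exchange isomorphism $d'^* \Omega_f \simeq \Thom'(p'_2, a') d'^*$, where $(p'_2, a')$ is the base-changed pair. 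The key geometric observation is that the natural morphism $X' \times_{S'} X' \to X \times_S X'$ is an isomorphism: given $(x, y) \in X \times_S X'$, the relation $f(x) = d(f'(y))$ forces $x \in f^{-1}(S') = X'$ because $d$ is a strict closed immersion, and then $f'(x) = f'(y)$ follows because $d$ is a monomorphism. Under this identification, $(p'_2, a')$ coincides with the diagonal--projection pair for $f'$, so $\Thom'(p'_2, a') = \Omega_{f'}$, and applying purity for $f'$ we obtain
\[
d^* f_* u'_\sharp \simeq f'_\sharp \Omega_{f'} d'^* u'_\sharp \simeq f'_* d'^* u'_\sharp.
\]

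It remains to prove $d'^* u'_\sharp \simeq 0$. Since $d$ is strict, the morphisms $d'$ and $u'$ form a strict closed immersion and its open complement in $X$, so the Remark following Proposition \ref{loc.1} supplies the cofiber sequence $u'_\sharp u'^* \to \id \to d'_* d'^*$. Evaluating at $u'_\sharp \cF$ for $\cF \in \sT(X'')$ and noting that the first map $u'_\sharp u'^* u'_\sharp \cF \to u'_\sharp \cF$ is an isomorphism (via $u'^* u'_\sharp \simeq \id$ for the open immersion $u'$), we conclude $d'_* d'^* u'_\sharp \cF \simeq 0$, and hence $d'^* u'_\sharp \cF \simeq 0$ by the full faithfulness of $d'_*$. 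The main subtlety is the identification $X \times_S X' \simeq X' \times_{S'} X'$, which is what allows the exchange isomorphism for $\Omega_f$ to land in the purity setup for $f'$.
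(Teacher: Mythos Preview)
Your proof is correct, but it takes a different and somewhat longer route than the paper's.

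The paper commutes $\Omega_f$ to the \emph{right} past $u'_\sharp$ rather than to the left past $d'^*$: using a left adjoint of the exchange \eqref{thom.13.2} (with $u'$ in the role of $u$), one gets $\Omega_f u'_\sharp \simeq u'_\sharp \Thom'(p'',a'')$ for the graph/projection pair $(p'',a'')$ of $f$ over $X''$. Then
\[
d^*f_*u'_\sharp \simeq d^*f_\sharp \Omega_f u'_\sharp \simeq d^*f_\sharp u'_\sharp \Thom'(p'',a'') \simeq d^* u_\sharp f''_\sharp \Thom'(p'',a'') \simeq 0,
\]
using only $f_\sharp u'_\sharp \simeq u_\sharp f''_\sharp$ (from $fu'=uf''$) and $d^*u_\sharp \simeq 0$. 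No log smooth base change, no purity for $f'$, and no identification of $X\times_S X'$ is needed.

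Your approach works too, but two remarks: first, the point-set argument for $X\times_S X' \simeq X'\times_{S'}X'$ is not how one argues in $\lSch/B$; the identity follows immediately from $X'=X\times_S S'$ and associativity of fiber products. Second, once you have $f'_\sharp \Omega_{f'} d'^* u'_\sharp$, the vanishing $d'^* u'_\sharp \simeq 0$ already finishes the proof; invoking purity for $f'$ to rewrite this as $f'_* d'^* u'_\sharp$ is an unnecessary detour.
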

\begin{proof}
Let $a''\colon X''\to X\times_S X''$ be the graph morphism, and let $p''\colon X\times_S X''\to X''$ be the projection.
A left adjoint of \eqref{thom.13.2} gives an isomorphism
\begin{equation}
\label{N2.18.1}
\Omega_f u_\sharp'
\simeq
u_\sharp' \Thom'(p'',a'').
\end{equation}
Theorem \ref{N2.17} gives an isomorphism
\begin{equation}
\label{N2.18.2}
d^*f_*u_\sharp'
\simeq
d^*f_\sharp \Omega_f u_\sharp'.
\end{equation}
Combine \eqref{N2.18.1} and \eqref{N2.18.2}, and use $d^*f_\sharp u_\sharp'\simeq d^*u_\sharp f_\sharp''\simeq 0$ to conclude.
\end{proof}

\appendix

\section{Categorical toolbox}

\subsection{Exchange transformations}

\begin{definition}
\label{exchange.1}
Let
\begin{equation}
\label{exchange.1.1}
Q
:=
\begin{tikzcd}
\cC\ar[d,"f^*"']\ar[r,"g^*"]&
\cC'\ar[d,"f'^*"]
\\
\cD\ar[r,"g'^*"]&
\cD'
\end{tikzcd}
\end{equation}
be a commutative square in $\infCat_\infty$.
Assume that $f^*$ and $f'^*$ admit left adjoints $f_\sharp$ and $f_\sharp'$.
The \emph{Beck-Chevalley transformation} (or \emph{exchange transformation}) is defined to be the composition
\[
Ex\colon f_\sharp' g'^*
\xrightarrow{ad}
f_\sharp' g'^* f^*f_\sharp
\xrightarrow{\simeq}
f_\sharp' f'^*g^*f_\sharp
\xrightarrow{ad'}
g^*f_\sharp.
\]
\end{definition}

\begin{definition}
\label{exchange.2}
Let $Q$ be a commutative square in $\infCat_\infty$ of the form \eqref{exchange.1.1}.
We say that \emph{$Q$ admits a left adjoint} if the following conditions are satisfied:
\begin{enumerate}
\item[(i)]
$f^*$ and $f'^*$ admit left adjoints $f_\sharp$ and $f_\sharp'$.
\item[(ii)]
The exchange transformation $Ex\colon f_\sharp' g'^* \to g^*f_\sharp$ is an isomorphism.
\end{enumerate}
In this case, we say that the commutative square
\[
\begin{tikzcd}
\cD\ar[d,"f_\sharp"']\ar[r,"g'^*"]&
\cD'\ar[d,"f_\sharp'"]
\\
\cC\ar[r,"g^*"]&
\cC'
\end{tikzcd}
\]
is a \emph{left adjoint of $Q$}.
\end{definition}

\begin{remark}
\label{exchange.3}
We are following Liu-Zheng \cite[Definition 2.2.1]{LZ} for the above terminology of a left adjoint of a square.
Lurie's terminology \cite[Definition 7.3.1.2]{HTT} is that $Q$ is \emph{left adjointable}.
\end{remark}

\begin{example}
\label{exchange.4}
Let $Q$ be a commutative square in $\infCat_\infty$ of the form \eqref{exchange.1.1}.
Assume that $f^*$ and $f'^*$ admit left adjoints $f_\sharp$ and $f_\sharp'$ and $g^*$ and $g'^*$ admit right adjoints $g_*$ and $g_*'$.
If the exchange transformation $Ex\colon f_\sharp'g'^* \to g^*f_\sharp$ is an isomorphism, then we have another exchange transformation
\[
Ex\colon f_\sharp g_*'
\xrightarrow{ad}
g_*g^*f_\sharp g_*'
\xrightarrow{Ex^{-1}}
g_*f_\sharp' g'^*g_*'
\xrightarrow{ad'}
g_*f_\sharp'.
\]
\end{example}

\subsection{Extensions of functors to the category of correspondences}

The purpose of this section is to review the technique of partial adjoints developed by Liu-Zheng \cite{LZ}.

We first recall the definition of the $\infty$-category of correspondences in \cite[\S 6.1]{LZ} as follows.
For every integer $n\geq 0$, let $\bC(\Delta^n)$ be the full subcategory of $\Delta^n \times (\Delta^n)^{op}$ spanned by $(i,j)$ such that $i\leq j$.
This construction defines a colimit preserving functor $\bC\colon \Set_\Delta\to \Set_\Delta$.
Let $\Corr\colon \Set_\Delta\to \Set_\Delta$ be its right adjoint.

Observe that $\bC(\Delta^n)$ is the nerve of the ordinary category associated with the diagram
\[
\begin{tikzcd}[column sep=small, row sep=small]
(0,n)
\ar[r]
\ar[d]
&
(0,n-1)
\ar[r]
\ar[d]
&
\cdots
\ar[r]
&
(0,1)
\ar[r]
\ar[d]
&
(0,0)
\\
(1,n)
\ar[r]
\ar[d]
&
(1,1)
\ar[d]
\ar[r]
&
\cdots
\ar[r]
&
(n-1,n-1)
\\
\vdots
\ar[d]
&
\vdots
\ar[d]
\\
(n-1,n)
\ar[d]
\ar[r]
&
(n-1,n-1)
\\
(n,n).
\end{tikzcd}
\]
A morphism of $\bC(\Delta^n)$ is \emph{vertical} (resp.\ \emph{horizontal}) if its image in the second (resp.\ first) factor degenerates.
A square in $\bC(\Delta^n)$ is \emph{exact} if it is cartesian and cocartesian, i.e., it is a square of the form
\[
\begin{tikzcd}
(m_1,n_1)\ar[d]\ar[r]&
(m_1,n_2)\ar[d]
\\
(m_2,n_1)\ar[r]&
(m_2,n_2)
\end{tikzcd}
\]
for some integers $m_1$, $m_2$, $n_1$, and $n_2$.

Suppose that $\cC$ is an $\infty$-category and $\cP$ and $\cQ$ are class of morphisms in $\cC$.
The \emph{simplicial set of correspondences} $\Corr(\cC,\cP,\cQ)$ is the simplicial subset of $\Corr(\cC)$ whose $n$-cells $\bC(\Delta^n)\to \cC$ sends vertical (resp.\ horizontal) morphisms into $\cP$ (resp.\ $\cQ$) and exact squares to pullback squares.
This is an $\infty$-category if $\cP$ and $\cQ$ are closed under pullbacks along one another and compositions by \cite[Lemma 6.1.2]{LZ}.

A morphism in $\Corr(\cC,\cP,\cQ)$ can be expressed as a diagram
\[
\begin{tikzcd}
X_{(0,1)}
\ar[d]
\ar[r]&
X_{(0,0)}
\\
X_{(1,1)}
\end{tikzcd}
\]
in $\cC$ such that the vertical morphism is in $\cP$ and the horizontal morphism is in $\cQ$.
A composition of morphisms can be expressed as a diagram
\[
\begin{tikzcd}
X_{(0,2)}
\ar[d]
\ar[r]
&
X_{(0,1)}
\ar[d]
\ar[r]
&
X_{(0,0)}
\\
X_{(1,2)}
\ar[d]
\ar[r]
&
X_{(1,1)}
\\
X_{(2,2)}
\end{tikzcd}
\]
in $\cC$ such that the vertical morphisms are in $\cP$, the horizontal morphisms are in $\cQ$, and the inner square is cartesian.
To simplify notation, we set
\[
f:=
\begin{tikzcd}
X'\ar[d,"\id"']\ar[r,"f"]&
X
\\
X'
\end{tikzcd}
\text{ and }
g^\dagger:=
\begin{tikzcd}
Y'\ar[d,"g"']\ar[r,"\id"]&
Y'
\\
Y
\end{tikzcd}
\]
for every $\cP$-morphism $f\colon X'\to X$ and $\cQ$-morphism $g\colon Y'\to Y$.

\begin{theorem}[Liu-Zheng]
\label{corr.1}
Let $\cP$ be a class of morphisms in an $\infty$-category $\cC$ closed under compositions and pullbacks, and let $\cF\colon \cC^{op}\to \infCat_\infty$ be a functor.
Assume that $\cF(f)$ admits a left adjoint for every $f\in \cP$ and $\cF(Q)$ admits a left adjoint for every cartesian square
\begin{equation}
\label{corr.1.1}
Q
:=
\begin{tikzcd}
X'\ar[d,"f'"']\ar[r,"g'"]&
X\ar[d,"f"]
\\
S'\ar[r,"g"]&
S
\end{tikzcd}
\end{equation}
in $\cC$ with $f\in \cP$.
Then $\cF$ can be naturally extended to a functor
\[
\cF
\colon
\Corr(\cC,\cP,\all)
\to
\infCat_\infty
\]
satisfying the following conditions:
\begin{enumerate}
\item[\textup{(i)}]
$\cF(f^\dagger)$ is a left adjoint of $\cF(f)$ for every $f\in \cP$.
\item[\textup{(ii)}]
For every cartesian square $Q$ of the form \eqref{corr.1.1} with $f\in \cP$, consider the induced commutative square 
\[
Q^\dagger
:=
\begin{tikzcd}
S'\ar[r,"g"]\ar[d,"f'^\dagger"']&
S\ar[d,"f^\dagger"]
\\
X'\ar[r,"g'"]&
X
\end{tikzcd}
\]
in $\Corr(\cC,\cP,\all)$.
Then $\cF(Q^\dagger)$ is a left adjoint of $\cF(Q)$.
\end{enumerate}
\end{theorem}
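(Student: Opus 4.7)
The plan is to apply the partial adjoint extension technique of Liu-Zheng directly. Via straightening, the functor $\cF\colon \cC^{op}\to \infCat_\infty$ corresponds to a cocartesian fibration $q\colon \cE\to \cC^{op}$. Hypothesis (i) translates to the statement that every cocartesian edge of $\cE$ lying over a $\cP$-morphism (viewed in $\cC^{op}$) admits a fiberwise left adjoint; hypothesis (ii) says that these left adjoints are compatible with the Beck-Chevalley transformations attached to cartesian squares in $\cC$ with vertical $\cP$-morphisms.

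Using the relative adjoint functor theorem (e.g.\ \cite[Corollary 7.3.2.7]{HA}, adapted as in \cite[Section 1]{1211.5948}), I would first produce a cartesian fibration $q^\dagger\colon \cE^\dagger\to \cC$ whose cartesian edges over each $\cP$-morphism $f$ implement the left adjoint of $\cF(f)$, while fiberwise $q^\dagger$ agrees with $q$. Then I would glue $q$ and $q^\dagger$ into a single fibration $\tilde{q}\colon \tilde{\cE}\to \Corr(\cC,\cP,\all)$, cocartesian over horizontal ($\all$-) morphisms and cartesian over vertical ($\cP$-) morphisms. The gluing is performed simplex-by-simplex on $\bC(\Delta^n)$: the vertical and horizontal edges are handled by $q^\dagger$ and $q$ respectively, and the exact-square condition built into $\bC(\Delta^n)$ matches exactly the Beck-Chevalley condition (ii), so that $n$-simplices can be coherently lifted. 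Straightening $\tilde{q}$ produces the desired functor $\cF\colon \Corr(\cC,\cP,\all)\to \infCat_\infty$.

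Properties (i) and (ii) of the conclusion are then built into the construction: by design, $\cF(f^\dagger)$ is a fiberwise left adjoint of $\cF(f)$, and $\cF(Q^\dagger)$ is obtained by replacing the two vertical edges of $\cF(Q)$ by their left adjoints and composing with the Beck-Chevalley isomorphism, which is precisely the content of Definition \ref{exchange.2}.

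The principal obstacle is managing higher-categorical coherence when gluing. At the level of objects and morphisms the construction is transparent, but verifying that the assignment extends to a genuine functor of $\infty$-categories requires the full machinery of partial adjoints in marked simplicial sets. In practice I would appeal to \cite[Proposition 6.1.3]{1211.5948} (and the surrounding lemmas) rather than reproduce the simplicial bookkeeping, as these results are tailored exactly to produce extensions to $\Corr(\cC,\cP,\all)$ from the data $(\cF,\cP)$ satisfying the adjointability hypotheses (i) and (ii).
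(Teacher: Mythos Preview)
Your proposal is correct in spirit and ultimately lands in the same place as the paper: both defer the real work to Liu--Zheng's machinery in \cite{1211.5948}. The packaging differs, however. You sketch a straightening/unstraightening argument---pass to a cocartesian fibration, build a companion cartesian fibration encoding the left adjoints, and glue over $\Corr(\cC,\cP,\all)$---and then cite a result from \S 6 of \cite{1211.5948}. The paper instead works directly with Liu--Zheng's \emph{cartesian bisimplicial nerve} $\cC_{\cP,\all}^{\mathrm{cart}}$ \cite[Definition 1.3.6]{1211.5948}: it applies the partial-adjoint result \cite[Proposition 1.4.4]{1211.5948} to obtain a functor out of $\delta_{2,\{2\}}^*\cC_{\cP,\all}^{\mathrm{cart}}$, and then uses the categorical equivalence $\delta_{2,\{2\}}^*\cC_{\cP,\all}^{\mathrm{cart}}\simeq \Corr(\cC,\cP,\all)$ from \cite[Example 4.30]{1211.5294}. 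The bisimplicial route is more economical because \cite[Proposition 1.4.4]{1211.5948} is stated precisely in that language, and its conclusions (1)--(4) immediately yield conditions (i) and (ii) of the theorem. Your fibration-gluing description is a reasonable heuristic for what is happening, but the reference you give (``Proposition 6.1.3'') does not match the actual input; the load-bearing citation is \cite[Proposition 1.4.4]{1211.5948}.
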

\begin{proof}
Let $\cC_{\cP,\all}^\mathrm{cart}$ be the cartesian bisimplicial nerve of $(\cC,\cP,\all)$ in the sense of \cite[Definition 1.3.16]{LZ}.
The $(m,n)$-simplicies of $\cC_{\cP,\all}^\mathrm{cart}$ are the maps
\[
\Delta^m\times\Delta^n\to \cC
\]
sending $e\times v$ to a morphism in $\cP$ and $e\times f$ to a cartesian square for every vertex $v$ in $\Delta^n$, edge $e$ in $\Delta^m$, and edge $f$ in $\Delta^n$.
We employ the functor
\[
\delta_{2,\{2\}}\colon \Set_{\Delta\times \Delta}\to \Set_{\Delta}
\]
in \cite[Definition 1.3.3(4)]{LZ}.

By \cite[Example 1.4.29]{LZ} (see also \cite[\S 6.1]{LZ}),
there is a categorical equivalence
\[
\delta_{2,\{2\}}^*
\cC_{\cP,\all}^\mathrm{cart}
\to
\Corr(\cC,\cP,\all).
\]
On the other hand, \cite[Proposition 2.2.4]{LZ} provides a functor
\[
\delta_{2,\{2\}}^*
\cC_{\cP,\all}^\mathrm{cart}
\to
\infCat_\infty.
\]
Combine these two to obtain
$
\cF
\colon
\Corr(\cC,\cP,\all)
\to
\infCat_\infty
$.
The conclusions (1) and (4) in loc.\ cit.\ imply that this is a natural extension of the original $\cF\colon \cC^{op}\to \infCat_\infty$.
The conclusions (2) and (3) in loc.\ cit.\ imply our conditions (i) and (ii).
\end{proof}

\subsection{Equivalences of \texorpdfstring{$\infty$}{00}-topoi}

Let $\cC$ be an ordinary category.
Consider the functor $\iota \colon \Set\to \infSpc$ sending a set to the corresponding discrete space.
This induces a functor
\[
\iota
\colon
\PSh(\cC)
\to
\PSh(\cC,\infSpc).
\]
For $X\in \cC$, let
\[
p_X\in \PSh(\cC)
\text{ and }
\mathfrak{p}_X\in \PSh(\cC,\infSpc)
\]
be the presheaves represented by $X$.
These two are related by
\begin{equation}
\mathfrak{p}_X
\simeq
\iota p_X.
\end{equation}

A presheaf of spaces $\cF\in \PSh(\cC,\infSpc)$ is a \emph{sheaf} if $\cF$ satisfies \v{C}ech descent,
i.e.,
we have
\[
\cF(X)
\simeq
\lim
\big(
\prod_{i\in I} \cF(X_i)
\;
\substack{\rightarrow\\[-1em] \rightarrow}
\;
\prod_{i,j\in I} \cF(X_i\times_X X_j)
\;
\substack{\rightarrow\\[-1em] \rightarrow \\[-1em] \rightarrow}
\;
\cdots
\big)
\]
for every covering $\{X_i\to X\}_{i\in I}$.
This definition of sheaves of spaces is equivalent to that in \cite[Definition 6.2.2.6]{HTT} by \cite[Theorem A5]{MR2034012}.

Let $\Sh(\cC,\infSpc)$ be the full subcategory of $\PSh(\cC,\infSpc)$ consisting of sheaves,
which is the localization of $\PSh(\cC,\infSpc)$ with respect to the class of morphisms
\[
\colim
\big(
\cdots
\;
\substack{\rightarrow\\[-1em] \rightarrow \\[-1em] \rightarrow}
\;
\coprod_{i,j\in I} \mathfrak{p}_{X_i\times_X X_j}
\;
\substack{\rightarrow\\[-1em] \rightarrow}
\;
\coprod_{i\in I} \mathfrak{p}_{X_i}
\big)
\to
\mathfrak{p}_X
\]
for every covering $\{X_i\to X\}_{i\in I}$.

Let $\cC$ be a site.
We have an adjunction
\[
a^*:
\PSh(\cC)\rightleftarrows \Sh(\cC):a_*,
\]
where $a^*$ is the sheafification functor,
and $a_*$ is the inclusion functor.
We have an adjunction
\[
L^*
:
\PSh(\cC,\infSpc)
\rightleftarrows
\Sh(\cC,\infSpc)
:
L_*,
\]
where $L^*$ is the localization functor,
and $L_*$ is the inclusion functor.
For $X\in \cC$,
let
\[
h_X\in \Sh(\cC)
\text{ and }
\mathfrak{h}_X\in \Sh(\cC,\infSpc)
\]
be the sheaves represented by $X$.
There are relations
\[
h_X\simeq a^*p_X
\text{ and }
\mathfrak{h}_X\simeq L^*\mathfrak{p}_X.
\]
Let
$
\iota\colon \Sh(\cC)\to \Sh(\cC,\infSpc)
$
be the composition $L^*\iota a_*$.

\begin{proposition}
\label{equivalence.2}
Let $\cC$ be a site.
Then the square
\[
\begin{tikzcd}
\PSh(\cC)\ar[d,"a^*"']\ar[r,"\iota"]&
\PSh(\cC,\infSpc)\ar[d,"L^*"]
\\
\Sh(\cC)\ar[r,"\iota"]&
\Sh(\cC,\infSpc)
\end{tikzcd}
\]
commutes.
In particular,
$\iota h_X\simeq \mathfrak{h}_X$ for all $X\in \cC$.
\end{proposition}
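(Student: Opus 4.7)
The plan is to exploit the fact that $\iota\colon \Set\to \infSpc$ is the inclusion of $0$-truncated (discrete) spaces: it is fully faithful and, being a right adjoint to $\pi_0$, preserves all limits. First I would verify that the pointwise-applied functor $\iota\colon \Psh(\cC)\to \infPsh(\cC,\infSpc)$ sends sheaves of sets to sheaves of spaces. For $F\in \Shv(\cC)$ and a covering $\{X_i\to X\}_{i\in I}$ with \v{C}ech nerve $X_\bullet$, the limit-preservation of $\iota$ combined with the ordinary sheaf condition $F(X)\simeq \lim_\Delta F(X_\bullet)$ in $\Set$ gives the corresponding equivalence $\iota F(X)\simeq \lim_\Delta \iota F(X_\bullet)$ in $\infSpc$. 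Thus $\iota a_*$ already takes values in $\infShv(\cC,\infSpc)$, and the defined functor $\iota=L^*\iota a_*$ on $\Shv(\cC)$ is canonically identified (via the fully faithful $L_*$) with the pointwise $\iota$.

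Second, I would show that for every presheaf of sets $F$ the sheaf of spaces $L^*\iota F$ is $0$-truncated. Each value $(\iota F)(X)=F(X)$ is discrete, hence $\iota F$ is $0$-truncated in $\infPsh(\cC,\infSpc)$; since $\infShv(\cC,\infSpc)$ is an $\infty$-topos, its reflector $L^*$ is left exact, and left exact functors preserve $n$-truncatedness, so $L^*\iota F$ is $0$-truncated in $\infShv(\cC,\infSpc)$.

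Third, having shown that $L^*\iota F$ is $0$-truncated, I would use the equivalence $\tau_{\leq 0}\infShv(\cC,\infSpc)\simeq \Shv(\cC)$, under which the inclusion of $0$-truncated objects becomes the functor $\iota$ on sheaves from step one, to write $L^*\iota F\simeq \iota G$ for a unique $G\in \Shv(\cC)$. The universal property of $L^*$-sheafification combined with the full faithfulness of $\iota$ on presheaves then gives
\[
\Hom_{\Shv(\cC)}(G,H)\simeq \Hom_{\infShv}(L^*\iota F,\iota H)\simeq \Hom_{\infPsh}(\iota F,\iota H)\simeq \Hom_{\Psh(\cC)}(F,H)
\]
for every $H\in \Shv(\cC)$, so $G$ satisfies the universal property of $a^*F$ and hence $G\simeq a^*F$. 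This proves $L^*\iota F\simeq \iota a^*F$, i.e., the commutativity of the square. The special case $\iota h_X\simeq \mathfrak{h}_X$ follows by taking $F=p_X$ and using $a^*p_X\simeq h_X$, $L^*\mathfrak{p}_X\simeq \mathfrak{h}_X$, and $\iota p_X\simeq \mathfrak{p}_X$.

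The main obstacle I foresee is the left-exactness of $L^*$ used in step two, which rests on $\infShv(\cC,\infSpc)$ being a left exact localization of $\infPsh(\cC,\infSpc)$—this is the defining feature of a Grothendieck $\infty$-topos \cite{HTT}—together with the identification of $\tau_{\leq 0}\infShv(\cC,\infSpc)$ with $\Shv(\cC)$ used in step three; both are standard but need to be invoked carefully.
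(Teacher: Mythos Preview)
Your argument is correct but takes a genuinely different route from the paper's. The paper proceeds in one line: it applies \cite[Proposition~A2]{MR2034012} to the unit $\id\to a_*a^*$ to conclude that $L^*\iota \xrightarrow{ad} L^*\iota a_*a^*$ is an equivalence, which is exactly the commutativity since $\iota$ on sheaves is by definition $L^*\iota a_*$. In other words, the paper outsources the entire content to the cited result (roughly: a map of set-valued presheaves that induces an isomorphism on set-sheafifications is inverted by $L^*\iota$). Your approach instead reconstructs this from first principles: you use left exactness of $L^*$ to see that $L^*\iota F$ is $0$-truncated, then invoke the equivalence $\tau_{\le 0}\infShv(\cC,\infSpc)\simeq \Shv(\cC)$ and a Yoneda argument to pin down $L^*\iota F$ as $\iota a^*F$. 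Your route is more self-contained and conceptually transparent, at the cost of invoking heavier $\infty$-topos machinery (left exactness of the sheafification reflector and the identification of $0$-truncated objects); the paper's route is a one-line citation. Both are perfectly valid, and your acknowledgment of the two ``obstacles'' is exactly right---they are standard (\cite[\S6.2.2, Proposition~5.5.6.16]{HTT}), so the argument goes through.
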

\begin{proof}
The natural transformation
\[
L^*\iota 
\xrightarrow{ad}
L^*\iota a_*a^*
\]
is an isomorphism by \cite[Proposition A2]{MR2034012},
which shows the claim.
\end{proof}

Let $f\colon \cC\to \cD$ be a functor of categories.
By \cite[Proposition I.5.1]{SGA4},
there is an adjunction
\[
f^*:
\PSh(\cC)
\rightleftarrows
\PSh(\cD)
:
f_*
\]
such that $f_*\cG(X):=\cG(f(X))$ for all $\cG\in \PSh(\cD)$ and $X\in \cC$.
By \cite[Proposition 5.2.6.3]{HTT},
there is an adjunction
\[
f^*:
\PSh(\cC,\infSpc)
\rightleftarrows
\PSh(\cD,\infSpc)
:f_*
\]
such that $f_*\cG(X):=\cG(f(X))$ for all $\cG\in \PSh(\cD,\infSpc)$ and $X\in \cC$.
Since this $f_*$ preserves colimits, there is an adjunction
\[
f_*
:
\PSh(\cD,\infSpc)
\rightleftarrows
\PSh(\cC,\infSpc)
:
f^!.
\]

If $f\colon \cC\to \cD$ is a continuous functor of sites,
then by \cite[Proposition III.1.2]{SGA4},
there is an adjunction
\[
f^*:
\Sh(\cC)
\rightleftarrows
\Sh(\cD)
:
f_*
\]
such that $f_*\cG(X):=\cG(f(X))$ for all $\cG\in \Sh(\cD)$ and $X\in \cC$.

\begin{proposition}
\label{equivalence.3}
Let $f\colon \cC\to \cD$ be a functor of categories.
Then the square
\[
\begin{tikzcd}
\PSh(\cD)
\ar[d,"f_*"']
\ar[r,"\iota"]&
\PSh(\cD,\infSpc)\ar[d,"f_*"]
\\
\PSh(\cC)\ar[r,"\iota"]&
\PSh(\cC,\infSpc)
\end{tikzcd}
\]
commutes.
\end{proposition}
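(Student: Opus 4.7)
The plan is to observe that both $f_*$ functors are implemented by precomposition with $f^{op}$, while both horizontal $\iota$ functors are implemented by postcomposition with $\iota\colon \Set\to \infSpc$. Once this is pinned down, the commutativity of the square reduces to the associativity of functor composition.

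First I would identify the horizontal arrows. By the pointwise description recalled in the preamble, $\iota\colon \Psh(\cD)\to \infPsh(\cD,\infSpc)$ sends a presheaf of sets $\cG$ to the composite $\iota\circ\cG$, that is, it is postcomposition with $\iota\colon \Set\to \infSpc$; the bottom horizontal arrow admits the same description.

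Next I would identify the vertical $f_*$ functors as precomposition along $f^{op}$ (viewing $f$ as a map of $\infty$-categories through the nerve on the right hand side). On the classical side this is the defining formula $(f_*\cG)(X) = \cG(f(X))$ from \cite[Proposition I.5.1]{SGA4}. On the $\infty$-categorical side, \cite[Proposition 5.2.6.3]{HTT}, which the paper uses to produce the adjunction $(f^*,f_*)$, identifies $f_*$ with the precomposition functor; equivalently, the pointwise formula $(f_*\cG)(X) = \cG(f(X))$ recalled above holds. Granting this, for each $\cG\in\Psh(\cD)$ the associativity
\[
\iota(f_*\cG) \;=\; \iota\circ(\cG\circ f^{op}) \;=\; (\iota\circ\cG)\circ f^{op} \;=\; f_*(\iota\cG)
\]
is natural in $\cG$ and gives the desired commutativity of the square.

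The only potential subtlety, which I expect to be the main (though very mild) obstacle, is to confirm that the $\infty$-categorical $f_*$ produced by Lurie's adjoint functor theorem really is the evident precomposition functor, rather than merely a functor with that pointwise effect on objects. This is however precisely the content of \cite[Proposition 5.2.6.3]{HTT}, so no additional work is required.
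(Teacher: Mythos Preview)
Your proposal is correct and takes essentially the same approach as the paper. The paper's proof simply records the pointwise identity $\iota f_*\cG(Y)\simeq \cG(f(Y))\simeq f_*\iota\cG(Y)$, which is exactly what your associativity argument $\iota\circ(\cG\circ f^{op})=(\iota\circ\cG)\circ f^{op}$ unpacks to; your version is slightly more careful in explicitly identifying the $\infty$-categorical $f_*$ with precomposition via \cite[Proposition 5.2.6.3]{HTT}, but the underlying content is identical.
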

\begin{proof}
Observe that we have natural isomorphisms
\[
\iota f_*\cG(Y)\simeq \cG(f(Y))\simeq f_*\iota \cG(Y)
\]
for all $\cG\in \PSh(\cD)$ and $Y\in \cD$.
\end{proof}

\begin{definition}
We say that a functor of sites $f\colon \cC\to \cD$ \emph{preserves \v{C}ech coverings} if the following conditions are satisfied for every covering $\{X_i\to X\}_{i\in I}$ in $\cC$:
\begin{enumerate}
\item[(i)]
$\{f(X_i)\to f(X)\}_{i\in I}$ is a covering in $\cD$.
\item[(ii)]
$f(X_{i_1})\times_{f(X)}\cdots \times_{f(X)}f(X_{i_n})$ is representable and the induced morphism
\[
f(X_{i_1}\times_X \cdots \times_X X_{i_n})
\to
f(X_{i_1})\times_{f(X)}\cdots \times_{f(X)}f(X_{i_n})
\]
is an isomorphism for all $i_1,\ldots,i_n\in I$.
\end{enumerate}
\end{definition}

\begin{proposition}
\label{equivalence.9}
Let $f\colon \cC\to \cD$ be a functor of sites preserving \v{C}ech coverings.
Then there is an adjunction
\begin{equation}
\label{equivalence.9.1}
f^*
:
\Sh(\cC,\infSpc)
\rightleftarrows
\Sh(\cD,\infSpc)
:
f_*,
\end{equation}
where $f_*\cG(X):=\cG(f(X))$ for all $\cG\in \Sh(\cD,\infSpc)$ and $X\in \cC$.
Furthermore,
the square
\begin{equation}
\label{equivalence.9.2}
\begin{tikzcd}
\PSh(\cC,\infSpc)\ar[r,"L^*"]\ar[d,"f^*"']&
\Sh(\cC,\infSpc)\ar[d,"f^*"]
\\
\PSh(\cD,\infSpc)\ar[r,"L^*"]&
\Sh(\cD,\infSpc)
\end{tikzcd}
\end{equation}
commutes.
\end{proposition}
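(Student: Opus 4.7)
The plan is to obtain the adjunction \eqref{equivalence.9.1} by restricting the presheaf-level adjunction $f^*\dashv f_*$ (from \cite[Proposition 5.2.6.3]{HTT}) to sheaves. The crux is to verify that the presheaf-level $f_*$ carries sheaves to sheaves, and for this we shall use the hypothesis that $f$ preserves \v{C}ech coverings.

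First I would check the following key lemma: if $\cG\in \infShv(\cD,\infSpc)$, then $f_*\cG\in \infShv(\cC,\infSpc)$. Given a covering $\{X_i\to X\}_{i\in I}$ in $\cC$, condition (i) of preserving \v{C}ech coverings gives a covering $\{f(X_i)\to f(X)\}_{i\in I}$ in $\cD$, so the sheaf condition on $\cG$ yields
\[
\cG(f(X))\simeq \lim\Bigl(\prod_{i}\cG(f(X_i))\;\substack{\rightarrow\\[-1em]\rightarrow}\;\prod_{i,j}\cG(f(X_i)\times_{f(X)}f(X_j))\;\substack{\rightarrow\\[-1em]\rightarrow\\[-1em]\rightarrow}\;\cdots\Bigr).
\]
Condition (ii) lets me replace each $f(X_{i_1})\times_{f(X)}\cdots\times_{f(X)}f(X_{i_n})$ with $f(X_{i_1}\times_X\cdots\times_X X_{i_n})$, so the right-hand side becomes the \v{C}ech totalization for $f_*\cG$ along $\{X_i\to X\}$. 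This shows $f_*\cG$ is a sheaf.

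Next I would observe that because $L_*$ is the inclusion, the restricted functor $f_*\colon \infShv(\cD,\infSpc)\to \infShv(\cC,\infSpc)$ satisfies $L_* f_*=f_* L'_*$ (denoting the presheaf-level $f_*$ on the right by the same symbol), and it preserves all limits (inherited from the presheaf-level $f_*$, which is a right adjoint, together with the fact that $L_*$ and $L'_*$ preserve and reflect limits). Since the $\infty$-topoi $\infShv(\cC,\infSpc)$ and $\infShv(\cD,\infSpc)$ are presentable and $f_*$ is accessible (again inherited from the presheaf level), the adjoint functor theorem \cite[Corollary 5.5.2.9]{HTT} produces a left adjoint $f^*$, establishing \eqref{equivalence.9.1}.

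Finally, I would verify the commutativity of \eqref{equivalence.9.2} by passing to right adjoints. Both composites $f^*\circ L^*$ and $L'^*\circ f^*_{\mathrm{psh}}$ are functors $\infPsh(\cC,\infSpc)\to \infShv(\cD,\infSpc)$, and for any $\cF\in\infPsh(\cC,\infSpc)$ and $\cG\in\infShv(\cD,\infSpc)$ there are natural equivalences
\[
\Map(L'^* f^*_{\mathrm{psh}}\cF,\cG)\simeq \Map(f^*_{\mathrm{psh}}\cF,L'_*\cG)\simeq \Map(\cF,f_{*,\mathrm{psh}}L'_*\cG)\simeq \Map(\cF,L_* f_*\cG)\simeq \Map(f^* L^*\cF,\cG),
\]
so both functors are left adjoint to the same functor and hence canonically equivalent. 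The main obstacle is the bookkeeping around \v{C}ech descent in the first step; once sheaf preservation by $f_*$ is established, the rest is a formal manipulation of adjunctions in the presentable setting.
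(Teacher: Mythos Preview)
Your proposal is correct and follows essentially the same approach as the paper: the key step---showing that the presheaf-level $f_*$ carries sheaves to sheaves via the \v{C}ech-covering hypothesis---is identical, and the remaining manipulations of adjunctions are equivalent. The only cosmetic difference is that the paper constructs the sheaf-level $f^*$ directly as $L^*f^*_{\mathrm{psh}}L_*$ and verifies the adjunction by a Hom computation, whereas you invoke the adjoint functor theorem and then identify $f^*$ via the commutativity of \eqref{equivalence.9.2}; both routes yield the same functor.
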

\begin{proof}
Let us show that $f_*L_*\cG$ is a sheaf for all $\cG\in \Sh(\cC,\infSpc)$.
We have
\[
f_*L_*\cG(X)
\simeq
\cG(f(X))
\]
for all $X\in \cC$.
Let $\{X_i\to X\}_{i\in I}$ be a covering in $\cC$.
Then $\{f(X_i)\to f(X)\}_{i\in I}$ is a covering in $\cD$ since $f$ preserves \v{C}ech covering.
Furthermore,
\[
\cG(f(X))
\simeq
\lim
\big(
\prod_{i\in I} \cG(f(X_i))
\;
\substack{\rightarrow\\[-1em] \rightarrow}
\;
\prod_{i,j\in I} \cG(f(X_i)\times_{f(X)} f(X_j))
\;
\substack{\rightarrow\\[-1em] \rightarrow \\[-1em] \rightarrow}
\;
\cdots
\big)
\]
implies
\[
f_*L_*\cF(X)
\simeq
\lim
\big(
\prod_{i\in I} f_*L_*\cF(X_i)
\;
\substack{\rightarrow\\[-1em] \rightarrow}
\;
\prod_{i,j\in I} f_*L_*\cF(X_i\times_X X_j)
\;
\substack{\rightarrow\\[-1em] \rightarrow \\[-1em] \rightarrow}
\;
\cdots
\big).
\]
Hence $f_*L_*\cG$ is a sheaf.
It follows that the natural transformation
\begin{equation}
\label{equivalence.9.3}
f_*L_*\xrightarrow{ad}
L_*L^*f_*L_*
\end{equation}
is an isomorphism.

For all $\cG\in \Sh(\cC,\infSpc)$, consider the morphisms
\begin{align*}
\Hom_{\Sh(\cD,\infSpc)}(L^*f^*L_*\cF,\cG)
\xrightarrow{\simeq} &
\Hom_{\PSh(\cC,\infSpc)}(L_*\cF,f_*L_*\cF)
\\
\to &
\Hom_{\Sh(\cC,\infSpc)}(L^*L_*\cF,L^*f_*L_*\cG)
\\
\xrightarrow{\simeq} &
\Hom_{\Sh(\cC,\infSpc)}(\cF,L^*f_*L_*\cG).
\end{align*}
The second arrow is an isomorphism since \eqref{equivalence.9.3} is an isomorphism.
It follows that $L^*f_*L_*$ is a right adjoint of $L^*f^*L_*$.
Since
\[
L^*f_*L_*\cG(X)
\simeq
f_*L_*\cG(X)
\simeq
\cG(f(X)),
\]
we have
\begin{equation}
\label{equivalence.9.4}
f_*\simeq L^*f_*L_*.
\end{equation}
We set $f^*:=L^*f^*L_*$ to obtain \eqref{equivalence.9.1}.
From \eqref{equivalence.9.3},
we have $f_*L_*\simeq L_*f_*$.
Take left adjoints to see that \eqref{equivalence.9.2} commutes.
\end{proof}

\begin{proposition}
\label{equivalence.7}
Let $f\colon \cC\to \cD$ be a continuous and cocontinuous functor of sites preserving \v{C}ech coverings.
Assume that
$
f_*
\colon
\Sh(\cD)
\to
\Sh(\cC)
$
is an equivalence of categories.
Then $f_*\colon \Sh(\cD,\infSpc)\to \Sh(\cC,\infSpc)$ preserves colimits,
and $f^*\colon \Sh(\cC,\infSpc)\to \Sh(\cD,\infSpc)$ is fully faithful.
\end{proposition}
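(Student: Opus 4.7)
The plan is to establish the colimit preservation first and then use it to deduce fully faithfulness. The starting point is the identification $f_*\simeq L^*f_*^{\Psh}L_*$ from the proof of Proposition \ref{equivalence.9}, where $f_*^{\Psh}$ denotes the presheaf pushforward (precomposition with $f$). As precomposition, $f_*^{\Psh}$ preserves all colimits in $\infPsh(\cD,\infSpc)$. For a diagram $(\cG_a)$ in $\infShv(\cD,\infSpc)$ one has $\colim_a^{\infShv}\cG_a\simeq L^*(\colim_a^{\infPsh}L_*\cG_a)$, and analogously on the source; plugging these into $f_*=L^*f_*^{\Psh}L_*$ reduces the identity $f_*(\colim_a^{\infShv}\cG_a)\simeq \colim_a^{\infShv}f_*\cG_a$ to the equivalence
\[
L^*f_*^{\Psh}\simeq f_*L^*
\qquad\text{as functors }\infPsh(\cD,\infSpc)\to \infShv(\cC,\infSpc).
\]
This last equivalence asserts precisely that $f_*^{\Psh}$ carries every sheafification unit $\cF\to L_*L^*\cF$ to a local equivalence on $\cC$; this is the $\infty$-categorical incarnation of the cocontinuity of $f$, which I would verify by reducing to the existing 1-categorical fact in SGA4, using the \v{C}ech-cover description of local equivalences and the hypothesis that $f$ preserves \v{C}ech coverings.

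For the fully faithfulness of $f^*$ it suffices to show the unit $\eta\colon \id\to f_*f^*$ is an equivalence. Both $f^*$ (a left adjoint) and $f_*$ (by the preceding step) preserve colimits, so the full subcategory of $\infShv(\cC,\infSpc)$ on which $\eta$ is an equivalence is closed under colimits. Since $\infShv(\cC,\infSpc)$ is generated under colimits by the representables $\mathfrak h_X$ with $X\in\cC$, it suffices to verify that $\eta_{\mathfrak h_X}$ is an equivalence for each such $X$.

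I would do this by direct computation, using Proposition \ref{equivalence.2} together with the analogues of Proposition \ref{equivalence.3} for $f^*$ (which follow by taking left adjoints): one obtains $f^*\mathfrak h_X\simeq \mathfrak h_{f(X)}$ and $f_*\mathfrak h_{f(X)}\simeq \iota(f_*^{\Shv} h_{f(X)})$. The hypothesis that $f_*^{\Shv}$ is an equivalence of 1-categories forces its quasi-inverse to coincide with the left adjoint $f^{*,\Shv}$ by uniqueness of adjoints, so the 1-categorical unit $h_X\to f_*^{\Shv}f^{*,\Shv}h_X\simeq f_*^{\Shv}h_{f(X)}$ is an isomorphism; applying $\iota$ and using naturality of the identifications exhibits $\eta_{\mathfrak h_X}$ as an equivalence.

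The main obstacle is the commutativity $L^*f_*^{\Psh}\simeq f_*L^*$ in the first step, which is the rigorous $\infty$-categorical form of cocontinuity; once this is in place, everything else reduces to formal adjunction and colimit arguments.
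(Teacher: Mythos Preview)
Your overall strategy---establish colimit preservation of $f_*$ first, then reduce the unit $\id\to f_*f^*$ to representables---matches the paper's. The difference, and the gap, is in how you handle the first step.

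You reduce colimit preservation to the identity $L^*f_*^{\Psh}\simeq f_*L^*$ and then propose to obtain it as ``the $\infty$-categorical incarnation of cocontinuity,'' citing the \v{C}ech description of local equivalences and the hypothesis that $f$ preserves \v{C}ech coverings. But ``preserves \v{C}ech coverings'' is a condition on coverings in $\cC$, whereas what you need here concerns coverings in $\cD$: you must show that $f_*^{\Psh}$ sends the \v{C}ech nerve of a $\cD$-covering of $Y$ to a local equivalence on $\cC$. You do not supply this argument, and you flag it yourself as the main obstacle. The paper does not attempt a general $\infty$-categorical cocontinuity statement at all. Instead it passes to the adjoint formulation (which is equivalent to yours): show that $f^{!}L_*\cF$ is a sheaf of spaces on $\cD$ for every sheaf $\cF$ on $\cC$, where $f^!$ is the right adjoint of $f_*^{\Psh}$ on presheaves. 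The key computation is the identification $L^*f_*^{\Psh}\mathfrak p_Y\simeq \iota(f_*^{\Shv}h_Y)$, obtained by applying 1-categorical cocontinuity (SGA4~III.2.3(3)) only to representables and combining with Propositions~\ref{equivalence.2} and~\ref{equivalence.3}. This gives $f^!L_*\cF(Y)\simeq \Hom(\iota f_*^{\Shv}h_Y,\cF)$, and the \v{C}ech descent condition for this functor of $Y$ then follows because the 1-categorical $f_*^{\Shv}$ (being an equivalence) sends the \v{C}ech nerve of the epimorphism $\coprod h_{Y_i}\to h_Y$ to a \v{C}ech nerve of an epimorphism, which lifts to an $\infty$-colimit by \cite[Corollary~A.3]{MR2034012}.

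For the second step your argument is close to the paper's, but you use the identification $f_*\mathfrak h_{f(X)}\simeq \iota(f_*^{\Shv}h_{f(X)})$ without justification. In the paper this is the commutativity of the square \eqref{equivalence.7.1}, and it is derived from the first step (specifically, from $f^!L_*\simeq L_*L^*f^!L_*$) by taking left adjoints; it does not come for free from Proposition~\ref{equivalence.3}. Once that square is in place, the paper also checks carefully that the exchange map $\iota f^{*}h_X\to f^{*}\iota h_X$ is an isomorphism before concluding that $\mathfrak h_X\to f_*f^*\mathfrak h_X$ is an equivalence.
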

\begin{proof}
Recall from \eqref{equivalence.9.4} that we have $f_*\simeq L^*f_*L_*$.
Let us first show that $L^*f^!L_*$ is a right adjoint of $L^*f_*L_*$.
For this,
it suffices to show that $f^!L_*\cF$ is a sheaf for all $\cF\in \Sh(\cC,\infSpc)$ since this implies that the second arrow in
\begin{align*}
\Hom_{\Sh(\cC,\infSpc)}(L^*f_*L_*\cG,\cF)
\xrightarrow{\simeq} &
\Hom_{\PSh(\cD,\infSpc)}(L_*\cG,f^!L_*\cF)
\\
\to &
\Hom_{\Sh(\cD,\infSpc)}(L^*L_*\cG,L^*f^!L_*\cF)
\\
\xrightarrow{\simeq} &
\Hom_{\Sh(\cD,\infSpc)}(\cG,L^*f^!L_*\cF)
\end{align*}
is an isomorphism for all $\cG\in \Sh(\cD,\infSpc)$.
We have isomorphisms
\[
L^*f_*\mathfrak{p}_Y
\simeq
L^*f_* \iota p_Y
\simeq
L^* \iota f_* p_Y
\simeq
\iota a^* f_* p_Y
\simeq
\iota f_* a^* p_Y
\simeq
\iota f_* h_Y.
\]
for all $Y\in \cD$,
where we need Proposition \ref{equivalence.3} for the second isomorphism, Proposition \ref{equivalence.2} for the third isomorphism, and \cite[Proposition III.2.3(3)]{SGA4} for the fourth isomorphism.
Hence we have a natural isomorphism
\[
f^!L_*\cF(Y)
\simeq
\Hom_{\Sh(\cC,\infSpc)}(\iota f_* h_Y,\cF).
\]
We need to show
\[
f^!L_*\cF(Y)
\simeq
\lim
\big(
\prod_{i\in I} f^! L_*\cF(Y_i)
\;
\substack{\to\\[-1em] \to}
\;
\prod_{i,j\in I} f^!L_*\cF(Y_i\times_Y Y_j)
\;
\substack{\rightarrow\\[-1em] \rightarrow \\[-1em] \rightarrow}
\;
\cdots
\big)
\]
for every covering $\{Y_i\to Y\}_{i\in I}$ in $\cD$.
To show this,
it suffices to show
\[
\colim
\big(
\cdots
\;
\substack{\rightarrow\\[-1em] \rightarrow \\[-1em] \rightarrow}
\;
\coprod_{i,j\in I} \iota f_*h_{Y_i\times_Y Y_j}
\;
\substack{\rightarrow\\[-1em] \rightarrow}
\;
\coprod_{i\in I} \iota f_*h_{Y_i}
\big)
\simeq
\iota f_* h_Y.
\]
By \cite[Corollary A3]{MR2034012},
it suffices to show that
\[
\cdots
\;
\substack{\rightarrow\\[-1em] \rightarrow \\[-1em] \rightarrow}
\;
\coprod_{i,j\in I}  f_*h_{Y_i\times_Y Y_j}
\;
\substack{\rightarrow\\[-1em] \rightarrow}
\;
\coprod_{i\in I}  f_*h_{Y_i}
\to
f_*h_Y
\]
is a \v{C}ech nerve and $f_*h_{Y_i}\to f_*h_Y$ is an epimorphism.
This is a consequence of the assumption that
$
f_*
\colon
\Sh(\cD)
\to
\Sh(\cC)
$
is an equivalence.
Hence we have shown that $f^!L_*\cF$ is a sheaf for all $\cF\in \Sh(\cC,\infSpc)$ and $f_*\colon \Sh(\cD,\infSpc)\to \Sh(\cC,\infSpc)$ preserves colimits.

It follows that the natural transformation
\[
f^!L_*
\xrightarrow{ad}
L_*L^*f^!L_*
\]
is an isomorphism.
Take left adjoints to see that the right square in the diagram
\[
\begin{tikzcd}
\Sh(\cD)\ar[d,"f_*"']\ar[r,"a_*"]&
\PSh(\cD)\ar[d,"f_*"]\ar[r,"\iota"]&
\PSh(\cD,\infSpc)\ar[r,"L^*"]\ar[d,"f_*"]&
\Sh(\cD,\infSpc)\ar[d,"f_*"]
\\
\Sh(\cC)\ar[r,"a_*"]&
\PSh(\cC)\ar[r,"\iota"]&
\PSh(\cC,\infSpc)\ar[r,"L^*"]&
\Sh(\cC,\infSpc).
\end{tikzcd}
\]
commutes.
The middle square commutes by Proposition \ref{equivalence.3}.
The left square commutes since $f$ is a continuous functor of sites.
It follows that the square
\begin{equation}
\label{equivalence.7.1}
\begin{tikzcd}
\Sh(\cD)\ar[d,"f_*"']\ar[r,"\iota"]&
\Sh(\cD,\infSpc)\ar[d,"f_*"]
\\
\Sh(\cC)\ar[r,"\iota"]&
\Sh(\cC,\infSpc)
\end{tikzcd}
\end{equation}
commutes.
From this, we have the exchange transformation
\[
\iota f^* \xrightarrow{Ex} f^* \iota.
\]

We claim that the morphism
\begin{equation}
\label{equivalence.7.2}
\iota f^* h_X
\xrightarrow{Ex}
f^*\iota h_X
\end{equation}
is an isomorphism for all $X\in \cC$.
Consider the induced commutative diagram
\[
\begin{tikzcd}
\iota a^* f^*p_X\ar[r]\ar[d]&
\iota f^* a^*p_X\ar[r,"Ex"]&
f^* \iota a^*p_X\ar[d]
\\
L^* \iota f^*p_X\ar[r]&
L^* f^* \iota p_X\ar[r]&
f^* L^* \iota p_X.
\end{tikzcd}
\]
The left vertical arrow is an isomorphism since \eqref{equivalence.7.1} commutes.
The lower right horizontal arrow is an isomorphism by Proposition \ref{equivalence.9}.
From
\begin{gather*}
\iota a^* p_X\simeq \mathfrak{h}_X\simeq L^*\iota p_X,
\\
a^*f^*p_X\simeq h_{f(X)}\simeq f^*a^*p_X,
\\
\iota f^* p_X\simeq \mathfrak{p}_{f(X)}\simeq f^*\iota p_X,
\end{gather*}
we see that the right vertical, upper left horizontal, and lower left horizontal arrows are isomorphisms.
It follows that the upper left horizontal arrow is an isomorphism,
i.e., \eqref{equivalence.7.2} is an isomorphism.

To show that $f^*\colon \Sh(\cC,\infSpc)\to \Sh(\cD,\infSpc)$ is fully faithful,
it suffices to show that the morphism
\begin{equation}
\label{equivalence.7.3}
\mathfrak{h}_X \xrightarrow{ad} f_*f^* \mathfrak{h}_X
\end{equation}
is an isomorphism for all $X\in \cC$ since every sheaf of spaces is isomorphic to a colimit of representable sheaves of spaces and $f^*$ and $f_*$ preserve colimits.
Consider the commutative diagram
\[
\begin{tikzcd}
\iota h_X\ar[r,"ad"]\ar[rrd,"ad"',bend right=10]&
\iota f_*f^* h_X\ar[r,"\simeq"]&
f_* \iota f^* h_X\ar[d,"Ex"]
\\
&
&
f_*f^*\iota h_X
\end{tikzcd}
\]
The right vertical arrow is an isomorphism since \eqref{equivalence.7.2} is an isomorphism.
The upper left horizontal arrow is an isomorphism since $f_*\colon \Sh(\cD)\to \Sh(\cC)$ is an equivalence.
It follows that the diagonal arrow is an isomorphism,
i.e., \eqref{equivalence.7.3} is an isomorphism.
\end{proof}

\begin{theorem}
\label{equivalence.8}
Let $f\colon \cC\to \cD$ be a continuous and cocontinuous functor of sites preserving \v{C}ech coverings.
Assume the following conditions:
\begin{enumerate}
\item[\textup{(i)}]
For all $Y\in \cD$, there exists a covering $\{Y_i\to Y\}_{i\in I}$ in $\cD$ such that
$
Y_{i_1}\times_Y \cdots \times_Y Y_{i_n}\in \im f
$
for all $i_1,\ldots,i_n\in I$.
\item[\textup{(ii)}]
$
f_*
\colon
\Sh(\cD)
\to
\Sh(\cC)
$
is an equivalence,
\end{enumerate}
Then $f_*\colon \Sh(\cD,\cV)\to \Sh(\cC,\cV)$ is an equivalence of $\infty$-categories for every $\infty$-category $\cV$ admitting limits.
\end{theorem}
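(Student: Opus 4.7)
My plan is to first handle the case $\cV=\infSpc$ by upgrading Proposition \ref{equivalence.7} to show that $f^*$ is an equivalence (not merely fully faithful), using hypothesis~(i); the general case will then follow from the universal property of the associated $\infty$-topos applied to $\cV$.

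For $\cV=\infSpc$, Proposition \ref{equivalence.7} already provides that $f^*\colon\infShv(\cC,\infSpc)\to\infShv(\cD,\infSpc)$ is fully faithful with colimit-preserving right adjoint $f_*$, so it suffices to establish essential surjectivity. Since every sheaf of spaces is a colimit of representables $\mathfrak{h}_Y$ and $f^*$ preserves colimits, I only need each $\mathfrak{h}_Y$ to lie in the essential image of $f^*$. Fix $Y\in\cD$ and apply condition~(i) to obtain a covering $\{Y_i\to Y\}_{i\in I}$ such that every iterated fibre product has the form $Y_{i_0}\times_Y\cdots\times_Y Y_{i_n}=f(X_{i_0,\ldots,i_n})$ for some $X_{i_0,\ldots,i_n}\in\cC$. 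The \v{C}ech presentation
\[
\mathfrak{h}_Y\;\simeq\;\colim_{[n]\in\Delta^{op}}\coprod_{(i_0,\ldots,i_n)}\mathfrak{h}_{Y_{i_0}\times_Y\cdots\times_Y Y_{i_n}},
\]
combined with the identity $f^*\mathfrak{h}_X\simeq\mathfrak{h}_{f(X)}$ (which follows from \eqref{equivalence.7.2} together with the classical relation $f^*h_X\simeq h_{f(X)}$ for continuous $f$) exhibits $\mathfrak{h}_Y$ as a colimit of objects in the essential image of $f^*$. Full faithfulness of $f^*$ lifts this simplicial diagram to $\infShv(\cC,\infSpc)$, and its colimit there maps to $\mathfrak{h}_Y$ under $f^*$, completing essential surjectivity.

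For general $\cV$ admitting small limits, I would invoke the universal property of the $\infty$-topos $\cX_\cC:=\infShv(\cC,\infSpc)$: restriction along the Yoneda embedding $\cC\to\cX_\cC$ induces an equivalence $\mathrm{Fun}^{\lim}(\cX_\cC^{op},\cV)\xrightarrow{\simeq}\infShv(\cC,\cV)$ with inverse given by right Kan extension. Combined with the analogous equivalence for $\cD$ and the equivalence $f^*\colon\cX_\cC\xrightarrow{\simeq}\cX_\cD$ from the previous paragraph, precomposition with $f^*$ yields the desired equivalence $f_*\colon\infShv(\cD,\cV)\xrightarrow{\simeq}\infShv(\cC,\cV)$. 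The main obstacle will be justifying this universal property cleanly, since it goes slightly beyond the $\infSpc$-valued theory developed earlier in the paper; as a concrete fallback, one can instead construct the inverse $g\colon\infShv(\cC,\cV)\to\infShv(\cD,\cV)$ directly via the formula $g(\cF)(Y):=\lim_{[n]\in\Delta}\prod_{(i_0,\ldots,i_n)}\cF(X_{i_0,\ldots,i_n})$, where independence of the chosen covering, the sheaf property of $g(\cF)$, and the relations $f_*g\simeq\id$ and $gf_*\simeq\id$ all reduce to \v{C}ech arguments parallel to those in the proof of Proposition \ref{equivalence.7}.
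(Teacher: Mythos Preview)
Your proof is correct and follows essentially the same approach as the paper: reduce to $\cV=\infSpc$ via the universal property of sheaves of spaces (the paper simply cites \cite[Proposition 1.3.1.7]{SAG} for this, so your fallback construction is unnecessary), then combine Proposition~\ref{equivalence.7} with the \v{C}ech presentation from hypothesis~(i) and the identification $f^*\mathfrak{h}_X\simeq\mathfrak{h}_{f(X)}$. The only cosmetic difference is that the paper verifies the counit $f^*f_*\mathfrak{h}_Y\to\mathfrak{h}_Y$ is an isomorphism directly---using that $f_*$ preserves colimits to reduce to $\mathfrak{h}_{f(X)}\simeq f^*\mathfrak{h}_X$ and then the triangle identity---rather than phrasing it as essential surjectivity via a diagram-lifting argument.
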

\begin{proof}
By \cite[Proposition 1.3.1.7]{SAG},
there is an equivalence between $\Sh(\cE,\cV)$ and the $\infty$-category of functors from $\Sh(\cE,\infSpc)$ to $\cV$ sending colimits to limits for every site $\cE$.
Hence we reduce to the case when $\cV=\infSpc$.

Proposition \ref{equivalence.7} means that $f^*$ is fully faithful and $f_*$ preserves colimits.
Hence it suffices to show that the morphism
\[
f^*f_*\cG\xrightarrow{ad'} \cG
\]
is an isomorphism for all $\cG\in \Sh(\cD,\infSpc)$.
Since $f^*$ preserves colimits and $\cG$ is a colimit of representable sheaves of spaces,
it suffices to show that the morphism
\[
f^*f_*\mathfrak{h}_Y\xrightarrow{ad'} \mathfrak{h}_Y
\]
is an isomorphism for all $Y\in \cD$.
Consider the covering $\{Y_i\to Y\}_{i\in I}$ in $\cD$ in the assumption.
Since
\[
\colim
\big(
\cdots
\;
\substack{\rightarrow\\[-1em] \rightarrow \\[-1em] \rightarrow}
\;
\coprod_{i,j\in I}  \mathfrak{h}_{Y_i\times_Y Y_j}
\;
\substack{\rightarrow\\[-1em] \rightarrow}
\;
\coprod_{i\in I}  \mathfrak{h}_{Y_i}
\big)
\simeq
\mathfrak{h}_Y,
\]
we only need to show that the morphism
\[
f^*f_*\mathfrak{h}_{Y_{i_1}\times_Y \cdots \times_Y Y_{i_n}}\xrightarrow{ad'} \mathfrak{h}_{Y_{i_1}\times_Y \cdots \times_Y Y_{i_n}}
\]
is an isomorphism for all $i_1,\ldots,i_n\in I$.
There exists $X\in \cC$ such that $f(X)\simeq Y_{i_1}\times_Y \cdots \times_Y Y_{i_n}$.
This implies $\mathfrak{h}_{Y_{i_1}\times_Y \cdots \times_Y Y_{i_n}}\simeq f^*\mathfrak{h}_X$.
To conclude,
observe that the composition
\[
f^*\mathfrak{h}_X\xrightarrow{ad}f^*f_*f^*\mathfrak{h}_X
\xrightarrow{ad'} f^*\mathfrak{h}_X
\]
is the identity and the first arrow is an isomorphism since $f^*$ is fully faithful.
\end{proof}

\bibliography{bib}

\begin{thebibliography}{10}

\bibitem{SGA4}
{\sc M.~Artin, A.~Grothendieck, and J.~L. Verdier}, {\em Th\'eorie des topos et cohomologie \'etale des sch\'emas}, vol.~269, 270, 305 of Lecture Notes in Mathematics, Springer-Verlag, 1972--1973.
\newblock S\'eminaire de G\'eom\'etrie Alg\'ebrique du Bois--Marie 1963---64.

\bibitem{Ayo071}
{\sc J.~Ayoub}, {\em Les six op\'{e}rations de {G}rothendieck et le formalisme des cycles \'{e}vanescents dans le monde motivique}, Ast\'{e}risque, 314 (2007).

\bibitem{logDM}
{\sc F.~Binda, D.~Park, and P.~A. {\O}stv{\ae}r}, {\em Triangulated categories of logarithmic motives over a field}, Ast{\'e}risque, 433 (2022).

\bibitem{logSH}
\leavevmode\vrule height 2pt depth -1.6pt width 23pt, {\em Logarithmic motivic homotopy theory}.
\newblock Preprint, \href{https://arxiv.org/pdf/2303.02729v3.pdf}{arXiv:2303.02729v3}, to appear in Mem. Am. Math. Soc., 2025.

\bibitem{CD19}
{\sc D.-C. {Cisinski} and F.~{D\'eglise}}, {\em {Triangulated categories of mixed motives}}, Cham: Springer, 2019.

\bibitem{MR2034012}
{\sc D.~Dugger, S.~Hollander, and D.~C. Isaksen}, {\em Hypercovers and simplicial presheaves}, Math. Proc. Cambridge Philos. Soc., 136 (2004), pp.~9--51.

\bibitem{LZ}
{\sc Y.~Liu and W.~Zheng}, {\em Enhanced six operations and base change theorem for higher artin stacks}.
\newblock Preprint, \href{https://arxiv.org/pdf/1211.5948v4.pdf}{arXiv:1211.5948v4}, 2024.

\bibitem{HTT}
{\sc J.~{Lurie}}, {\em {Higher topos theory}}, vol.~170, Princeton, NJ: Princeton University Press, 2009.

\bibitem{HA}
\leavevmode\vrule height 2pt depth -1.6pt width 23pt, {\em {Higher algebra}}.
\newblock \url{https://www.math.ias.edu/~lurie/papers/HA.pdf}, 2017.

\bibitem{SAG}
\leavevmode\vrule height 2pt depth -1.6pt width 23pt, {\em Spectral algebraic geometry}.
\newblock \url{https://www.math.ias.edu/~lurie/papers/SAG-rootfile.pdf}, 2018.

\bibitem{MV}
{\sc F.~Morel and V.~Voevodsky}, {\em {${\bf A}^1$}-homotopy theory of schemes}, Inst. Hautes \'{E}tudes Sci. Publ. Math.,  (1999), pp.~45--143 (2001).

\bibitem{TOda}
{\sc T.~Oda}, {\em Convex Bodies and Algebraic Geometry}, vol.~15 of Ergebnisse der Mathematik und ihrer Grenzgebiete. 3. Folge, Springer-Verlag Berlin Heidelberg, 1988.

\bibitem{Ogu}
{\sc A.~Ogus}, {\em Lectures on Logarithmic Algebraic Geometry}, Cambridge Studies in Advanced Mathematics, Cambridge University Press, 2018.

\bibitem{ParThesis}
{\sc D.~Park}, {\em Triangulated categories of motives over fs log schemes}, PhD thesis, University of California, Berkeely, 2016.

\bibitem{divspc}
\leavevmode\vrule height 2pt depth -1.6pt width 23pt, {\em Inverting log blowups in log geometry}, Tunis. J. Math., 6 (2024), pp.~405--453.

\bibitem{logshriek}
\leavevmode\vrule height 2pt depth -1.6pt width 23pt, {\em Log motivic exceptional direct image functors}.
\newblock Preprint, \href{https://arxiv.org/pdf/2403.06692v2.pdf}{arXiv:2403.06692v2}, 2026.

\bibitem{logA1}
\leavevmode\vrule height 2pt depth -1.6pt width 23pt, {\em $\mathbb{A}^1$-homotopy theory of log schemes}.
\newblock Preprint, \href{http://arxiv.org/pdf/2205.14750v4}{arXiv:2205.14750v4}, to appear in Eur. J. Math, 2026.

\bibitem{logsix}
\leavevmode\vrule height 2pt depth -1.6pt width 23pt, {\em Motivic six-functor formalism for log schemes}.
\newblock Preprint, \href{https://arxiv.org/pdf/2403.07645v2.pdf}{arXiv:2403.07645v2}, 2026.

\bibitem{zbMATH07027475}
{\sc T.~Tsuji}, {\em Saturated morphisms of logarithmic schemes}, Tunis. J. Math., 1 (2019), pp.~185--220.

\end{thebibliography}
\bibliographystyle{siam}

\end{document}